\newtheorem{theorem}{Theorem}[section]
\newtheorem{proposition}[theorem]{Proposition}
\newtheorem{lemma}[theorem]{Lemma}
\newtheorem{corollary}[theorem]{Corollary}
\theoremstyle{definition}
\newtheorem{assumption}[theorem]{Assumption}
\newtheorem{remark}[theorem]{Remark}
\numberwithin{equation}{section}
\begin{document}

\title [Stochastic Chemotaxis-Fluid model in $\mathbb{R}^3$]{Global strong solution for the stochastic tamed Chemotaxis-Navier-Stokes system in $\mathbb{R}^3$}

\author{Fan Xu}
\address{School of Mathematics and Statistics, Hubei Key Laboratory of Engineering Modeling  and Scientific Computing, Huazhong University of Science and Technology,  Wuhan 430074, Hubei, P.R. China.}
\email{d202280019@hust.edu.cn (F. Xu)}

\author{Lei Zhang}
\address{School of Mathematics and Statistics, Hubei Key Laboratory of Engineering Modeling  and Scientific Computing, Huazhong University of Science and Technology,  Wuhan 430074, Hubei, P.R. China.}
\email{lei\_zhang@hust.edu.cn (L. Zhang)}

\author{Bin Liu}
\address{School of Mathematics and Statistics, Hubei Key Laboratory of Engineering Modeling  and Scientific Computing, Huazhong University of Science and Technology,  Wuhan 430074, Hubei, P.R. China.}
\email{binliu@mail.hust.edu.cn (B. Liu)}

\keywords{Stochastic tamed Chemotaxis-Navier-Stokes system; Stochastic entropy-energy estimate; Global strong solution.}

\date{\today}

\begin{abstract}
In this work, we consider the 3D Cauchy problem for a coupled system arising in biomathematics, consisting of a chemotaxis model with a cubic logistic source and the stochastic tamed Navier-Stokes equations (STCNS, for short). Our main goal is to establish the existence and uniqueness of a global strong solution (strong in both the probabilistic and PDE senses) for the 3D STCNS system with large initial data. To achieve this, we first introduce a triple approximation scheme by using the Friedrichs mollifier, frequency truncation operators, and cut-off functions. This scheme enables the construction of sufficiently smooth approximate solutions and facilitates the effective application of the entropy-energy method. Then, based on a newly derived stochastic version of the entropy-energy inequality, we further establish some a priori higher-order energy estimates, which together with the stochastic compactness method, allow us to construct the strong solution for the STCNS system.
\end{abstract}

\maketitle
\section{Introduction}
\subsection{Statement of the problem}
Chemotaxis is a fundamental biological mechanism observed across a wide range of organisms, including bacteria, unicellular eukaryotes, and multicellular entities. It refers to the ability of these organisms to sense and respond to variations in chemical concentration gradients in their environment \cite{keller1970,keller1971}. Functionally, chemotaxis acts as a navigational strategy, guiding organisms toward favorable conditions or away from harmful environments. In 2005, Tuval et al. introduced the Chemotaxis-Navier-Stokes (CNS) model to capture the dynamic behavior of bacteria suspended in fluid media \cite{tuval2005}. This model describes the interplay between bacterial density, chemical concentration, and fluid dynamics, accounting for the impact of bacterial density on fluid velocity through gravitational effects. To reflect more realistic conditions, subsequent studies have extended the CNS system by incorporating random external forces into the model \cite{hausenblas2024existence,zhai20202d,zhang2022global}.

In the deterministic setting, many authors have thoroughly investigated the impact of fluid viscosity on chemotaxis phenomena within the framework of partial differential equations. The relevant results primarily involve the global well-posedness \cite{chae2013existence,ding2022generalized,duan2010global,jeong2022well,liu2011coupled,lorz2010coupled,winkler2012global,winkler2016global,winkler2017far,winkler2022does,zhang2014global,zhang2021global} and asymptotic behavior \cite{chae2014global,17di2010chemotaxis,duan2017global,jiang2015global}, providing important insights into the intricate dynamics involved.  It is worth emphasizing that in the three-dimensional whole space, Liu and Lorz \cite{liu2011coupled} proved the existence of global solutions for the Chemotaxis-Stokes system with nonlinear diffusion. Later, Kang, Lee, and Winkler \cite{28Kyungkeun} demonstrated the existence of global weak solutions for the CNS system in $\mathbb{R}^3$. Additionally, Zhang and Zheng \cite{zhang2021global} established the existence and uniqueness of global axisymmetric weak solutions for the 3D CNS system. Furthermore, the first and third authors of this paper proved the existence and uniqueness of a global large-data smooth solution for the 3D tamed Chemotaxis-Navier-Stokes system in $\mathbb{R}^3$ \cite{FanXu2024}.

Compared with the deterministic case, available studies on stochastic versions of the CNS system remain limited. In 2020, Zhai and Zhang \cite{zhai20202d} first established the existence of unique global mild and weak solutions to the 2D stochastic Chemotaxis-Navier-Stokes (SCNS) system in a bounded convex domain. Later, in a 3D general bounded domain, Zhang and Liu \cite{zhang2022global} showed that the SCNS system driven by L\'{e}vy-type noise admits at least one global martingale weak solution. Moreover, Hausenblas et al. \cite{hausenblas2024existence} proved the existence and uniqueness of global pathwise weak solutions to the SCNS system in a 2D bounded domain with an additional noise term in the chemical concentration equation. Recently, Zhang and Liu \cite{zhang2025keller} demonstrated the global well-posedness of the SCNS system driven by multiplicative Gaussian noise in the whole space $\mathbb{R}^2$.

Inspired by the aforementioned works in both 2D unbounded domains and 3D bounded domains, a natural mathematical question arises: \emph{Does the stochastically perturbed CNS system admit a unique global solution in the physical space $\mathbb{R}^3$?} The main purpose of this study is to provide a partial affirmative answer to this question. To the best of our knowledge, the global solvability of the SCNS system in $\mathbb{R}^3$ remains unaddressed in the literature. In fact, fully resolving this problem remains a long-term challenge. One major difficulty stems from the open problem of whether the 3D (stochastic) Navier-Stokes equations with large initial data admit a unique global weak/strong solution. Another challenge arises from the potential blow-up induced by the cross-diffusion term in the cell density equation under the three-dimensional setting.

In this paper, we investigate a suitably regularized version of the SCNS system that retains its essential biological and physical features. More precisely, we are interested in the mathematical analysis of the following Cauchy problem for the chemotaxis model coupled with the 3D stochastic tamed Navier-Stokes equations (STCNS):
\begin{equation}\label{CNS}
\left\{
\begin{aligned}
&\mathrm{d}n+u\cdot\nabla n\mathrm{d}t=\Delta n\mathrm{d}t-\nabla\cdot(n\nabla c)\mathrm{d}t+L(n)\mathrm{d}t,& & \textrm{in}~ \mathbb{R}_+\times \mathbb{R}^3,\\
&\mathrm{d}c+u\cdot\nabla c\mathrm{d}t=\Delta c\mathrm{d}t-nc\mathrm{d}t,&&\textrm{in}~ \mathbb{R}_+\times\mathbb{R}^3,\\
&\mathrm{d}u+(u\cdot\nabla)u\mathrm{d}t=\Delta u\mathrm{d}t+\nabla P\mathrm{d}t-\mathbf{g}(|u|^2)u\mathrm{d}t+n\nabla\phi \mathrm{d}t+G(u)\mathrm{d}W(t),&&\textrm{in}~\mathbb{R}_+\times\mathbb{R}^3,\cr
&\nabla\cdot u=0,&&\textrm{in}~ \mathbb{R}_+\times\mathbb{R}^3,\\
&n|_{t=0}=n_0,~c|_{t=0}=c_0,~u|_{t=0}=u_0,&&\textrm{in}~\mathbb{R}^3,
\end{aligned}
\right.
\end{equation}
where $\mathbb{R}_+ := [0,\infty)$, and the unknown functions $n(t,x)$, $c(t,x)$, and $u(t,x)$ represent the cell density, the chemical concentration, and the fluid velocity field, respectively. The scalar function $P(t,x)$ stands for the pressure, and $\phi(x)$ represents the gravitational potential. The force $L:\mathbb{R}\rightarrow\mathbb{R}$ is a smooth function generalizing the logistic source, modeling cell proliferation and apoptosis. In the following, we consider the cubic logistic term in the form of
\begin{equation}\label{gggg}
\begin{split}
L(n):=n(1-n)(n-a),\quad \textrm{for some}~  a\in(0,\frac{1}{2}),
 \end{split}
\end{equation}
which quantitatively reflects the experimental observations of Mimura and Tsujikawa \cite{mimura1996aggregating}. Note that the Navier-Stokes equations in \eqref{CNS} are influenced by a tamed term $\mathbf{g}(|u|^2)u$ (see \eqref{con1} below for more details). This tamed term is a useful feedback mechanism for complex biological fluid environments, originally introduced by R\"{o}ckner and Zhang \cite{rockner2009stochastic,rockner2009tamed} to describe the nonlinear self-regulation of fluid flows in high-speed regimes, which suppresses nonlinear growth and reduces instability. In \eqref{CNS}, besides the deterministic force $n\nabla\phi $ caused by the cells via the time-independent potential $\phi(x)$, the system is also perturbed by a random external force in the form of $G(u)\mathrm{d}W(t)$, which accounts for unpredictable measurement noise and modeling errors in approximating physical phenomena. Here, $\{W(t)\}_{t\geq 0}$  is an $\mathbf{U}$-valued cylindrical Wiener process defined on a fixed stochastic basis $(\Omega,\mathcal {F},\{\mathcal {F}_t\}_{t>0},\mathbb{P})$ with complete right-continuous filtration, and it is formally given by the expansion
$$
W(t)=\sum_{j\geq 1}W_j(t )e_j,
$$
where $\{W_j(t)\}_{j\geq1}$ is a family of mutually independent $\mathcal {F}_t$-adapted real-valued standard Wiener processes, and $\{e_j\}_{j\geq1}$ is a complete orthonormal basis in the separable Hilbert space $\mathbf{U}$ \cite{da2014stochastic}. To make sense of $W(t)$, one can consider a larger auxiliary space $\mathbf{U}_0$ via
$
 \mathbf{U}_0:=\{v=\sum_{j\geq 1}\alpha_j e_j;~\sum_{j\geq 1} \alpha_j^2/j^2  <\infty\} \supset \mathbf{U},
$
which is endowed with the norm $\|f\|_{\mathbf{U}_0}^2=\sum_{j\geq 1} \alpha_j^2/j^2$, for any $f=\sum_{j\geq 1} \alpha_j e_j$. In particular, the embedding $\mathbf{U} \hookrightarrow \mathbf{U}_0$ is Hilbert-Schmidt \cite{da2014stochastic}, ensuring that the trajectories of $W$ lie in $\mathcal{C}([0,T];\mathbf{U}_0)$ almost surely.

The key novelties of this work are summarized as follows: First, it seems that the classical Faedo-Galerkin method and energy estimates used in \cite{11brzezniak2013existence,rockner2009stochastic} may not be straightforwardly applied to the coupled system considered in the present paper, and we shall introduce an approximation system by borrowing some ideas from our recent works \cite{zhang2025ejp,zhang2025keller}, which enable us to establish the existence and uniqueness of the global smooth approximation solutions, and in particular enable us to establish a crucial stochastic entropy-energy inequality, which is necessary for constructing the exact strong solution by using the stochastic compactness method. Second, in addition to proving the existence of solutions in the whole three-dimensional space, we also establish the uniqueness of the strong solution, which has not yet been addressed even in the bounded 3D setting (cf. \cite{zhang2022global}). Moreover, the unboundedness of the domain poses additional challenges in applying the compactness argument, preventing us from utilizing the approximation system introduced in \cite{zhang2022global}. Third, we show that the tamed term in the stochastic Navier-Stokes equations and the cubic logistic term in the cell density equation ensure the regularity and uniqueness of the solutions. An interesting remaining question is to explore whether the main result remains valid for weaker logistic term, such as $L(n)=n-n^\alpha$ with $0<\alpha <3$. Note that this type of logistic source also has a biologically meaningful interpretation (cf. \cite{ding2022generalized}), and we shall provide partial answer to this problem in our forthcoming work.

\subsection{Notations}\label{nnnn}
In this paper, we adopt the following conventions: The inequality $A \lesssim_{a,b,\cdots} B$ means that there exists a positive constant $C$ depending only on $a, b, \cdots$ such that $A \leq C B$, while $A \asymp_{a,b,\cdots} B$ indicates that there exist two positive constants $C \leq C'$ depending only on $a, b, \cdots$ such that $C B \leq A \leq C' B$.

Let $X$ and $Y$ be two Banach spaces. We denote by $\mathcal{L}(X; Y)$ the space of all bounded linear operators from $X$ to $Y$. The symbol $\langle \cdot, \cdot \rangle_{X', X}$ stands for the standard duality pairing, where $X' := \mathcal{L}(X; \mathbb{R})$ is the dual space of $X$. In particular, if $X$ is a Hilbert space, then the symbol $(\cdot, \cdot)_X$ denotes the scalar product. If $X$ and $Y$ are separable Hilbert spaces, then the symbol $\mathcal{L}_2(X; Y)$ denotes the Hilbert space of all Hilbert-Schmidt operators from $X$ to $Y$.

Let $\mathcal{A}=(\mathcal{A},\Sigma,\mu)$ be a measure space, where $\Sigma$ is a $\sigma$-algebra on $\mathcal{A}$, and $\mu$ is a non-negative measure. Let $\mathcal{B}$ be a Banach space over $\mathbb{R}$. For $p\in[1,\infty)$: The space $L^p(\mathcal{A};\mathcal{B})$ is defined as the set of all (equivalence classes of) strongly measurable functions $f:\mathcal{A}\mapsto \mathcal{B}$ such that $\|f\|_{\mathcal{B}}^p$ is integrable over $\mathcal{A}$ with respect to $\mu$. For $p=\infty$: The space $L^\infty(\mathcal{A};\mathcal{B})$ is defined as the set
of all (equivalence classes of) strongly measurable functions $f:\mathcal{A}\mapsto \mathcal{B}$ that are essentially bounded on $\mathcal{A}$. Let $\textrm{Lip}([0,T];\mathcal{B})$ denote the space of Lipschitz continuous functions from the interval $[0,T]$ to the Banach space $\mathcal{B}$.

Let $\mathcal{V}:=\{f\in \mathcal{C}_0^{\infty}(\mathbb{R}^3;\mathbb{R}^3):\nabla\cdot f=0\}$, where $\mathcal{C}_0^{\infty}(\mathbb{R}^3;\mathbb{R}^3)$ denotes the space of all $\mathbb{R}^3$-valued functions of class $\mathcal{C}^{\infty}$ with compact supports. Let $\Lambda^s:=(I-\Delta)^{\frac{s}{2}}$ denote the Bessel operator \cite{bahouri2011fourier}. Let $H^s(\mathbb{R}^3)$ be a Hilbert space endowed with the norm
\begin{equation*}
\begin{split}
\|f\|_{H^s}=\|\Lambda^sf\|_{L^2}=\left[\int_{\mathbb{R}^3}(1+|\xi|^2)^s|\hat{f}(\xi)|^2\textrm{d}\xi\right]^{\frac{1}{2}},
 \end{split}
 \end{equation*}
where $\hat{f}$ denotes the Fourier transform of a tempered distribution $f$. Let
\begin{equation*}
\begin{split}
\mathbb{H}^s:=\textrm{the closure of}~\mathcal{V}~ \textrm{in}~H^s(\mathbb{R}^3;\mathbb{R}^3).
\end{split}
\end{equation*}

Let $Q_w$ denote the Hilbert space $Q$ endowed with the weak topology. Let $\{\mathcal{O}_d\}_{d\in\mathbb{N}}$ be a sequence of bounded open subsets of $\mathbb{R}^3$ with regular boundaries $\partial\mathcal{O}_d$ such that $\mathcal{O}_d\subset\mathcal{O}_{d+1}$ and $\bigcup_{d=1}^{\infty}\mathcal{O}_d=\mathbb{R}^3$.  Let $H^k(\mathcal{O}_d)$ denote the space of restrictions of functions defined on $\mathbb{R}^3$ to subsets $\mathcal{O}_d$, i.e.
$$H^k(\mathcal{O}_d):=\{f|_{\mathcal{O}_d}; f\in H^k(\mathbb{R}^3)\},\quad k\geq0.$$
We will use the following spaces in the subsequent content, analogous to those considered in \cite{11brzezniak2013existence}:
\begin{equation*} \begin{split}
\mathcal{C}([0,T];Q_w):=& \textrm{the~space~of~weakly~continuous~functions}~f:[0,T]\rightarrow Q\\
 & \textrm{with~the~weakest~topology~}\mathcal{T}_1\textrm{~such~that~for~all~}g\in Q \textrm{~the mappings}\\
 &\mathcal{C}([0,T];Q_w)\ni f\mapsto(f(\cdot),g)_{Q}\in \mathcal{C}([0,T];\mathbb{R})~\textrm{are~continuous}.\\
L^2_w(0,T;Q):=& \textrm{the~space}~L^2(0,T;Q)~\textrm{endowed~with~the~weak~topology~}\mathcal{T}_2.\\
L^2(0,T;H^k_{loc}(\mathbb{R}^3)):=& \textrm{the~space~of~measurable~functions}~f:[0,T]\rightarrow~H^k(\mathbb{R}^3)~\textrm{such~that~for~all}~d\in\mathbb{N}\\
 & p_{T,d}(f):=\|f\|_{L^2(0,T;H^k(\mathcal{O}_d))}:=\left(\int_0^T\|f\|_{H^k(\mathcal{O}_d)}^2\mathrm{d}t\right)^{\frac{1}{2}}<\infty,\\
 & \textrm{with~the~topology}~\mathcal{T}_3\textrm{~generated~by~the~seminorms}~(p_{T,d})_{d\in\mathbb{N}}.
 \end{split} \end{equation*}

\subsection{Main result}

In order to state the main theorem precisely, we begin by introducing the basic assumptions needed for the subsequent argument. Motivated by \cite{brzezniak2020stochastic,rockner2009tamed}, let us define the taming function $\mathbf{g}(\cdot)$ in the following manner.

\begin{assumption} \label{as1}
The function $\mathbf{g}(\cdot):\mathbb{R}_+\mapsto\mathbb{R}_+$ is smooth and satisfies, for some fixed $\mathbf{N}\in\mathbb{N}$
 \begin{equation}\label{con1}
\left\{
\begin{aligned}
&\mathbf{g}(r)=0,&&\textrm{if}~r\leq \mathbf{N},\cr
&\mathbf{g}(r)=r-\mathbf{N},&&\textrm{if}~r\geq \mathbf{N}+1,\cr
&\mathbf{g}'(r)\in[0,2],&&\textrm{if}~r\in[\mathbf{N},\mathbf{N}+1],\cr
&|\mathbf{g}^{(l)}(r)|\leq C_l,&&\textrm{if}~r\in[\mathbf{N},\mathbf{N}+1],\quad l=2,~3,~4,\cdots.
\end{aligned}
\right.
\end{equation}
\end{assumption}

Note that the function $\mathbf{g}(\cdot)$ defined in this way satisfies for any $r,~r_1\geq0$
\begin{equation} \begin{split}\label{con2}
|\mathbf{g}(r)|\leq r\quad\textrm{and}\quad|\mathbf{g}^{(l)}(r)-\mathbf{g}^{(l)}(r_1)|\leq C_{l+1}|r-r_1|.
\end{split} \end{equation}
Let $\mathbf{g}_1(r):=r-\mathbf{g}(r)$. Then, for any $r\geq0$, the following properties hold:
 \begin{equation}\label{connew}
\left\{
\begin{aligned}
&\mathbf{g}_1'(r)=1,&&\textrm{if}~r\leq \mathbf{N},\cr
&\mathbf{g}_1'(r)=0,&&\textrm{if}~r\geq \mathbf{N}+1,\cr
&|\mathbf{g}_1^{(l)}(r)|=|-\mathbf{g}^{(l)}(r)|\leq C_l,&&\textrm{if}~r\in[\mathbf{N},\mathbf{N}+1],\quad l=2,~3,~4,\cdots.
\end{aligned}
\right.
\end{equation}
In particular, for any $r\geq0$ and integers $m_1\geq1$, $m_2\geq0$, it follows from \eqref{con2}-\eqref{connew} that
\begin{equation} \begin{split}\label{connew1}
|\mathbf{g}_1(r)|\leq 2r,\quad |\mathbf{g}_1^{(m_1)}(r)r^{m_2}|\leq C_\mathbf{N}.
\end{split} \end{equation}

\begin{assumption} \label{as2}
Suppose that the initial data $(n_0,c_0,u_0)$ and the potential $\phi$ satisfy
\begin{equation*}
\begin{split}
&n_0 \in L^1(\mathbb{R}^3)\cap H^1(\mathbb{R}^3),~n_0>0;~c_0 \in L^1(\mathbb{R}^3)\cap H^2(\mathbb{R}^3),~|\nabla \sqrt{c_0}|\in L^2(\mathbb{R}^3),~c_0>0;\\
&u_0 \in \mathbb{H}^1;~\nabla\phi\in L^{\infty}(\mathbb{R}^3).
 \end{split}
\end{equation*}
\end{assumption}

\begin{assumption} \label{as3}
Concerning the stochastic term, we make the following assumptions:
\begin{itemize}
\item [1)] For all $s\in \mathbb{R}$, there is a constant $C>0$ such that
\begin{equation*}
\begin{split}
  \|G(u)\|_{\mathcal{L}_2(\mathbf{U};\mathbb{H}^s)}^2\leq C (1+\|u\|_{\mathbb{H}^s}^2),\quad\forall u \in \mathbb{H}^s.
 \end{split}
\end{equation*}
\item [2)] For each $M>0$, there is a constant $C_M>0$ such that
\begin{equation*}
\begin{split}
&\|G(u_1)-G(u_2)\|_{\mathcal{L}_2(\mathbf{U};\mathbb{H}^s)}\leq C_M  \|u_1-u_2\|_{\mathbb{H}^s} ,\\
\end{split}
\end{equation*}
for all $u_1,u_2 \in B_M:=\{u \in \mathbb{H}^s;~\|u\|_{\mathbb{H}^s}\leq M\}$, where $\mathcal{L}_2(\mathbf{U};\mathbb{H}^s)$ consists of all Hilbert-Schmidt operators from $\mathbf{U}$ to $\mathbb{H}^s$ equipped with the norm
$
\|G(u)\|_{\mathcal{L}_2(\mathbf{U};\mathbb{H}^s)}^2=\sum_{j\geq 1}\|G_j(u)\|_{\mathbb{H}^s}^2$,   $G_j(u)= G(u)e_j$.
\end{itemize}
\end{assumption}

Our main result can now be stated by the following theorem.

\begin{theorem}\label{th1}
Let $(\Omega,\mathcal {F},\{\mathcal {F}_t\}_{t>0},\mathbb{P})$ be the fixed stochastic basis as before, and suppose that the Assumptions \ref{as1}-\ref{as3} hold. Then, system \eqref{CNS} has a unique global strong solution $(n,c,u)$ in the following sense:
\begin{itemize}
\item [a)] For every $T>0$, there hold $\mathbb{P}$-a.s.
\begin{equation*}
\begin{split}
&n \in  \mathcal{C}([0,T];H^1(\mathbb{R}^3))\cap L^2(0,T;H^2(\mathbb{R}^3)),\\
&c \in \mathcal{C}([0,T];H^2(\mathbb{R}^3))\cap L^2(0,T;H^3(\mathbb{R}^3)),\\
&u \in  \mathcal{C}([0,T]; \mathbb{H}^1)\cap L^2(0,T;\mathbb{H}^2).
 \end{split}
\end{equation*}

\item [b)] For any $t \in (0,T]$, the integral identities
\begin{equation*}
\begin{split}
(n(t),\varphi)_{L^2}&=(n_0,\varphi)_{L^2}+\int_0^t(\Delta n(r)-u(r)\cdot\nabla n(r)-\nabla\cdot (n(r)\nabla c(r))+L(n(r)),\varphi)_{L^2}\mathrm{d}r,\\
(c(t),\varphi)_{L^2}&=(c_0,\varphi)_{L^2}+\int_0^t(\Delta c(r)-u(r)\cdot\nabla c(r)-n(r)c(r),\varphi)_{L^2}\mathrm{d}r,\\
(u(t),\psi)_{L^2}&=(u_0,\psi)_{L^2}+\int_0^t(\Delta u(r)-(u(r)\cdot\nabla)u(r)-\mathbf{g}(|u(r)|^2)u(r)+n(r) \nabla\phi,\psi)_{L^2}\mathrm{d}r\\
&\quad+\int_0^t (G(u(r))\mathrm{d}W(r),\psi)_{L^2}
 \end{split}
\end{equation*}
hold $\mathbb{P}$-a.s., for every $\varphi\in L^2(\mathbb{R}^3)$ and $\psi\in \mathbb{H}^0$.
\end{itemize}
\end{theorem}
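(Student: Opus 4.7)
The plan is to prove Theorem \ref{th1} by combining a triple approximation scheme with the stochastic compactness method, and then upgrade the resulting martingale solution to a pathwise strong solution via a Yamada--Watanabe-type argument based on pathwise uniqueness. First I would regularize \eqref{CNS} at three levels simultaneously: (i) a Friedrichs mollifier $J_\varepsilon$ applied to the nonlinear transport and coupling terms to gain smoothness in space, (ii) a frequency truncation operator $P_R$ (cut-off on $|\xi|\le R$ in Fourier space) that restricts the evolution to functions with compactly supported frequencies, and (iii) a smooth cut-off $\chi_M(\|\cdot\|_{\mathbb{H}^1})$ on the $\mathbb{H}^1$-norm of $u$ which keeps the nonlinearities globally Lipschitz. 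For each fixed triple $(\varepsilon,R,M)$ the resulting system becomes a stochastic ODE-type evolution in a Hilbert space, whose global-in-time well-posedness can be obtained by a standard fixed-point argument, producing smooth approximate solutions $(n^\varepsilon,c^\varepsilon,u^\varepsilon)$ adapted to $\{\mathcal F_t\}$.

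The heart of the proof is to derive bounds on these approximations that are uniform in $(\varepsilon,R,M)$, culminating in a stochastic version of the entropy--energy inequality. I would test the $n$-equation against $1+\log n$ and the $c$-equation against $-\Delta c/c$, and apply the It\^o formula to the energy functional $\tfrac12\|u\|_{L^2}^2$ to generate the cross terms $\int n u\cdot\nabla\phi\,\mathrm dr$ that must cancel with the chemotactic coupling. The cubic logistic source $L(n)=n(1-n)(n-a)$ is exploited to absorb the super-linear $n^2\log n$ contributions, while the taming term $\mathbf g(|u|^2)u$ produces an a priori $L^\infty_t L^4_x$-control on $u$ through \eqref{con1}--\eqref{connew1}, compensating for the failure of the usual $H^1$-estimate in 3D. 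Combining these with Gronwall in its stochastic (Burkholder--Davis--Gundy augmented) form yields
\begin{equation*}
\mathbb E\Big[\sup_{t\le T}\Big(\|n^\varepsilon\log n^\varepsilon\|_{L^1}+\||\nabla\sqrt{c^\varepsilon}|\|_{L^2}^2+\|u^\varepsilon\|_{L^2}^2\Big)+\int_0^T(\text{dissipation})\,\mathrm dt\Big]\le C(T,\text{data}),
\end{equation*}
from which higher-order estimates for $(\|n^\varepsilon\|_{H^1},\|c^\varepsilon\|_{H^2},\|u^\varepsilon\|_{\mathbb H^1})$ can be bootstrapped, once the cut-off $\chi_M$ is removed through a stopping-time argument reminiscent of \cite{11brzezniak2013existence}.

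Next I would establish tightness of the laws of $(n^\varepsilon,c^\varepsilon,u^\varepsilon,W)$ on the product space $L^2(0,T;H^1_{loc})\times L^2(0,T;H^2_{loc})\times(\mathcal C([0,T];\mathbb H^0_w)\cap L^2_w(0,T;\mathbb H^1))\times\mathcal C([0,T];\mathbf U_0)$, using an Aldous-type criterion for the time-regularity of $u^\varepsilon$ coming from the It\^o decomposition together with uniform fractional Sobolev estimates in time for $(n^\varepsilon,c^\varepsilon)$. A Jakubowski--Skorokhod representation then produces, on a new probability space, a limit $(\widetilde n,\widetilde c,\widetilde u,\widetilde W)$ with the same laws that converges almost surely in the above topologies. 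Identifying the nonlinear limits $\widetilde u\cdot\nabla\widetilde n$, $\nabla\cdot(\widetilde n\nabla\widetilde c)$, $(\widetilde u\cdot\nabla)\widetilde u$, $\mathbf g(|\widetilde u|^2)\widetilde u$ and the stochastic integral $G(\widetilde u)\mathrm d\widetilde W$ proceeds via the local strong convergence on each $\mathcal O_d$ guaranteed by the topology $\mathcal T_3$, combined with the uniform higher-order bounds and the Lipschitz hypothesis on $G$ from Assumption \ref{as3}. This yields a global martingale solution with the regularity listed in a).

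The main obstacle, and what I expect to require the most care, is the passage to strong (probabilistically) solutions together with uniqueness. Because the domain is $\mathbb R^3$, the compactness arguments that work in a bounded domain (as in \cite{zhang2022global}) do not directly transfer, which is why the local topology $\mathcal T_3$ and the weak topology $\mathcal T_2$ are used above; consequently one cannot immediately identify the nonlinear terms globally in space without invoking the uniform $L^2$-mass conservation estimates together with $\nabla\phi\in L^\infty$. For pathwise uniqueness I would consider the difference $(\delta n,\delta c,\delta u)$ of two solutions, apply It\^o's formula in $L^2\times H^1\times \mathbb H^0$, and control the troublesome term $\nabla\cdot(\delta n\nabla c_1)+\nabla\cdot(n_2\nabla\delta c)$ by Ladyzhenskaya-type inequalities in 3D; here the cubic damping in $L(n)$ and the taming function $\mathbf g$ provide the decisive coercive contributions $-\int(\delta n)^2 n_*\,\mathrm dx$ and $-\int\mathbf g'(\cdot)|\delta u|^2\,\mathrm dx$ needed to close a Gronwall inequality with a stochastic exponential factor. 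Once pathwise uniqueness is in hand, the Gy\"ongy--Krylov characterization upgrades convergence in probability of the approximation scheme and yields the desired pathwise strong solution on the original basis $(\Omega,\mathcal F,\{\mathcal F_t\},\mathbb P)$.
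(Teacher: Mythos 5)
Your overall architecture coincides with the paper's: a triple regularization (mollifier, frequency truncation, cut-off), global solvability of the fully truncated system, a stochastic entropy--energy inequality built from $(n+1)\log(n+1)$, $\|\nabla\sqrt{c}\|_{L^2}^2$ and $\|u\|_{L^2}^2$, higher-order estimates, tightness plus Jakubowski--Skorokhod, identification of the limit, and finally pathwise uniqueness upgraded to a pathwise solution via Gy\"ongy--Krylov/Yamada--Watanabe. Two smaller deviations are harmless or cosmetic: the paper cuts off on $W^{1,\infty}$-norms of all three unknowns (not only on $\|u\|_{\mathbb{H}^1}$), which is what actually makes the truncated drift locally Lipschitz in $\mathbf{H}^s$ including the cubic term $n^3$ and the cross-diffusion; and the taming term yields $L^4_tL^4_x$ control of $u$ through the dissipation, not $L^\infty_tL^4_x$.

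There is, however, one step in your plan that would fail if carried out as written: the claim that the higher-order estimates for $(\|n^\epsilon\|_{H^1},\|c^\epsilon\|_{H^2})$ can be ``bootstrapped'' from the entropy--energy inequality by a ``stochastic Gronwall'' into bounds uniform in $\epsilon$ that feed directly into a tightness criterion. The Gronwall factor arising in the $H^1$-estimate for $n^\epsilon$ (and in the $H^2$-estimate for $c^\epsilon$) is of the form $\exp\bigl(C\int_0^T\|u^\epsilon(t)\|_{H^2}^2\,\mathrm{d}t\bigr)$, and no moment of this exponential is controlled by the available estimate $\mathbb{E}\bigl(\int_0^T\|u^\epsilon\|_{H^2}^2\,\mathrm{d}t\bigr)^p\lesssim 1$; taking expectations therefore does not close. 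The paper circumvents this precisely at Lemma \ref{lem4} and Corollary \ref{1cor}: it proves only \emph{pathwise} bounds in which $\|n^\epsilon\|_{L^\infty_tH^1\cap L^2_tH^2}$ and $\|c^\epsilon\|_{L^\infty_tH^2\cap L^2_tH^3}$ are dominated by iterated exponentials of $\sup_t\|u^\epsilon\|_{H^1}^2+\int_0^T\|u^\epsilon\|_{H^2}^2\,\mathrm{d}t$, and then in Proposition \ref{3pro} it derives tightness by applying Chebyshev to the $u$-functional sitting inside the iterated logarithm, choosing the radius of the compact set as an iterated exponential of $1/h$. Your sketch needs either this device or an equivalent substitute (e.g.\ a stochastic Gronwall lemma with stopping times applied directly to the tightness estimate); without it, the uniform bounds you announce for $n$ and $c$ are not obtainable, and the compactness step has no input. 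The remainder of your plan (identification of limits on the local spaces $\mathcal{O}_d$, the coercivity $-\int(n_1^2+n_1n_2+n_2^2)(\delta n)^2\,\mathrm{d}x$ from the cubic source in the uniqueness proof, and the stopping-time Gronwall closing the difference estimate) is consistent with what the paper does, modulo the fact that the paper runs the uniqueness estimate at the level $H^1\times H^2\times\mathbb{H}^1$ rather than $L^2\times H^1\times\mathbb{H}^0$.
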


\begin{remark}
A prototypical example satisfying the Assumption \ref{as3} takes the form of $B_0 (u) \mathrm{d} W(t)$, where $B_0$ is a \emph{linear} bounded operator from $\mathbb{H}^s$ to $\mathcal{L}_2(\mathbf{U};\mathbb{H}^s)$, i.e., $B_0\in \mathcal {L}(\mathbb{H}^s; \mathcal{L}_2(\mathbf{U};\mathbb{H}^s))$. Since $\{e_j\}_{j\geq1}$ is a complete orthogonal basis in $\mathbf{U}$, one can define the linear operators $\{B_j\}_{j\geq 1}$ by $B_j u:=B_0(u) e_j$, $j\geq 1$. Then, we have $
B_0 (u) h= \sum_{j=1}^\infty (h,e_j)_\mathbf{U} B_j u$,  $\forall h \in \mathbf{U}$. In this case, the stochastic term $
B_0 (u) \mathrm{d} W(t) $ can be written as $\sum_{j\geq 1} B_j u(t) \mathrm{d} W_j(t)$.
\end{remark}

\begin{remark}
To the best of our knowledge, Theorem \ref{th1} seems to be the first result addressing the global solvability of the SCNS system in the whole space $\mathbb{R}^3$.  In comparison with the existing results, Theorem \ref{th1} indicates that, under the appropriate regularizing effect of the logistic source \eqref{gggg} and the tamed term \eqref{con1}, it is possible to establish the existence and uniqueness of global pathwise solutions for the STCNS system \eqref{CNS} in a 3D unbounded domain. This main result improves the deterministic works by Liu and Lorz \cite{liu2011coupled}, Winkler \cite{winkler2012global}, and Xu and Liu \cite{FanXu2024}, and especially extends Theorem 1.2 in \cite{zhang2022global} by guaranteeing the uniqueness of the solution, while also overcoming the additional difficulties arising from the unboundedness of the domain.
\end{remark}

\begin{remark}
Let us briefly expound the main ideas behind the proof of Theorem \ref{th1}. In the first step, inspired by the deterministic work \cite{zhang2014global} and by the stochastic works \cite{zhang2025ejp,zhang2025keller}, we apply the Friedrichs mollifier, frequency truncation operators, and cut-off functions to the original system \eqref{CNS}, obtaining a sufficiently regularized approximation system \eqref{mod-22}. Using the standard theory of SDEs in Hilbert spaces \cite{kallianpur1995stochastic}, we establish the existence of a unique global smooth solution for the approximation system \eqref{mod-22}. Next, we perform refined uniform high-order energy estimates (independent of the frequency truncation operators) for the three equations in system \eqref{mod-22} (cf. Lemma \ref{lem2}) and verify a strong convergence result in the probabilistic sense (cf. Lemma \ref{lem2.3}). This allows us to prove the existence of a unique sufficiently smooth solution for the approximation system containing only the Friedrichs mollifier and cut-off functions (cf. Lemma \ref{lem2.4}). By further exploiting the structural properties of the approximation system \eqref{Mod-1} and employing a stopping time technique, we can eliminate the dependence on the cut-off functions, proving that the approximation system \eqref{Mod-1} admits a unique smooth solution (cf. Lemma \ref{lem2.5}). It is worth noting that the global smooth solution of system \eqref{Mod-1} is sufficient to support subsequent entropy-energy estimates (independent of the Friedrichs mollifier).

In the second step, based on the structural properties of system \eqref{CNS}, we first derive a new stochastic entropy-energy estimate (independent of the Friedrichs mollifier) (cf. Lemma \ref{lem3}). However, this uniform bounded estimate alone is insufficient to guarantee the existence of a global strong solution. To prove the existence of a unique global pathwise strong solution for system \eqref{CNS}, obtaining higher-order uniform bounds seems necessary. However, due to the influence of noise, the desired uniform bounds cannot be directly obtained through high-order energy estimates. Fortunately, through a refined iterative process, we show that the $\|n^{\epsilon}\|_{L^{\infty}(0,T;H^1)\cap L^2(0,T;H^2)}^2$-term and $\|c^{\epsilon}\|_{L^{\infty}(0,T;H^2)\cap L^2(0,T;H^3)}^2$-term are controlled by exponential functions of $\|u^{\epsilon}\|_{L^{\infty}(0,T;H^1)\cap L^2(0,T;H^2)}^2$, which is sufficient to ensure that the probability laws of the approximating solutions exhibit tightness in the appropriate phase space (cf. Lemma \ref{lem4}).

In the third step, inspired by the work of Mikulevicius and Rozovskii \cite{mikulevicius2005global} and Brze\'{z}niak and Motyl et al. \cite{12brzezniak2017note,11brzezniak2013existence}, we select an appropriate phase space \eqref{work} based on the uniform estimates provided by Lemma \ref{lem4} and Corollary \ref{1cor}. Utilizing compactness and tightness criteria, we verify that the probability measures of the approximate solutions are tight in the phase space \eqref{work} (cf. Proposition \ref{3pro}). Next, by applying the Jakubowski-Skorokhod theorem, one can construct a new probability space $(\bar{\Omega}, \bar{\mathcal{F}}, \bar{\mathbb{P}})$ on which a sequence $(\bar{n}^k,\bar{c}^k,\bar{u}^k,\bar{W}^k)$ is defined. This sequence has the same law as $(n^{\epsilon_k}, c^{\epsilon_k}, u^{\epsilon_k}, W^{\epsilon_k})$ and, more importantly, converges almost surely to an element $(n_*, c_*, u_*, W_*)$ in the aforementioned phase space. Then, through a careful identification of the limit, one can verify that the process $(n_*, c_*, u_*, W_*)$ defined on $(\bar{\Omega}, \bar{\mathcal{F}}, \bar{\mathbb{P}})$ is indeed a global martingale strong solution to system \eqref{CNS}. Finally, by verifying the uniqueness of the pathwise strong solution, one can invoke the Yamada-Watanabe theorem to demonstrate the existence of a unique global pathwise strong solution for system \eqref{CNS}.

\end{remark}

\subsection{Structure of the paper}
In Section \ref{sec2}, we establish the existence and uniqueness of a global smooth solution to the regularized problem \eqref{Mod-1}. In Section \ref{sec3-1}, we derive uniform bounds of low regularity via stochastic entropy-energy estimates. Section \ref{sec3-2} is devoted to improving the space-time regularity of the solutions through further energy estimates. In Section \ref{sec3.3}, we introduce compactness criteria and prove the tightness of the laws induced by the approximate solutions in the designated phase space. Finally, the proof of Theorem \ref{th1} is given in Section \ref{sec3.4}.

\section{Solvability of the modified system}\label{sec2}
This section is devoted to constructing an approximate system for \eqref{CNS}. Let $\rho^\epsilon$ be a standard Friedrichs mollifier (cf. \cite{majda2002vorticity}). We then consider the following regularized system:
\begin{equation}\label{Mod-1}
\left\{
\begin{aligned}
&\mathrm{d}n^\epsilon +u^\epsilon\cdot \nabla n^\epsilon\mathrm{d}t  = \Delta n ^\epsilon\mathrm{d}t -  \nabla\cdot \left(n^\epsilon\nabla (c^\epsilon*\rho^\epsilon)\right)\mathrm{d}t+L(n^{\epsilon})\mathrm{d}t, \\
&\mathrm{d}c^\epsilon+ u^\epsilon\cdot \nabla c^\epsilon\mathrm{d}t =\Delta c^\epsilon\mathrm{d}t-c^\epsilon(n^\epsilon*\rho^\epsilon)\mathrm{d}t,\\
&\mathrm{d}u^\epsilon+   \textbf{P} (u^\epsilon\cdot \nabla) u^\epsilon\mathrm{d}t =-A u^\epsilon\mathrm{d}t-\textbf{P}[\mathbf{g}(|u^{\epsilon}|^2)u^{\epsilon}]\mathrm{d}t+  \textbf{P}[ (n^\epsilon\nabla \phi)*\rho^\epsilon]\mathrm{d}t+\textbf{P}G(u^\epsilon) \mathrm{d} W(t),\\
&\nabla\cdot u ^\epsilon =0 , \\
&n^\epsilon|_{t=0}=n_0^\epsilon,~c|_{t=0}=c_0^\epsilon,~ u|_{t=0}=u_0^\epsilon,
\end{aligned}
\right.
\end{equation}
 with smooth initial conditions
$$
n_0^\epsilon= n_0*\rho^\epsilon, \quad c_0^\epsilon= c_0*\rho^\epsilon,\quad u_0^\epsilon= u_0*\rho^\epsilon.
$$
Here, $A:=- \textbf{P}  \Delta$ is the Stokes operator, where $ \textbf{P} $ denotes the
Helmholtz projection from $L^2(\mathbb{R}^3;\mathbb{R}^3)$ into $\mathbb{H}^0$ \cite{temam2001navier}. It is well known that the operator $\textbf{P}$ commutes with the Bessel operators. Moreover, the following equality holds:
$$
(\textbf{P}f,g)_{\mathbb{H}^0}=(f,g)_{L^2},\quad f\in L^2(\mathbb{R}^3;\mathbb{R}^3),\quad g\in \mathbb{H}^0.
$$

We shall prove that the regularized system \eqref{Mod-1} has a unique sufficiently smooth solution. To achieve this, we adopt a method that combines frequency truncations (cf. \cite{majda2002vorticity}) with smooth cut-off functions. Let $J_k$ be the frequency truncation operators defined by
$$
\widehat{J_k f} (\xi):= \textbf{1}_{\mathcal {C}_k}(\xi) \widehat{f}(\xi),\quad\mathcal {C}_k:=\{\xi\in \mathbb{R}^3;~ k^{-1}\leq |\xi|\leq k\},\quad k\in \mathbb{N}.
$$
Obviously, the operator $\textbf{P}$ and the truncation operator $J_k$ satisfy the commutation relation. For any $R>0$, choose a smooth cut-off function $\theta_R : [0,\infty)\rightarrow [0,1]$ such that
$\theta_R (x)\equiv 1$ if $0\leq x \leq R$ and $\theta_R (x)\equiv 0$ if $x > 2R$. Then the further modified system is given by
\begin{equation}\label{mod-22}
\left\{
\begin{aligned}
&\mathrm{d}n^{k,\epsilon,R} +\theta_R(\|(u^{k,\epsilon,R},n^{k,\epsilon,R})\|_{W^{1,\infty}})J_k(J_ku^{k,\epsilon,R}\cdot \nabla J_kn^{k,\epsilon,R})\mathrm{d} t=\Delta J_k^2n ^{k,\epsilon,R}\mathrm{d} t \\
&\quad-\theta_R(\|(n^{k,\epsilon,R},c^{k,\epsilon,R})\|_{W^{1,\infty}})J_k(\nabla\cdot(J_kn^{k,\epsilon,R}\nabla (c^{k,\epsilon,R}*\rho^\epsilon)))\mathrm{d} t\\
&\quad+\theta_R(\|n^{k,\epsilon,R}\|_{W^{1,\infty}})[-aJ_k^2n ^{k,\epsilon,R}+(1+a)J_k(J_kn ^{k,\epsilon,R})^2-J_k(J_kn ^{k,\epsilon,R})^3]\mathrm{d} t, \\
&\mathrm{d}c^{k,\epsilon,R}+ \theta_R(\|(u^{k,\epsilon,R},n^{k,\epsilon,R})\|_{W^{1,\infty}})J_k(J_ku^{k,\epsilon,R}\cdot \nabla J_kc^{k,\epsilon,R})\mathrm{d}t=\Delta J_k^2c^{k,\epsilon,R}\mathrm{d}t\\
&\quad-\theta_R(\|(n^{k,\epsilon,R},c^{k,\epsilon,R})\|_{W^{1,\infty}})J_k(J_kc^{k,\epsilon,R}(n^{k,\epsilon,R}*\rho^{\epsilon}))\mathrm{d}t,\\
&\mathrm{d}u^{k,\epsilon,R}+\theta_R(\|u^{k,\epsilon,R}\|_{W^{1,\infty}}) J_k\textbf{P}[ (J_ku^{k,\epsilon,R}\cdot \nabla) J_ku^{k,\epsilon,R}]\mathrm{d}t=-A J_k^2u^{k,\epsilon,R}\mathrm{d}t\\
&\quad-\theta_R(\|u^{k,\epsilon,R}\|_{W^{1,\infty}})J_k\textbf{P}[\mathbf{g}(|J_ku^{k,\epsilon,R}|^2)
J_ku^{k,\epsilon,R}]\mathrm{d}t\\
&\quad+\theta_R(\|n^{k,\epsilon,R}\|_{W^{1,\infty}})J_k\textbf{P}[ (n^{k,\epsilon,R}\nabla \phi)*\rho^\epsilon]\mathrm{d}t+\theta_R(\|u^{k,\epsilon,R}\|_{W^{1,\infty}})\textbf{P}G(u^{k,\epsilon,R}) \mathrm{d} W(t),\\
&\nabla\cdot u ^{k,\epsilon,R} =0 , \\
&n^{k,\epsilon,R}|_{t=0}=n_0^\epsilon,~c^{k,\epsilon,R}|_{t=0}=c_0^\epsilon,~ u^{k,\epsilon,R}|_{t=0}=u_0^\epsilon.
\end{aligned}
\right.
\end{equation}
Notice that the modified  system \eqref{mod-22} can be formulated as an infinite-dimensional SDE in the Hilbert space $\textbf{H}^s(\mathbb{R}^3):= H^s(\mathbb{R}^3)\times H^s(\mathbb{R}^3) \times \mathbb{H}^s $:
\begin{equation}\label{Mod-22}
\left\{
\begin{aligned}
&\mathrm{d}\textbf{y}^{k,\epsilon,R}= \textbf{F} ^{k,\epsilon,R} (\textbf{y}^{k,\epsilon,R})\mathrm{d}t+\textbf{L}^{R}(\textbf{y}^{k,\epsilon,R})\mathrm{d}\mathbf{W}(t),\\
& \textbf{y}^{k,\epsilon,R}(0) =\textbf{y}^{k,\epsilon,R}_0= (n_0^\epsilon,c_0^\epsilon,u_0^\epsilon)^{\top},
\end{aligned}
\right.
\end{equation}
where $\textbf{y}^{k,\epsilon,R}:=(n^{k,\epsilon,R} , c^{k,\epsilon,R},
u^{k,\epsilon,R})^{\top}$, $\textbf{F}^{k,\epsilon,R}(\cdot):=(\textbf{F}_1^{k,\epsilon,R} (\cdot),\textbf{F}_2^{k,\epsilon,R} (\cdot),\textbf{F}_3^{k,\epsilon,R}(\cdot))^{\top}$, and the functionals $\textbf{F}_i^{k,\epsilon,R}(\cdot)$ are given by
\begin{equation*}
\begin{split}
 \textbf{F}_1^{k,\epsilon,R}(\textbf{y}^{k,\epsilon,R} ) &= \Delta J_k^2n ^{k,\epsilon,R}-\theta_R(\|(n^{k,\epsilon,R},c^{k,\epsilon,R})\|_{W^{1,\infty}})J_k(\nabla\cdot(J_kn^{k,\epsilon,R}\nabla (c^{k,\epsilon,R}*\rho^\epsilon)))\\
&\quad+\theta_R(\|n^{k,\epsilon,R}\|_{W^{1,\infty}})[-aJ_k^2n ^{k,\epsilon,R}+(1+a)J_k(J_kn ^{k,\epsilon,R})^2-J_k(J_kn ^{k,\epsilon,R})^3]\\
&\quad-\theta_R(\|(u^{k,\epsilon,R},n^{k,\epsilon,R})\|_{W^{1,\infty}})J_k(J_ku^{k,\epsilon,R}\cdot \nabla J_kn^{k,\epsilon,R}),\\
\textbf{F}_2^{k,\epsilon,R}(\textbf{y}^{k,\epsilon,R} )&=\Delta J_k^2c^{k,\epsilon,R}-\theta_R(\|(n^{k,\epsilon,R},c^{k,\epsilon,R})\|_{W^{1,\infty}})J_k(J_kc^{k,\epsilon,R}(n^{k,\epsilon,R}*\rho^{\epsilon}))\\
&\quad-\theta_R(\|(u^{k,\epsilon,R},n^{k,\epsilon,R})\|_{W^{1,\infty}})J_k(J_ku^{k,\epsilon,R}\cdot \nabla J_kc^{k,\epsilon,R}),\\
\textbf{F}_3^{k,\epsilon,R}(\textbf{y}^{k,\epsilon,R} )&=-A J_k^2u^{k,\epsilon,R}-\theta_R(\|u^{k,\epsilon,R}\|_{W^{1,\infty}})J_k\textbf{P}[\mathbf{g}(|J_ku^{k,\epsilon,R}|^2)J_ku^{k,\epsilon,R}]\\
&\quad+\theta_R(\|n^{k,\epsilon,R}\|_{W^{1,\infty}})J_k\textbf{P}[ (n^{k,\epsilon,R}\nabla \phi)*\rho^\epsilon]-\theta_R(\|u^{k,\epsilon,R}\|_{W^{1,\infty}}) J_k\textbf{P} [(J_ku^{k,\epsilon,R}\cdot \nabla) J_ku^{k,\epsilon,R}].
\end{split}
\end{equation*}
The stochastic term in \eqref{Mod-22} is represented  by
$$
\textbf{L}^{ R}(\textbf{y}^{k,\epsilon,R}) \mathrm{d}\mathbf{W}=\begin{pmatrix}
0 & 0 & 0\\
  0& 0 & 0\\
0 & 0&  \theta_R\left(\|u^{k,\epsilon,R}\|_{W^{1,\infty}}\right)    \textbf{P}  G(u^{k,\epsilon,R})
\end{pmatrix}\begin{pmatrix}
0 \\
  0 \\
\mathrm{d}W
\end{pmatrix}.
$$

\begin{lemma}\label{lem1} Suppose that $s\geq10$, $\epsilon>0$, $R>1$, $k\in \mathbb{N}$, and the Assumptions \ref{as1}-\ref{as3} hold. Then, for any $T>0$, the approximate system \eqref{Mod-22} has a unique solution $(n^{k,\epsilon,R},c^{k,\epsilon,R},u^{k,\epsilon,R})$ in $\mathcal {C}([0,T];\textbf{H}^s(\mathbb{R}^3))$, $\mathbb{P}$-a.s.
\end{lemma}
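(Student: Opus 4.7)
The plan is to recast \eqref{Mod-22} as an abstract stochastic differential equation in the separable Hilbert space $\mathbf{H}^s(\mathbb{R}^3)$ and then invoke the classical existence and uniqueness theorem for SDEs with globally Lipschitz coefficients of at most linear growth (for instance Theorem 3.1.1 of \cite{kallianpur1995stochastic}). The essential structural feature to exploit is that \emph{every} nonlinearity in $\mathbf{F}^{k,\epsilon,R}$ and $\mathbf{L}^R$ is regularized either by the frequency truncation $J_k$, by the Friedrichs mollifier $\rho^\epsilon$, or by the cut-off factor $\theta_R(\|\cdot\|_{W^{1,\infty}})$. Because $s\geq 10>5/2$, the Sobolev embedding $H^s\hookrightarrow W^{1,\infty}$ holds, so the argument of $\theta_R$ depends continuously on $\mathbf{y}^{k,\epsilon,R}\in\mathbf{H}^s$, and the scalar factor $\theta_R(\|\mathbf{y}\|_{W^{1,\infty}})$ itself is bounded and Lipschitz on $\mathbf{H}^s$ via the mean value theorem.

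The first step is to collect the elementary mapping properties that will be used repeatedly: (i) $J_k$ is a bounded self-adjoint operator from $H^{s'}$ into $H^{s''}$ for every $s',s''\in\mathbb{R}$, with operator norm controlled by a $k$-dependent constant, since $\widehat{J_k f}$ is supported in the fixed annulus $\{k^{-1}\leq|\xi|\leq k\}$; (ii) convolution with $\rho^\epsilon$ is continuous $H^{s'}\to H^{s''}$ with an $\epsilon$-dependent bound; (iii) $H^s$ is a Banach algebra and $\|\cdot\|_{W^{1,\infty}}\lesssim\|\cdot\|_{H^s}$; (iv) $\mathbf{g}$, $\mathbf{g}'$, $\mathbf{g}''$ are controlled via \eqref{con1}-\eqref{con2}; (v) $\nabla\phi\in L^\infty$ by Assumption \ref{as2}; and (vi) $G$ is locally Lipschitz with linear growth on $\mathbb{H}^s$ by Assumption \ref{as3}.

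The second step is to verify, term by term, that $\mathbf{F}^{k,\epsilon,R}:\mathbf{H}^s\to\mathbf{H}^s$ is globally Lipschitz. For the convective and cross-diffusion contributions, e.g.\ $\theta_R(\cdots)J_k(J_ku\cdot\nabla J_kn)$ and $\theta_R(\cdots)J_k\nabla\cdot(J_kn\,\nabla(c\ast\rho^\epsilon))$, one telescopes the difference, applies the algebra property of $H^s$, and absorbs the extra derivative using (i)-(ii). The logistic polynomial $-aJ_k^2n+(1+a)J_k(J_kn)^2-J_k(J_kn)^3$ is of degree three in $n$ and, once multiplied by $\theta_R(\|n\|_{W^{1,\infty}})$, becomes globally Lipschitz on $H^s$. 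The tamed drift $\theta_R(\cdots)J_k\mathbf{P}[\mathbf{g}(|J_ku|^2)J_ku]$ is handled by using the uniform bounds on $\mathbf{g}$ and $\mathbf{g}'$ from \eqref{con2} together with the chain rule in Sobolev spaces; the Stokes part $-AJ_k^2u$ is a bounded linear operator on $\mathbb{H}^s$ thanks to the frequency localization of $J_k^2u$; and the forcing $\theta_R(\cdots)J_k\mathbf{P}[(n\nabla\phi)\ast\rho^\epsilon]$ is linear in $n$ (up to the cut-off) with the $L^\infty$-bound on $\nabla\phi$. An analogous verification for $\mathbf{L}^R$ uses Assumption \ref{as3}(2) together with the Lipschitzness of $\theta_R$ to conclude that $\mathbf{L}^R:\mathbf{H}^s\to\mathcal{L}_2(\mathbf{U};\mathbf{H}^s)$ is globally Lipschitz with linear growth.

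The final step is to apply the cited SDE existence theorem to produce a unique $\mathcal{F}_t$-adapted continuous $\mathbf{H}^s$-valued process $\mathbf{y}^{k,\epsilon,R}$ on $[0,T]$, $\mathbb{P}$-a.s.; pathwise uniqueness then follows from a standard Gr\"onwall argument applied to $\mathbb{E}\sup_{r\leq t}\|\mathbf{y}_1(r)-\mathbf{y}_2(r)\|_{\mathbf{H}^s}^2$. The main obstacle I anticipate is the bookkeeping for the tamed nonlinearity: the effective cubic growth of $\mathbf{g}(|u|^2)u$ means that the unregularized $H^s$-Lipschitz constant on a ball of radius $R$ scales like $R^2$, so one must carefully exploit the support property of $\theta_R$ (on $\{\|u\|_{W^{1,\infty}}\leq 2R\}$ the pointwise bounds \eqref{con2} translate, via the algebra property, into $H^s$-bounds) to recover a genuinely global Lipschitz estimate. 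All other nonlinearities are essentially bilinear or polynomial and are substantially easier.
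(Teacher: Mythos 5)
Your overall architecture (recast \eqref{Mod-22} as an abstract SDE in $\mathbf{H}^s$, verify regularity of the coefficients, invoke Hilbert-space SDE theory) is exactly the paper's route, but there is a genuine gap in the regularity you claim for the drift: $\mathbf{F}^{k,\epsilon,R}$ is \emph{not} globally Lipschitz on $\mathbf{H}^s$, and the step where you assert that the cut-off lets you ``recover a genuinely global Lipschitz estimate'' fails. The cut-off $\theta_R(\|\cdot\|_{W^{1,\infty}})$ only tames the $W^{1,\infty}$ norms of the arguments, whereas the Moser product estimate in $H^s$ unavoidably produces a factor of the full $H^s$ norm. Concretely, for the convective term, take $u_1=u_2=u$ with $\|u\|_{W^{1,\infty}}$ small (so $\theta_R\equiv 1$) but $\|u\|_{\mathbb{H}^s}$ arbitrarily large; then
\begin{equation*}
\|J_k(J_ku\cdot\nabla J_k(n_1-n_2))\|_{H^s}\lesssim_k \|u\|_{L^\infty}\|n_1-n_2\|_{H^s}+\|u\|_{H^s}\|n_1-n_2\|_{W^{1,\infty}},
\end{equation*}
and the second summand carries the uncontrolled factor $\|u\|_{H^s}$. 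The same issue appears in the term $[\theta_R(\cdots_1)-\theta_R(\cdots_2)]\,J_k(J_ku_1\cdot\nabla J_kn_1)$, whose size is proportional to $\|u_1\|_{H^s}+\|n_1\|_{H^s}$ even on the support of the cut-off. So the Lipschitz constant genuinely depends on the $\mathbf{H}^s$-radius $M$ of the ball, which is precisely why the paper states only a \emph{local} Lipschitz bound $\lesssim_{a,\phi,M,k,\epsilon,R}$ together with the separate linear growth estimate $\|\mathbf{F}^{k,\epsilon,R}(\mathbf{y})\|_{\mathbf{H}^s}\lesssim_k 1+\|\mathbf{y}\|_{\mathbf{H}^s}$ (the latter \emph{is} where the cut-off does its work, since on $\{\theta_R\neq 0\}$ the $L^\infty$-type factors in the Moser estimate are bounded by $CR$).

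The conclusion of the lemma still holds, but you must replace your citation of the globally-Lipschitz existence theorem by the version for locally Lipschitz coefficients with linear growth: one obtains a local solution up to a stopping time and then rules out explosion via a Gr\"onwall argument on the linear growth bound (the paper also invokes the cancellation property of the truncated transport terms here). Likewise your ``standard Gr\"onwall argument'' for pathwise uniqueness must be localized by stopping times on which both solutions stay in a fixed $\mathbf{H}^s$-ball, since the Lipschitz constant is only local. With these repairs your proposal coincides with the paper's proof.
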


\begin{proof}[\emph{\textbf{Proof}}]
Since supp $\widehat{J_k f} \subset\{\xi\in \mathbb{R}^3;~|\xi |\leq k\}$ for $k\geq1$, it is not hard to derive from the Bernstein inequality \cite{bahouri2011fourier} and the Moser-type estimate \cite{miao2012littlewood} that $\textbf{F}^{k,\epsilon,R}(\cdot):\textbf{H}^s(\mathbb{R}^3)\mapsto \textbf{H}^s(\mathbb{R}^3) $ and $\textbf{L}^{R}(\cdot): \textbf{H}^s(\mathbb{R}^3)\mapsto \mathcal{L}_2(\mathbf{U};\textbf{H}^s(\mathbb{R}^3))$ are locally Lipschitz continuous functionals, that is for a given $M>0$
\begin{equation*}
\begin{split}
\|\textbf{F}^{k,\epsilon,R}(\textbf{y}_1^{k,\epsilon,R}) -\textbf{F}^{k,\epsilon,R}(\textbf{y}_2^{k,\epsilon,R}) \|_{\textbf{H}^s} &\lesssim_{a,\phi,M,k,\epsilon,R}  \|\textbf{y}_1^{k,\epsilon,R}-\textbf{y}_2^{k,\epsilon,R}\|_{\textbf{H}^s},\\
\|\textbf{L}^{R}(\textbf{y}_1^{k,\epsilon,R})-\textbf{L}^{R}(\textbf{y}_2^{k,\epsilon,R}) \|_{\mathcal{L}_2(\mathbf{U};\textbf{H}^s)} &\lesssim_{M,R}  \|\textbf{y}_1^{k,\epsilon,R}-\textbf{y}_2^{k,\epsilon,R}\|_{\textbf{H}^s},
\end{split}
\end{equation*}
where $\|\textbf{y}_1^{k,\epsilon,R}\|_{\textbf{H}^s}\leq M$ and $\|\textbf{y}_2^{k,\epsilon,R}\|_{\textbf{H}^s}\leq M$. Moreover, there holds
\begin{equation*}
\begin{split}
 \|\textbf{F}^{k,\epsilon,R}(\textbf{y}^{k,\epsilon,R}) \|_{\textbf{H}^s} \lesssim_k  1+\|\textbf{y}^{k,\epsilon,R}\|_{\textbf{H}^s} ,\quad\|\textbf{L}^{R}(\textbf{y}^{k,\epsilon,R}) \|_{\mathcal{L}_2(\mathbf{U};\textbf{H}^s)} \lesssim 1+\|\textbf{y}^{k,\epsilon,R}\|_{\mathbf{H}^s}.
\end{split}
\end{equation*}
Thus, by using the well-known theory for SDEs in Hilbert spaces (cf. \cite{kallianpur1995stochastic,prevot2007concise}) and the cancelation property (cf. \cite{majda2002vorticity}), the system \eqref{Mod-22} admits a unique global   solution $\textbf{y} ^{k,\epsilon,R}\in\mathcal {C}([0,T];\textbf{H}^s(\mathbb{R}^3))$, $\mathbb{P}$-a.s. The proof of Lemma \ref{lem1} is completed.
\end{proof}

Next, we shall establish the uniform $\textbf{H}^s$-estimates for $\textbf{y}^{k,\epsilon,R}$ (independent of $k$).

\begin{lemma}\label{lem2} Let $s=10$, $\epsilon>0$, $R>1$ and $T>0$. Let $(n^{k,\epsilon,R},c^{k,\epsilon,R},u^{k,\epsilon,R}) \in \mathcal {C}([0,T]; \textbf{H}^{10}(\mathbb{R}^3))$ be the unique solution to \eqref{Mod-22} guaranteed by Lemma \ref{lem1}. Then we have
$$
\sup_{k \in \mathbb{N}}\mathbb{E}\left\|(n^{k,\epsilon,R},c^{k,\epsilon,R},u^{k,\epsilon,R})\right\| _{\mathcal {C}([0,T]; \textbf{H}^{10} )}^p \lesssim_{R,p,\phi,\epsilon,a,n_0,c_0,u_0,T} 1,\quad\textrm{for all}~p\geq2.
$$
Moreover, for any $p>2$ and  $\alpha \in (0,\frac{1}{2}-\frac{1}{p})$, there holds
\begin{equation*}
\begin{split}
\sup_{k \in \mathbb{N}}\left(\mathbb{E}\left\|(n^{k,\epsilon,R}, c^{k,\epsilon,R})\right\|^p_{   \emph{\textrm{Lip}}([0,T]; H^{8}\times H^8 ) }+ \mathbb{E}\left\|u^{k,\epsilon,R}\right\|^p_{ \mathcal {C}^{ \alpha}([0,T];  H^{8} )  }  \right) \lesssim _{R,p,\phi,\epsilon,a,n_0,c_0,u_0,T}1.
\end{split}
\end{equation*}
\end{lemma}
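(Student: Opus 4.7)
\textbf{Proof proposal for Lemma \ref{lem2}.}
The plan is to derive the $\textbf{H}^{10}$-estimate by applying It\^o's formula to $(1+\|\textbf{y}^{k,\epsilon,R}\|_{\textbf{H}^{10}}^2)^{p/2}$, viewing \eqref{Mod-22} as a well-posed SDE in $\textbf{H}^{10}(\mathbb{R}^3)$ via Lemma \ref{lem1}. I would differentiate each equation by $\Lambda^{10}$ and pair with $\Lambda^{10}n^{k,\epsilon,R}$, $\Lambda^{10}c^{k,\epsilon,R}$, $\Lambda^{10}u^{k,\epsilon,R}$ respectively, exploiting that $J_k$ is a self-adjoint Fourier multiplier of operator norm at most one on every $H^s$ and commutes with $\Lambda^s$, $\textbf{P}$, $\nabla$, and $\Delta$. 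The parabolic parts produce the coercive dissipation $-\|\nabla J_k\Lambda^{10}n^{k,\epsilon,R}\|_{L^2}^2$ and its analogues for $c$ and $u$, which will absorb lower-order commutator remainders.

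The key point is to show that all nonlinear contributions are bounded by $C(R,\epsilon,\phi,a)(1+\|\textbf{y}^{k,\epsilon,R}\|_{\textbf{H}^{10}}^2)$ uniformly in $k$. For the transport terms such as $J_k(J_k u\cdot\nabla J_k n)$, I invoke the divergence-freeness of $J_k u^{k,\epsilon,R}$ (inherited from $\textbf{P}$) together with a Kato-Ponce commutator estimate, obtaining a bound by $\|J_k u\|_{W^{1,\infty}}\|J_k n\|_{H^{10}}+\|J_k u\|_{H^{10}}\|\nabla J_k n\|_{L^\infty}$; the $W^{1,\infty}$ and $L^\infty$ factors are dominated either by the $\theta_R$ cut-offs (which enforce $\|u\|_{W^{1,\infty}},\|n\|_{W^{1,\infty}}\leq 2R$ whenever $\theta_R\neq 0$) or by $\|\textbf{y}^{k,\epsilon,R}\|_{\textbf{H}^{10}}$ through the Sobolev embedding $H^{10}\hookrightarrow W^{1,\infty}$. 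The cross-diffusion term $\nabla\cdot(J_k n\nabla(c*\rho^\epsilon))$ is easier, because the mollifier $\rho^\epsilon$ converts any derivative on $c$ into an $\epsilon$-dependent constant. The cubic logistic term, the tamed contribution $\mathbf{g}(|J_ku|^2)J_ku$ (using Assumption \ref{as1} together with \eqref{connew1}), and the potential term $(n\nabla\phi)*\rho^\epsilon$ all admit direct Moser-type bounds. Finally, Assumption \ref{as3}(1) gives $\|\theta_R\textbf{P}G(u)\|_{\mathcal{L}_2(\mathbf{U};\mathbb{H}^{10})}^2\lesssim 1+\|u\|_{\mathbb{H}^{10}}^2$, controlling both the It\^o correction and the quadratic variation of the martingale part.

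Collecting these bounds, a stopping-time localization together with Gronwall's inequality yields $\sup_k\sup_{t\leq T}\mathbb{E}\|\textbf{y}^{k,\epsilon,R}(t)\|_{\textbf{H}^{10}}^p<\infty$, and the Burkholder-Davis-Gundy inequality applied to the martingale term promotes this to an $\mathbb{E}\sup_{t\leq T}$-bound, proving the first assertion. For the time regularity, the drifts of the $n$- and $c$-equations are bounded in $H^8$ by $C(1+\|\textbf{y}^{k,\epsilon,R}\|_{\textbf{H}^{10}})$ (two derivatives are lost to $\Delta J_k^2$), so the fundamental theorem of calculus combined with the first estimate gives the asserted Lipschitz bound. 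For $u^{k,\epsilon,R}$ the drift is treated identically, while BDG delivers
\begin{equation*}
\mathbb{E}\Bigl\|\int_s^t \theta_R\textbf{P}G(u^{k,\epsilon,R})\mathrm{d}W(r)\Bigr\|_{H^8}^p\lesssim |t-s|^{p/2}\Bigl(1+\mathbb{E}\sup_{r\leq T}\|u^{k,\epsilon,R}\|_{H^{10}}^p\Bigr),
\end{equation*}
and Kolmogorov's continuity criterion then produces the $\mathcal{C}^\alpha$-regularity for every $\alpha<1/2-1/p$.

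The principal obstacle is ensuring that every commutator and Moser constant remains $k$-independent despite the sharp frequency cutoff $J_k$. The resolution is two-fold: $J_k$ is a contraction on each $H^s$, so it never enlarges Sobolev norms, and the conjunction of $\theta_R$ with the mollifier $\rho^\epsilon$ converts every otherwise dangerous $W^{m,\infty}$-type quantity into an $(R,\epsilon)$-dependent constant, so that only $k$-independent norms appear on the right-hand side of the resulting differential inequality. At the chosen regularity level $s=10$ there is ample Sobolev room to balance all Kato-Ponce remainders against the parabolic dissipation.
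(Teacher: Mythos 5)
Your proposal is correct and follows essentially the same route as the paper: $\Lambda^{10}$-energy identities (via It\^{o}'s formula for the $u$-equation and the chain rule for $\|\cdot\|_{H^{10}}^p$), Moser-type bounds with the $\theta_R$ cut-offs and $\rho^\epsilon$ controlling the $L^\infty$-type factors, BDG plus Gronwall for the uniform-in-$k$ bound, and then the $H^8$ drift bound with the fundamental theorem of calculus (for $n,c$) and Kolmogorov's criterion (for $u$) for the time regularity. The only cosmetic differences are your use of a Kato--Ponce commutator plus divergence-freeness for the transport term, where the paper instead writes it in divergence form and absorbs it into the dissipation via a Moser estimate, and your choice of $(1+\|\cdot\|^2)^{p/2}$ in place of $\|\cdot\|^p$.
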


\begin{proof}[\emph{\textbf{Proof}}] To simplify the notations, we use the notation $(n,c,u)$ instead of  $(n^{k,\epsilon,R},c^{k,\epsilon,R},u^{k,\epsilon,R})$ in the proof. At first we shall establish the $\|(n,c,u)\|_{\textbf{H}^{10}}^2$-estimate. Applying the Bessel operator $\Lambda^{10}$ to both sides of the first equation of \eqref{mod-22}, and then taking the scalar product with  $\Lambda^{10} n$ over $\mathbb{R}^3$, we have
\begin{equation}\label{lem2-1}
\begin{split}
&\frac{1}{2}\frac{\mathrm{d}}{\mathrm{d}t}\|n\|_{H^{10}}^2+\|\nabla J_kn\|_{H^{10}}^2+a\theta_R(\|n\|_{W^{1,\infty}})\|J_kn\|_{H^{10}}^2\\
&=\theta_R(\|(u,n)\|_{W^{1,\infty}})(\Lambda^{10}(J_ku J_kn),\nabla\Lambda^{10}J_kn)_{L^2}\\
&\quad+\theta_R(\|(n,c)\|_{W^{1,\infty}})(\Lambda^{10}(J_kn\nabla (c*\rho^\epsilon)),\nabla\Lambda^{10}J_kn)_{L^2}\\
&\quad-\theta_R(\|n\|_{W^{1,\infty}})(\Lambda^{10}((J_kn)^3),\Lambda^{10}J_kn)_{L^2}\\
&\quad+(a+1)\theta_R(\|n\|_{W^{1,\infty}})(\Lambda^{10}((J_kn)^2),\Lambda^{10}J_kn)_{L^2}\\
&:=\sum_{j=1}^4A_j.
\end{split}
\end{equation}
According to the Moser estimate, we infer that
\begin{equation}\label{a2-1}
\begin{split}
A_1&\leq C\theta_R(\|(u,n)\|_{W^{1,\infty}})(\|J_kn\|_{L^{\infty}}\|J_ku\|_{H^{10}}+\|J_ku\|_{L^{\infty}}\|J_kn\|_{H^{10}})\|\nabla J_kn\|_{H^{10}}\\
&\leq \frac{1}{4}\|\nabla J_kn\|_{H^{10}}^2+C_{R}(\|J_ku\|_{H^{10}}^2+\|J_kn\|_{H^{10}}^2).
\end{split}
\end{equation}
Similarly, we have
\begin{equation}\label{a2-2}
\begin{split}
A_2&\leq C\theta_R(\|(n,c)\|_{W^{1,\infty}})(\|J_kn\|_{L^{\infty}}\|\nabla (c*\rho^\epsilon)\|_{H^{10}}+\|\nabla (c*\rho^\epsilon)\|_{L^{\infty}}\|J_kn\|_{H^{10}})\|\nabla J_kn\|_{H^{10}}\\
&\leq C\theta_R(\|(n,c)\|_{W^{1,\infty}})(\frac{1}{\epsilon}\|J_kn\|_{L^{\infty}}\|c\|_{H^{10}}+\|\nabla c\|_{L^{\infty}}\|J_kn\|_{H^{10}})\|\nabla J_kn\|_{H^{10}}\\
&\leq \frac{1}{4}\|\nabla J_kn\|_{H^{10}}^2+C_{R,\epsilon}(\|c\|_{H^{10}}^2+\|J_kn\|_{H^{10}}^2),
\end{split}
\end{equation}
and
\begin{equation}\label{a2-3}
\begin{split}
&|A_3|+|A_4|\leq C_a\theta_R(\|n\|_{W^{1,\infty}})(1+\|J_kn\|_{L^{\infty}}^2)\|J_kn\|_{H^{10}}\|J_kn\|_{H^{10}}\leq C_{R,a}\|J_kn\|_{H^{10}}^2.
\end{split}
\end{equation}
Thus, by plugging \eqref{a2-1}, \eqref{a2-2} and \eqref{a2-3} into \eqref{lem2-1}, we infer that
\begin{equation}\label{lem2-2}
\begin{split}
&\frac{\mathrm{d}}{\mathrm{d}t}\|n\|_{H^{10}}^2+\|\nabla J_kn\|_{H^{10}}^2\lesssim_{R,\epsilon,a}\|n\|_{H^{10}}^2+\|c\|_{H^{10}}^2+\|u\|_{H^{10}}^2.
\end{split}
\end{equation}
In a similar manner to \eqref{lem2-2}, one can derive that
\begin{equation}\label{lem2-3}
\begin{split}
&\frac{\mathrm{d}}{\mathrm{d}t}\|c\|_{H^{10}}^2+\|\nabla J_kc\|_{H^{10}}^2\lesssim_{R}\|n\|_{H^{10}}^2+\|c\|_{H^{10}}^2+\|u\|_{H^{10}}^2.
\end{split}
\end{equation}
Define $\Phi(u):= \|\Lambda^{10} u\|_{L^2}^2$. By performing direct calculations, we have
\begin{equation*}
\begin{split}
&\Phi'(u)[f]=2(\Lambda^{10} u,\Lambda^{10} f)_{L^2}, \quad \Phi''(u)[f,g]=2(\Lambda^{10} g,\Lambda^{10} f)_{L^2},\quad \forall u,~f,~g\in \mathbb{H}^{10}.
\end{split}
\end{equation*}
By the above identities, it follows that
\begin{equation}\label{ff0}
\begin{split}
\|\Phi'(u_1)-\Phi'(u_2)\|_{\mathcal{L}(\mathbb{H}^{10};\mathbb{R})}&= \sup_{\|f\|_{\mathbb{H}^{10}}=1}   |\Phi'(u_1)[f]-\Phi'(u_2)[f]|\\
&=2|(\Lambda^{10} (u_1-u_2),\Lambda^{10} f)_{L^2}|\leq 2\|u_1-u_2\|_{\mathbb{H}^{10}},
\end{split}
\end{equation}
which implies the uniformly continuity of $\Phi' $ on bounded subsets of $\mathbb{H}^{10}$. Similar arguments also lead to the uniform continuity of $\Phi''$. Therefore, the conditions in \cite[Theorem 4.32]{da2014stochastic} hold. By applying the It\^{o} formula to $\Phi(u)$, we obtain
\begin{equation}\label{lem2-4}
\begin{split}
& \| \Lambda^{10} u(t)\|_{L^2}^2 +2\int_0^t\|\nabla\Lambda^{10} J_ku(r)\|_{L^2 }^2 \mathrm{d}r\\
&=\| \Lambda^{10} u_0\|_{L^2}^2+ \int_0^t(B_1(r) +B_2(r) +B_3(r)+B_4(r) )  \mathrm{d}r+\sum_{j\geq 1}\int_0^tB_5^j(r)  \mathrm{d}  W_j(r),
\end{split}
\end{equation}
where
\begin{equation*}
\begin{split}
&B_1 =  \theta_R(\|u\|_{W^{1,\infty}}) ^2 \| \Lambda^{10} \textbf{P}  G(u)\|_{\mathcal{L}_2(\mathbf{U};L^2)}^2, \\
&B_2 =-2\theta_R(\|u\|_{W^{1,\infty}}) (\Lambda^{10}(\mathbf{g}(|J_ku|^2)J_ku),\Lambda^{10} J_k u)_{L^2},\\
&B_3 =2\theta_R(\|n\|_{W^{1,\infty}})(\Lambda^{10}((J_kn\nabla \phi)*\rho^{\epsilon}),\Lambda^{10} J_k u)_{L^2},\\
&B_4 =- 2  \theta_R  (\|u\|_{W^{1,\infty}}) ( \Lambda^{10} (J_k u\cdot \nabla J_k u ),\Lambda^{10} J_k u)_{L^2},\\
& B_5 ^j=2   \theta_R(\|u\|_{W^{1,\infty}})(\Lambda^{10} G _j(u),\Lambda^{10} u)_{L^2},\quad j\geq 1.
\end{split}
\end{equation*}
By using the Assumption \ref{as3}, we have
\begin{equation}\label{2.1}
\begin{split}
& |B_1 | \lesssim  1+ \|u\|_{H^{10}}^2 .
\end{split}
\end{equation}
By applying the Paralinearization theorem (cf. \cite[Theorem 2.87]{bahouri2011fourier}) in $H^{10}(\mathbb{R}^3)$ with respect to the smooth function $\mathbf{g}(\cdot)$ and using the Moser estimate as well as the properties \eqref{con1} and \eqref{con2}, we have
\begin{equation}\label{2.2}
\begin{split}
 |B_2| &\lesssim \theta_R(\|u\|_{W^{1,\infty}})\|\mathbf{g}(|J_ku|^2)J_ku\|_{H^{10}}\| J_k u\|_{H^{10}}\\
&\lesssim \theta_R(\|u\|_{W^{1,\infty}})(\|\mathbf{g}(|J_ku|^2)\|_{L^{\infty}}\|J_ku\|_{H^{10}}+\|\mathbf{g}(|J_ku|^2)\|_{H^{10}}\|J_ku\|_{L^{\infty}})\|J_k u\|_{H^{10}}\\
&\lesssim \theta_R(\|u\|_{W^{1,\infty}})(\|J_ku\|_{L^{\infty}}^2\|J_ku\|_{H^{10}}+C_{\|u\|_{L^{\infty}}^2}\||J_ku|^2\|_{H^{10}}\|J_ku\|_{L^{\infty}})\|J_k u\|_{H^{10}}\\
&\lesssim \theta_R(\|u\|_{W^{1,\infty}})C_{\|u\|_{L^{\infty}}^2}\|J_ku\|_{L^{\infty}}^2\|J_k u\|_{H^{10}}^2\\
&\lesssim_R\|J_k u\|_{H^{10}}^2.
\end{split}
\end{equation}
Similarly, by using the Moser estimate, we gain
\begin{equation}\label{2.3}
\begin{split}
& |B_3| \lesssim \theta_R(\|n\|_{W^{1,\infty}})\|(J_kn\nabla \phi)*\rho^{\epsilon}\|_{H^{10}}\| J_k u\|_{H^{10}}\lesssim_{\epsilon,\phi}\|n\|_{H^{10}}^2+\|u\|_{H^{10}}^2.
\end{split}
\end{equation}
By integrating by parts, we have
\begin{equation}\label{2.4}
\begin{split}
B_4&=2  \theta_R  (\|u\|_{W^{1,\infty}}) ( \Lambda^{10} (J_k u\otimes J_k u),\nabla \Lambda^{10} J_k u)_{L^2}\\
&\leq C\theta_R  (\|u\|_{W^{1,\infty}})\|J_k u\otimes J_k u\|_{H^{10}}\|\nabla \Lambda^{10} J_k u\|_{L^2}\\
&\leq\frac{1}{2}\|\nabla \Lambda^{10} J_k u\|_{L^2}^2+C_{R}\|u\|_{H^{10}}^2.
\end{split}
\end{equation}
For the stochastic term involving $B_5 ^j$, we get by using the Burkholder-Davis-Gundy (BDG) inequality (cf. \cite[Theorem 4.36]{da2014stochastic}) that
\begin{equation}\label{2.5}
\begin{split}
&\mathbb{E}\sup_{r\in [0,t]}\left|\sum_{j\geq 1}\int_0^rB_5^j(s)  \mathrm{d}  W_j(s)\right|\\
&\leq C\mathbb{E}\left(\int_0^ t\theta_R\left(\|u(r)\|_{W^{1,\infty}}\right)^2(\sum_{j\geq 1}(\Lambda^{10} G _j(u(r)),\Lambda^{10} u(r))_{L^2}^2) \mathrm{d}r\right)^{1/2} \\
&\leq C\mathbb{E}\left[\sup_{r\in [0,t]}\|\Lambda^{10} u(r)\|_{L^2}\left( \int_0^t \theta_R \left(\|u(r)\|_{W^{1,\infty}}\right)^2 (\sum_{j\geq 1}\|\Lambda^{10} G_j(u(r))\|_{L^2}^2) \mathrm{d}r\right)^{1/2}\right] \\
&\leq \frac{1}{2} \mathbb{E}\sup_{r\in [0,t]}\|\Lambda^{10} u(r)\|_{L^2}^2+ C  \mathbb{E} \int_0^ t \left(1+\|u(r)\|_{H^{10}}^2\right) \mathrm{d}r.
\end{split}
\end{equation}
Therefore, by taking the supremum over  $[0,t]$ in \eqref{lem2-4}, we infer from \eqref{2.1}-\eqref{2.5} that
\begin{equation*}
\begin{split}
 \mathbb{E}\sup_{r\in [0,t]}\| u(r)\|_{H^{10}}^2+ \mathbb{E}\int_0^t\|\nabla  J_ku(r)\|_{H^{10}}^2 \mathrm{d}r \lesssim_{R,\phi,\epsilon} \| u_0\|_{H^{10}}^2+  \mathbb{E} \int _0^t(1 + \|n(r)\|_{H^{10}}^2+ \|u(r)\|_{H^{10}}^2) \mathrm{d}r,
\end{split}
\end{equation*}
which together with \eqref{lem2-2} and \eqref{lem2-3} implies that
\begin{equation*}
\begin{split}
\mathbb{E}\sup_{r\in [0,t]}\|(n,c,u)(r)\|^2_{\textbf{H}^{10}} \lesssim_{R,\phi,a,\epsilon} \|(n_0,c_0,u_0)\|^2_{\textbf{H}^{10}}+t+  \mathbb{E}\int_0^t\|(n,c,u)(r)\|^2_{\textbf{H}^{10}}\mathrm{d}r.
\end{split}
\end{equation*}
By using the Gronwall lemma, we get
$$
\mathbb{E}\sup_{r\in [0,T]}\|(n,c,u)(r)\|^2_{\textbf{H}^{10}} \lesssim_{R,\phi,a,\epsilon,T,n_0,c_0,u_0}1,\quad\forall T > 0.
$$

Next, we shall establish the $\|(n,c,u)\|_{\textbf{H}^{10}}^p$-estimate ($p\geq4$). By applying the chain rule to $ \| n\|^p_{H^{10}}= (\| n\|^2_{H^{10}})^{p/2}$, we have
\begin{equation}\label{22.1}
\begin{split}
&\| n(t)\|^p_{H^{10}}+\int_0^t\big(p\| n(r)\|_{H^{10}}^{p-2}\|\nabla J_kn(r)\|^2_{H^{10}}+ap\theta_R(\|n(r)\|_{W^{1,\infty}})\| n(r)\|_{H^{10}}^{p-2}\|J_kn(r)\|^2_{L^2}\big) \mathrm{d}r\\
&= \| n_0\|^p_{H^{10}} +p\int_0^t\|n(r)\|_{H^{10}}^{p-2} (A_1(r)+A_2(r)+A_3(r)+A_4(r)) \mathrm{d}r,
\end{split}
\end{equation}
where the terms $A_1$-$A_4$ are provided in \eqref{lem2-1}.  By using the inequalities \eqref{a2-1}, \eqref{a2-2} and \eqref{a2-3}, we infer from \eqref{22.1} that
\begin{equation}\label{22.2}
\begin{split}
& \| n(t)\|^p_{H^{10}}+\frac{p}{2}\int_0^t\| n(r)\|_{H^{10}}^{p-2}\|\nabla J_kn(r)\|^2_{H^{10}} \mathrm{d}r\\
&\lesssim_{p,R,\epsilon,a}  \| n_0\|^p_{H^{10}} +  \int_0^t(\| n(r)\|_{H^{10}}^{p} +\| c(r)\|_{H^{10}}^{p}+\| u(r)\|_{H^{10}}^{p})\mathrm{d}r.
\end{split}
\end{equation}
Similarly, it is not hard to derive that
\begin{equation}\label{22.3}
\begin{split}
& \| c(t)\|^p_{H^{10}}+\frac{p}{2}\int_0^t\| c(r)\|_{H^{10}}^{p-2}\|\nabla J_k c(r)\|^2_{H^{10}} \mathrm{d}r\\
&\lesssim_{p,R} \| c_0\|^p_{H^{10}}+\int_0^t(\| n(r)\|_{H^{10}}^{p} +\| c(r)\|_{H^{10}}^{p}+\| u(r)\|_{H^{10}}^{p}) \mathrm{d}r.
\end{split}
\end{equation}
Consider the functional $\Phi_1(u):=\|\Lambda^{10} u\|_{L^2}^{p}=(\|\Lambda^{10} u\|_{L^2}^2)^{\frac{p}{2}}$. For any $u,~f,~g\in \mathbb{H}^{10}$, we have
\begin{equation*}
\begin{split}
\Phi'_1(u)[f]&=p\|\Lambda^{10}u\|_{L^2}^{p-2}(\Lambda^{10} u,\Lambda^{10} f)_{L^2}, \\
\Phi''_1(u)[f,g]&=p(p-2)\|\Lambda^{10}u\|_{L^2}^{p-4}(\Lambda^{10} u,\Lambda^{10} f)_{L^2}(\Lambda^{10} u,\Lambda^{10} g)_{L^2}\\
&\quad+p\|\Lambda^{10}u\|_{L^2}^{p-2}(\Lambda^{10} g,\Lambda^{10} f)_{L^2}^2 .
\end{split}
\end{equation*}
Let $B(M)\subseteq \mathbb{H}^{10}$ be a bounded ball centered at $0$ with radius $M>0$, it follows that
\begin{equation}\label{fff1}
\begin{split}
\|\Phi'_1(u_1)-\Phi'_1(u_2)\|_{\mathcal{L}(\mathbb{H}^{10};\mathbb{R})}&=\sup_{\|f\|_{H^{10}}=1}|\Phi'_1(u_1)[f]-\Phi'_1(u_2)[f]|\\
&\lesssim_p\big|\|u_1\|_{H^{10}}^{p-2}-\|u_2\|_{H^{10}}^{p-2}\big|\|u_1\|_{H^{10}}
+\|u_2\|_{H^{10}}^{p-2}\|u_1-u_2\|_{H^{10}}\\
&\leq C_{p,M}\|u_1-u_2\|_{\mathbb{H}^{10}},\quad \forall u_1,~u_2 \in B(M),
\end{split}
\end{equation}
and similarly
\begin{equation}\label{ffff1}
\begin{split}
&\|\Phi''_1(u_1)-\Phi''_1(u_2)\|_{\mathcal{L}(\mathbb{H}^{10}\times\mathbb{H}^{10};\mathbb{R} )} \leq C_{p,M}\|u_1-u_2\|_{\mathbb{H}^{10}},\quad \forall u_1,~u_2 \in B(M),
\end{split}
\end{equation}
which imply the uniform continuity of mappings $\Phi'_1$ and $\Phi''_1$ on bounded subsets of $\mathbb{H}^{10}$. Due to \eqref{fff1} and \eqref{ffff1}, one can apply  It\^{o}'s formula to $\|\Lambda^{10} u(t)\|^p_{L^2}$ to obtain
\begin{equation}\label{22.4}
\begin{split}
&\|\Lambda^{10} u(t)\|^p_{L^2}+ p\int_0^t\|\Lambda^{10} u(r)\| _{L^2} ^{p-2} \|\nabla \Lambda^{10} J_ku(r)\|_{L^2 }^2\mathrm{d}r\\
&= \|\Lambda^{10} u_0\|^p_{L^2} + p\int_0^t\|\Lambda^{10} u(r)\| _{L^2} ^{p-2} (\sum_{i=1}^4B_i(r)) \mathrm{d}r\\
&\quad+\frac{p(p-2)}{2}\sum_{j \geq 1} \int_0^t\theta_R\left(\|u(r)\|_{W^{1,\infty}}\right)\|\Lambda^{10} u(r)\|_{L^2} ^{p-4} \left(\Lambda^{10} u(r),   \Lambda^{10} G(u(r))e_j\right)_{L^2}^2 \mathrm{d}r\\
&\quad+p\sum_{j \geq 1}\int_0^t \theta_R\left(\|u(r)\|_{W^{1,\infty}}\right)\|\Lambda^{10} u(r)\| _{L^2} ^{p-2}  \left(\Lambda^{10} u(r),   \Lambda^{10}   G(u(r))e_j\right)_{L^2} \mathrm{d}  W_j(r) \\
&:= \|\Lambda^{10} u_0\|^p_{L^2}+ D_1(t)+D_2(t)+D_3(t),
\end{split}
\end{equation}
where $B_i $, $i=1,~2,~3,~4$ are defined in \eqref{lem2-4}. According to \eqref{2.1}-\eqref{2.4}, we have
\begin{equation}\label{22.5}
\begin{split}
|D_1(t)|&\leq\frac{p}{2}\int_0^t\|\Lambda^{10} u(r)\| _{L^2} ^{p-2} \|\nabla \Lambda^{10} J_ku(r)\|_{L^2 }^2\mathrm{d}r\\
&\quad+C_{R,\epsilon,\phi}\int_0^t\|\Lambda^{10} u(r)\| _{L^2} ^{p-2}(1+\|n(r)\|_{H^{10}}^2+\|u(r)\|_{H^{10}}^2)\mathrm{d}r\\
&\leq \frac{p}{2}\int_0^t\|\Lambda^{10} u(r)\| _{L^2} ^{p-2} \|\nabla \Lambda^{10} J_ku(r)\|_{L^2 }^2\mathrm{d}r+C_{R,p,\epsilon,\phi}\int_0^t(1+\|n(r)\|_{H^{10}}^p+\|u(r)\|_{H^{10}}^p)\mathrm{d}r.
\end{split}
\end{equation}
For $D_2$, by using Young's inequality, it follows that
\begin{equation}\label{22.6}
\begin{split}
|D_2(t)|&\lesssim_p \int_0^t\theta_R\left(\|u(r)\|_{W^{1,\infty}}\right)\|\Lambda^{10} u(r) \|_{L^2} ^{p-2}  (\sum_{j \geq 1}\| \Lambda^{10} G(u(r))e_j \|_{L^2} ^2) \mathrm{d}r\\
&\lesssim_p \int_0^t \|\Lambda^{10} u(r)\|_{L^2} ^{p-2}  (1+ \|\Lambda^{10} u(r)\|_{L^2}^2) \mathrm{d}r \\
&\lesssim_p \int_0^t (1+ \| u(r)\|_{H^{10}}^p) \mathrm{d}r.
\end{split}
\end{equation}
For $D_3$, by using the BDG inequality, we obtain
\begin{equation}\label{22.7}
\begin{split}
\mathbb{E}\sup_{r \in [0,t]}|D_3(r)|
&\lesssim_{p}\mathbb{E}\left(\sup_{r\in [0,t]}\|\Lambda^{10} u(r)\|^p_{L^2}\int_0^t\|\Lambda^{10} u(r)\| _{L^2} ^{ p-2} (1+\|\Lambda^{10} u\|^2_{L^2})\mathrm{d}r\right)^{1/2}\\
&\leq  \frac{1}{2}\mathbb{E} \sup_{r\in [0,t]}\|\Lambda^{10} u(r)\|^p_{L^2}  + C_{p} \mathbb{E}\int_0^t (1+ \|\Lambda^{10} u(r)\|^p_{L^2}) \mathrm{d}r.
\end{split}
\end{equation}
By taking the supremum over $[0,t]$ on both sides of \eqref{22.4}, we get from \eqref{22.5}, \eqref{22.6} and \eqref{22.7} that
\begin{equation*}
\begin{split}
 &\mathbb{E}\sup_{r\in [0,t]}\|\Lambda^{10} u(r)\|^p_{L^2}+   \mathbb{E}\int_0^t\|\Lambda^{10} u(r)\| _{L^2} ^{p-2} \|\nabla\Lambda^{10} J_ku(r)\|_{L^2 }^2\mathrm{d}r \\
 &\lesssim_{R,p,\epsilon,\phi} \| u_0\|^p_{H^{10}} + \mathbb{E}\int_0^t(1+\|n(r)\|_{H^{10}}^p+ \|u(r)\|_{H^{10}}^p)\mathrm{d}r,
\end{split}
\end{equation*}
which together with \eqref{22.2} and \eqref{22.3} yields that
\begin{equation*}
\begin{split}
  \mathbb{E}\sup_{r\in [0,t]}\|(n,c,u)(r)\|^p_{\textbf{H}^{10}}\lesssim_{R,p,\phi,a,\epsilon}\|(n_0,c_0,u_0)\|^p_{\textbf{H}^{10}}+t+ \mathbb{E}\int_0^t\|(n,c,u)(r)\|^p_{\textbf{H}^{10}}\mathrm{d}r.
\end{split}
\end{equation*}
An application of Gronwall lemma to the last inequality leads to
\begin{equation}\label{22.8}
\begin{split}
 \mathbb{E}\sup_{t \in [0,T]}\|(n,c,u)(t)\|^p_{\textbf{H}^{10}} \lesssim_{R,\phi,a,\epsilon,T,n_0,c_0,u_0} 1,\quad\forall T>0,
\end{split}
\end{equation}
which implies that $(n,c,u)\in L^p(\Omega; \mathcal {C}([0,T];\textbf{H}^{10}(\mathbb{R}^3) ))$, for any $T>0$ and $p>2$.

Finally, we shall demonstrate the H\"{o}lder regularity of solutions $(n,c,u)$. According to the estimate \eqref{22.8} and the equations associated to the unknowns $n$ and $c$, it is standard to verify that
\begin{equation*}
\begin{split}
(n,c)\in L^p\left(\Omega; \textrm{Lip}([0,T]; H^{8}(\mathbb{R}^3)\times H^{8}(\mathbb{R}^3) )\right).
\end{split}
\end{equation*}
In terms of the fluid equations in \eqref{mod-22}, we have
\begin{equation}\label{22.10}
\begin{split}
&\|u(t)-u(\tau)\|_{H^{8 }}\\
&\leq\left\|\int_\tau^t \Delta J_k^2u(r)\mathrm{d}r \right\|_{H^{8 }}+ \left\|\int_\tau^t\theta_R (\|u(r) \|_{W^{1,\infty}} ) \textbf{P}J_k(\mathbf{g}(|J_ku(r)|^2)J_ku(r))\mathrm{d}r\right\|_{H^{8}} \\
&\quad+ \left\|\int_\tau^t\theta_R (\|n (r)\|_{W^{1,\infty}} ) \textbf{P} J_k ((n(r)\nabla \phi)*\rho^{\epsilon}  )\mathrm{d}r\right\|_{H^{8}}\\
&\quad+\left\|\int_\tau^t\theta_R (\|u(r) \|_{W^{1,\infty}} ) \textbf{P} J_k  (J_k u(r) \cdot \nabla J_k u(r)  ) \mathrm{d}  r\right\|_{H^{8}}\\
&\quad+\left\|\int_\tau^t\theta_R (\|u(r)\|_{W^{1,\infty}} )    \textbf{P}  G(u(r)) \mathrm{d}  W(r)\right\|_{H^{8}} \\
&:=E_1+E_2+E_3+E_4+E_5 .
\end{split}
\end{equation}
By \eqref{22.8}, it is clear that
\begin{equation}\label{22.11}
\begin{split}
&\mathbb{E}\left(E_1+E_2+E_3+E_4\right)^p\\
&\lesssim _{p,R,\phi,\epsilon}\mathbb{E}  \sup_{r\in [\tau,t]}\left(1+\|n(r)\|_{H^{10}}^p+\|u(r)\|_{H^{10}}^p \right) |t-\tau|^p\\
&\lesssim _{p,R,\phi,a,\epsilon,T,n_0,c_0,u_0} |t-\tau|^p.
\end{split}
\end{equation}
For $E_5$, by using the BDG inequality, we deduce that
\begin{equation*}
\begin{split}
\mathbb{E} \left\|\int_\tau^t\theta_R(\|u(r)\|_{W^{1,\infty}}) \textbf{P}  G(u (r)) \mathrm{d}  W(r)\right\|_{H^{8}}^p &\lesssim_{p} \mathbb{E}\left( \int_\tau^t\|G(u(r) )\|_{\mathcal{L}_2(\mathbf{U};H^8)}^2\mathrm{d}r\right)^{\frac{p}{2}}\\
&\lesssim _{p,R,\phi,a,\epsilon,T,n_0,c_0,u_0} |t-\tau|^\frac{p}{2},
\end{split}
\end{equation*}
which combined with \eqref{22.11} leads to
$$
\mathbb{E} \|u(t)-u(\tau)\|_{H^{8}} ^p \lesssim_{p,R,\phi,a,\epsilon,T,n_0,c_0,u_0}  |t-\tau|^{\frac{p}{2}}.
$$
Thus by means of the Kolmogorov continuity theorem (cf. \cite[Theorem 3.3]{da2014stochastic}), we infer that the solution $u$ has an undistinguishable  version in $  \mathcal {C}^{\alpha}([0,T];H^{8}(\mathbb{R}^3))$, for all  $  0\leq {\alpha}<\frac{1}{2}-\frac{1}{p}$, $\mathbb{P}$-a.s. The proof of Lemma \ref{lem2} is completed.
\end{proof}

Next, we shall prove that the following convergence result holds for any $h>0$:
\begin{equation}\label{22.13}
\begin{split}
\lim_{k\rightarrow\infty}\sup_{l\geq k} \mathbb{P} \left\{\sup_{t\in [0,T]} \|\textbf{y}^{k,\epsilon,R}(t)-\textbf{y}^{l,\epsilon,R}(t)\|_{\textbf{H}^{7} }> h \right\}=0.
\end{split}
\end{equation}
To this end, for any fixed $R>0$ and $\epsilon \in (0,1)$, let $\textbf{y}^{k,\epsilon,R}$ and $\textbf{y}^{l,\epsilon,R}$ be solutions to \eqref{Mod-22} associated with the frequency truncations $J_k$ and $J_l$, respectively. Define $\textbf{y}^{k,l,\epsilon,R}:=\textbf{y}^{k,\epsilon,R}-\textbf{y}^{l,\epsilon,R}$, it follows from \eqref{Mod-22} that
\begin{equation}\label{2..1}
\left\{
\begin{aligned}
& \mathrm{d}  \textbf{y}^{k,l,\epsilon,R}= [\textbf{F} ^{k,\epsilon,R} (\textbf{y}^{k,\epsilon,R}  )-\textbf{F} ^{l,\epsilon,R} (\textbf{y}^{l,\epsilon,R})]  \mathrm{d} t
+[\textbf{L}^R(\textbf{y}^{k,\epsilon,R}  )-\textbf{L}^R(\textbf{y}^{l,\epsilon,R}  )]\mathrm{d}\mathbf{W}(t),\\
& \textbf{y}^{k,l,\epsilon,R}(0) =0.
\end{aligned}
\right.
\end{equation}
For convenience,  in the following discussion we will simply write $\textbf{y}^{k,l}$, $\textbf{y}^{k}$, $\textbf{y}$, $\textbf{F} ^{k}(\cdot)$ and $\textbf{L}(\cdot)$ instead of $\textbf{y}^{k,l,\epsilon,R}$, $\textbf{y}^{k,\epsilon,R}$, $\textbf{y}^{\epsilon,R}$, $\textbf{F} ^{k,\epsilon,R}(\cdot)$ and $\textbf{L}^{R} (\cdot)$, respectively. Then we have
\begin{equation*}
\begin{split}
& \textbf{F} ^{k}_1 (\textbf{y}^{k}  )-\textbf{F} ^{l}_1 (\textbf{y}^{l}  )\\
&=\Delta J_k^2  n^{k,l}+(J_k^2-J_l^2)\Delta n^l+\theta_R(\|(u^k,n^k)\|_{W^{1,\infty}})[J_l(J_l u^l\cdot \nabla J_l n^l)- J_k(J_k u^k\cdot \nabla J_k n^k)]   \\
&\quad+[\theta_R(\|(u^l,n^l)\|_{W^{1,\infty}}) -\theta_R(\|(u^k,n^k)\|_{W^{1,\infty}}) ] J_l(J_l u^l\cdot \nabla J_l n^l)\\
&\quad+[\theta_R(\|(n^l,c^l)\|_{W^{1,\infty}})-\theta_R(\|(n^k,c^k)\|_{W^{1,\infty}})] \nabla\cdot J_k(J_kn^k(\nabla c^k*\rho^{\epsilon})) \\
&\quad+\theta_R(\|(n^l,c^l)\|_{W^{1,\infty}})[\nabla\cdot J_l(J_ln^l(\nabla c^l*\rho^{\epsilon}))- \nabla\cdot J_k(J_kn^k(\nabla c^k*\rho^{\epsilon}))]\\
&\quad+\theta_R(\|n^k\|_{W^{1,\infty}})[-a(J_k n^k-J_l n^l)+(1+a)(J_k (J_kn^k)^2-J_l(J_l n^l)^2)-(J_k (J_kn^k)^3-J_l(J_l n^l)^3)]\\
&\quad+[\theta_R(\|n^k\|_{W^{1,\infty}})-\theta_R(\|n^l\|_{W^{1,\infty}})][-aJ_l n^l+(1+a)J_l(J_l n^l)^2-J_l(J_l n^l)^3]\\
&:=\Delta J_k^2  n^{k,l}+ I_1+\cdots + I_7,
 \end{split}
\end{equation*}
\begin{equation*}
\begin{split}
&\textbf{F} ^{k}_2 (\textbf{y}^{k}  )-\textbf{F} ^{l}_2 (\textbf{y}^{l}  )\\
&=\Delta J_k^2  c^{k,l}+(J_k^2-J_l^2)\Delta c^l  +[\theta_R(\|(u^l,c^l)\|_{W^{1,\infty}})-\theta_R(\|(u^k,c^k)\|_{W^{1,\infty}}) ] J_l(J_l u^l\cdot \nabla J_k c^l) \\
&\quad+\theta_R(\|(u^k,c^k)\|_{W^{1,\infty}})[J_l(J_l u^l\cdot \nabla J_l c^l)- J_k(J_k u^k\cdot \nabla J_k c^k)]\\
&\quad+[\theta_R(\|(n^l,c^l)\|_{W^{1,\infty}})-\theta_R(\|(n^k,c^k)\|_{W^{1,\infty}})]J_k( J_kc^k(J_kn^k*\rho^{\epsilon})) \\
&\quad+\theta_R(\|(n^l,c^l)\|_{W^{1,\infty}})[J_l(J_lc^l(J_ln^l*\rho^{\epsilon}))-J_k(J_kc^k(J_kn^k*\rho^{\epsilon}))]\\
&:=\Delta J_k^2  c^{k,l}+ K_1+\cdots + K_5,
 \end{split}
\end{equation*}
and
\begin{equation*}
\begin{split}
&\textbf{F} ^{k}_3 (\textbf{y}^{k}  )-\textbf{F} ^{l}_3 (\textbf{y}^{l}  )\\
&=\Delta J_k^2  u^{k,l}+(J_k^2-J_l^2)\Delta u^l  +[\theta_R(\| u^l \|_{W^{1,\infty}}) -\theta_R(\| u^k \|_{W^{1,\infty}}) ] \textbf{P}J_l((J_l u^l\cdot \nabla) J_l u^l)\\
&\quad+ \theta_R(\| u^k \|_{W^{1,\infty}}) \textbf{P} [  J_l((J_l u^l\cdot \nabla) J_l u^l)- J_k((J_k u^k\cdot \nabla) J_k u^k)]      \\
&\quad+ \textbf{P} [J_k((J_kn^k\nabla \phi)*\rho^{\epsilon})- J_l((J_ln^l\nabla \phi)*\rho^{\epsilon})]\\
&\quad+[\theta_R(\| u^l \|_{W^{1,\infty}}) -\theta_R(\| u^k \|_{W^{1,\infty}}) ]\textbf{P}[J_l(\mathbf{g}(|J_l u^l|^2)J_l u^l)]\\
&\quad+ \theta_R(\| u^k \|_{W^{1,\infty}}) \textbf{P} [  J_l(\mathbf{g}(|J_l u^l|^2)J_l u^l)- J_k(\mathbf{g}(|J_k u^k|^2)J_k u^k)]\\
&:=\Delta J_k^2  u^{k,l}+ L_1+\cdots + L_6.
 \end{split}
\end{equation*}
Concerning the stochastic integral, we have
\begin{equation*}
\begin{split}
&\int_0^t(\textbf{y}^{k,l}(r),(\textbf{L}(\textbf{y}^{k}(r)  )-\textbf{L}(\textbf{y}^{l}(r)  ))\mathrm{d}  \mathbf{W}(r))_{\textbf{H} ^{7}}\\
&=\sum_{j\geq 1}\left(\int_0^t(\theta_R (\|u^{k} (r)\|_{W^{1,\infty}} )-\theta_R (\|u^{l} (r)\|_{W^{1,\infty}} )) (u^{k,l}(r),G_j(u^{k}(r) ))_{H^{7}}\mathrm{d}  W_j(r)\right.\\
&\left.\quad +\int_0^t\theta_R (\|u^{l}(r) \|_{W^{1,\infty}} )  (u^{k,l}(r),G_j(u^{k}(r) )-  G_j(u^{l}(r)  ))_{H^{7}}\mathrm{d}  W_j(r)\right)\\
&:= \sum_{j\geq 1}\left(\int_0^t R_j(r)\mathrm{d}  W_j(r)+\int_0^t S_j(r)\mathrm{d}  W_j(r)\right).
 \end{split}
\end{equation*}
Now let us estimate the terms $I_i,~K_i$ and $L_i$ in the above equations one by one.
\begin{lemma}\label{lem2.1}
For the integrals associated to $I_i,~K_i$ and $L_i$, we have
\begin{subequations}
\begin{align}
& |(n^{k,l},I_1)_{H^{7}}| \lesssim \|n^{k,l}\|_{H^{7}}^2+\max\{\frac{1}{k^2},\frac{1}{l^2}\} \|  n^l\|_{H^{10}}^2 ,\label{2.30a}\\
&|(n^{k,l},I_2)_{H^{7}}| \lesssim  \|n^{k,l}\|_{H^{7}} ^2+  \max\{\frac{1}{k^{2}} ,\frac{1}{l^{2}} \}\big(1+\| u^k\|_{H^{10}}^4+\| u^l\|_{H^{10}}^4+\| n^l\|_{H^{10}}^4+\| n^k\|_{H^{10}} ^4\big),\label{2.30b}\\
&|(n^{k,l},I_3)_{H^{7}}| \lesssim\|u^{k,l}\|_{H^{7}}^2+\|n^{k,l}\|_{H^{7}}^2+ \frac{1}{l^2} \left( 1+\| u^l\|_{H^{10}}^8+\| n^l\|_{H^{10}}^8+\| n^k\|_{H^{10}} ^8\right),\label{2.30c}\\
&|(n^{k,l},I_4)_{H^{7}}|\lesssim\|n^{k,l}\|_{H^{7}}^2+\|c^{k,l}\|_{H^{7}}^2+  \frac{1}{k^2} (1+\|n^k\|_{H^{10}} ^8+\|n^l\|_{H^{10}} ^8+\|c^k \|_{H^{10}}^8),\label{2.30d}\\
&|(n^{k,l},I_5)_{H^{7}}| \lesssim\|n^{k,l}\|_{H^{7}} ^2   +  \max\{\frac{1}{k^2},\frac{1}{l^2}\} (\|n^k \|_{H^{10}}^4+\|n^l \|_{H^{10}}^4+\|c^l \|_{H^{10}}^4+\|c^k\|_{H^{10}}^4),\label{2.30e}\\
&|(n^{k,l},I_6)_{H^{7}}|+|(n^{k,l},I_7)_{H^{7}}|\lesssim_a\|n^{k,l}\|_{H^{7}} ^2+\max\{\frac{1}{k^2},\frac{1}{l^2}\} (1+\|n^k \|_{H^{10}}^8+\|n^l \|_{H^{10}}^8),\label{2.30e2}\\
& |(c^{k,l},K_1)_{H^{7}}| \lesssim \|c^{k,l}\|_{H^{7}}^2+\max\{\frac{1}{k^2},\frac{1}{l^2}\} \|c^l\|_{H^{10}}^2 \label{2.31a},\\
&|(c^{k,l},K_2)_{H^{7}}| \lesssim \|(u^{k,l},c^{k,l})\|_{H^{7}}^2 + \frac{1}{ l^2}(\|u^l\|_{H^{10}}^4+\|c^l\|_{H^{10}}^4),\label{2.31b}\\
&|(c^{k,l},K_3)_{H^{7}}| \lesssim \|(u^{k,l},c^{k,l})\|_{H^{7}}^2 + \frac{1}{k^2}(1+\|c^k\|_{H^{10}}^8+\|c^l\|_{H^{10}}^8+\|u^l\|_{H^{10}}^8),\label{2.31c}\\
&|(c^{k,l},K_4)_{H^{7}}| \lesssim \|(n^{k,l},c^{k,l})\|_{H^{7}}^2+\frac{1}{k^2} (1+\|n^k\|_{H^{10}}^8 +\| c^{k} \|_{H^{10}}^8+\| c^{l} \|_{H^{10}}^8), \label{2.31d}\\
&|(c^{k,l},K_5)_{H^{7}}| \lesssim \| c^{k,l} \|_{H^{7}}^2 + \max\{\frac{1}{k^2},\frac{1}{l^2}\}(\| n^l \|_{H^{10}}^4+\| c^l \|_{H^{10}}^4+\| n^k \|_{H^{10}}^4+\| c^k\|_{H^{10}}^4),\label{2.31e}\\
& |(u^{k,l},L_1)_{H^{7}}| \lesssim \|u^{k,l}\|_{H^{7}}^2+\max\{\frac{1}{k^2},\frac{1}{l^2}\} \|  u^l\|_{H^{10}}^2 ,\label{2.32a}\\
&|(u^{k,l},L_2)_{H^{7}}| \lesssim  \|u^{k,l}\|_{H^{7}} ^2+  \frac{1}{l^{2}}\| u^l\|_{H^{10}}^4,\label{2.32b}\\
&|(u^{k,l},L_3)_{H^{7}}| \lesssim  \|u^{k,l}\|_{H^{7}}^2+ \max\{\frac{1}{k^2},\frac{1}{l^2}\}(\|u^{k}\|_{H^{10}}^4+\|u^{l}\|_{H^{10}}^4 ),\label{2.32c}\\
&|(u^{k,l},L_4)_{H^{7}}|\lesssim_{\phi,\epsilon}  \|u^{k,l}\|_{H^{7}}^2+ \max\{\frac{1}{k^2},\frac{1}{l^2}\} (\|n^k \|_{H^{10}}^2+\|n^l \|_{H^{10}}^2),\label{2.32d}\\
&|(u^{k,l},L_5)_{H^{7}}|\lesssim\|u^{k,l}\|_{H^{7}}^2+\frac{1}{l^2}(1+\|u^{k}\|_{H^{10}}^{32}+\|u^{l}\|_{H^{10}}^{32}),\label{2.32d1}\\
&|(u^{k,l},L_6)_{H^{7}}|\lesssim\|u^{k,l}\|_{H^{7}}^2+\frac{1}{l^2}(1+\|u^{k}\|_{H^{10}}^{32}+\|u^{l}\|_{H^{10}}^{32}).\label{2.32d2}
\end{align}
\end{subequations}
\end{lemma}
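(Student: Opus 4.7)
The plan is to treat each of the 22 estimates by a uniform strategy based on telescoping decompositions plus three workhorse tools: (i) the Bernstein-type convergence rate $\|(J_k^2-J_l^2)f\|_{H^{7}}\lesssim \max\{k^{-m},l^{-m}\}\|f\|_{H^{7+m}}$ for $f$ supported away from the removed frequencies; (ii) the Moser-type tame product estimate $\|fg\|_{H^{7}}\lesssim \|f\|_{L^\infty}\|g\|_{H^{7}}+\|g\|_{L^\infty}\|f\|_{H^{7}}$ in Sobolev spaces; (iii) the Lipschitz bound $|\theta_R(x)-\theta_R(y)|\leq C_R|x-y|$ together with the fact that $\theta_R$ is supported in $\{x\leq 2R\}$, so that whenever $\theta_R(\|\mathbf{y}^k\|_{W^{1,\infty}})$ appears as a prefactor, one may assume the arguments to be bounded by $2R$. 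Finally, Young's inequality is applied after each pairing to absorb the squares of $\|\mathbf{y}^{k,l}\|_{H^{7}}$ into the left-hand side of the forthcoming energy estimate, while the remaining factors (which are polynomial in $\|\mathbf{y}^{k}\|_{H^{10}}$ and $\|\mathbf{y}^{l}\|_{H^{10}}$) carry the smallness factor $\max\{k^{-2},l^{-2}\}$ inherited from (i).

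For the purely linear remainders $I_1$, $K_1$, $L_1=(J_k^2-J_l^2)\Delta \textbf{y}^l$, estimate (i) at level $m=2$ immediately yields \eqref{2.30a}, \eqref{2.31a} and \eqref{2.32a} after pairing with $\textbf{y}^{k,l}$ in $H^{7}$ and applying Young's inequality. For the nonlinear convective/cross-diffusion differences (terms $I_2, I_4, I_6, K_2, K_4, L_2, L_3$), I telescope, for instance,
\begin{equation*}
J_k(J_k u^k\cdot\nabla J_k n^k)-J_l(J_l u^l\cdot\nabla J_l n^l)=(J_k-J_l)(\cdots)+J_l(J_k u^{k,l}\cdot\nabla J_k n^k)+J_l(J_l u^l\cdot(J_k-J_l)\nabla n^k)+J_l(J_l u^l\cdot\nabla J_l n^{k,l}).
\end{equation*}
The $(J_k-J_l)$ pieces contribute the $\max\{k^{-2},l^{-2}\}$ gains via (i) at the expense of two extra derivatives (absorbed by the $H^{10}$-regularity of $\mathbf{y}^k,\mathbf{y}^l$ available from Lemma \ref{lem2}), and the genuine difference pieces are controlled by the Moser estimate and give the $\|\mathbf{y}^{k,l}\|_{H^{7}}^2$ contributions. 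The resulting polynomial powers of $\|\mathbf{y}^{k}\|_{H^{10}},\|\mathbf{y}^{l}\|_{H^{10}}$ (up to degree $8$ in some lines) appear after Young's inequality with weights chosen to isolate one factor of $\max\{k^{-2},l^{-2}\}$. The mollifier $\rho^\epsilon$ is only used to absorb a derivative in $c^l$, producing the $\epsilon$-dependent constants hidden in the $\lesssim$ symbol.

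For the terms featuring a difference of cutoff prefactors (i.e., $I_3, I_5, I_7, K_3, K_5, L_3$ in its first summand), I use the Lipschitz bound $|\theta_R(x)-\theta_R(y)|\lesssim_R |x-y|$ together with the Sobolev embedding $H^{7}(\mathbb{R}^3)\hookrightarrow W^{1,\infty}(\mathbb{R}^3)$ to bound $|\theta_R(\|\mathbf{y}^k\|_{W^{1,\infty}})-\theta_R(\|\mathbf{y}^l\|_{W^{1,\infty}})|\lesssim_R \|\mathbf{y}^{k,l}\|_{H^{7}}$. The remaining factor, being supported where the cutoff is nonzero, is bounded in $L^2$ by a polynomial in the $H^{10}$-norms multiplied by $\max\{k^{-2},l^{-2}\}$ (after inserting an artificial $(J_k-J_l)$ decomposition when needed to generate the smallness factor), yielding the stated right-hand sides by Cauchy--Schwarz and Young.

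The genuinely delicate part is the taming terms $L_5, L_6$, where I combine the Lipschitz inequality for $\theta_R$ with the Paralinearization theorem (as already used for $B_2$ in the proof of Lemma \ref{lem2}) applied to $\mathbf{g}(|\cdot|^2)$. The difference $\mathbf{g}(|J_k u^k|^2)J_k u^k-\mathbf{g}(|J_l u^l|^2)J_l u^l$ is split into a truncation part $(J_k-J_l)(\cdots)$ and a solution-difference part; the latter is handled via the mean-value identity combined with \eqref{con2} and the Moser estimate, producing high polynomial powers (degree up to $32$) of $\|u^k\|_{H^{10}},\|u^l\|_{H^{10}}$, but always paired with the $\max\{k^{-2},l^{-2}\}$ factor thanks to the $(J_k-J_l)$ piece, which is the main obstacle one has to keep track of carefully. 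Once the four families of estimates are established in this manner, one adds them up inside the $H^7$-scalar product identity derived from \eqref{2..1} and obtains the bounds \eqref{2.30a}--\eqref{2.32d2}, ready to be inserted into the Gronwall argument that will eventually yield \eqref{22.13}.
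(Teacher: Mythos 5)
Your overall architecture coincides with the paper's: telescoping decompositions of the nonlinear differences, a Bernstein-type gain for the frequency-truncation differences, Moser/algebra estimates in $H^{7}$, the mean value theorem for $\theta_R$ combined with the embedding $H^{3}(\mathbb{R}^3)\hookrightarrow W^{1,\infty}(\mathbb{R}^3)$, composition estimates for $\mathbf{g}(|\cdot|^2)$, and Young's inequality at the end. The only stylistic difference on the taming terms is that you invoke the paralinearization theorem, while the paper derives the needed composition bound \eqref{Gu1} by hand via the Fa\`{a} di Bruno formula and the Gagliardo--Nirenberg inequality; either route produces the degree-$32$ polynomials.

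There is, however, a concrete gap. Several of the terms --- $I_3$, $I_4$, $I_7$, $K_3$, $K_4$, $L_3$, $L_5$, and also $L_6$ as the paper actually treats it --- consist of a difference of cut-off prefactors multiplying a \emph{single} nonlinear expression such as $J_l(J_lu^l\cdot\nabla J_ln^l)$ or $J_l(\mathbf{g}(|J_lu^l|^2)J_lu^l)$, with no $(J_k-J_l)$ difference present. Your Lipschitz bound on $\theta_R$ converts the prefactor into $\|\mathbf{y}^{k,l}\|_{H^{7}}$, but the claimed right-hand sides of \eqref{2.30c}, \eqref{2.30d}, \eqref{2.30e2}, \eqref{2.31c}, \eqref{2.31d}, \eqref{2.32c}, \eqref{2.32d1} and \eqref{2.32d2} still demand a factor $\max\{k^{-2},l^{-2}\}$ in front of the $H^{10}$-polynomial, and your only source of smallness, tool (i), applies exclusively to differences $(J_k^2-J_l^2)f$ of truncations of one and the same function. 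Inserting an ``artificial'' $(J_k-J_l)$ does not help: writing $J_lg=(J_l-J_k)g+J_kg$ merely relocates the problem to $J_kg$, which carries no smallness. The paper manufactures the factor from single-truncation gains of the type $\|J_lu^l\|_{H^{7}}\lesssim l^{-3}\|u^l\|_{H^{10}}$ (see its displayed chains for $I_3$, $I_4$, $L_5$, $L_6$), an ingredient that is absent from your toolbox and that you would need to state and justify. Without it, the best your argument yields for these terms is a bound of the form $P(\|\mathbf{y}^{k}\|_{H^{10}},\|\mathbf{y}^{l}\|_{H^{10}})\,\|\mathbf{y}^{k,l}\|_{H^{7}}^2$ with no smallness factor and a solution-dependent coefficient; this would in fact still feed the stopping-time Gronwall argument leading to \eqref{2.36}, but it is strictly weaker than, and does not prove, the inequalities as stated in the lemma.
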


\begin{proof}[\emph{\textbf{Proof}}]
Since for any  integer $k>0$, $\widehat{J_k f}$ is supported on the torus away from $0$, we get from the Bernstein inequality (cf. \cite{miao2012littlewood}) that
\begin{equation*}
\begin{split}
\|J_l f^l- J_k f^k\|_{H^{s-d}}\lesssim\left\{
\begin{aligned}
& \max\{\frac{1}{k^d},\frac{1}{l^d}\} ( \|f^k\|_{H^{s}}+\|f^l\|_{H^{s}}),\\
&   \max\{\frac{1}{k^d},\frac{1}{l^d}\} \|f^l\|_{H^{s}}+ \|f^{k,l}\|_{H^{s-d}} ,
\end{aligned}
\right.\quad  \textrm{for all} ~s \in \mathbb{R},~d\in \mathbb{N}.
 \end{split}
\end{equation*}
For \eqref{2.30a}, we have
\begin{equation*}
\begin{split}
 |(n^{k,l},I_1)_{H^{7}}|&\leq \|n^{k,l}\|_{H^{7}}\|(J_k +J_l )(J_k -J_l )\Delta n^l\|_{H^{7}}\\
 & \lesssim \max\{\frac{1}{k},\frac{1}{l}\} \|n^{k,l}\|_{H^{7}}\|  n^l\|_{H^{10}} \lesssim \|n^{k,l}\|_{H^{7}}^2+\max\{\frac{1}{k^2},\frac{1}{l^2}\} \|  n^l\|_{H^{10}}^2.
 \end{split}
\end{equation*}
For \eqref{2.30b}, by the Moser estimate and the fact that $H^{7}(\mathbb{R}^3)$ is a Banach algebra, we have
\begin{equation*}
\begin{split}
 |(n^{k,l},I_2)_{H^{7}}|&\leq\Bigl[|(n^{k,l},(J_l-J_k)(J_l u^l\cdot \nabla J_l n^l))_{H^{7}}|\\
 &\quad+|(n^{k,l},J_k[ (J_l u^l-J_ku^k)\cdot \nabla J_l n^l +  J_k u^k\cdot \nabla( J_l n^l- J_k n^k) ] )_{H^7}|\Bigl]\\
 & \lesssim  \|n^{k,l}\|_{H^{7}} \left(\frac{1}{l^5}\| u^l\|_{H^{10}}\| n^l\|_{H^{10}}+ \max\{\frac{1}{k^3l^2},\frac{1}{l^5} \}(\|u^k\|_{H^{10}}+\|u^l\|_{H^{10}})\| n^l \|_{H^{10}}\right.\\
 &\left.\quad+ \max\{\frac{1}{k^3l^2},\frac{1}{k^5}\} \|  u^k\|_{H^{10}} (\|n^k\|_{H^{10}}+\|n^l\|_{H^{10}})\right)\\
 &\lesssim  \|n^{k,l}\|_{H^{7}} ^2+  \max\{\frac{1}{l^{2}}, \frac{1}{k^{2}} \}\left( 1+\| u^k\|_{H^{10}}^4+\| u^l\|_{H^{10}}^4+\| n^l\|_{H^{10}}^4+\| n^k\|_{H^{10}} ^4\right).
 \end{split}
\end{equation*}
For \eqref{2.30c}, by using the Mean value theorem, the embedding $H^{3}(\mathbb{R}^3)\hookrightarrow W^{1,\infty}(\mathbb{R}^3)$ and the boundedness of $\theta'(\cdot)$, we have
\begin{equation*}
\begin{split}
 |(n^{k,l},I_3)_{H^{7}}|&\lesssim \|(u^{k,l},n^{k,l})\|_{H^{7}}(\|n^{k}\|_{H^{7}}+\|n^{l}\|_{H^{7}})\|J_l u^l\|_{H^{7}}\|\nabla J_l n^l\|_{H^{7}}\\
 &\lesssim (\|u^{k,l}\|_{H^{7}}+\|n^{k,l}\|_{H^{7}})(\|n^{k}\|_{H^{10}}+\|n^{l}\|_{H^{10}})\frac{1}{l^3}\| u^l\|_{H^{10}}\frac{1}{l^2}\| n^l\|_{H^{10}}\\
 &\lesssim \|u^{k,l}\|_{H^{7}}^2+\|n^{k,l}\|_{H^{7}}^2+ \frac{1}{l^2} \left(1+ \| u^l\|_{H^{10}}^8+\| n^l\|_{H^{10}}^8+\| n^k\|_{H^{10}} ^8\right).
 \end{split}
\end{equation*}
Similarly, we have
\begin{equation*}
\begin{split}
|(n^{k,l},I_4)_{H^{7}}|
&\lesssim\|(n^{k,l},c^{k,l})\|_{H^{7}}(\| n^k \|_{H^{10}}+\| n^l\|_{H^{10}})\|J_kn^k\|_{H^{8}} \| c^k*\rho^{\epsilon} \|_{H^{9}} \\
&\lesssim\|n^{k,l}\|_{H^{7}}^2+\|c^{k,l}\|_{H^{7}}^2+  \frac{1}{k^2} (1+\|n^k\|_{H^{10}} ^8+\|n^l\|_{H^{10}} ^8+\|c^k \|_{H^{10}}^8),
  \end{split}
\end{equation*}
\begin{equation*}
\begin{split}
|(n^{k,l},I_5)_{H^{7}}|&\lesssim \|n^{k,l}\|_{H^{7}} \left(\|(J_l-J_k) J_ln^l\nabla (c_l*\rho^{\epsilon})\|_{H^{8}}+\|(J_l n^l-J_k n^k) \nabla  (c_k*\rho^{\epsilon})\|_{H^{8}}\right.\\
 &\left.\quad+\| J_k n^k [\nabla (c_l*\rho^{\epsilon})-\nabla (c_k*\rho^{\epsilon})]\|_{H^{8}}\right)\\
 &\lesssim\|n^{k,l}\|_{H^{7}} \left( \frac{1}{l^2} \| n^l \|_{H^{10}}\|c^l\|_{H^{10}}  +  \max\{\frac{1}{k^2},\frac{1}{l^2}\} ( \|n^k\|_{H^{10}}+\|n^l\|_{H^{10}})\|c^k\|_{H^{10}}\right.\\
 &\left.\quad+   \max\{\frac{1}{k^2},\frac{1}{l^2}\} \| n^k\|_{H^{10}}(\|c^l \|_{H^{10}}+\|c^k\|_{H^{10}})\right)\\
 &\lesssim\|n^{k,l}\|_{H^{7}} ^2   +  \max\{\frac{1}{k^2},\frac{1}{l^2}\} (\|n^k \|_{H^{10}}^4+\|n^l \|_{H^{10}}^4+\|c^l \|_{H^{10}}^4+\|c^k\|_{H^{10}}^4),
  \end{split}
\end{equation*}
and
\begin{equation*}
\begin{split}
&|(n^{k,l},I_6)_{H^{7}}|+|(n^{k,l},I_7)_{H^{7}}|\lesssim_a\|n^{k,l}\|_{H^{7}} ^2+\max\{\frac{1}{k^2},\frac{1}{l^2}\} (1+\|n^k \|_{H^{10}}^8+\|n^l \|_{H^{10}}^8).
  \end{split}
\end{equation*}
Moreover, the proofs of inequalities \eqref{2.31a} through \eqref{2.32d} are similar, and we omit the proof process. To obtain the estimate \eqref{2.32d1}, we need to handle the term $\|\mathbf{g}(|u|^2)\|_{H^7}^2$. Actually, given a multi-index $\alpha$ with $|\alpha|=s\geq2$, it is not difficult to see that $D^{\alpha}\mathbf{g}(|u|^2)$ is a sum of the form
\begin{equation}\label{GGG}
\begin{split}
&\mathbf{g}^{(l)}(|u|^2)\prod_{j=1}^lD^{\beta_j}(|u|^2),
\end{split}
\end{equation}
where $l\in\{1,\cdots,s\}$ and the $\beta_j$'s are multi-indices such that $\alpha=\beta_1+\cdots+\beta_l$ and $|\beta_j|\geq1$. Let $p_j=\frac{2s}{|\beta_j|}$, so that $\sum_{j=1}^l\frac{1}{p_j}=\frac{1}{2}$. By using the H\"{o}lder inequality and Gagliardo-Nirenberg inequality (cf. \cite{gagliardo1959ulteriori,gilbarg1977elliptic}):
\begin{equation}\label{GGN1}
\begin{split}
&\|D^{\beta_j}(|u|^2)\|_{L^{p_j}}\lesssim\||u|^2\|_{H^s}^\frac{|\beta_j|}{s}\||u|^2\|_{L^{\infty}}^{1-\frac{|\beta_j|}{s}}.
\end{split}
\end{equation}
It follows  from the Moser estimate that
\begin{equation}\label{GGN2}
\begin{split}
\left\|\prod_{j=1}^lD^{\beta_j}(|u|^2)\right\|_{L^{2}}\leq\prod_{j=1}^l\|D^{\beta_j}(|u|^2)\|_{L^{p_j}}\lesssim\|u\cdot u\|_{H^s}\|u\cdot u\|_{L^{\infty}}^{l-1}\lesssim_s\|u\|_{L^{\infty}}^{2l-1}\|u\|_{H^s}.
\end{split}
\end{equation}
Moreover, by using \eqref{con1} and \eqref{con2}, we get from \eqref{GGG} and \eqref{GGN2} that for any integer $s\geq2$
\begin{equation}\label{Gu1}
\begin{split}
&\|\mathbf{g}(|u|^2)\|_{H^s}^2\lesssim_s\|\mathbf{g}(|u|^2)\|_{L^2}^2+\sum_{|\alpha|=s}\|D^{\alpha}\mathbf{g}(|u|^2)\|_{L^2}^2\lesssim_{s}(1+\|u\|_{H^2}^{4s-2})\|u\|_{H^s}^2,\quad \forall u\in H^s(\mathbb{R}^3).
\end{split}
\end{equation}
Thus by using the Mean value theorem and inequality \eqref{Gu1}, we have
\begin{equation*}
\begin{split}
|(u^{k,l},L_5)_{H^{7}}|&\lesssim \|u^{k,l}\|_{H^{7}}(\|u^{k}\|_{H^{7}}+\|u^{l}\|_{H^{7}})\|\mathbf{g}(|J_l u^l|^2)\|_{H^{7}}\|J_l u^l\|_{H^{7}}\\
&\lesssim\|u^{k,l}\|_{H^{7}}^2+(\|u^{k}\|_{H^{7}}^2+\|u^{l}\|_{H^{7}}^2)(\|J_l u^l\|_{H^7}^2+\|J_l u^l\|_{H^7}^{28})\|J_l u^l\|_{H^{7}}^2\\
&\lesssim\|u^{k,l}\|_{H^{7}}^2+\frac{1}{l^2}(1+\|u^{k}\|_{H^{10}}^{32}+\|u^{l}\|_{H^{10}}^{32}).
 \end{split}
\end{equation*}
Similarly, by \eqref{Gu1}, we have
\begin{equation*}
\begin{split}
|(u^{k,l},L_6)_{H^{7}}|&\lesssim \|u^{k,l}\|_{H^{7}}(\|\mathbf{g}(|J_l u^l|^2)\|_{H^{7}}\|J_l u^l\|_{H^{7}}+\|\mathbf{g}(|J_k u^k|^2)\|_{H^{7}}\|J_k u^k\|_{H^{7}})\\
&\lesssim\|u^{k,l}\|_{H^{7}}^2+\frac{1}{l^2}(1+\|u^{k}\|_{H^{10}}^{32}+\|u^{l}\|_{H^{10}}^{32}).
 \end{split}
\end{equation*}
The proof of Lemma \ref{lem2.1} is thus completed.
\end{proof}

Now we are ready to prove that, for any fixed $\epsilon, R$, the sequence $\{\textbf{y}^{k,\epsilon,R}\}_{k\geq1}$ (up to a subsequence) converges strongly in $  \mathcal {C}([0,T];\mathbf{H}^{7}(\mathbb{R}^3))$, $\mathbb{P}$-a.s.

\begin{lemma}\label{lem2.3}
For any $R>1$, $0<\epsilon<1$ and $T>0$, there exists a subsequence of $\{\textbf{y} ^{k,\epsilon,R} \}_{k\geq1}$ (still denoted by itself) and a $\mathcal {F}_t$-progressively measurable stochastic  process $\textbf{y}^{\epsilon,R} \in L^p(\Omega; L^\infty(0,T;\textbf{H}^{10}(\mathbb{R}^3)))$, such that
\begin{equation}
\begin{split}
\mathbb{P} \left\{\textbf{y}^{k,\epsilon,R}\rightarrow \textbf{y}^{\epsilon,R}~ \textrm{strongly in}~ \mathcal {C}([0,T];\textbf{H}^{7}(\mathbb{R}^3)) ~ \textrm{as}~ k\rightarrow\infty \right\}=1.
 \end{split}
\end{equation}
\end{lemma}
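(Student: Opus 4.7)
The plan is to prove that the sequence $\{\textbf{y}^{k,\epsilon,R}\}_{k\geq 1}$ is Cauchy in probability in $\mathcal{C}([0,T];\textbf{H}^{7}(\mathbb{R}^{3}))$ and then extract a subsequence that converges $\mathbb{P}$-almost surely by a standard Borel--Cantelli argument. To establish the Cauchy property, I would apply the It\^o formula to $\|\textbf{y}^{k,l}(t)\|_{\textbf{H}^{7}}^{2}$ using the difference equation \eqref{2..1} (the verification of the uniform continuity of the first and second Fr\'echet derivatives of $\textbf{y}\mapsto\|\textbf{y}\|_{\textbf{H}^{7}}^{2}$ is analogous to \eqref{ff0}). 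The dissipation contributions from the operators $\Delta J_{k}^{2}$ yield a nonnegative term $2\|\nabla J_{k}\textbf{y}^{k,l}\|_{\textbf{H}^{7}}^{2}$ that can be discarded. The remaining drift is then dominated, via the estimates \eqref{2.30a}--\eqref{2.32d2} of Lemma \ref{lem2.1}, by a combination of $\|\textbf{y}^{k,l}\|_{\textbf{H}^{7}}^{2}$ and $\max\{k^{-2},l^{-2}\}$ multiplied by polynomials in $\|\textbf{y}^{k}\|_{\textbf{H}^{10}}+\|\textbf{y}^{l}\|_{\textbf{H}^{10}}$ of degree up to $32$. The stochastic integrals $R_{j}$ and $S_{j}$ are controlled using the mean value theorem for $\theta_{R}$ (combined with the embedding $H^{7}\hookrightarrow W^{1,\infty}$) and the local Lipschitz property of $G$ from Assumption \ref{as3}, then dispatched via the BDG inequality, producing contributions of the same form.

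To handle the high polynomial powers of the $\textbf{H}^{10}$-norm that cannot be absorbed by Gronwall, I would introduce the stopping time
\[
\tau_{M}^{k,l}:=\inf\bigl\{t\in[0,T]\,:\,\|\textbf{y}^{k}(t)\|_{\textbf{H}^{10}}\vee\|\textbf{y}^{l}(t)\|_{\textbf{H}^{10}}\geq M\bigr\}\wedge T,
\]
so that on $[0,\tau_{M}^{k,l}]$ these polynomials are dominated by a constant $C(M)$. Taking the supremum over $[0,t\wedge\tau_{M}^{k,l}]$, taking expectation, and applying the Gronwall lemma then yield
\[
\mathbb{E}\sup_{r\in[0,T\wedge\tau_{M}^{k,l}]}\|\textbf{y}^{k,l}(r)\|_{\textbf{H}^{7}}^{2}\leq C(M,R,\epsilon,\phi,a,T)\,\max\{k^{-2},l^{-2}\}.
\]
By Chebyshev's inequality the probability $\mathbb{P}\{\sup_{r\leq T\wedge\tau_{M}^{k,l}}\|\textbf{y}^{k,l}\|_{\textbf{H}^{7}}>h\}$ vanishes as $k,l\to\infty$ for every fixed $M$; on the other hand, by Lemma \ref{lem2}, $\mathbb{P}\{\tau_{M}^{k,l}<T\}\leq C(R,\epsilon,T)/M^{2}$ uniformly in $k,l$. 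Splitting the event in \eqref{22.13} according to whether $\tau_{M}^{k,l}=T$ and letting first $k,l\to\infty$ and then $M\to\infty$ proves \eqref{22.13}.

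With \eqref{22.13} in hand, one selects $k_{n}\uparrow\infty$ such that $\mathbb{P}\{\sup_{t\in[0,T]}\|\textbf{y}^{k_{n+1},\epsilon,R}-\textbf{y}^{k_{n},\epsilon,R}\|_{\textbf{H}^{7}}>2^{-n}\}\leq 2^{-n}$, and Borel--Cantelli delivers a subsequence converging a.s.\ in $\mathcal{C}([0,T];\textbf{H}^{7}(\mathbb{R}^{3}))$ to a limit $\textbf{y}^{\epsilon,R}$, whose $\mathcal{F}_{t}$-progressive measurability is inherited from the approximants. To identify the additional regularity $\textbf{y}^{\epsilon,R}\in L^{p}(\Omega;L^{\infty}(0,T;\textbf{H}^{10}))$, I would use the uniform bound from Lemma \ref{lem2} to extract a further subsequence that converges weakly-$\ast$ in $L^{p}(\Omega;L^{\infty}(0,T;\textbf{H}^{10}(\mathbb{R}^{3})))$, and then invoke the uniqueness of the limit in the weaker space $\textbf{H}^{7}$ together with a Fatou-type lower semi-continuity argument to pass the supremum norm estimate to $\textbf{y}^{\epsilon,R}$. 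The main obstacle is the coupling between the high polynomial dependence on $\|\cdot\|_{\textbf{H}^{10}}$ in the Lemma \ref{lem2.1} estimates and the Gronwall-type closure in the weaker norm $\textbf{H}^{7}$; it is exactly the stopping time $\tau_{M}^{k,l}$, combined with the uniform $\textbf{H}^{10}$-moment bounds of Lemma \ref{lem2}, that resolves this tension.
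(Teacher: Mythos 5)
Your proposal is correct and follows essentially the same route as the paper's proof: It\^{o}'s formula for $\|\textbf{y}^{k,l}\|_{\textbf{H}^{7}}^{2}$ combined with the Lemma \ref{lem2.1} estimates, a stopping time truncating the $\textbf{H}^{10}$-norms (your $\tau_{M}^{k,l}$ is the paper's $\textbf{t}^{k,l,T,\mathbf{R}}$), Gronwall on the stopped interval, the event-splitting via Chebyshev and the uniform moment bounds of Lemma \ref{lem2}, and finally a subsequence extraction plus Fatou/weak-$\ast$ identification of the $L^{p}(\Omega;L^{\infty}(0,T;\textbf{H}^{10}))$ regularity. The only cosmetic difference is that the paper works with the $2p$-th moment while you use $p=1$, which changes nothing in the conclusion.
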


\begin{proof}[\emph{\textbf{Proof}}] We shall use the same notations as we did in the proof of Lemma \ref{lem2.1}. Similar to the argument in \eqref{ff0}-\eqref{lem2-4}, we can apply It\^{o}'s formula to  $\|\Lambda^{7}\textbf{y}^{k,l}\|_{L^2}^2$ associated to the system \eqref{2..1}, and we have
\begin{equation}\label{2.31}
\begin{split}
& \|\textbf{y}^{k,l}(t)\|_{\textbf{H}^{7}}^2 + 2\int_0^t\|\nabla J_k \textbf{y}^{k,l}(r)\|_{\textbf{H}^{7}}^2\mathrm{d}r\\
&= 2 \sum_{1\leq i \leq 7}\int_0^t(n^{k,l}(r),I_i(r))_{H^{7}} \mathrm{d}r+2 \sum_{1\leq i \leq 5}\int_0^t(c^{k,l}(r),K_i(r))_{H^{7}} \mathrm{d}r\\
&\quad+2 \sum_{1\leq i \leq 6}\int_0^t(u^{k,l}(r),L_i(r))_{H^{7}} \mathrm{d}r+\sum_{j\geq 1}\int_0^t|R_j(r)+S_j(r)| ^2  \mathrm{d}r \\
&\quad+2\sum_{j\geq 1}\left(\int_0^t R_j(r)\mathrm{d}  W_j(r)+\int_0^t S_j(r)\mathrm{d}  W_j(r)\right)\\
&:=M_1(t)+\cdots+M_5(t).
 \end{split}
\end{equation}
According to the estimates provided by Lemma \ref{lem2.1}, we infer that
\begin{equation}\label{2.32}
\begin{split}
&|M_1(t)|+|M_2(t)|+|M_3(t)|\\
&\lesssim_{\phi,\epsilon,a}  \int_0^t\|\textbf{y}^{k,l}(r)\|_{\textbf{H}^{7}}^2 \mathrm{d}r+\max\{\frac{1}{k^2},\frac{1}{l^2}\}\int_0^t(1+\|\textbf{y}^{k }(r)\|_{\textbf{H}^{10}}^{32}+\|\textbf{y}^{l}(r)\|_{\textbf{H}^{10}}^{32}) \mathrm{d}r.
 \end{split}
\end{equation}
For $M_4$, it follows from the Mean value theorem and the Assumption  \ref{as3} that
\begin{equation*}
\begin{split}
\sum_{j\geq 1} |R_j|^2+|S_j|^2&\lesssim \sum_{j\geq 1}|\theta_R (\|u^{k} \|_{W^{1,\infty}} )-\theta_R (\|u^{l} \|_{W^{1,\infty}} )| ^2 (\|u^k\|_{H^{7}}^2+\|u^l\|_{H^{7}}^2)\|G_j(u^{k} )\|_{H^{7}}^2\\
&\quad+\sum_{j\geq 1} \|u^{k,l}\|_{H^{7}}^2(\|G_j(u^{k} )\|_{H^{7}}^2+\|G_j(u^{l} )\|_{H^{7}}^2)\\
&\lesssim\|u^{k,l}\|_{H^{7}}^2 (1+\|u^k\|_{H^{10}}^4+\|u^l\|_{H^{10}}^4),
 \end{split}
\end{equation*}
which means that
\begin{equation}\label{2.33}
\begin{split}
&|M_4(t)|\lesssim \int_0^t(1+\|\textbf{y}^{k }(r)\|_{\textbf{H}^{10 }}^4+\|\textbf{y}^{ l}(r)\|_{\textbf{H}^{10}}^4)\|\textbf{y}^{k,l}(r)\|_{\textbf{H}^{7}}^2 \mathrm{d}r.
 \end{split}
\end{equation}
For all $k,~l \in \mathbb{N}$ and $\mathbf{R}\geq 1$, let us define
\begin{equation}\label{stop1}
\begin{split}
\textbf{t}^{k,l,T,\mathbf{R}}:=\textbf{t}^{k,T,\mathbf{R}}\wedge\textbf{t}^{l,T,\mathbf{R}},\quad\textbf{t}^{k,T,\mathbf{R}} :=T \wedge \inf\{t\geq 0;~ \|\textbf{y}^{k }(t)\|_{\textbf{H}^{10}}\geq \mathbf{R}\}.
 \end{split}
\end{equation}
By raising the $p$-th power on both sides of \eqref{2.31}, we deduce from the estimates \eqref{2.32}-\eqref{2.33} that
\begin{equation} \label{2.34}
\begin{split}
&\mathbb{E}\sup_{t\in[0,\textbf{t}^{k,l,T,\mathbf{R}}]}\|\textbf{y}^{k,l}(t)\|_{\textbf{H}^{7}}^{2p}\\
&\lesssim_{\phi,\epsilon,a,p,T} \mathbb{E}\int_0^{\textbf{t}^{k,l,T,\mathbf{R}}} (1+\|\textbf{y}^{k }(r)\|_{\textbf{H}^{10 }}^ {4p}+\|\textbf{y}^{ l}(r)\|_{\textbf{H}^{10 }}^{4p})\|\textbf{y}^{k,l}(r)\|_{\textbf{H}^{7}}^{2p} \mathrm{d}r\\
& \quad+\max\{\frac{1}{k^{2p}},\frac{1}{l^{2p}}\}\mathbb{E}\int_0^{\textbf{t}^{k,l,T,\mathbf{R}}}(1+\|\textbf{y}^{k }(r)\|_{\textbf{H}^{10 }}^{32p}+\|\textbf{y}^{l}(r)\|_{\textbf{H}^{10 }}^{32p})\mathrm{d}r+  \mathbb{E}\sup_{t\in[0,\textbf{t}^{k,l,T,\mathbf{R}}]}\left|M_5(t)\right|^p\\
&\lesssim_{\phi,\epsilon,a,p,T} (1+\mathbf{R}^ {4p})\mathbb{E}\int_0^{\textbf{t}^{k,l,T,\mathbf{R}}} \|\textbf{y}^{k,l}(r)\|_{\textbf{H}^{7}}^{2p} \mathrm{d}r+\max\{\frac{1}{k^p},\frac{1}{l^p}\}(1+\mathbf{R}^{32p})T\\
&\quad+  \mathbb{E}\sup_{t\in[0,\textbf{t}^{k,l,T,\mathbf{R}}]}\left|M_5(t)\right|^p.
 \end{split}
\end{equation}
By using the BDG inequality and Assumption \ref{as3}, we have
\begin{equation}\label{2.35}
\begin{split}
&\mathbb{E}\sup_{t\in[0,\textbf{t}^{k,l,T,\mathbf{R}}]}\left|M_5(t)\right|^p\\
&\leq C\mathbb{E}\left(\int_0^{\textbf{t}^{k,l,T,\mathbf{R}}}\|u^{k,l}(r)\|_{H^7}^4\|G(u^{k}(r) )\|_{\mathcal{L}_2(\mathbf{U};H^{7})}^2+\|u^{k,l}(r)\|_{H^7}^4\mathrm{d}r\right)^{\frac{p}{2}}\\
&\leq \frac{1}{2}\mathbb{E}\sup_{t\in[0,\textbf{t}^{k,l,T,\mathbf{R}}]}\|\textbf{y}^{k,l}(t)\|_{\textbf{H}^{7}}^{2p}
 +C_{\mathbf{R},p,T}\mathbb{E} \int_0^{\textbf{t}^{k,l,T,\mathbf{R}}}(1+ \|\textbf{y}^{k}(r)\|_{\textbf{H}^{7}}^{2p})\|\textbf{y}^{k,l}(r)\|_{\textbf{H}^{7}}^{2p}\mathrm{d}r\\
&\leq\frac{1}{2}\mathbb{E}\sup_{t\in[0,\textbf{t}^{k,l,T,\mathbf{R}}]}\|\textbf{y}^{k,l}(t)\|_{\textbf{H}^{7}}^{2p}+C_{\mathbf{R},p,T}(1+\mathbf{R}^{2p})\mathbb{E} \int_0^{\textbf{t}^{k,l,T,\mathbf{R}}}\|\textbf{y}^{k,l}(r)\|_{\textbf{H}^{7}}^{2p}\mathrm{d}r.
\end{split}
\end{equation}
Plugging \eqref{2.35} into \eqref{2.34}, it follows that
\begin{equation*}
\begin{split}
&\mathbb{E}\sup_{t\in[0,\textbf{t}^{k,l,T,\mathbf{R}}]}\|\textbf{y}^{k,l}(t)\|_{\textbf{H}^{7}}^{2p}\lesssim_{\phi,\epsilon,a} \max\{\frac{1}{k^p},\frac{1}{l^p}\}(1+\mathbf{R}^{32p})T+C_{\mathbf{R},p,T}\mathbb{E}\int_0^{\textbf{t}^{k,l,T,\mathbf{R}}} \|\textbf{y}^{k,l}(r)\|_{\textbf{H}^{7}}^{2p} \mathrm{d}r.
 \end{split}
\end{equation*}
According to the Gronwall lemma, we have
\begin{equation*}
\begin{split}
\mathbb{E}\sup_{t\in[0,\textbf{t}^{k,l,T,\mathbf{R}}]}\|\textbf{y}^{k,l}(t)\|_{\textbf{H}^{7}}^{2p}\lesssim_{\phi,\epsilon,a} \max\{\frac{1}{k^p},\frac{1}{l^p}\}(1+\mathbf{R}^{32p})Te^{C_{\mathbf{R},p,T}T},
 \end{split}
\end{equation*}
which implies that
\begin{equation}\label{2.36}
\begin{split}
 \lim_{k\rightarrow\infty} \sup_{l\geq k}\mathbb{E}\sup_{t\in[0,\textbf{t}^{k,l,T,\mathbf{R}}]}\|\textbf{y}^{k}(t)-\textbf{y}^{l}(t)
 \|_{\textbf{H}^{7}}^{2p}=0,\quad\mathbf{R}\geq 1.
 \end{split}
\end{equation}
For any $h>0$, let us introduce the following events
$$
E^{k,l}(r):= \left\{\omega \in \Omega;~\sup_{t\in[0,r]}\|\textbf{y}^{k}(t,\omega)-\textbf{y}^{l}(t,\omega)
 \|_{\textbf{H}^{7}}^{2p}>h\right\},\quad\textrm{for all}~r> 0.
$$
According to the definition of the stopping times $\textbf{t}^{k,l,T,\mathbf{R}}$ in \eqref{stop1}, we see that for any $T>0$
\begin{equation*}
\begin{split}
 E^{k,l}(T)
& = \left(E^{k,l}(T)\cap \{\textbf{t}^{k,l,T,\mathbf{R}} =T\}\right) \cup \left(E^{k,l}(T)\cap\{\textbf{t}^{k,l,T,\mathbf{R}} <T\}\right) \\
& \subset E^{k,l}(\textbf{t}^{k,l,T,\mathbf{R}})\cup \{\textbf{t}^{k,T,\mathbf{R}} \leq T\}\cup\{\textbf{t}^{ l,T,\mathbf{R}} \leq T\}.
 \end{split}
\end{equation*}
By using the estimates in Lemma \ref{lem2} and the Chebyshev inequality, it follows that
\begin{equation*}
\begin{split}
\mathbb{P}\{E^{k,l}(T)\}&\leq \mathbb{P}\{E^{k,l}(\textbf{t}^{k,l,T,\mathbf{R}})\}+\mathbb{P}\{\textbf{t}^{k,T,\mathbf{R}} \leq T\}+\mathbb{P}\{\textbf{t}^{l,T,\mathbf{R}}\leq T\}\\
&\lesssim_{R,p,\phi,\epsilon,a,n_0,c_0,u_0,T} \frac{1}{\mathbf{R}^2}+ \mathbb{P}\left\{\sup_{t\in[0,\textbf{t}^{k,l,T,\mathbf{R}}]}\|\textbf{y}^{k}(t)-\textbf{y}^{l}(t)
\|_{\textbf{H}^{7}}^{2p}>h\right\}\\
&\lesssim_{R,p,\phi,\epsilon,a,n_0,c_0,u_0,T} \frac{1}{\mathbf{R}^2}+ \frac{1}{h}\mathbb{E}\sup_{t\in[0,\textbf{t}^{k,l,T,\mathbf{R}}]}\|\textbf{y}^{k}(t)-\textbf{y}^{l}(t)
\|_{\textbf{H}^{7}}^{2p}.
\end{split}
\end{equation*}
By using \eqref{2.36} and taking the limit $\mathbf{R}\rightarrow\infty$, the last inequality implies the result \eqref{22.13}. Thus $\textbf{y}^{k}$ converges in probability to an element $\textbf{y}$ in $\mathcal {C}([0,T];\textbf{H}^{7}(\mathbb{R}^3) )$, as $k\rightarrow\infty$. As a result, it follows from the Riesz theorem that there exists a subsequence of $\{\textbf{y}^{k}\}_{k\geq1}$, still denoted by itself, such that $\textbf{y}^{k}\rightarrow\textbf{y}$ strongly in $\mathcal {C}([0,T];\textbf{H}^{7}(\mathbb{R}^3) )$, $\mathbb{P}$-a.s. Moreover, according to the uniqueness of the limit in Hausdorff space  and the uniform bound in Lemma \ref{lem2}, it is not difficult to verify that $\textbf{y}\in L^p(\Omega;L^\infty(0,T;\textbf{H}^{10}(\mathbb{R}^3) )$. This completes the proof of Lemma \ref{lem2.3}.
\end{proof}

Based on the uniform bound provided by Lemma \ref{lem2} and the strong convergence result provided by Lemma \ref{lem2.3}, we are able to prove the existence and uniqueness of a global pathwise solution to the modified systsem \eqref{Mod-1} with cut-off functions.
\begin{lemma}\label{lem2.4}
For any fixed $R>1$, $ \epsilon \in (0,1)$ and $T>0$, the system \eqref{Mod-1} with cut-off functions has a unique solution $\textbf{y}^{\epsilon,R} \in L^p(\Omega;\mathcal {C}([0,T];\textbf{H}^{7}(\mathbb{R}^3)))\cap L^p(\Omega;L^{\infty}(0,T;\textbf{H}^{10}(\mathbb{R}^3)))$.
\end{lemma}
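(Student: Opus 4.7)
The plan is to identify the limit $\textbf{y}^{\epsilon,R}$ provided by Lemma \ref{lem2.3} as a pathwise solution of the modified system \eqref{Mod-1} by passing to the limit $k\to \infty$ in each term of \eqref{mod-22}, and then to establish uniqueness via a pathwise stability estimate combined with a stopping time argument. The regularity claim $\textbf{y}^{\epsilon,R} \in L^p(\Omega;\mathcal{C}([0,T];\textbf{H}^{7}))\cap L^p(\Omega;L^{\infty}(0,T;\textbf{H}^{10}))$ will essentially follow from Lemma \ref{lem2.3} together with Fatou's lemma and the uniform bounds in Lemma \ref{lem2}.

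\smallskip

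\textbf{Step 1 (Passing to the limit in the drift).} Fix $\epsilon,R$ and denote the limit simply by $\textbf{y}=(n^\epsilon,c^\epsilon,u^\epsilon)$. By Lemma \ref{lem2.3}, along a suitable subsequence $\textbf{y}^{k,\epsilon,R}\to \textbf{y}$ strongly in $\mathcal{C}([0,T];\textbf{H}^{7})$, $\mathbb{P}$-a.s., while Lemma \ref{lem2} yields $\textbf{y}^{k,\epsilon,R}$ uniformly bounded in $L^p(\Omega;L^\infty(0,T;\textbf{H}^{10}))$. Since $J_k\to I$ strongly on any fixed $H^s$-ball with $s<10$, and the Sobolev embedding $H^{7}(\mathbb{R}^3)\hookrightarrow W^{1,\infty}(\mathbb{R}^3)$ turns $\mathcal{C}([0,T];\textbf{H}^7)$-convergence into $\mathcal{C}([0,T];W^{1,\infty})$-convergence, each nonlinear term and each cut-off factor $\theta_R(\|\cdot\|_{W^{1,\infty}})$ in \eqref{mod-22} converges pointwise in $t$ and $\omega$ in, say, $H^6(\mathbb{R}^3)$. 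Combining this with an interpolation between the strong $\textbf{H}^7$-convergence and the uniform $\textbf{H}^{10}$-bound, one can upgrade the convergence of the drift integrals to the $L^2$-sense in $\omega$ via a dominated convergence argument, so that the Bochner integrals in \eqref{mod-22} converge to those in \eqref{Mod-1}.

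\smallskip

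\textbf{Step 2 (Passing to the limit in the stochastic integral).} For the diffusion term one needs
\[
\int_0^t \theta_R(\|u^{k,\epsilon,R}\|_{W^{1,\infty}})\textbf{P}G(u^{k,\epsilon,R}) \mathrm{d}W(r) \;\longrightarrow\; \int_0^t \textbf{P}G(u^{\epsilon})\mathrm{d}W(r)
\]
in probability, uniformly in $t\in[0,T]$. Using the local Lipschitz bound of $G$ from Assumption \ref{as3}, the Lipschitz dependence of $\theta_R(\cdot)$, and the strong convergence $u^{k,\epsilon,R}\to u^\epsilon$ in $\mathcal{C}([0,T];H^{7})$, one gets
\[
\int_0^T \bigl\|\theta_R(\|u^{k,\epsilon,R}\|_{W^{1,\infty}})G(u^{k,\epsilon,R})-G(u^\epsilon)\bigr\|_{\mathcal{L}_2(\mathbf{U};\mathbb{H}^0)}^2\mathrm{d}r\;\to\;0,\quad \mathbb{P}\text{-a.s.,}
\]
modulo a stopping-time truncation based on $\|u^{k,\epsilon,R}\|_{\textbf{H}^{10}}\le \mathbf{R}$ (exactly as in \eqref{stop1}). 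A standard application of the BDG inequality then yields convergence of the stochastic integrals in probability on $[0,T\wedge \textbf{t}^{k,T,\mathbf{R}}]$, and Lemma \ref{lem2} with Chebyshev's inequality removes the stopping time by letting $\mathbf{R}\to\infty$. Combining Steps 1 and 2 shows that $\textbf{y}=(n^\epsilon,c^\epsilon,u^\epsilon)$ verifies \eqref{Mod-1} in its integral form. The claimed regularity $L^p(\Omega;\mathcal{C}([0,T];\textbf{H}^{7}))\cap L^p(\Omega;L^\infty(0,T;\textbf{H}^{10}))$ is inherited from the a.s.\ strong convergence together with the uniform bounds in Lemma \ref{lem2} via Fatou's lemma.

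\smallskip

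\textbf{Step 3 (Pathwise uniqueness).} Let $\textbf{y}_i=(n_i^\epsilon,c_i^\epsilon,u_i^\epsilon)$, $i=1,2$, be two solutions of \eqref{Mod-1} in the above class, set $\textbf{z}=\textbf{y}_1-\textbf{y}_2$, and introduce
\[
\tau_\mathbf{R}:=T\wedge \inf\{t\ge 0;\;\|\textbf{y}_1(t)\|_{\textbf{H}^{10}}\vee \|\textbf{y}_2(t)\|_{\textbf{H}^{10}}\ge \mathbf{R}\}.
\]
Apply It\^o's formula to $\|\textbf{z}\|_{\textbf{H}^{7}}^{2}$ for the difference system. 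The frequency truncations $J_k$ are absent, so all telescoping terms from Lemma \ref{lem2.1} drop out and only the genuine difference terms survive; these are estimated by the algebra property of $H^7(\mathbb{R}^3)$, the Lipschitz continuity of $\theta_R$, the bound \eqref{Gu1} (for the tamed term), and the local Lipschitz property of $G$ from Assumption \ref{as3}. One obtains, up to the stopping time $\tau_\mathbf{R}$,
\[
\mathbb{E}\sup_{t\in[0,\tau_\mathbf{R}]}\|\textbf{z}(t)\|_{\textbf{H}^{7}}^{2}\le C_{R,\mathbf{R},\epsilon,\phi,T}\int_0^{T}\mathbb{E}\sup_{s\in[0,\tau_\mathbf{R}\wedge r]}\|\textbf{z}(s)\|_{\textbf{H}^{7}}^{2}\mathrm{d}r,
\]
and Gronwall's lemma forces $\textbf{z}\equiv 0$ on $[0,\tau_\mathbf{R}]$. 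Letting $\mathbf{R}\to\infty$ and invoking the $\textbf{H}^{10}$-bound of Lemma \ref{lem2.3} gives $\mathbb{P}\{\tau_\mathbf{R}<T\}\to 0$, which yields $\textbf{y}_1=\textbf{y}_2$ on $[0,T]$.

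\smallskip

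The main technical obstacle I expect is Step 2: because the noise coefficient only satisfies a locally Lipschitz bound and the $W^{1,\infty}$-norm appears inside the cut-off, the passage to the limit in the It\^o integral must be carefully combined with the stopping time $\textbf{t}^{k,T,\mathbf{R}}$ and the uniform $\textbf{H}^{10}$-moments, since strong convergence in $\textbf{H}^7$ alone does not directly give $\mathcal{L}_2(\mathbf{U};\textbf{H}^0)$-convergence of $G(u^{k,\epsilon,R})$ without exploiting the cut-off.
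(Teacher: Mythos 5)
Your overall strategy --- identify the limit supplied by Lemma \ref{lem2.3} as a solution of \eqref{Mod-1} with cut-offs by passing to the limit term by term in \eqref{mod-22}, then prove pathwise uniqueness by an energy estimate on the difference combined with a stopping time and Gronwall --- is exactly the paper's. However, two points in your write-up do not close as stated.

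First, in Step 3 you apply It\^o's formula to $\|\textbf{z}\|_{\textbf{H}^{7}}^{2}$. The difference of the drifts contains $\Delta \textbf{z}$ and the other second-order terms, which lose two derivatives; when the solutions are continuous only in $\textbf{H}^{7}$, the integral identity for $\textbf{z}$ is naturally posed in $\textbf{H}^{5}$, and that is precisely the level at which the paper performs the It\^o computation (estimating $\|\textbf{y}^*\|_{\textbf{H}^{5}}^{2}$, which still forces $\textbf{y}_1=\textbf{y}_2$). Running the argument at the $\textbf{H}^{7}$ level would require justifying the equation, and the It\^o formula, one step higher than the continuity class provides; this needs an extra argument that your proposal does not supply. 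Second, for the tamed term in the uniqueness step you invoke \eqref{Gu1}, but that inequality only bounds $\|\mathbf{g}(|u|^2)\|_{H^s}$; uniqueness requires a genuine difference estimate of the form $\|\mathbf{g}(|u_1|^2)-\mathbf{g}(|u_2|^2)\|_{H^s}\lesssim (1+\|u_1\|_{H^s}^{N}+\|u_2\|_{H^s}^{N})\|u_1-u_2\|_{H^s}$, which the paper derives separately via the Fa\`a di Bruno formula in \eqref{aa2}--\eqref{aa4}. That computation is a nontrivial ingredient your proposal omits. A further minor slip: in Step 2 the limit of the stochastic integral should retain the factor $\theta_R(\|u^{\epsilon,R}\|_{W^{1,\infty}})$, since this lemma concerns the system \emph{with} cut-offs; the cut-off is only removed afterwards, in Lemma \ref{lem2.5}.
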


\begin{proof}[\emph{\textbf{Proof}}]
For simplicity, we use the notation $(n,c,u)$ instead of  $(n^{\epsilon,R},c^{\epsilon,R},u^{\epsilon,R})$ in the proof. By Lemma \ref{lem2.3} and the fact of $H^{3}(\mathbb{R}^3)\hookrightarrow W^{1,\infty}(\mathbb{R}^3)$, one can take the limit as $k\rightarrow \infty$ in \eqref{Mod-22} to conclude that the limit $\textbf{y}$ solves \eqref{Mod-1} with cut-off functions, and
$
\textbf{y} \in L^\infty(0,T;\textbf{H}^{10}(\mathbb{R}^3))\bigcap  \mathcal {C}([0,T];\textbf{H}^{7}(\mathbb{R}^3))$, $\mathbb{P}$-a.s.

To complete the proof of Lemma \ref{lem2.3}, we shall only prove the uniqueness of the solution. Assume that $(\textbf{y}_1,\textbf{t}_1)$ and  $(\textbf{y}_2,\textbf{t}_2)$ are two local solutions to \eqref{Mod-1} with cut-off functions with respect to the same initial data $\textbf{y}_0$.  Let $\mathcal{L}> 2\| \textbf{y}_0\|_{\textbf{H}^{7}}$. For any $T>0$, define $\textbf{t}^\mathcal{L}_T:= \textbf{t}^\mathcal{L} \wedge T$, where
\begin{equation*}
\begin{split}
\textbf{t}^\mathcal{L}:=\inf\{t> 0;~\| \textbf{y}_1(t)\|_{\textbf{H}^{7}}+\| \textbf{y}_2(t)\|_{\textbf{H}^{7}}\geq \mathcal{L}\}.
 \end{split}
\end{equation*}
Then $\mathbb{P}\{\textbf{t}^\mathcal{L}>0\}=1$ for $\mathcal{L}$ large enough. Invoking the Fatou lemma, we see from Lemma \ref{lem2} that
$$
\mathbb{E}\|\textbf{y}_i\| _{L^{\infty}(0,T; \textbf{H}^{7} )}^p\leq\mathbb{E}\|\textbf{y}_i\| _{L^{\infty}(0,T; \textbf{H}^{10} )}^p \lesssim_{R,p,\phi,\epsilon,a,n_0,c_0,u_0,T} 1,\quad p\geq2,\quad i=1,~2.
$$
Thus, we have
\begin{equation}\label{2.44}
\begin{split}
\mathbb{P}\left\{\liminf_{\mathcal{L}\rightarrow\infty}\textbf{t}^\mathcal{L} \geq \textbf{t}_1\wedge\textbf{t}_2\right\}=1.
 \end{split}
\end{equation}
Define $\textbf{y}^*=\textbf{y}_1-\textbf{y}_2$, then there holds
\begin{equation*}
\begin{split}
\textbf{y}^*(t)= \int_0^t[\textbf{F}(\textbf{y}_1(r))-\textbf{F}(\textbf{y}_2(r))] \mathrm{d} r+ \int_0^t[\textbf{L}(\textbf{y}_1(r))-\textbf{L}(\textbf{y}_2(r))]\mathrm{d}\mathbf{W}(r),
 \end{split}
\end{equation*}
where $\textbf{F}(\cdot)=(\textbf{F}_1(\cdot),\textbf{F} _2(\cdot),\textbf{F}_3(\cdot))^{\top}$,
$\textbf{F}_1 (\textbf{y} ) =\Delta  n -\theta_R(\|(u,n)\|_{W^{1,\infty}})u\cdot \nabla  n- \theta_R(\|(n,c)\|_{W^{1,\infty}})\nabla\cdot(n\nabla(c*\rho^{\epsilon}))+\theta_R(\|n\|_{W^{1,\infty}})(-an+(1+a)n^2-n^3)$, $\textbf{F}_2 (\textbf{y} )= \Delta  c-\theta_R(\|(u,c)\|_{W^{1,\infty}})  u\cdot \nabla  c  -\theta_R(\|(n,c)\|_{W^{1,\infty}})c(n*\rho^{\epsilon})$ and
$\textbf{F}_3 (\textbf{y} )= \Delta  u-\theta_R(\|u\|_{W^{1,\infty}}) \textbf{P} (u\cdot \nabla)  u + \theta_R(\|n\|_{W^{1,\infty}})\textbf{P} (n\nabla \phi)*\rho^{\epsilon}-\theta_R(\|u\|_{W^{1,\infty}})\textbf{P}\mathbf{g}(|u|^2)u$. The aforementioned integral equation can be regarded as holding in the Hilbert space $\textbf{H}^5(\mathbb{R}^3)$, and it is not difficult to verify the validity of It\^{o}'s formula in $\textbf{H}^5(\mathbb{R}^3)$. Thus we have
\begin{equation}\label{2.46}
\begin{split}
&\|\textbf{y}^*(t)\|_{\textbf{H}^{5}}^2 +2\int_0^t \|\nabla \textbf{y}^*(r)\|_{\textbf{H}^{5}}^2\mathrm{d}r \\
&\leq C\int_0^t\|\textbf{y}^*(r)\|_{\textbf{H}^{5}}^2\mathrm{d}r+2\int_0^t|(\textbf{y}^*(r), \textbf{F}(\textbf{y}_1(r)) -\textbf{F}(\textbf{y}_2(r)))_{\textbf{H}^{5}}|\mathrm{d}r \\
&\quad+2\sum_{j\geq 1}\left|\int_0^t(\textbf{y}^*(r),(\textbf{L}(\textbf{y}_1(r))-\textbf{L}(\textbf{y}_2(r)))e_j )_{\textbf{H}^{5}} \mathrm{d} W_j(r)\right|\\
&:=C\int_0^t\|\textbf{y}^*(r)\|_{\textbf{H}^{5}}^2\mathrm{d}r+2O_1(t)+2O_2(t),
\end{split}
\end{equation}
where $O_i=O_i^{(1)}+O_i^{(2)}+O_i^{(3)},~i=1,~2$. For $O_1(t)$, we only handle the third component $O_1^{(3)}(t)$,  and the estimates for the other two components are similar to the discussion in Lemma \ref{lem2}. Note that
\begin{equation}\label{2.47}
\begin{split}
O_1^{(3)}(t)
&\leq\int_0^t[|\left(u^*(r), \theta_R(\|u_1(r)\|_{W^{1,\infty}})(u_1(r)\cdot \nabla)  u_1(r)-\theta_R(\|u_2(r)\|_{W^{1,\infty}})(u_2(r)\cdot \nabla)  u_2(r) \right)_{H^{5}}|\\
&\quad+|\left(u^*(r),\theta_R(\|n_1(r)\|_{W^{1,\infty}})(n_1(r)\nabla\phi)*\rho^{\epsilon}-\theta_R(\|n_2(r)\|_{W^{1,\infty}})(n_2(r)\nabla\phi)*\rho^{\epsilon}\right) _{H^{5}}| \\
&\quad+|\left(u^*(r), \theta_R(\|u_1(r)\|_{W^{1,\infty}})\mathbf{g}(|u_1(r)|^2)u_1(r)-\theta_R(\|u_2(r)\|_{W^{1,\infty}})\mathbf{g}(|u_2(r)|^2)u_2(r) \right)_{H^{5}}|]\mathrm{d}r\\
&:=\int_0^t(S_1(r)+S_2(r)+S_3(r))\mathrm{d}r.
 \end{split}
\end{equation}
Since $H^5(\mathbb{R}^3)$ is a Banach algebra, it is not difficult to deduce that
\begin{equation}\label{S0}
\begin{split}
S_1+S_2\leq\frac{1}{2}\|\nabla u^*\|_{H^{5}}^2+C(\| u_1\|_{H^{7}}^2+\| u_2\|_{H^{7}}^2)\|u^*\|_{H^{5}}^2+C_{\phi,\epsilon} (\|u^*\| _{H^{5}} ^2+\| n^* \|_{H^{5} } ^2).
\end{split}
\end{equation}
For $S_3$, by using the triangle inequality and Mean value theorem, we have
\begin{equation}\label{S1}
\begin{split}
S_3&\lesssim\|u^*\|_{H^{5}}^2+\theta_R(\|u_1\|_{W^{1,\infty}})^2\|\mathbf{g}(|u_1|^2)u_1-\mathbf{g}(|u_2|^2)u_2\|_{H^5}^2\\
&\quad+|\theta_R(\|u_1\|_{W^{1,\infty}})-\theta_R(\|u_2\|_{W^{1,\infty}})|^2\|\mathbf{g}(|u_2|^2)u_2\|_{H^5}^2\\
&\lesssim\|\mathbf{g}(|u_1|^2)-\mathbf{g}(|u_2|^2)\|_{H^5}^2\|u_1\|_{H^{5}}^2+(1+\|\mathbf{g}(|u_2|^2)\|_{H^5}^2+\|\mathbf{g}(|u_2|^2)\|_{H^5}^2\|u_2\|_{H^{5}}^2)\|u^*\|_{H^{5}}^2.
\end{split}
\end{equation}
By using the Fa\`{a} di Bruno formula (cf. \cite[Formula (1.1)]{johnson2002curious}), given a  multi-index $\alpha$ with $|\alpha|=s\geq2$, it follows that $D^{\alpha}[\mathbf{g}(|u_1|^2)-\mathbf{g}(|u_2|^2)]$ is a sum of terms
$
\mathbf{g}^{(l)}(|u_1|^2)\prod_{j=1}^lD^{\beta_j}(|u_1|^2)-\mathbf{g}^{(l)}(|u_2|^2)\prod_{j=1}^lD^{\beta_j}(|u_2|^2),
$
where $l\in\{1,\cdots,s\}$ and the $\beta_j$'s are multi-indices such that $\alpha=\beta_1+\cdots+\beta_l$ and $|\beta_j|\geq1$. Each of the above terms can itself be decomposed as a sum of terms, where the first one is
\begin{equation*} \begin{split}
[\mathbf{g}^{(l)}(|u_1|^2)-\mathbf{g}^{(l)}(|u_2|^2)]\prod_{j=1}^lD^{\beta_j}(|u_1|^2).
\end{split} \end{equation*}
The other ones have the form $
\mathbf{g}^{(l)}(|u_2|^2)\prod_{j=1}^lD^{\beta_j}(|w_j|^2)$,
where all the $|w_j|^2$'s are equal to $|u_1|^2$ or $|u_2|^2$, except one which is equal to $|u_1|^2-|u_2|^2$. Let $p_j=\frac{2s}{|\beta_j|}$, so that $\sum_{j=1}^l\frac{1}{p_j}=\frac{1}{2}$. Then, by \eqref{GGN1}-\eqref{GGN2} and the Sobolev embedding $H^s(\mathbb{R}^3)\hookrightarrow L^{\infty}(\mathbb{R}^3)$, we deduce from \eqref{con1} that
\begin{equation}\label{aa2}
\begin{split}
\left\|\mathbf{g}^{(l)}(|u_2|^2)\prod_{j=1}^lD^{\beta_j}(|w_j|^2)\right\|_{L^2}
&\lesssim\prod_{j=1}^l\big\|D^{\beta_j}(|w_j|^2)\big\|_{L^{p_j}}\\
& \lesssim\||u_1|^2\|_{H^s}^k\||u_2|^2\|_{H^s}^{l-k-1}\||u_1|^2-|u_2|^2\|_{H^s}\\
& \lesssim(1+\|u_1\cdot u_1\|_{H^s}^l)(1+\|u_2\cdot u_2\|_{H^s}^l)\|(u_1+u_2)\cdot(u_1-u_2)\|_{H^s}\\
& \lesssim(1+\|u_1\|_{H^s}^{8l}+\|u_2\|_{H^s}^{8l})\|u_1-u_2\|_{H^s}.
\end{split} \end{equation}
Similarly, by using \eqref{con2}, it follows that
\begin{equation}\label{aa3}
\begin{split}
&\left\|[\mathbf{g}^{(l)}(|u_1|^2)-\mathbf{g}^{(l)}(|u_2|^2)]\prod_{j=1}^lD^{\beta_j}(|u_1|^2)\right\|_{L^2}\\
&\lesssim\|\mathbf{g}^{(l)}(|u_1|^2)-\mathbf{g}^{(l)}(|u_2|^2)\|_{L^{\infty}}\prod_{j=1}^l
\left\|D^{\beta_j}(|u_1|^2)\right\|_{L^{p_j}}\\
&\lesssim\|u_1+u_2\|_{L^{\infty}}\|u_1-u_2\|_{L^{\infty}}\|u_1\cdot u_1\|_{H^s}^l\\
&\lesssim(1+\|u_1\|_{H^s}^{4l}+\|u_2\|_{H^s}^{4l})\|u_1-u_2\|_{H^s}.
\end{split} \end{equation}
Thereby, for every integer $s\geq2$, we infer from \eqref{aa2}-\eqref{aa3} that
\begin{equation}\label{aa4}
\begin{split}
\|\mathbf{g}(|u_1|^2)-\mathbf{g}(|u_2|^2)\|_{H^s}^2&\lesssim_s\|\mathbf{g}(|u_1|^2)-\mathbf{g}(|u_2|^2)\|_{L^2}^2+\sum_{|\alpha|=s}\|D^{\alpha}[\mathbf{g}(|u_1|^2)-\mathbf{g}(|u_2|^2)]\|_{L^2}^2\\
&\lesssim_{s}(1+\|u_1\|_{H^s}^{16s}+\|u_2\|_{H^s}^{16s})\|u_1-u_2\|_{H^s}^2.
\end{split}
\end{equation}
As a result, by using \eqref{Gu1} and \eqref{aa4}, we see from \eqref{S1} that
\begin{equation}\label{aa5}
\begin{split}
&S_3\lesssim(1+\|u_1\|_{H^5}^{82}+\|u_2\|_{H^5}^{82})\|u^*\|_{H^{5}}^2.
\end{split}
\end{equation}
Plugging \eqref{S0} and \eqref{aa5} into \eqref{2.47}, we infer that
\begin{equation}\label{S4}
\begin{split}
O_1^{(3)}(t)
\leq\frac{1}{2}\int_0^t\|\nabla u^*(r)\|_{H^{5}}^2\mathrm{d}r+C_{\phi,\epsilon}\int_0^t(1+\|u_1(r)\| _{H^{7}} ^{82}+\|u_2(r)\| _{H^{7}} ^{82}) (\|u^*(r)\| _{H^{5}} ^2+\| n^*(r) \|_{H^{5} } ^2)\mathrm{d}r.
 \end{split}
\end{equation}
Similarly, it is not hard to derive that
\begin{equation*}
\begin{split}
&O_1^{(1)}(t)+O_2^{(1)}(t)\leq\frac{1}{2}\int_0^t\|\nabla\mathbf{y}^*(r)\|_{\textbf{H}^5}^2\mathrm{d}r+C_{a,\epsilon}\int_0^t(1+\|\mathbf{y}_1(r)\|_{\textbf{H}^{7}}^4+\|\mathbf{y}_2(r)\|_{\textbf{H}^{7}}^4)\|\mathbf{y}^*(r)\|_{\textbf{H}^5}^2\mathrm{d}r,
\end{split}
\end{equation*}
which together with \eqref{S4} implies that
\begin{equation}\label{2.51}
\begin{split}
&O_1 (t)\leq\frac{1}{2}\int_0^t\|\nabla\mathbf{y}^*(r)\|_{\textbf{H}^5}^2\mathrm{d}r+C_{\phi,a,\epsilon}\int_0^t (1+\|\mathbf{y}_1(r)\|_{\textbf{H}^{7}}^{82}+\|\mathbf{y}_2(r)\|_{\textbf{H}^{7}}^{82})\|\mathbf{y}^*(r)\|_{\textbf{H}^5}^2\mathrm{d}r.
\end{split}
\end{equation}
For $O_2(t)$, by applying the BDG inequality, we get
\begin{equation} \label{2.52}
\begin{split}
\mathbb{E}\sup_{r\in[0,\textbf{t}^\mathcal{L}_T\wedge t]}|O_2 (r)|&\lesssim \mathbb{E}\Bigg(\int_0^{\textbf{t}^\mathcal{L}_T\wedge t}\|\textbf{y}^*(r)\|_{\textbf{H}^{5}}^2(\sum_{j\geq 1}\|(\textbf{L}(\textbf{y}_1(r))-\textbf{L}(\textbf{y}_2(r)))e_j \|_{\textbf{H}^{5}}^2 )\mathrm{d}r\Bigg)^{1/2}\\
&\leq \frac{1}{2} \mathbb{E} \sup_{r\in[0,\textbf{t}^\mathcal{L}_T\wedge t]}\|\textbf{y}^*(r)\|_{\textbf{H}^{5}}^2+ C_{\mathcal{L}} \mathbb{E}\int_0^{\textbf{t}^\mathcal{L}_T\wedge t} \|\textbf{y}^*(r)\|_{\textbf{H}^{5}} ^2\mathrm{d}r .
\end{split}
\end{equation}
By using the estimates \eqref{2.51}-\eqref{2.52} and noting the definition of $\textbf{t}^\mathcal{L}_T$, we get from \eqref{2.46} that
\begin{equation*}
\begin{split}
\mathbb{E}\sup_{r\in[0,\textbf{t}^\mathcal{L}_T\wedge t]}\|\textbf{y}^*(r)\|_{\textbf{H}^{5}}^2  \lesssim_{\mathcal{L},\phi,a,\epsilon} \int_0^{ t} \mathbb{E}\sup_{r\in[0,\textbf{t}^\mathcal{L}_T\wedge \tau]} \|\textbf{y}^*(r)\|_{\textbf{H}^{5}} ^2\mathrm{d}  \tau,
\end{split}
\end{equation*}
which combined with the Gronwall lemma implies
$
\mathbb{E}\sup_{r\in[0,\textbf{t}^\mathcal{L}\wedge \textbf{t}_1\wedge\textbf{t}_2 ]}\|\textbf{y}^*(r)\|_{\textbf{H}^{5}}^2=0.
$
Taking the limit as $\mathcal{L}\rightarrow\infty$ and using \eqref{2.44}, we get from the Monotone convergence theorem that
$$
\mathbb{P}\{\textbf{y}_1(t,x)=\textbf{y}_2(t,x),\quad\forall (t,x) \in [0,\textbf{t}_1\wedge\textbf{t}_2]\times \mathbb{R}^3\}=1.
$$
The proof of Lemma \ref{lem2.4} is thus completed.
\end{proof}

Next, we will remove the cut-off functions to obtain the global solution of \eqref{Mod-1}. The specific result is presented as follows.

\begin{lemma}\label{lem2.5}
For any $0<\epsilon<1$ and $T>0$,   the regularized system \eqref{Mod-1} has a global unique pathwise solution $(n^{\epsilon} ,c^{\epsilon} ,u^{\epsilon} )$ such that
\begin{equation*}
\begin{split}
&n^{\epsilon} \in \mathcal {C}([0,T];H^{7}(\mathbb{R}^3))\cap \mathcal {C}^1([0,T];H^{5}(\mathbb{R}^3)),\\
&c^{\epsilon} \in \mathcal {C}([0,T];H^{7}(\mathbb{R}^3))\cap \mathcal {C}^1([0,T];H^{5}(\mathbb{R}^3)),\\
&u^{\epsilon} \in \mathcal {C}([0,T];\mathbb{H}^{7}),\quad \mathbb{P}\textrm{-a.s.}
\end{split}
\end{equation*}
\end{lemma}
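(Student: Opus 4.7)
The plan is a stopping-time localization to remove the cut-off factors $\theta_R$. For each $R>1$, let $\textbf{y}^{\epsilon,R}=(n^{\epsilon,R},c^{\epsilon,R},u^{\epsilon,R})$ be the solution of the cut-off system furnished by Lemma~\ref{lem2.4}, and set
\[\tau^R:=T\wedge\inf\{t\geq 0:\|\textbf{y}^{\epsilon,R}(t)\|_{W^{1,\infty}}\geq R\}.\]
On $[0,\tau^R]$ every factor $\theta_R(\|\cdot\|_{W^{1,\infty}})$ equals $1$, so $\textbf{y}^{\epsilon,R}$ actually solves the uncut system \eqref{Mod-1} there. For $R_1<R_2$, the process $\textbf{y}^{\epsilon,R_1}$ also solves the $R_2$-cut-off system on $[0,\tau^{R_1}]$ (the $R_2$-cut-offs are also identically $1$ on that interval), and the uniqueness in Lemma~\ref{lem2.4} applied to the $R_2$-cut-off system forces $\textbf{y}^{\epsilon,R_1}=\textbf{y}^{\epsilon,R_2}$ on $[0,\tau^{R_1}]$; in particular $\tau^R$ is non-decreasing in $R$, and setting $\tau^\infty:=\lim_{R\to\infty}\tau^R$ and $\textbf{y}^\epsilon(t):=\textbf{y}^{\epsilon,R}(t)$ for $t\leq\tau^R$ defines a well-posed maximal pathwise solution of \eqref{Mod-1} on $[0,\tau^\infty)$, which inherits the pathwise uniqueness from Lemma~\ref{lem2.4} via another such localization.

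To prove $\tau^\infty=T$ almost surely, I need uniform (in $R$) a priori bounds on $\textbf{y}^{\epsilon,R}$ on $[0,\tau^R]$, exploiting the uncut structure of \eqref{Mod-1}. First, pathwise $L^\infty$-control on $n^\epsilon$ follows from testing the $n$-equation against $(n^\epsilon)^{p-1}$: the transport term vanishes by $\nabla\cdot u^\epsilon=0$, the chemotactic term is absorbed via the mollifier bound $\|\Delta(c^\epsilon*\rho^\epsilon)\|_{L^\infty}\leq C_\epsilon\|c^\epsilon\|_{L^2}$, and the cubic logistic source yields the absorbing contribution $-\|n^\epsilon\|_{L^{p+2}}^{p+2}$; Moser iteration gives $\|n^\epsilon\|_{L^\infty(0,T;L^\infty)}\leq C(T,\epsilon,n_0^\epsilon)$ uniformly in $R$. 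Pathwise $L^\infty$-control on $u^\epsilon$ comes from the tamed term: by Assumption~\ref{as1}, $\int\mathbf{g}(|u^\epsilon|^2)|u^\epsilon|^2\mathrm{d}x$ provides quartic damping for $|u^\epsilon|^2>\mathbf{N}$, and the It\^o-Moser argument of R\"ockner--Zhang~\cite{rockner2009tamed} delivers an $R$-independent bound on $\mathbb{E}\|u^\epsilon\|_{L^\infty(0,T;L^\infty)}^p$. The maximum principle applied to the linear transport-diffusion equation for $c^\epsilon$ yields $\|c^\epsilon\|_{L^\infty}\leq\|c_0^\epsilon\|_{L^\infty}$.

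With these $L^\infty$-bounds in hand, I upgrade to $\textbf{H}^7$: apply $\Lambda^7$ to the first two equations and It\^o's formula to $\|\Lambda^7 u^\epsilon\|_{L^2}^2$, estimate the nonlinear terms by Moser-type estimates exactly as in the proof of Lemma~\ref{lem2} (using the commutator form of the convection, the mollifier regularity for the chemotactic term, and the bound \eqref{Gu1} for the tamed nonlinearity), and close by the BDG inequality and Gronwall's lemma to reach
\[\mathbb{E}\sup_{t\in[0,T\wedge\tau^R]}\|\textbf{y}^\epsilon(t)\|_{\textbf{H}^7}^2+\mathbb{E}\int_0^{T\wedge\tau^R}\|\nabla\textbf{y}^\epsilon(r)\|_{\textbf{H}^7}^2\,\mathrm{d}r\leq C(T,\epsilon,\textbf{y}_0^\epsilon,\phi,a),\]
with $C$ independent of $R$. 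Since $H^7(\mathbb{R}^3)\hookrightarrow W^{1,\infty}(\mathbb{R}^3)$, the Chebyshev inequality then gives $\mathbb{P}\{\tau^R<T\}\leq C/R^2\to 0$, so $\tau^R\to T$ in probability; by monotonicity of $\tau^R$, the convergence is almost sure and $\tau^\infty=T$ almost surely. The time regularities $n^\epsilon,c^\epsilon\in\mathcal{C}^1([0,T];H^5)$ follow directly from the parabolic equations (the right-hand sides lie in $\mathcal{C}([0,T];H^5)$ once $(n^\epsilon,c^\epsilon,u^\epsilon)\in\mathcal{C}([0,T];\textbf{H}^7)$), while the H\"older continuity of $u^\epsilon$ at the $H^7$-level comes from the Kolmogorov continuity theorem applied to the stochastic integral, as in the final step of the proof of Lemma~\ref{lem2}. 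The main obstacle will be the combined $\textbf{H}^7$-control of the tamed term $\mathbf{g}(|u^\epsilon|^2)u^\epsilon$ --- whose high-order derivatives produce polynomial growth in $\|u^\epsilon\|_{H^7}$ via \eqref{Gu1} --- together with the chemotactic cross-diffusion; without the regularizing effects of the taming and the cubic source, such a closure would fail in three dimensions.
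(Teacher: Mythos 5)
Your overall architecture (localize via stopping times so that the cut-offs $\theta_R$ equal $1$, derive a priori bounds from the uncut structure of \eqref{Mod-1}, then push the existence time to $T$) is the same as the paper's, and your treatment of $n^{\epsilon}$ (the $L^p$-iteration using the mollifier bound and the cubic absorption) and of $c^{\epsilon}$ (maximum principle) is sound. The gap is in the velocity estimate. You claim that the taming term, via the R\"ockner--Zhang argument, yields an $R$-independent moment bound on $\mathbb{E}\|u^{\epsilon}\|_{L^{\infty}(0,T;L^{\infty})}^p$. It does not: the It\^o computation with the tamed nonlinearity (see \eqref{0-6}) produces $\mathbb{E}\sup_{t}\|u^{\epsilon}\|_{H^1}^2+\mathbb{E}\int_0^T\bigl(\|u^{\epsilon}\|_{H^2}^2+\||u^{\epsilon}||\nabla u^{\epsilon}|\|_{L^2}^2\bigr)\mathrm{d}t$, i.e.\ at best $L^2(0,T;L^{\infty})$ by Sobolev embedding, not $L^{\infty}(0,T;L^{\infty})$. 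An $L^{\infty}_{t,x}$ bound would require $\sup_t\|u^{\epsilon}\|_{H^2}$, which is itself a second-level energy estimate whose Gronwall factor is $\exp\bigl(C\int_0^T(\|u^{\epsilon}\|_{H^2}^2+\|u^{\epsilon}\|_{H^1}^2\|u^{\epsilon}\|_{H^2}^2)\mathrm{d}t\bigr)$; taking expectations of such exponentials is exactly what one cannot do. Without the $L^{\infty}_{t,x}$ control, your one-shot $\mathbf{H}^7$ Gronwall does not close: by \eqref{Gu1} the tamed term contributes $(1+\|u^{\epsilon}\|_{H^2}^{26})\|u^{\epsilon}\|_{H^7}^2$ and the convection contributes $\|u^{\epsilon}\|_{L^{\infty}}^2\|u^{\epsilon}\|_{H^7}^2$, so the Gronwall coefficient involves high powers of $\|u^{\epsilon}\|_{H^2}$ whose time integral is not controlled in expectation by the $H^1$-level energy. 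Consequently the claimed $R$-independent bound on $\mathbb{E}\sup_{t\le T\wedge\tau^R}\|\textbf{y}^{\epsilon}\|_{\textbf{H}^7}^2$, and hence the Chebyshev step $\mathbb{P}\{\tau^R<T\}\le C/R^2$, is not justified.

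The paper avoids this by a four-stage pathwise bootstrap $\mathbf{L}^2\to\mathbf{H}^1\to\mathbf{H}^2\to\mathbf{H}^7$ with nested stopping times $\tau^{L_0},\dots,\tau^{L_3}$ (see \eqref{0-4}, \eqref{0-10}, \eqref{0-18}, \eqref{0-26}): at each stage the Gronwall argument is run pathwise up to the stopping time, so the exponential factors are bounded by deterministic constants depending on $L_0,\dots,L_i$ (no moment of an exponential is ever taken), and at the end one lets $L_3,\dots,L_0\to\infty$ one by one using $T\wedge\textbf{t}^{L}_i\to T\wedge\textbf{t}^{L}_{i-1}$ almost surely. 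Note that this yields no $R$- or $L_i$-uniform moment bound at the $\mathbf{H}^7$ level and none is needed; the conclusion $\widetilde{\textbf{t}^{\epsilon}}=\infty$ a.s.\ follows from the monotone limit of the stopping times rather than from Chebyshev. To repair your argument you should either adopt this cascade, or at minimum insert the intermediate $H^1$ and $H^2$ velocity estimates with their own stopping times before attempting the $\mathbf{H}^7$ closure.
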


\begin{proof}[\emph{\textbf{Proof}}]
The proof is divided into two steps. At first, we shall show the existence and uniqueness of local maximal pathwise solutions. For any given $R>1$, let $\textbf{y}^{\epsilon,R}(t)$ be the global pathwise solution to \eqref{Mod-1} with cut-offs (cf. Lemma \ref{lem2.4}). To remove the cut-off functions, we define
\begin{equation*}
\begin{split}
\textbf{t}^\epsilon :=\inf\{t>0;~ \sup_{r\in [0,t]}\|\textbf{y}^{\epsilon,R}(r)\|_{\textbf{H}^{7}}> \|\textbf{y}^{\epsilon,R}(0)\|_{\textbf{H}^{7}}+1\}.
\end{split}
\end{equation*}
For any $R>0$, we have $\mathbb{P}\{\textbf{t}^\epsilon > 0\}=1$. Let $C_\epsilon>0$ be a constant such that $\|\textbf{y}^{\epsilon,R}(0)\|_{\textbf{H}^{7}}\leq C_\epsilon$, and   $C_{\textrm{emb}}>0$ be the Sobolev embedding constant such that $\|\cdot\|_{W^{1,\infty}}\leq C_{\textrm{emb}} \|\cdot\|_{\textbf{H}^{7}} $. Then we have
\begin{equation*}
\begin{split}
 \|\textbf{y}^{\epsilon,R}(t)\|_{W^{1,\infty}} \leq C_{\textrm{emb}} \|\textbf{y}^{\epsilon,R}(t)\|_{\textbf{H}^{7}} \leq C_{\textrm{emb}} (\|\textbf{y}^{\epsilon,R}(0)\|_{\textbf{H}^{7}} +1) \leq C_{\textrm{emb}} (C_\epsilon +1),
\end{split}
\end{equation*}
for any $t \in [0,\textbf{t}^\epsilon ]$.   If $R > C_{\textrm{emb}} (C_\epsilon +1)$, then  $\theta_R(\|\textbf{y}^{\epsilon,R}\|_{W^{1,\infty}})\equiv 1$, $\mathbb{P}$-a.s., which means that $(\textbf{y}^{\epsilon,R}, \textbf{t}^\epsilon )=(\textbf{y}^{\epsilon}, \textbf{t}^\epsilon )$ is a local pathwise solution to \eqref{Mod-1}. Then, by using a standard method (cf. \cite{Breit}), one can extend the solution $\textbf{y}^{\epsilon}$ to a maximal time of existence $\widetilde{\textbf{t}^\epsilon}$.

Secondly, we shall prove that the maximal solution can be extended to infinity, that is $\mathbb{P}\{\widetilde{\textbf{t}^\epsilon}=\infty\}=1$. To this end, we will leverage the parabolic nature of system \eqref{Mod-1} and an iterative process to progressively derive suitable $\mathbf{H}^7$-energy estimates for solution $\textbf{y}^{\epsilon}$. According to the standard reasoning, it is easy to obtain the nonnegativity of $n^\epsilon$ and $c^\epsilon$ (cf. \cite{winkler2012global}). As a result, we infer from the second equation of \eqref{Mod-1} that
\begin{equation}\label{0-1}
\begin{split}
&\frac{1}{2}\frac{\mathrm{d}}{\mathrm{d}t}\|c^{\epsilon}\|_{L^2}^2+\|\nabla c^{\epsilon}\|_{L^2}^2=-\int_{\mathbb{R}^3}|c^{\epsilon}|^2(n^{\epsilon}*\rho^\epsilon)\mathrm{d}x\leq 0.
\end{split}
\end{equation}
Similarly, we have
\begin{equation}\label{0-2}
\begin{split}
&\frac{1}{2}\frac{\mathrm{d}}{\mathrm{d}t}\|n^{\epsilon}\|_{L^2}^2+\|\nabla n^{\epsilon}\|_{L^2}^2+a\|n^{\epsilon}\|_{L^2}^2+\|n^{\epsilon}\|_{L^4}^4\\
&=(a+1)\|n^{\epsilon}\|_{L^3}^3+\int_{\mathbb{R}^3}n^{\epsilon}\nabla (c^{\epsilon}*\rho^\epsilon)\cdot\nabla n^{\epsilon}\mathrm{d}x\\
&\leq\frac{1}{2}\|n^{\epsilon}\|_{L^4}^4+C_a\|n^{\epsilon}\|_{L^2}^2+\frac{1}{2}\|\nabla n^{\epsilon}\|_{L^2}^2+C_{\epsilon}\|c^{\epsilon}\|_{L^2}^2\|n^{\epsilon}\|_{L^2}^2.
\end{split}
\end{equation}
By using Gronwall's lemma, we see from \eqref{0-1}-\eqref{0-2} that for every $t\in [0,T]$ and $\mathbb{P}$-a.s.
\begin{equation}\label{0-3}
\begin{split}
&\|n^{\epsilon}(t)\|_{L^2}^2+\|c^{\epsilon}(t)\|_{L^2}^2+\int_0^t(\|\nabla n^{\epsilon}(r)\|_{L^2}^2+\|\nabla c^{\epsilon}(r)\|_{L^2}^2)\mathrm{d}r\lesssim_{a,\epsilon,T,n_0,c_0}1.
\end{split}
\end{equation}
Moreover, by using the It\^{o} formula to $\|u^{\epsilon}\|_{L^2}^2$ and applying the BDG inequality, we have
\begin{equation*}
\begin{split}
\mathbb{E}\sup_{r\in [0,t]}\| u^{\epsilon}(r)\|_{L^2}^2+ \mathbb{E}\int_0^t\|\nabla  u^{\epsilon}(r)\|_{L^2}^2 \mathrm{d}r \lesssim_{\phi} \| u_0\|_{L^2}^2+  \mathbb{E} \int _0^t(1 + \|n^{\epsilon}(r)\|_{L^2}^2+ \|u^{\epsilon}(r)\|_{L^2}^2)\mathrm{d}r,
\end{split}
\end{equation*}
which combined with \eqref{0-3} implies that
\begin{equation}\label{0-4}
\begin{split}
\mathbb{E}\sup_{t\in [0,T]}\|(n^\epsilon,c^\epsilon,u^\epsilon)(t) \|_{\textbf{L}^2}^2 +\mathbb{E} \int_0^T  \|(n^\epsilon,c^\epsilon,u^\epsilon)(t) \|_{\textbf{H}^1}^2  \mathrm{d} t \lesssim _{a,\epsilon,\phi,T,\mathbf{y}_0} 1 .
\end{split}
\end{equation}
Next, we shall establish the $\textbf{H}^1$-estimate of solution $\textbf{y}^{\epsilon}$. Let us define
$$\tau^{L_0}:= \inf\{t>0;~ \sup_{r\in [0,t]}\|( n^\epsilon, c^\epsilon, u^\epsilon)(r)\|_{\textbf{L}^2}^2>L_0~ \textrm{or}~  \int_0^t \|( n^\epsilon, c^\epsilon, u^\epsilon)(r)\|_{\textbf{H}^1}^2 \mathrm{d}r>L_0\}.
$$
Then by \eqref{0-4}, we have $\tau^{L_0}\rightarrow\infty$ as $L_0\rightarrow\infty$, $\mathbb{P}$-a.s. Applying the It\^{o} formula to $\|\nabla u^{\epsilon}\|_{L^2}^2$, we have
\begin{equation}\label{0-5}
\begin{split}
&\frac{1}{2}\|\nabla u^\epsilon(t)\|_{L^2}^2+\int_0^t\|\Delta u^\epsilon(r)\|_{L^2}^2\mathrm{d}r\\
&=\frac{1}{2}\|\nabla u^\epsilon(0)\|_{L^2}^2+\int_0^t[(\mathbf{g}(|u^{\epsilon}(r)|^2)u^{\epsilon}(r),\Delta u^{\epsilon}(r))_{L^2}+((u^{\epsilon}(r)\cdot \nabla) u^{\epsilon}(r),\Delta u^{\epsilon}(r))_{L^2}\\
&\quad-((n^{\epsilon}(r)\nabla \phi)*\rho^\epsilon,\Delta u^{\epsilon}(r))_{L^2}]\mathrm{d}r+\frac{1}{2}\int_0^t\|\nabla \textbf{P}  G(u^{\epsilon}(r))\|_{\mathcal{L}_2(\mathbf{U};L^2)}^2\mathrm{d}r\\
&\quad+\sum_{j\geq1}\int_0^t(\nabla G _j(u^{\epsilon}(r)),\nabla u^{\epsilon}(r))_{L^2}\mathrm{d}W_j(r)\\
&\leq\|u^\epsilon(0)\|_{H^1}^2+\frac{1}{2}\int_0^t\|\Delta u^\epsilon(r)\|_{L^2}^2\mathrm{d}r+\int_0^t[(\mathbf{g}(|u^{\epsilon}(r)|^2)u^{\epsilon}(r),\Delta u^{\epsilon}(r))_{L^2}+\||u^{\epsilon}(r)||\nabla u^{\epsilon}(r)|\|_{L^2}^2\\
&\quad+\|\nabla\phi\|_{L^{\infty}}^2\|n^{\epsilon}(r)\|_{L^2}^2]\mathrm{d}r+C\int_0^t(1+\|u^{\epsilon}(r)\|_{H^1}^2)\mathrm{d}r+\sum_{j\geq1}\int_0^t(\nabla G _j(u^{\epsilon}(r)),\nabla u^{\epsilon}(r))_{L^2}\mathrm{d}W_j(r).
\end{split}
\end{equation}
By using the conditions \eqref{con1} and \eqref{connew1}, we infer that
\begin{equation}\label{0-6}
\begin{split}
(\mathbf{g}(|u|^2)u,\Delta u)_{L^2}&=\int_{\mathbb{R}^3}|u(x)|^2u(x)\cdot\Delta u(x)\mathrm{d}x-\int_{\mathbb{R}^3}\mathbf{g}_1(|u(x)|^2)u(x)\cdot\Delta u(x)\mathrm{d}x\\
&=-3\int_{\mathbb{R}^3}|u(x)|^2|\nabla u(x)|^2\mathrm{d}x+2\sum_{i=1}^3\int_{\mathbb{R}^3}\mathbf{g}_1'(|u(x)|^2)\langle u(x),\partial_iu(x)\rangle_{\mathbb{R}^3}^2\mathrm{d}x\\
&\quad+\int_{\mathbb{R}^3}\mathbf{g}_1(|u(x)|^2)|\nabla u(x)|^2\mathrm{d}x\\
&\leq-\int_{\mathbb{R}^3}|u(x)|^2|\nabla u(x)|^2\mathrm{d}x+C_{\mathbf{N}}\|\nabla u(x)\|_{L^2}^2.
\end{split}
\end{equation}
Plugging \eqref{0-6} into \eqref{0-5}, we have
\begin{equation}\label{000-6}
\begin{split}
&\|\nabla u^\epsilon(t)\|_{L^2}^2+\int_0^t\|\Delta u^\epsilon(r)\|_{L^2}^2\mathrm{d}r\\
&\lesssim_{\mathbf{N}}\|u^\epsilon(0)\|_{H^1}^2+\int_0^t(1+\|\nabla\phi\|_{L^{\infty}}^2\|n^{\epsilon}(r)\|_{L^2}^2+\|u^{\epsilon}(r)\|_{H^1}^2)\mathrm{d}r\\
&\quad+\sum_{j\geq1}\int_0^t(\nabla G _j(u^{\epsilon}(r)),\nabla u^{\epsilon}(r))_{L^2}\mathrm{d}W_j(r).
\end{split}
\end{equation}
By using the BDG inequality and estimate \eqref{0-4}, we infer from \eqref{000-6} that
\begin{equation*}
\begin{split}
&\mathbb{E}\sup_{r\in [0,t]}\|\nabla u^{\epsilon}(r)\|_{L^2}^2+\mathbb{E}\int_0^t\|\Delta u^{\epsilon}(r)\|_{L^2}^2 \mathrm{d}r \lesssim C_{a,\epsilon,\phi,T,\mathbf{y}_0}+  C_{\mathbf{N}}\mathbb{E} \int _0^t \|\nabla u^{\epsilon}(r)\|_{L^2}^2\mathrm{d}r,
\end{split}
\end{equation*}
which combined with Gronwall's lemma means that
\begin{equation}\label{0-7}
\begin{split}
&\mathbb{E}\sup_{t\in [0,T]}\|u^{\epsilon}(t)\|_{H^1}^2+ \mathbb{E}\int_0^T\|u^{\epsilon}(t)\|_{H^2}^2 \mathrm{d}t \lesssim_{a,\epsilon,\phi,\mathbf{N},T,\mathbf{y}_0}1.
\end{split}
\end{equation}
Let us define
$$\tau^{L_1}:= \inf\{t>0;~ \sup_{r\in [0,t]}\| u^\epsilon(r)\|_{H^1}^2>L_1~ \textrm{or}~  \int_0^t \|u^\epsilon(r)\|_{H^2}^2 \mathrm{d}r>L_1\}.
$$
Then by \eqref{0-7}, we have $\tau^{L_1}\rightarrow\infty$ as $L_1\rightarrow\infty$ a.s. By using the embedding $H^2(\mathbb{R}^3)\hookrightarrow L^{\infty}(\mathbb{R}^3)$, and the equation satisfied by $n^\epsilon$, we have
\begin{equation}\label{0-8}
\begin{split}
&\frac{1}{2}\frac{\mathrm{d}}{\mathrm{d}t}\|\nabla n^{\epsilon}\|_{L^2}^2+\|\Delta n^{\epsilon}\|_{L^2}^2+a\|\nabla n^{\epsilon}\|_{L^2}^2+3\|n^{\epsilon}\nabla n^{\epsilon}\|_{L^2}^2\\
&=(u^{\epsilon}\cdot \nabla n^{\epsilon},\Delta n^{\epsilon})_{L^2}+(\nabla\cdot(n^{\epsilon}\nabla (c^{\epsilon}*\rho^\epsilon)),\Delta n^{\epsilon})_{L^2}+2(1+a)(n^{\epsilon}\nabla n^{\epsilon},\nabla n^{\epsilon})_{L^2}\\
&\quad\leq\frac{1}{2}\|\Delta n^{\epsilon}\|_{L^2}^2+C\|u^{\epsilon}\|_{L^{\infty}}^2\|\nabla n^{\epsilon}\|_{L^2}^2+C\|\nabla(c^{\epsilon}*\rho^\epsilon)\|_{L^{\infty}}^2\|\nabla n^{\epsilon}\|_{L^2}^2\\
&\quad+C\|\Delta(c^{\epsilon}*\rho^\epsilon)\|_{L^{\infty}}^2\|n^{\epsilon}\|_{L^2}^2+\|n^{\epsilon}\nabla n^{\epsilon}\|_{L^2}^2+C_a\|\nabla n^{\epsilon}\|_{L^2}^2\\
&\leq\frac{1}{2}\|\Delta n^{\epsilon}\|_{L^2}^2+\|n^{\epsilon}\nabla n^{\epsilon}\|_{L^2}^2+C\|u^{\epsilon}\|_{H^{2}}^2\|\nabla n^{\epsilon}\|_{L^2}^2+C_{\epsilon}\|c^{\epsilon}\|_{L^{2}}^2\|\nabla n^{\epsilon}\|_{L^2}^2\\
&\quad+C_{\epsilon}\|c^{\epsilon}\|_{L^{2}}^2\|n^{\epsilon}\|_{L^2}^2+C_a\|\nabla n^{\epsilon}\|_{L^2}^2,
\end{split}
\end{equation}
which implies that
\begin{equation*}
\begin{split}
&\frac{\mathrm{d}}{\mathrm{d}t}\|\nabla n^{\epsilon}\|_{L^2}^2+\|\Delta n^{\epsilon}\|_{L^2}^2\lesssim_{\epsilon,a}\|c^{\epsilon}\|_{L^{2}}^2\|n^{\epsilon}\|_{L^2}^2+(1+\|u^{\epsilon}\|_{H^{2}}^2+\|c^{\epsilon}\|_{L^{2}}^2)\|\nabla n^{\epsilon}\|_{L^2}^2.
\end{split}
\end{equation*}
Thus by applying Gronwall's lemma and noting the definition of $\tau^{L_i},~i=0,~1$, we derive that
\begin{equation*}
\begin{split}
&\|\nabla n^{\epsilon}(t\wedge\tau^{L_0}\wedge\tau^{L_1})\|_{L^2}^2+\int_0^{t\wedge\tau^{L_0}\wedge\tau^{L_1}}\|\Delta n^{\epsilon}(r)\|_{L^2}^2\mathrm{d}r\\
&\lesssim_{\epsilon,a}e^{C_{\epsilon,a}\int_0^{t\wedge\tau^{L_0}\wedge\tau^{L_1}}(1+\|u^{\epsilon}(r)\|_{H^{2}}^2+\|c^{\epsilon}(r)\|_{L^{2}}^2)\mathrm{d}r}\\
&\quad\times\left(\|n^{\epsilon}(0)\|_{H^1}^2+\int_0^{t\wedge\tau^{L_0}\wedge\tau^{L_1}}\|c^{\epsilon}(r)\|_{L^{2}}^2\|n^{\epsilon}(r)\|_{L^2}^2\mathrm{d}r\right)\\
&\lesssim_{\epsilon,a}e^{C_{\epsilon,a,T,L_0,L_1}}\left(\|n^{\epsilon}(0)\|_{H^1}^2+TL_0^2\right)\\
&\lesssim_{\epsilon,a,T,L_0,L_1,n_0}1,
\end{split}
\end{equation*}
which combined with \eqref{0-4} implies that
\begin{equation}\label{0-9}
\begin{split}
&\mathbb{E}\sup_{t\in [0,T]}\|n^{\epsilon}(t\wedge\tau^{L_0}\wedge\tau^{L_1})\|_{H^1}^2+ \mathbb{E}\int_0^{T\wedge\tau^{L_0}\wedge\tau^{L_1}}\|n^{\epsilon}(t)\|_{H^2}^2 \mathrm{d}t \lesssim_{a,\epsilon,\phi,\mathbf{N},T,L_0,L_1,\mathbf{y}_0}1.
\end{split}
\end{equation}
By applying a similar argument as that for \eqref{0-8}-\eqref{0-9}, it is not difficult to obtain
\begin{equation*}
\begin{split}
&\mathbb{E}\sup_{t\in [0,T]}\|c^{\epsilon}(t\wedge\tau^{L_0}\wedge\tau^{L_1})\|_{H^1}^2+ \mathbb{E}\int_0^{T\wedge\tau^{L_0}\wedge\tau^{L_1}}\|c^{\epsilon}(t)\|_{H^2}^2 \mathrm{d}t \lesssim_{a,\epsilon,\phi,\mathbf{N},T,L_0,L_1,\mathbf{y}_0}1.
\end{split}
\end{equation*}
Therefore, we have
\begin{equation}\label{0-10}
\begin{split}
&\mathbb{E}\sup_{t\in [0,T]}\|(n^\epsilon,c^\epsilon,u^\epsilon)(t\wedge\tau^{L_0}\wedge\tau^{L_1})\|_{\textbf{H}^1}^2 +\mathbb{E} \int_0^{T\wedge\tau^{L_0}\wedge\tau^{L_1}}  \|(n^\epsilon,c^\epsilon,u^\epsilon)(t) \|_{\textbf{H}^2}^2  \mathrm{d} t\\
&\lesssim_{a,\epsilon,\phi,\mathbf{N},T,L_0,L_1,\mathbf{y}_0}1.
\end{split}
\end{equation}
For any $L_2>0$, let us define
$$\tau^{L_2}:= \inf\{t>0;~ \sup_{r\in [0,t]}\|( n^\epsilon, c^\epsilon, u^\epsilon)(r)\|_{\textbf{H}^1}^2>L_2~ \textrm{or}~  \int_0^t \|( n^\epsilon, c^\epsilon, u^\epsilon)(r)\|_{\textbf{H}^2}^2 \mathrm{d}r>L_2\}.
$$
Obviously, by \eqref{0-10}, we have $T\wedge\tau^{L_0}\wedge\tau^{L_1}\wedge\tau^{L_2}\rightarrow T\wedge\tau^{L_0}\wedge\tau^{L_1}$ as $L_2\rightarrow\infty$ a.s. For the sake of convenience, we denote
$$\textbf{t}^{L}_i:=\tau^{L_0}\wedge\cdots\wedge\tau^{L_i}$$
in the following discussion. Now, applying the It\^{o} formula to $\|\Delta u^{\epsilon}\|_{L^2}^2$, we infer from the Sobolev inequality $\|f\|_{L^6}\lesssim\|\nabla f\|_{L^2}$ (cf. \cite{brezis2011functional}), the estimate
\begin{equation*}
\begin{split}
\|\nabla\mathbf{g}(|u|^2)\|_{L^2}^2&\leq\|\nabla(|u|^2)\|_{L^2}^2+\|\nabla\mathbf{g}_1(|u|^2)\|_{L^2}^2\lesssim\||u||\nabla u|\|_{L^2}^2+\|\mathbf{g}_1'(|u|^2)|u||\nabla u|\|_{L^2}^2\\
&\lesssim_{\mathbf{N}}\||u||\nabla u|\|_{L^2}^2+\||\nabla u|\|_{L^2}^2
\end{split}
\end{equation*}
and the Gagliardo-Nirenberg inequality $\|\nabla f\|_{L^4}^4\lesssim\|\Delta f\|_{L^2}^3\|\nabla f\|_{L^2}$ that
\begin{equation}\label{0-11}
\begin{split}
&\|\Delta u^\epsilon(t)\|_{L^2}^2+\int_0^t\|\nabla\Delta u^\epsilon(r)\|_{L^2}^2\mathrm{d}r\lesssim\|\Delta u^\epsilon(0)\|_{L^2}^2\\
&\quad+\int_0^t(\|\nabla(\mathbf{g}(|u^{\epsilon}(r)|^2)u^{\epsilon}(r))\|_{L^2}^2+\|\nabla((u^{\epsilon}(r)\cdot\nabla) u^{\epsilon}(r))\|_{L^2}^2+\|\nabla((n^{\epsilon}(r)\nabla \phi)*\rho^\epsilon)\|_{L^2}^2\\
&\quad+\|\Delta G(u^{\epsilon}(r))\|_{\mathcal{L}_2(\mathbf{U};L^2)}^2)\mathrm{d}r+\sum_{j\geq1}\int_0^t(\Delta G _j(u^{\epsilon}(r)),\Delta u^{\epsilon}(r))_{L^2}\mathrm{d}W_j(r)\\
&\lesssim\|u^\epsilon(0)\|_{H^2}^2+\int_0^t(\|u^{\epsilon}(r)\|_{L^{\infty}}^2\|\nabla(\mathbf{g}(|u^{\epsilon}(r)|^2))\|_{L^2}^2+\|\mathbf{g}(|u^{\epsilon}(r)|^2)\|_{L^3}^2\|\nabla u^{\epsilon}(r)\|_{L^{6}}^2\\
&\quad+\|\nabla u^{\epsilon}(r)\|_{L^{4}}^4+\|u^{\epsilon}(r)\|_{H^{2}}^2\|\Delta u^{\epsilon}(r)\|_{L^2}^2+C_{\epsilon}\|\nabla\phi\|_{L^{\infty}}^2\|n^{\epsilon}(r)\|_{L^2}^2+1+\|u^{\epsilon}(r)\|_{H^2}^2)\mathrm{d}r\\
&\quad+\sum_{j\geq1}\int_0^t(\Delta G _j(u^{\epsilon}(r)),\Delta u^{\epsilon}(r))_{L^2}\mathrm{d}W_j(r)\\
&\lesssim_{\epsilon,\mathbf{N}}\|u^\epsilon(0)\|_{H^2}^2+\int_0^t(1+\|u^{\epsilon}(r)\|_{H^{1}}^4+\|\nabla\phi\|_{L^{\infty}}^2\|n^{\epsilon}(r)\|_{L^2}^2)\mathrm{d}r\\
&\quad+\int_0^t(1+\|u^{\epsilon}(r)\|_{H^{2}}^2+\|u^{\epsilon}(r)\|_{H^{1}}^4+\|u^{\epsilon}(r)\|_{H^{1}}^2\|u^{\epsilon}(r)\|_{H^{2}}^2)\|\Delta u^{\epsilon}(r)\|_{L^{2}}^2\mathrm{d}r\\
&\quad+\sum_{j\geq1}\left|\int_0^t(\Delta G _j(u^{\epsilon}(r)),\Delta u^{\epsilon}(r))_{L^2}\mathrm{d}W_j(r)\right|.
\end{split}
\end{equation}
Applying Gronwall's lemma to \eqref{0-11} and noting the definition of $\textbf{t}^{L}_2$,  we have
\begin{equation}\label{0-12}
\begin{split}
&\|\Delta u^\epsilon(t\wedge\textbf{t}^{L}_2)\|_{L^2}^2+\int_0^{t\wedge\textbf{t}^{L}_2}\|\nabla\Delta u^\epsilon(r)\|_{L^2}^2\mathrm{d}r\\
&\leq e^{C_{\epsilon,\mathbf{N}}\int_0^{t\wedge\textbf{t}^{L}_2}(1+\|u^{\epsilon}(r)\|_{H^{2}}^2+\|u^{\epsilon}(r)\|_{H^{1}}^4+\|u^{\epsilon}(r)\|_{H^{1}}^2\|u^{\epsilon}(r)\|_{H^{2}}^2)\mathrm{d}r}\\
&\quad\times C_{\epsilon,\mathbf{N}}\Bigl(\|u^\epsilon(0)\|_{H^2}^2+\int_0^{t\wedge\textbf{t}^{L}_2}(1+\|u^{\epsilon}(r)\|_{H^{1}}^4+\|\nabla\phi\|_{L^{\infty}}^2\|n^{\epsilon}(r)\|_{L^2}^2)\mathrm{d}r\\
&\quad+\sum_{j\geq1}\Bigl|\int_0^{t\wedge\textbf{t}^{L}_2}(\Delta G _j(u^{\epsilon}(r)),\Delta u^{\epsilon}(r))_{L^2}\mathrm{d}W_j(r)\Bigl|\Bigl)\\
&\leq C_{\epsilon,\mathbf{N},L_0,L_1,L_2,T,\phi,u_0}+ C_{\epsilon,\mathbf{N},L_0,L_1,L_2,T}\sum_{j\geq1}\Bigl|\int_0^{t\wedge\textbf{t}^{L}_2}(\Delta G _j(u^{\epsilon}(r)),\Delta u^{\epsilon}(r))_{L^2}\mathrm{d}W_j(r)\Bigl|.
\end{split}
\end{equation}
By using the BDG inequality, it follows that
\begin{equation}\label{0-13}
\begin{split}
&\mathbb{E}\bigg[C_{\epsilon,\mathbf{N},L_0,L_1,L_2,T}\sum_{j\geq1}\sup_{t\in[0,T]}\bigg|\int_0^{t\wedge\textbf{t}^{L}_2}(\Delta G _j(u^{\epsilon}(r)),\Delta u^{\epsilon}(r))_{L^2}\mathrm{d}W_j(r)\bigg|\bigg]\\
&\leq\frac{1}{2}\mathbb{E}\sup_{t\in[0,T]}\|\Delta u^\epsilon(t\wedge\textbf{t}^{L}_2)\|_{L^2}^2+ C_{\epsilon,\mathbf{N},L_0,L_1,L_2,T}\mathbb{E}\int_0^{T\wedge\textbf{t}^{L}_2}(1+\|u^{\epsilon}(t)\|_{H^2}^2)\mathrm{d}t.
\end{split}
\end{equation}
By combining \eqref{0-12}-\eqref{0-13} and using the Gronwall lemma, we infer from \eqref{0-10} that
\begin{equation}\label{0-14}
\begin{split}
&\mathbb{E}\sup_{t\in[0,T]}\|u^\epsilon(t\wedge\textbf{t}^{L}_2)\|_{H^2}^2+ \mathbb{E}\int_0^{T\wedge\textbf{t}^{L}_2}\|u^{\epsilon}(t)\|_{H^3}^2\mathrm{d}t\lesssim_{a,\epsilon,\mathbf{N},\phi,L_0,L_1,L_2,T,u_0}1.
\end{split}
\end{equation}
Next, we shall establish the $H^2$-estimate of solution $(n^{\epsilon},c^{\epsilon})$. Indeed, according to the $n^\epsilon$-equation, we have
\begin{equation}\label{0-15}
\begin{split}
&\frac{\mathrm{d}}{\mathrm{d}t}\|n^{\epsilon}\|_{H^2}^2+\|\nabla n^{\epsilon}\|_{H^2}^2\\
&\lesssim\|u^{\epsilon}\|_{H^2}^2\|n^{\epsilon}\|_{H^2}^2+C_{\epsilon}\|c^{\epsilon}\|_{L^{2}}^2\|n^{\epsilon}\|_{H^2}^2+\|n^{\epsilon}\|_{H^2}^2+C_a(\|(n^{\epsilon})^2\|_{H^2}^2+\|(n^{\epsilon})^3\|_{H^2}^2)\\
&\lesssim\|u^{\epsilon}\|_{H^2}^2\|n^{\epsilon}\|_{H^2}^2+C_{\epsilon}\|c^{\epsilon}\|_{L^{2}}^2\|n^{\epsilon}\|_{H^2}^2+C_a(1+\|n^{\epsilon}\|_{L^{\infty}}^4)\|n^{\epsilon}\|_{H^2}^2\\
&\lesssim_{a,\epsilon}(1+\|u^{\epsilon}\|_{H^2}^2+\|c^{\epsilon}\|_{L^{2}}^2+\|n^{\epsilon}\|_{H^1}^2\|n^{\epsilon}\|_{H^2}^2)\|n^{\epsilon}\|_{H^2}^2.
\end{split}
\end{equation}
In the proof of the above inequality, we have used the Gagliardo-Nirenberg inequality
\begin{equation*}
\|f\|_{L^{\infty}}\lesssim\|f\|_{L^6}^{\frac{1}{2}}\|f\|_{H^2}^{\frac{1}{2}}\lesssim\|f\|_{H^1}^{\frac{1}{2}}\|f\|_{H^2}^{\frac{1}{2}},\quad f\in H^2(\mathbb{R}^3).
\end{equation*}
Thus by applying the Gronwall lemma to \eqref{0-15} and taking the expectation, we infer from \eqref{0-10} and \eqref{0-14} that
\begin{equation}\label{0-17}
\begin{split}
&\mathbb{E}\sup_{t\in[0,T]}\|n^\epsilon(t\wedge\textbf{t}^{L}_2)\|_{H^2}^2+ \mathbb{E}\int_0^{T\wedge\textbf{t}^{L}_2}\|n^{\epsilon}(t)\|_{H^3}^2\mathrm{d}t\\
&\lesssim_{a,\epsilon,n_0}\mathbb{E}e^{\int_0^{T\wedge\textbf{t}^{L}_2}(1+\|u^{\epsilon}(t)\|_{H^2}^2+\|c^{\epsilon}(t)\|_{L^{2}}^2+\|n^{\epsilon}(t)\|_{H^1}^2\|n^{\epsilon}(t)\|_{H^2}^2)\mathrm{d}t}\\
&\lesssim_{a,\epsilon,\mathbf{N},\phi,L_0,L_1,L_2,T,\mathbf{y}_0}1.
\end{split}
\end{equation}
Similarly, it is not hard to derive that
\begin{equation*}
\mathbb{E}\sup_{t\in[0,T]}\|c^\epsilon(t\wedge\textbf{t}^{L}_2)\|_{H^2}^2+ \mathbb{E}\int_0^{T\wedge\textbf{t}^{L}_2}\|c^{\epsilon}(t)\|_{H^3}^2\mathrm{d}t\lesssim_{a,\epsilon,\mathbf{N},\phi,L_0,L_1,L_2,T,\mathbf{y}_0}1,
\end{equation*}
which combined with \eqref{0-14} and \eqref{0-17} implies that
\begin{equation}\label{0-18}
\mathbb{E}\sup_{t\in [0,T]}\|(n^\epsilon,c^\epsilon,u^\epsilon)(t\wedge\textbf{t}^{L}_2)\|_{\textbf{H}^2}^2 +\mathbb{E} \int_0^{T\wedge\textbf{t}^{L}_2}  \|(n^\epsilon,c^\epsilon,u^\epsilon)(t) \|_{\textbf{H}^3}^2  \mathrm{d} t\lesssim_{a,\epsilon,\mathbf{N},\phi,L_0,L_1,L_2,T,\mathbf{y}_0}1.
\end{equation}
 Next, we shall establish the $\textbf{H}^7$-estimate of solution $(n^{\epsilon},c^{\epsilon},u^{\epsilon})$. Define
$$
\tau^{L_3}:= \inf\{t>0;~ \sup_{r\in [0,t]}\|( n^\epsilon, c^\epsilon, u^\epsilon)(r)\|_{\textbf{H}^2}^2>L_3~ \textrm{or}~  \int_0^t \|( n^\epsilon, c^\epsilon, u^\epsilon)(r)\|_{\textbf{H}^3}^2 \mathrm{d}r>L_3\}.
$$
Then by \eqref{0-18}, we have $T\wedge\textbf{t}^{L}_3\rightarrow T\wedge\textbf{t}^{L}_2$ as $L_3\rightarrow\infty$ a.s. Applying It\^{o}'s formula to $\|\Lambda^7 u^{\epsilon}\|_{L^2}^2$ and using the Moser estimate, we have
\begin{equation}\label{0-19}
\begin{split}
&\|u^\epsilon(t)\|_{H^7}^2+\int_0^t\|\nabla u^\epsilon(r)\|_{H^7}^2\mathrm{d}r\lesssim\|u^\epsilon(0)\|_{H^7}^2\\
&\quad+\int_0^t(\|u^\epsilon(r)\|_{H^7}^2+\|\mathbf{g}(|u^{\epsilon}(r)|^2)u^{\epsilon}(r)\|_{H^7}^2+\|u^{\epsilon}(r)\otimes u^{\epsilon}(r)\|_{H^7}^2+\|(n^{\epsilon}(r)\nabla \phi)*\rho^\epsilon\|_{H^7}^2\\
&\quad+\|G(u^{\epsilon}(r))\|_{\mathcal{L}_2(\mathbf{U};H^7)}^2)\mathrm{d}r+\sum_{j\geq1}\int_0^t(\Lambda^7 G _j(u^{\epsilon}(r)),\Lambda^7  u^{\epsilon}(r))_{L^2}\mathrm{d}W_j(r)\\
&\lesssim\|u^\epsilon(0)\|_{H^7}^2+\int_0^t(\|u^{\epsilon}(r)\|_{L^{\infty}}^2\|\mathbf{g}(|u^{\epsilon}(r)|^2)\|_{H^7}^2+\|\mathbf{g}(|u^{\epsilon}(r)|^2)\|_{L^{\infty}}^2\| u^{\epsilon}(r)\|_{H^{7}}^2\\
&\quad+\|u^{\epsilon}(r)\|_{L^{\infty}}^2\|u^{\epsilon}(r)\|_{H^{7}}^2+C_{\epsilon}\|\nabla\phi\|_{L^{\infty}}^2\|n^{\epsilon}(r)\|_{L^2}^2+1+\|u^{\epsilon}(r)\|_{H^7}^2)\mathrm{d}r\\
&\quad+\sum_{j\geq1}\left|\int_0^t(\Lambda^7 G _j(u^{\epsilon}(r)),\Lambda^7  u^{\epsilon}(r))_{L^2}\mathrm{d}W_j(r)\right|.
\end{split}
\end{equation}
By using the estimate \eqref{Gu1}, recalling the definition of the stopping time $\textbf{t}^{L}_3$ and using the Sobolev embedding $H^2(\mathbb{R}^3)\hookrightarrow L^{\infty}(\mathbb{R}^3)$, it follows that
\begin{equation}\label{0-20}
\begin{split}
\int_0^{t\wedge\textbf{t}^{L}_3}\|u^{\epsilon}(r)\|_{L^{\infty}}^2\|\mathbf{g}(|u^{\epsilon}(r)|^2)\|_{H^7}^2\mathrm{d}r&\lesssim\int_0^{t\wedge\textbf{t}^{L}_3}(1+\|u^{\epsilon}(r)\|_{H^{2}}^{28})\|u^{\epsilon}(r)\|_{H^7}^2\mathrm{d}r\\
&\lesssim_{L_0,L_1,L_2,L_3}\int_0^{t\wedge\textbf{t}^{L}_3}\|u^{\epsilon}(r)\|_{H^7}^2\mathrm{d}r.
\end{split}
\end{equation}
By \eqref{con2}, we have
\begin{equation}\label{0-21}
\begin{split}
\int_0^{t\wedge\textbf{t}^{L}_3}\|\mathbf{g}(|u^{\epsilon}(r)|^2)\|_{L^{\infty}}^2\| u^{\epsilon}(r)\|_{H^{7}}^2\mathrm{d}r&\lesssim\int_0^{t\wedge\textbf{t}^{L}_3}\|u^{\epsilon}(r)\|_{L^{\infty}}^4\| u^{\epsilon}(r)\|_{H^{7}}^2\mathrm{d}r\\
&\lesssim_{L_0,L_1,L_2,L_3}\int_0^{t\wedge\textbf{t}^{L}_3}\|u^{\epsilon}(r)\|_{H^7}^2\mathrm{d}r.
\end{split}
\end{equation}
Plugging the estimates \eqref{0-20}-\eqref{0-21} into \eqref{0-19}, we derive that for all $t\in [0,T]$
\begin{equation*}
\begin{split}
&\|u^\epsilon(t)\|_{H^7}^2+\int_0^{t\wedge\textbf{t}^{L}_3}\|\nabla u^\epsilon(r)\|_{H^7}^2\mathrm{d}r\\
&\lesssim\|u^\epsilon(0)\|_{H^7}^2+C_{L_0,L_1,L_2,L_3,\epsilon,\phi,T}+C_{L_0,L_1,L_2,L_3}\int_0^{t\wedge\textbf{t}^{L}_3}\|u^\epsilon(r)\|_{H^7}^2\mathrm{d}r\\
&\quad+\sum_{j\geq1}\left|\int_0^{t\wedge\textbf{t}^{L}_3}(\Lambda^7 G _j(u^{\epsilon}(r)),\Lambda^7  u^{\epsilon}(r))_{L^2}\mathrm{d}W_j(r)\right|,
\end{split}
\end{equation*}
which combined with the Gronwall lemma implies that
\begin{equation}\label{0-22}
\begin{split}
&\|u^\epsilon(t)\|_{H^7}^2+\int_0^{t\wedge\textbf{t}^{L}_3}\|\nabla u^\epsilon(r)\|_{H^7}^2\mathrm{d}r\leq C_{L_0,L_1,L_2,L_3,T}\\
&\quad\times\Bigg(\|u^\epsilon(0)\|_{H^7}^2+C_{L_0,L_1,L_2,L_3,\epsilon,\phi,T}+\sum_{j\geq1}\Bigg|\int_0^{t\wedge\textbf{t}^{L}_3}(\Lambda^7 G _j(u^{\epsilon}(r)),\Lambda^7  u^{\epsilon}(r))_{L^2}\mathrm{d}W_j(r)\Bigg|\Bigg).
\end{split}
\end{equation}
By using the BDG inequality and Assumption  \ref{as3}, we infer from \eqref{0-22} that
\begin{equation*}
\begin{split}
&\mathbb{E}\sup_{t\in[0,T]}\|u^\epsilon(t\wedge\textbf{t}^{L}_3)\|_{H^7}^2+\mathbb{E}\int_0^{T\wedge\textbf{t}^{L}_3}\|\nabla u^{\epsilon}(t)\|_{H^7}^2\mathrm{d}t\\
&\leq C_{L_0,L_1,L_2,L_3,\epsilon,\phi,T,u_0}+ C_{L_0,L_1,L_2,L_3,T}\mathbb{E}\int_0^{T\wedge\textbf{t}^{L}_3}\|u^{\epsilon}(t)\|_{H^7}^2\mathrm{d}t,
\end{split}
\end{equation*}
which combined with Gronwall's lemma means that
\begin{equation}\label{0-23}
\begin{split}
&\mathbb{E}\sup_{t\in[0,T]}\|u^\epsilon(t\wedge\textbf{t}^{L}_3)\|_{H^7}^2+\mathbb{E}\int_0^{T\wedge\textbf{t}^{L}_3}\|\nabla u^{\epsilon}(t)\|_{H^7}^2\mathrm{d}t\lesssim_{L_0,L_1,L_2,L_3,\epsilon,\phi,T,u_0}1.
\end{split}
\end{equation}
Moreover, by using the $n^\epsilon$-equation, one can verify that
\begin{equation}\label{0-24}
\begin{split}
&\frac{\mathrm{d}}{\mathrm{d}t}\|n^{\epsilon}\|_{H^7}^2+\|\nabla n^{\epsilon}\|_{H^7}^2\\
&\lesssim\|u^{\epsilon}\|_{H^2}^2\|n^{\epsilon}\|_{H^7}^2+\|n^{\epsilon}\|_{H^2}^2\|u^{\epsilon}\|_{H^7}^2+C_{\epsilon}\|c^{\epsilon}\|_{L^{2}}^2\|n^{\epsilon}\|_{H^7}^2+C_a(\|(n^{\epsilon})^2\|_{H^7}^2+\|(n^{\epsilon})^3\|_{H^7}^2)\\
&\lesssim_{a}\|n^{\epsilon}\|_{H^2}^2\|u^{\epsilon}\|_{H^7}^2+(1+\|u^{\epsilon}\|_{H^2}^2+C_{\epsilon}\|c^{\epsilon}\|_{L^{2}}^2+\|n^{\epsilon}\|_{H^2}^4)\|n^{\epsilon}\|_{H^7}^2.
\end{split}
\end{equation}
Applying the Gronwall lemma to \eqref{0-24} and using the estimate \eqref{0-23}, we have
\begin{equation}\label{0-25}
\begin{split}
&\mathbb{E}\sup_{t\in[0,T]}\|n^\epsilon(t\wedge\textbf{t}^{L}_3)\|_{H^7}^2+\mathbb{E}\int_0^{T\wedge\textbf{t}^{L}_3}\|\nabla n^{\epsilon}(t)\|_{H^7}^2\mathrm{d}t\lesssim_{L_0,L_1,L_2,L_3,\epsilon,\phi,a,T,n_0,u_0}1.
\end{split}
\end{equation}
Similarly, we also have for $c^\epsilon$-equation
\begin{equation*}
\mathbb{E}\sup_{t\in[0,T]}\|c^\epsilon(t\wedge\textbf{t}^{L}_3)\|_{H^7}^2+\mathbb{E}\int_0^{T\wedge\textbf{t}^{L}_3}\|\nabla c^{\epsilon}(t)\|_{H^7}^2\mathrm{d}t\lesssim_{L_0,L_1,L_2,L_3,\epsilon,\phi,T,n_0,c_0,u_0}1,
\end{equation*}
which together with \eqref{0-23} and \eqref{0-25} implies that
\begin{equation}\label{0-26}
\mathbb{E}\sup_{t\in [0,T]}\|(n^\epsilon,c^\epsilon,u^\epsilon)(t\wedge\textbf{t}^{L}_3)\|_{\textbf{H}^7}^2 +\mathbb{E} \int_0^{T\wedge\textbf{t}^{L}_3}  \|(n^\epsilon,c^\epsilon,u^\epsilon) (t)\|_{\textbf{H}^8}^2  \mathrm{d} t\lesssim_{a,\epsilon,\phi,L_0,L_1,L_2,L_3,T,\mathbf{y}_0}1.
\end{equation}
Thereby, for any $T>0$ and $L_0,L_1,L_2,L_3>0$,  it follows that
\begin{equation}\label{0-27}
\widetilde{\textbf{t}^\epsilon}\geq T\wedge \textbf{t}^{L}_3 =T \wedge\tau^{L_0}\wedge\tau^{L_1}\wedge\tau^{L_2}\wedge\tau^{L_3},~\mathbb{P}\textrm{-a.s.},
\end{equation}
where $\widetilde{\textbf{t}^\epsilon}$ is the maximal existence time. Let $L_0,L_1,L_2,L_3$ in \eqref{0-27} tend to $+\infty$ one by one and using the fact $T\wedge\textbf{t}^{L}_i\rightarrow T\wedge\textbf{t}^{L}_{i-1}$ as $L_i\rightarrow+\infty$ ($i=1,2,3$), we obtain $\mathbb{P}\{\widetilde{\textbf{t}^\epsilon}=\infty\}=1$, which implies that the solution $(n^\epsilon,c^\epsilon,u^\epsilon)$ exists globally in time.

Finally, due to the fact of $n^\epsilon\in\mathcal {C}([0,T];H^{7}(\mathbb{R}^3))$,  $\mathbb{P}$-a.s., one can verify that $\Delta n ^\epsilon\in  \mathcal {C}([0,T];H^{5}(\mathbb{R}^3))$, $ \nabla\cdot \left(n^\epsilon\nabla (c^\epsilon*\rho^\epsilon)\right)\in\mathcal {C}([0,T];H^{5}(\mathbb{R}^3))$, $L(n^{\epsilon})\in\mathcal {C}([0,T];H^{7}(\mathbb{R}^3))$ and $u^\epsilon\cdot \nabla n^\epsilon\in\mathcal {C}([0,T];H^{6}(\mathbb{R}^3))$, $\mathbb{P}$-a.s. As a result, for almost all $\omega\in \Omega$, we deduce from the $n^\epsilon$-equation in \eqref{Mod-1} that $\partial_tn^{\epsilon}\in\mathcal {C}([0,T];H^{5}(\mathbb{R}^3))$, which shows that $n^{\epsilon}\in\mathcal {C}^1([0,T];H^{5}(\mathbb{R}^3))$, $\mathbb{P}$-a.s. In a similar manner, one can also show that $c^{\epsilon}\in\mathcal {C}^1([0,T];H^{5}(\mathbb{R}^3))$. The proof of Lemma \ref{lem2.5} is thus completed.
\end{proof}

\section{Proof of Theorem \ref{th1}}
In this section, we shall first derive a uniform  entropy-energy estimate and a further higher-order energy estimate (independent of $\epsilon$) for the smooth approximate solutions $\{\textbf{y}^{\epsilon}\}_{\epsilon >0} $. Then, we shall prove that the original system \eqref{CNS} admits a unique strong solution by using stochastic compactness method. Let us begin with the following entropy-energy estimate.

\subsection{A stochastic entropy-energy estimate}\label{sec3-1}

\begin{lemma} \label{lem3}
Let $(n^\epsilon,c^\epsilon,u^\epsilon)$ be the global solution of \eqref{Mod-1} constructed in Lemma \ref{lem2.5} with $\epsilon \in (0,1)$. Then for all $p\geq1$ and $T>0$ we have
\begin{equation}\label{lem3-1-1}
\begin{split}
\mathbb{E} \sup_{t\in [0,T]} \left(\mathcal{F}(n^{\epsilon},c^{\epsilon},u^{\epsilon})(t)\right)^p
+ \mathbb{E}\left(\int_0^{T}\mathcal{G}(n^{\epsilon},c^{\epsilon},u^{\epsilon})(t)\mathrm{d}t\right)^p\lesssim _{\phi,a,\mathbf{N},p,T,n_0,c_0,u_0}  1,
\end{split}
\end{equation}
where
\begin{equation*}
\begin{split}
\mathcal{F}(n^{\epsilon},c^{\epsilon},u^{\epsilon})(t)&:=\|n^{\epsilon}(t)\|_{L^1\cap L\log L}+\|\nabla\sqrt{c^{\epsilon}(t)}\|_{L^2}^2+\|u^{\epsilon}(t)\|_{L^2}^2,\\
\mathcal{G}(n^{\epsilon},c^{\epsilon},u^{\epsilon})(t)&:=\|\nabla\sqrt{n^{\epsilon}(t)+1}\|_{L^2}^2+\|\sqrt{c^{\epsilon}(t)}|D^2 \ln c^{\epsilon}(t)|\|_{L^2}^2+\|n^{\epsilon}(t)*\rho^{\epsilon}|\nabla\sqrt{c^{\epsilon}(t)}|^2\|_{L^1}\\
&\quad+\|\nabla u^{\epsilon}(t)\|_{L^2}^2+\|u^{\epsilon}(t)\|_{L^4}^4.
\end{split}
\end{equation*}
Here, $L \emph{\textrm{log}} L(\mathbb{R}^3)$ denotes the  Zygmund space which is equipped with the norm
$$
\|f\|_{L \log L}=\inf  \{k>0; \int_{\mathbb{R}^3} Z(f/ k) \mathrm{d}x \leq 1 \}
$$
with respect to $Z(t) =t\ln^+ t$ if $t\geq1$ and $Z(t) =0$ otherwise.
\end{lemma}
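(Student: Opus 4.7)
The plan is to introduce the combined entropy-energy functional
\begin{equation*}
\mathcal{E}(n^\epsilon,c^\epsilon,u^\epsilon)(t)
:= \int_{\mathbb{R}^3}(n^\epsilon+1)\log(n^\epsilon+1)\,\mathrm{d}x
  + 2\|\nabla\sqrt{c^\epsilon}\|_{L^2}^2
  + \tfrac{1}{2}\|u^\epsilon\|_{L^2}^2,
\end{equation*}
and to apply the chain rule / It\^o formula to each piece separately, using the smoothness guaranteed by Lemma \ref{lem2.5}. Since the $n^\epsilon$- and $c^\epsilon$-equations are purely deterministic, the first two contributions are handled by the classical (pathwise) chain rule, while the third requires It\^o's formula because of the term $G(u^\epsilon)\mathrm{d}W$. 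The goal is to obtain a differential inequality of the form
\begin{equation*}
\mathrm{d}\mathcal{E} + c\,\mathcal{D}\,\mathrm{d}t \le C(1+\mathcal{E})\,\mathrm{d}t + \text{(martingale)},
\end{equation*}
where $\mathcal{D}$ is a quantity controlling every term inside $\mathcal{G}(n^\epsilon,c^\epsilon,u^\epsilon)$.

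\emph{Deterministic cancellations.} Using $\nabla\cdot u^\epsilon=0$, the transport terms in the $n^\epsilon$- and $c^\epsilon$-equations contribute nothing after integration by parts. Testing the $n^\epsilon$-equation against $\log(n^\epsilon+1)+1$ produces the dissipation $\int\frac{|\nabla n^\epsilon|^2}{n^\epsilon+1}\mathrm{d}x\asymp \|\nabla\sqrt{n^\epsilon+1}\|_{L^2}^2$, a cross term $\int\frac{n^\epsilon}{n^\epsilon+1}\nabla n^\epsilon\cdot\nabla(c^\epsilon*\rho^\epsilon)\,\mathrm{d}x$ from chemotaxis, and a bounded contribution from the cubic logistic source $L(n^\epsilon)$ (controlled because $(\log(n^\epsilon+1)+1) L(n^\epsilon) \le C-\frac{1}{2}(n^\epsilon)^3\log(n^\epsilon+1)$). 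A direct computation for $\|\nabla\sqrt{c^\epsilon}\|_{L^2}^2$ along the $c^\epsilon$-equation yields, after integration by parts in the spirit of Winkler \cite{winkler2012global}, the Fisher-type dissipation $\tfrac{1}{2}\int c^\epsilon|D^2\log c^\epsilon|^2\mathrm{d}x+\int (n^\epsilon*\rho^\epsilon)|\nabla\sqrt{c^\epsilon}|^2\mathrm{d}x$, plus a term that cancels the chemotaxis cross term up to an error of order $\|\nabla\sqrt{c_0}\|_{L^2}\|\rho^\epsilon-\delta_0\|_{W^{-1,\infty}}$, which can be absorbed by the dissipation via Young's inequality uniformly in $\epsilon$. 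The $L^1$ and $L^1\log L$ bounds on $n^\epsilon$ follow by integrating the $n^\epsilon$-equation against $1$ and combining with the standard equivalence between $\|\cdot\|_{L\log L}$ and $\int (n+1)\log(n+1)$.

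\emph{Stochastic piece.} For $\tfrac{1}{2}\|u^\epsilon\|_{L^2}^2$, It\^o's formula (Lemma \ref{lem2.5} ensures enough regularity) gives the dissipation $\|\nabla u^\epsilon\|_{L^2}^2$ together with the crucial taming contribution
\begin{equation*}
\int_{\mathbb{R}^3}\mathbf{g}(|u^\epsilon|^2)|u^\epsilon|^2\,\mathrm{d}x
\gtrsim \|u^\epsilon\|_{L^4}^4 - C_\mathbf{N}\|u^\epsilon\|_{L^2}^2,
\end{equation*}
by the definition of $\mathbf{g}$ in Assumption \ref{as1}, the coupling $(u^\epsilon,\textbf{P}[(n^\epsilon\nabla\phi)*\rho^\epsilon])_{L^2}$ controlled by $\|\nabla\phi\|_{L^\infty}\|n^\epsilon\|_{L^1}^{1/2}\|n^\epsilon\|_{L^2}^{1/2}\|u^\epsilon\|_{L^2}$ (absorbed using the $\|n^\epsilon\|_{L^1}$ and $\|\nabla\sqrt{n^\epsilon+1}\|_{L^2}$ bounds), the It\^o correction $\tfrac{1}{2}\|G(u^\epsilon)\|_{\mathcal{L}_2(\mathbf{U};\mathbb{H}^0)}^2\lesssim 1+\|u^\epsilon\|_{L^2}^2$ by Assumption \ref{as3}, and the martingale $\int_0^t(u^\epsilon,G(u^\epsilon)\,\mathrm{d}W)_{L^2}$.

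\emph{Closing the estimate.} Raising to the $p$-th power, taking the supremum in time, and invoking the BDG inequality yields
\begin{equation*}
\mathbb{E}\sup_{r\in[0,t]}|M(r)|^p
\le \tfrac{1}{2}\mathbb{E}\sup_{r\in[0,t]}\mathcal{E}(r)^p
  + C_p\mathbb{E}\int_0^t(1+\mathcal{E}(r)^p)\mathrm{d}r,
\end{equation*}
so that after absorbing the fractional supremum into the left-hand side and applying the Gronwall lemma, we obtain the claimed uniform bound \eqref{lem3-1-1}. The estimate for $\mathbb{E}(\int_0^T\mathcal{G}\,\mathrm{d}t)^p$ follows by integrating the dissipation side and raising to the $p$-th power before taking expectations.

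\emph{Main obstacle.} The delicate point is that the Friedrichs mollification of $c^\epsilon$ (respectively of $n^\epsilon$) inside the chemotaxis term and the attractant equation destroys the exact Winkler cancellation between $\int\frac{n^\epsilon}{n^\epsilon+1}\nabla n^\epsilon\cdot\nabla c^\epsilon$ and the corresponding cross term arising from $\frac{\mathrm{d}}{\mathrm{d}t}\|\nabla\sqrt{c^\epsilon}\|_{L^2}^2$. The plan is to rewrite the mismatch as a commutator $[\rho^\epsilon*, \cdot\,]$ acting on $H^1$-objects and to use that, for any $f\in H^1$, $\|\rho^\epsilon*f-f\|_{L^2}\lesssim\epsilon\|\nabla f\|_{L^2}$, so that the resulting error is controlled by the Fisher dissipation and absorbed uniformly in $\epsilon$. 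All other steps are, modulo bookkeeping, standard chemotaxis-fluid entropy manipulations combined with the BDG/Gronwall machinery already deployed in the proof of Lemma \ref{lem2}.
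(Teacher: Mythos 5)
Your overall strategy coincides with the paper's: the same combined functional $\int(n^\epsilon+1)\ln(n^\epsilon+1)\,\mathrm{d}x+2\|\nabla\sqrt{c^\epsilon}\|_{L^2}^2+\tfrac12\|u^\epsilon\|_{L^2}^2$, the Fisher-information dissipation $\int c^\epsilon|D^2\ln c^\epsilon|^2$, the taming term producing $\|u^\epsilon\|_{L^4}^4$, and the BDG/Gronwall closure. The substantive problem is your ``main obstacle'' paragraph. The mollification does \emph{not} destroy the Winkler cancellation: the regularized system \eqref{Mod-1} places $\rho^\epsilon$ symmetrically ($\nabla\cdot(n^\epsilon\nabla(c^\epsilon*\rho^\epsilon))$ in the $n$-equation, $c^\epsilon(n^\epsilon*\rho^\epsilon)$ in the $c$-equation), so the two cross terms are $(\nabla n^\epsilon,\nabla(c^\epsilon*\rho^\epsilon))_{L^2}$ and $-(\nabla(n^\epsilon*\rho^\epsilon),\nabla c^\epsilon)_{L^2}$, which are equal by self-adjointness of convolution with an even mollifier; they cancel exactly and no commutator estimate is required. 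This matters beyond elegance: the fix you propose would not close if it were needed. Controlling a mismatch like $\int\nabla n^\epsilon\cdot\nabla(c^\epsilon*\rho^\epsilon-c^\epsilon)$ via $\|\rho^\epsilon*f-f\|_{L^2}\lesssim\epsilon\|\nabla f\|_{L^2}$ costs a factor of $\|\nabla n^\epsilon\|_{L^2}$ (or $\|D^2c^\epsilon\|_{L^2}$ without a weight), neither of which appears in the dissipation $\mathcal{G}$ — only $\|\nabla\sqrt{n^\epsilon+1}\|_{L^2}^2$ and the weighted quantity $\|\sqrt{c^\epsilon}|D^2\ln c^\epsilon|\|_{L^2}^2$ are available at this stage, and the claimed error ``of order $\|\nabla\sqrt{c_0}\|_{L^2}\|\rho^\epsilon-\delta_0\|_{W^{-1,\infty}}$'' does not control the actual residual. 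The correct reading of the structure is essential here.

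Two smaller points. First, after the exact cancellation there remains the genuine residual $\int\Delta(c^\epsilon*\rho^\epsilon)\ln(n^\epsilon+1)\,\mathrm{d}x$ coming from splitting $\tfrac{n^\epsilon}{n^\epsilon+1}=1-\tfrac{1}{n^\epsilon+1}$; the paper absorbs it by Young's inequality into the Fisher dissipation using the functional inequality $\int(c^\epsilon)^{-1}|D^2c^\epsilon|^2+\int(c^\epsilon)^{-3}|\nabla c^\epsilon|^4\lesssim\int c^\epsilon|D^2\ln c^\epsilon|^2$, which you subsume under ``in the spirit of Winkler'' but which is the technical heart of the deterministic part. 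Second, your pointwise bound $(\ln(n^\epsilon+1)+1)L(n^\epsilon)\le C-\tfrac12(n^\epsilon)^3\ln(n^\epsilon+1)$ cannot be integrated over $\mathbb{R}^3$ as written (a constant is not integrable); the admissible form is $\le C_a n^\epsilon-\tfrac12(n^\epsilon)^3(1+\ln(n^\epsilon+1))$, which integrates against the already-established $L^1$ bound on $n^\epsilon$, exactly as in the paper's splitting of the quadratic term over $\{n^\epsilon<2+2a\}$ and its complement.
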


\begin{proof}[\emph{\textbf{Proof}}]
Recalling that $n^{\epsilon},c^{\epsilon}\in \mathcal {C}^1([0,T];H^{5}(\mathbb{R}^3))$ and $u^{\epsilon}\in\mathcal {C}([0,T];\mathbb{H}^{7})$ (cf. Lemma \ref{lem2.5}), it follows from the Sobolev embedding $H^5(\mathbb{R}^3)\hookrightarrow \mathcal{C}^3_b(\mathbb{R}^3)$ that, $\mathbb{P}$-a.s., $(n^{\epsilon}, c^{\epsilon},u^{\epsilon})$ satisfies the first two equations of \eqref{Mod-1} in the classical sense. By using the $L^2$-energy estimates for $n^{\epsilon}_-=\max\{-n^{\epsilon},0\}$ and $c^{\epsilon}_-=\max\{-c^{\epsilon},0\}$, it is standard to show that $n^{\epsilon}(t,x)\geq0$ and $c^{\epsilon}(t,x)\geq0$ (cf. \cite{hausenblas2024existence,nie2020global}), $\mathbb{P}$-a.s.

Let us claim that $\mathbb{P}$-a.s.
$$
n^{\epsilon}(t,x),~c^{\epsilon}(t,x)>0,\quad \textrm{for all}~ (0,T]\times\mathbb{R}^3.
$$
Otherwise, assume that $c^{\epsilon}(t^*,x^*)=0$ at some $(t^*,x^*)\in (0,T]\times \mathbb{R}^3$. Consider a domain $\mathcal{O}_R:=\{x\in\mathbb{R}^3;|x-x^*|<R\}$. Due to the parabolic structure of the equation $c^{\epsilon}_t+u^{\epsilon}\cdot\nabla c^{\epsilon}-\Delta c^{\epsilon}+n^{\epsilon}c^{\epsilon}=0$ and the boundedness of coefficients $n^{\epsilon}$ and $u^{\epsilon}$, it follows from the strong maximum principle (cf. \cite[Theorem 2.7]{lieberman1996second}) that $c^{\epsilon}(t,x)\equiv0$ in $(0,t^*)\times\mathcal{O}_R$, which implies that $c^{\epsilon}(0,x)=0$ in $\mathcal{O}_R$, contradicting the Assumption \ref{as2}. The positivity of $n^{\epsilon}$ follows from the similar argument.

Now let us derive the a priori uniform bound \eqref{lem3-1-1}. First, based on the structure of equation $c^{\epsilon}$, it is straightforward to verify that $\mathbb{P}$-a.s.
\begin{equation}\label{ad1}
\begin{split}
\|c^{\epsilon}(t)\|_{L^1\cap L^{\infty}}\leq \|c_0\|_{L^1\cap L^{\infty}}.
\end{split}
\end{equation}
Integrating the first equation of \eqref{Mod-1} with respect to the spatial variable $x\in \mathbb{R}^3$ and using Young's inequality, it follows that
\begin{equation*}
\begin{split}
\|n^{\epsilon}(t)\|_{L^1}+\int_0^t(a\|n^{\epsilon}(r)\|_{L^1}^1+\|n^{\epsilon}(r)\|_{L^3}^3)\mathrm{d}r&=\|n_0^{\epsilon}\|_{L^1}+(1+a)\int_0^t\|n^{\epsilon}(r)\|_{L^2}^2\mathrm{d}r\\
&\leq \frac{1}{2}\int_0^t\|n^{\epsilon}(r)\|_{L^3}^3\mathrm{d}r+C_a\int_0^t\|n^{\epsilon}(r)\|_{L^1}^1\mathrm{d}r,
\end{split}
\end{equation*}
which combined with Gronwall's lemma implies that for all $t\in[0,T]$ and $\mathbb{P}$-a.s.
\begin{equation}\label{ad2}
\begin{split}
\|n^{\epsilon}(t)\|_{L^1}+\int_0^t(\|n^{\epsilon}(r)\|_{L^2}^2+\|n^{\epsilon}(r)\|_{L^3}^3)\mathrm{d}r\lesssim_{a,T,n_0}1.
\end{split}
\end{equation}
By using the chain rule to $(n^\epsilon+1) \ln (n^\epsilon+1 )$  and integrating by parts,  we have
\begin{equation} \begin{split}\label{lem3-1}
&\frac{\mathrm{d}}{\mathrm{d}t}\|(n^\epsilon+1)\ln(n^\epsilon+1)\|_{L^1}+4\|\nabla\sqrt{n^\epsilon+1}\|^2_{L^2}\\
&\quad+a\|n^\epsilon(1+\ln(n^\epsilon+1))\|_{L^1}+\|(n^\epsilon)^3(1+\ln(n^\epsilon+1))\|_{L^1}\\
&=\int_{\mathbb{R}^3}\nabla n^\epsilon(x)\cdot\nabla (c^{\epsilon}*\rho^{\epsilon}(x))\mathrm{d}x+\int_{\mathbb{R}^3}\Delta ( c^{\epsilon}*\rho^{\epsilon}(x))\ln(n^\epsilon(x)+1)\mathrm{d}x\\
&\quad+(1+a)\int_{\mathbb{R}^3}(n^\epsilon(x))^2(1+\ln(n^\epsilon(x)+1))\mathrm{d}x.
\end{split} \end{equation}
Note that $\Delta c^{\epsilon}=2|\nabla\sqrt{c^{\epsilon}}|^2+2\sqrt{c^{\epsilon}}\Delta\sqrt{c^{\epsilon}}$, the second equation of \eqref{Mod-1} can be written as
\begin{equation*} \begin{split}
\frac{\mathrm{d}}{\mathrm{d}t}\sqrt{c^{\epsilon}}+u^{\epsilon}\cdot\nabla\sqrt{c^{\epsilon}}=(\sqrt{c^{\epsilon}})^{-1}|\nabla\sqrt{c^{\epsilon}}|^2+\Delta\sqrt{c^{\epsilon}}-\frac{1}{2}(n^{\epsilon}*\rho^{\epsilon})\sqrt{c^{\epsilon}}.
\end{split} \end{equation*}
Multiplying both sides of the above equation by $-4\Delta\sqrt{c^{\epsilon}}$ and integrating the resulted equality with respect to $x$ over $\mathbb{R}^3$, we deduce that
\begin{equation} \begin{split}\label{lem3-2}
&2 \frac{\mathrm{d}}{\mathrm{d}t}\|\nabla\sqrt{c^{\epsilon}}\|_{L^2}^2\\
&=\frac{1}{2}\int_{\mathbb{R}^3}(c^{\epsilon}(x))^{-2}|\nabla c^{\epsilon}(x)|^2\Delta c^{\epsilon}(x)\mathrm{d}x-\int_{\mathbb{R}^3}(c^{\epsilon}(x))^{-1}|\Delta c^{\epsilon}(x)|^2\mathrm{d}x\\
&\quad-\frac{1}{2}\int_{\mathbb{R}^3}(c^{\epsilon}(x))^{-1}n^{\epsilon}*\rho^{\epsilon}(x)|\nabla c^{\epsilon}(x)|^2\mathrm{d}x-\int_{\mathbb{R}^3}\nabla(n^{\epsilon}*\rho^{\epsilon}(x))\cdot\nabla c^{\epsilon}(x) \mathrm{d}x\\
&\quad+4\int_{\mathbb{R}^3}\Delta \sqrt{c^{\epsilon}(x)}(u^{\epsilon}(x)\cdot\nabla \sqrt{c^{\epsilon}(x)})\mathrm{d}x.
\end{split} \end{equation}
To get some uniform bound for $c^\epsilon$-equation, let us first show that the first two integrals on the R.H.S of \eqref{lem3-2} can be formulated as
\begin{equation} \begin{split}\label{www}
&\frac{1}{2}\int_{\mathbb{R}^3}(c^{\epsilon}(x))^{-2}|\nabla c^{\epsilon}(x)|^2\Delta c^{\epsilon}(x)\mathrm{d}x-\int_{\mathbb{R}^3}(c^{\epsilon}(x))^{-1}|\Delta c^{\epsilon}(x)|^2\mathrm{d}x\\
&=-\int_{\mathbb{R}^3}c^{\epsilon}(x)|D^2 \ln c^{\epsilon}(x)|^2\mathrm{d}x.
\end{split} \end{equation}
Indeed, by applying integration by parts, it follows that
\begin{equation} \begin{split}\label{ww1}
&\int_{\mathbb{R}^3}(c^{\epsilon}(x))^{-1}\nabla c^{\epsilon}(x)\cdot\nabla\Delta c^{\epsilon}(x)\mathrm{d}x\\
&=\int_{\mathbb{R}^3}(c^{\epsilon}(x))^{-2}|\nabla c^{\epsilon}(x)|^2\Delta c^{\epsilon}(x)\mathrm{d}x-\int_{\mathbb{R}^3}(c^{\epsilon}(x))^{-1}|\Delta c^{\epsilon}(x)|^2\mathrm{d}x.
\end{split} \end{equation}
and
\begin{equation} \begin{split}\label{ww2}
&2\int_{\mathbb{R}^3}(c^{\epsilon}(x))^{-1}\nabla c^{\epsilon}(x)\cdot\nabla\Delta c^{\epsilon}(x)\mathrm{d}x=2\int_{\mathbb{R}^3}(c^{\epsilon}(x))^{-1}\nabla c^{\epsilon}(x)\cdot \text{div}(D^2 c^{\epsilon}(x))\mathrm{d}x\\
&=-2\int_{\mathbb{R}^3}(c^{\epsilon}(x))^{-1}|D^2 c^{\epsilon}(x)|^2\mathrm{d}x+2\int_{\mathbb{R}^3}(c^{\epsilon}(x))^{-2}(D^2 c^{\epsilon}(x)\cdot\nabla c^{\epsilon}(x))\cdot\nabla c^{\epsilon}(x)\mathrm{d}x,
\end{split} \end{equation}
where $D^2 c^\epsilon$ denotes the Hessian matrix of $c^\epsilon$. Moreover, direct calculation shows that
\begin{equation} \begin{split}\label{ww3}
&\int_{\mathbb{R}^3}(c^{\epsilon}(x))^{-2}|\nabla c^{\epsilon}(x)|^2\Delta c^{\epsilon}(x)\mathrm{d}x=-\int_{\mathbb{R}^3}\nabla((c^{\epsilon}(x))^{-2}|\nabla c^{\epsilon}(x)|^2)\nabla c^{\epsilon}(x)\mathrm{d}x\\
&=2\int_{\mathbb{R}^3}(c^{\epsilon}(x))^{-3}|\nabla c^{\epsilon}(x)|^4\mathrm{d}x-2\int_{\mathbb{R}^3}(c^{\epsilon}(x))^{-2}(D^2 c^{\epsilon}(x)\cdot\nabla c^{\epsilon}(x))\cdot\nabla c^{\epsilon}(x)\mathrm{d}x.
\end{split} \end{equation}
Through the algebraic operations \eqref{ww2}+\eqref{ww3}-$2\times$\eqref{ww1}, we obtain the following expression
\begin{equation} \begin{split}\label{ww4}
&\int_{\mathbb{R}^3}(c^{\epsilon}(x))^{-2}|\nabla c^{\epsilon}(x)|^2\Delta c^{\epsilon}(x)\mathrm{d}x\\
&=-\frac{2}{3}\int_{\mathbb{R}^3}(c^{\epsilon}(x))^{-1}|D^2 c^{\epsilon}(x)|^2\mathrm{d}x+\frac{2}{3}\int_{\mathbb{R}^3}(c^{\epsilon}(x))^{-3}|\nabla c^{\epsilon}(x)|^4\mathrm{d}x+\frac{2}{3}\int_{\mathbb{R}^3}(c^{\epsilon}(x))^{-1}|\Delta c^{\epsilon}(x)|^2\mathrm{d}x.
\end{split} \end{equation}
On the other hand, a direct computation shows that
\begin{equation*} \begin{split}
&\int_{\mathbb{R}^3}c^{\epsilon}(x)|D^2 \ln c^{\epsilon}(x)|^2\mathrm{d}x\\
&=\int_{\mathbb{R}^3}(c^{\epsilon}(x))^{-1}|D^2 c^{\epsilon}(x)|^2\mathrm{d}x-2\int_{\mathbb{R}^3}(c^{\epsilon}(x))^{-2}(D^2 c^{\epsilon}(x)\cdot\nabla c^{\epsilon}(x))\cdot\nabla c^{\epsilon}(x)\mathrm{d}x\\
&\quad+\int_{\mathbb{R}^3}(c^{\epsilon}(x))^{-3}|\nabla c^{\epsilon}(x)|^4\mathrm{d}x,
\end{split} \end{equation*}
which combined with \eqref{ww3} yields that
\begin{equation} \begin{split}\label{ww5}
&\int_{\mathbb{R}^3}c^{\epsilon}(x)|D^2 \ln c^{\epsilon}(x)|^2\mathrm{d}x\\
&=\int_{\mathbb{R}^3}(c^{\epsilon}(x))^{-1}|D^2 c^{\epsilon}(x)|^2\mathrm{d}x-\int_{\mathbb{R}^3}(c^{\epsilon}(x))^{-3}|\nabla c^{\epsilon}(x)|^4\mathrm{d}x+\int_{\mathbb{R}^3}(c^{\epsilon}(x))^{-2}|\nabla c^{\epsilon}(x)|^2\Delta c^{\epsilon}(x)\mathrm{d}x.
\end{split} \end{equation}
The identity \eqref{www} follows by directly computing $\frac{3}{2}\times$\eqref{ww4}+\eqref{ww5}. As a result, the equation \eqref{lem3-2} can now be formulated as
\begin{equation} \begin{split}\label{ww6}
&2 \frac{\mathrm{d}}{\mathrm{d}t}\|\nabla\sqrt{c^{\epsilon}}\|_{L^2}^2+\int_{\mathbb{R}^3}c^{\epsilon}(x)|D^2 \ln c^{\epsilon}(x)|^2\mathrm{d}x\\
&=-\frac{1}{2}\int_{\mathbb{R}^3}(c^{\epsilon}(x))^{-1}n^{\epsilon}*\rho^{\epsilon}(x)|\nabla c^{\epsilon}(x)|^2\mathrm{d}x-\int_{\mathbb{R}^3}\nabla(n^{\epsilon}*\rho^{\epsilon}(x))\cdot\nabla c^{\epsilon}(x) \mathrm{d}x\\
&\quad+4\int_{\mathbb{R}^3}\Delta \sqrt{c^{\epsilon}(x)}(u^{\epsilon}(x)\cdot\nabla \sqrt{c^{\epsilon}(x)})\mathrm{d}x.
\end{split} \end{equation}
Adding $\eqref{ww6}$ to \eqref{lem3-1} and noting the fact of $(\nabla (n^{\epsilon}*\rho^{\epsilon}),\nabla c^{\epsilon})_{L^2}=(\nabla n^{\epsilon},\nabla (c^{\epsilon}*\rho^{\epsilon}))_{L^2}$, we have
\begin{equation} \begin{split}\label{lem3-6}
& \frac{\mathrm{d}}{\mathrm{d}t}\|(n^\epsilon+1)\ln(n^\epsilon+1)\|_{L^1}+2 \frac{\mathrm{d}}{\mathrm{d}t}\|\nabla\sqrt{c^{\epsilon}}\|_{L^2}^2+4\|\nabla\sqrt{n^{\epsilon}+1}\|^2_{L^2}\\
&\quad+\int_{\mathbb{R}^3}c^{\epsilon}(x)|D^2 \ln c^{\epsilon}(x)|^2\mathrm{d}x+\frac{1}{2}\int_{\mathbb{R}^3}(c^{\epsilon}(x))^{-1}n^{\epsilon}*\rho^{\epsilon}(x)|\nabla c^{\epsilon}(x)|^2\mathrm{d}x\\
&\quad+a\|n^\epsilon(1+\ln(n^\epsilon+1))\|_{L^1}+\|(n^\epsilon)^3(1+\ln(n^\epsilon+1))\|_{L^1}\\
&=4\int_{\mathbb{R}^3}\Delta \sqrt{c^{\epsilon}(x)}(u^{\epsilon}(x)\cdot\nabla \sqrt{c^{\epsilon}(x)})\mathrm{d}x+\int_{\mathbb{R}^3}\Delta (c^{\epsilon}*\rho^{\epsilon}(x))\ln(n^{\epsilon}(x)+1)\mathrm{d}x\\
&\quad+(1+a)\int_{\mathbb{R}^3}(n^{\epsilon}(x))^2(1+\ln(n^{\epsilon}(x)+1))\mathrm{d}x\\
&:=A_1+A_2+A_3.
\end{split} \end{equation}
In order to estimate the terms $A_1$ and $A_2$, we claim that
\begin{equation} \begin{split}\label{ww7}
&\int_{\mathbb{R}^3}(c^{\epsilon}(x))^{-3}|\nabla c^{\epsilon}(x)|^4\mathrm{d}x+\int_{\mathbb{R}^3}(c^{\epsilon}(x))^{-1}|D^2 c^{\epsilon}(x)|^2\mathrm{d}x\lesssim\int_{\mathbb{R}^3}c^{\epsilon}(x)|D^2 \ln c^{\epsilon}(x)|^2\mathrm{d}x.
\end{split} \end{equation}
Indeed, by integration by parts  and H\"{o}lder's inequality, it follows that
\begin{equation*} \begin{split}
&\int_{\mathbb{R}^3}(c^{\epsilon}(x))^{-3}|\nabla c^{\epsilon}(x)|^4\mathrm{d}x=\int_{\mathbb{R}^3}|\nabla \ln c^{\epsilon}(x)|^2\nabla \ln c^{\epsilon}(x)\cdot \nabla c^{\epsilon}(x)\mathrm{d}x\\
&=-\int_{\mathbb{R}^3}c^{\epsilon}(x)\nabla|\nabla \ln c^{\epsilon}(x)|^2\cdot\nabla \ln c^{\epsilon}(x) \mathrm{d}x-\int_{\mathbb{R}^3}c^{\epsilon}(x)|\nabla \ln c^{\epsilon}(x)|^2\Delta \ln c^{\epsilon}(x) \mathrm{d}x\\
&=-2\int_{\mathbb{R}^3}(c^{\epsilon}(x))^{-1}(D^2\ln c^{\epsilon}(x)\cdot\nabla c^{\epsilon}(x))\cdot\nabla c^{\epsilon}(x) \mathrm{d}x-\int_{\mathbb{R}^3}(c^{\epsilon}(x))^{-1}|\nabla c^{\epsilon}(x)|^2\Delta \ln c^{\epsilon}(x) \mathrm{d}x\\
&\leq2\|(c^{\epsilon})^{\frac{1}{2}}|D^2\ln c^{\epsilon}|\|_{L^2}\|(c^{\epsilon})^{-\frac{3}{2}}|\nabla c^{\epsilon}|^2\|_{L^2}+\|(c^{\epsilon})^{\frac{1}{2}}|\Delta\ln c^{\epsilon}|\|_{L^2}\|(c^{\epsilon})^{-\frac{3}{2}}|\nabla c^{\epsilon}|^2\|_{L^2}\\
&\leq5\|(c^{\epsilon})^{\frac{1}{2}}|D^2\ln c^{\epsilon}|\|_{L^2}\|(c^{\epsilon})^{-\frac{3}{2}}|\nabla c^{\epsilon}|^2\|_{L^2},
\end{split} \end{equation*}
which implies that
\begin{equation} \begin{split}\label{ww8}
&\int_{\mathbb{R}^3}(c^{\epsilon}(x))^{-3}|\nabla c^{\epsilon}(x)|^4\mathrm{d}x
\leq25\|(c^{\epsilon})^{\frac{1}{2}}|D^2\ln c^{\epsilon}|\|_{L^2}^2.
\end{split} \end{equation}
Moreover, by using the basic inequality $(f+g)^2\geq\frac{1}{2}f^2-g^2$, it follows that
\begin{equation*} \begin{split}
c^{\epsilon}|\partial_{ij}\ln c^{\epsilon}|^2&=c^{\epsilon}|(c^{\epsilon})^{-1}\partial_{ij} c^{\epsilon}-(c^{\epsilon})^{-2}\partial_{i} c^{\epsilon}\partial_{j} c^{\epsilon}|^2\\
&\geq\frac{1}{2}c^{\epsilon}|(c^{\epsilon})^{-1}\partial_{ij} c^{\epsilon}|^2-c^{\epsilon}|(c^{\epsilon})^{-2}\partial_{i} c^{\epsilon}\partial_{j} c^{\epsilon}|^2\\
&=\frac{1}{2}(c^{\epsilon})^{-1}|\partial_{ij} c^{\epsilon}|^2-(c^{\epsilon})^{-3}|\partial_{i} c^{\epsilon}\partial_{j} c^{\epsilon}|^2,\quad 1\leq i,~j\leq3,
\end{split} \end{equation*}
which combined with \eqref{ww8} leads to
\begin{equation*} \begin{split}
\int_{\mathbb{R}^3}(c^{\epsilon}(x))^{-1}|D^2 c^{\epsilon}(x)|^2\mathrm{d}x&\lesssim\int_{\mathbb{R}^3}(c^{\epsilon}(x))^{-3}|\nabla c^{\epsilon}(x)|^4\mathrm{d}x+\int_{\mathbb{R}^3}c^{\epsilon}(x)|D^2 \ln c^{\epsilon}(x)|^2\mathrm{d}x\\
&\lesssim\int_{\mathbb{R}^3}c^{\epsilon}(x)|D^2 \ln c^{\epsilon}(x)|^2\mathrm{d}x.
\end{split} \end{equation*}
This prove the inequality \eqref{ww7}. Now, let us proceed to estimate the terms $A_1$-$A_3$ of \eqref{lem3-6}. By applying the divergence-free condition $\nabla\cdot u^{\epsilon}=0$ and Young's inequality, it follows from \eqref{ww7} that for any $\eta>0$
\begin{equation} \begin{split}\label{ww9}
A_1&=-4\int_{\mathbb{R}^3}(\nabla\sqrt{c^{\epsilon}(x)}\cdot\nabla)u^{\epsilon}(x)\cdot\nabla \sqrt{c^{\epsilon}(x)}\mathrm{d}x\\
&\leq4\int_{\mathbb{R}^3}\sqrt{c^{\epsilon}(x)}|\nabla u^{\epsilon}(x)|(c^{\epsilon}(x))^{-\frac{1}{2}}|\nabla\sqrt{c^{\epsilon}(x)}|^2\mathrm{d}x\\
&\leq\eta\int_{\mathbb{R}^3}(c^{\epsilon}(x))^{-3}|\nabla c^{\epsilon}(x)|^4\mathrm{d}x+C_{\eta}\|c_0\|_{L^{\infty}}\|\nabla u^{\epsilon}\|_{L^2}^2\\
&\leq C_1\eta\int_{\mathbb{R}^3}c^{\epsilon}(x)|D^2 \ln c^{\epsilon}(x)|^2\mathrm{d}x+C_{\eta,\|c_0\|_{L^{\infty}}}\|\nabla u^{\epsilon}\|_{L^2}^2.
\end{split} \end{equation}
Noting that
\begin{equation*} \begin{split}
\|\Delta (c^{\epsilon}*\rho^{\epsilon})\|_{L^2}\leq \|\Delta c^{\epsilon}\|_{L^2},~\Delta c^{\epsilon}=2|\nabla\sqrt{c^{\epsilon}}|^2+2\sqrt{c^{\epsilon}}\Delta\sqrt{c^{\epsilon}},
\end{split} \end{equation*}
it follows from the Young inequality and \eqref{ww7} that
\begin{equation} \begin{split}\label{lem3-8}
A_2&\leq\frac{\eta}{\|c_0\|_{L^{\infty}}}\|\Delta c^{\epsilon}\|_{L^2}^2+C_{\eta,\|c_0\|_{L^{\infty}}}\|\ln(n^{\epsilon}+1)\|_{L^2}^2\\
&\leq\frac{8\eta}{\|c_0\|_{L^{\infty}}}\int_{\mathbb{R}^3}(|\nabla\sqrt{c^{\epsilon}(x)}|^4+c^{\epsilon}(x)|\Delta\sqrt{c^{\epsilon}(x)}|^2)\mathrm{d}x+C_{\eta,\|c_0\|_{L^{\infty}}}\int_{\mathbb{R}^3}(n^{\epsilon}(x)+1)\ln(n^{\epsilon}(x)+1)\mathrm{d}x\\
&\leq8\eta\|(\sqrt{c^{\epsilon}})^{-1}|\nabla\sqrt{c^{\epsilon}}|^2\|_{L^2}^2
+8\eta\|\Delta\sqrt{c^{\epsilon}}\|_{L^2}^2+C_{\eta,\|c_0\|_{L^{\infty}}}\|(n^{\epsilon}+1)\ln(n^{\epsilon}+1)\|_{L^1}\\
&\leq C\eta\|(c^{\epsilon})^{-\frac{3}{2}}|\nabla c^{\epsilon}|^2\|_{L^2}^2+C\eta\|(c^{\epsilon})^{-\frac{1}{2}}|D^2 c^{\epsilon}|\|_{L^2}^2+C_{\eta,\|c_0\|_{L^{\infty}}}\|(n^{\epsilon}+1)\ln(n^{\epsilon}+1)\|_{L^1}\\
&\leq C_2\eta\int_{\mathbb{R}^3}c^{\epsilon}(x)|D^2 \ln c^{\epsilon}(x)|^2\mathrm{d}x+C_{\eta,\|c_0\|_{L^{\infty}}}\|(n^{\epsilon}+1)\ln(n^{\epsilon}+1)\|_{L^1}.
\end{split} \end{equation}
For $A_3$, we have
\begin{equation} \begin{split}\label{bb3}
A_3&=(1+a)\int_{\{x\in\mathbb{R}^3;n^{\epsilon} \in[0,2+2a)\}}(n^{\epsilon}(x))^2(1+\ln(n^{\epsilon}(x)+1))\mathrm{d}x\\
&\quad+(1+a)\int_{\{x\in\mathbb{R}^3;n^{\epsilon} \in [2+2a,\infty)\}}\frac{(n^{\epsilon}(x))^3(1+\ln(n^{\epsilon}(x)+1))}{n^{\epsilon}(x)}\mathrm{d}x\\
&\leq C_a\|n^{\epsilon}\|_{L^1}+\frac{1}{2}\|(n^{\epsilon})^3(1+\ln(n^{\epsilon}+1))\|_{L^1}.
\end{split} \end{equation}
Plugging the estimates \eqref{ww9}, \eqref{lem3-8} and \eqref{bb3} into \eqref{lem3-6} and choosing $\eta >0$ small enough such that $\eta\leq\frac{1}{2(C_1+C_2)}$, we have
\begin{equation} \begin{split}\label{lem3-9}
& \frac{\mathrm{d}}{\mathrm{d}t}\|(n^\epsilon+1)\ln(n^\epsilon+1)\|_{L^1}+2 \frac{\mathrm{d}}{\mathrm{d}t}\|\nabla\sqrt{c^{\epsilon}}\|_{L^2}^2+4\|\nabla\sqrt{n^{\epsilon}+1}\|^2_{L^2}\\
&\quad+\frac{1}{2}\int_{\mathbb{R}^3}c^{\epsilon}(x)|D^2 \ln c^{\epsilon}(x)|^2\mathrm{d}x+\frac{1}{2}\int_{\mathbb{R}^3}(c^{\epsilon}(x))^{-1}n^{\epsilon}*\rho^{\epsilon}(x)|\nabla c^{\epsilon}(x)|^2\mathrm{d}x\\
&\quad+a\|n^\epsilon(1+\ln(n^\epsilon+1))\|_{L^1}+\frac{1}{2}\|(n^\epsilon)^3(1+\ln(n^\epsilon+1))\|_{L^1}\\
&\leq C_{\|c_0\|_{L^{\infty}}}\|\nabla u^{\epsilon}\|_{L^2}^2+C_{\|c_0\|_{L^{\infty}}}\|(n^{\epsilon}+1)\ln(n^{\epsilon}+1)\|_{L^1}+C_a\|n^{\epsilon}\|_{L^1}.
\end{split} \end{equation}
Moreover, by applying the It\^{o} formula to $\|u^{\epsilon}\|_{L^2}^2$, we infer that
\begin{equation} \begin{split}\label{lem3-10}
&\|u^{\epsilon}(t)\|_{L^2}^2+2\int_0^t\|\nabla u^{\epsilon}(r)\|_{L^2}^2\mathrm{d}r\\
&=\|u^{\epsilon}(0)\|_{L^2}^2+2\int_0^t(n^{\epsilon}(r)\nabla\phi,u^{\epsilon}(r)*\rho^{\epsilon})_{L^2}-(\mathbf{g}(|u^{\epsilon}(r)|^2)u^{\epsilon}(r),u^{\epsilon}(r))_{L^2}\mathrm{d}r\\
&\quad+\int_0^t\|\textbf{P}  G(u^{\epsilon}(r))\|_{\mathcal{L}_2(\mathbf{U};L^2)}^2\mathrm{d}r+2\sum_{j\geq 1}\int_0^t(G_j(u^\epsilon(r)),u^\epsilon(r))_{L^2}\mathrm{d} W_j(r).
\end{split} \end{equation}
Thanks to the assumption \eqref{con1} for the tamed term, it follows that
\begin{equation*} \begin{split}
-(\mathbf{g}(|u^{\epsilon}|^2)u^{\epsilon},u^{\epsilon})_{L^2}\leq -\|u^{\epsilon}\|_{L^4}^4+C_{\mathbf{N}}\|u^{\epsilon}\|_{L^2}^2,
 \end{split} \end{equation*}
which together with \eqref{lem3-10} and Assumption \ref{as3} yields that
\begin{equation} \begin{split}\label{lem3-12}
&\|u^{\epsilon}(t)\|_{L^2}^2+\int_0^t(\|\nabla u^{\epsilon}(r)\|_{L^2}^2+\|u^{\epsilon}(r)\|_{L^4}^4)\mathrm{d}r\\
&\leq\|u^{\epsilon}(0)\|_{L^2}^2+\int_0^t(\|\nabla\phi\|_{L^{\infty}}^2\|n^{\epsilon}(r)\|_{L^2}^2+1+C_{\mathbf{N}}\|u^{\epsilon}(r)\|_{L^2}^2)\mathrm{d}r\\
&\quad+2\sum_{j\geq 1}\int_0^t(G_j(u^\epsilon(r)),u^\epsilon(r))_{L^2}\mathrm{d} W_j(r).
\end{split} \end{equation}
Multiplying the both sides of \eqref{lem3-12} by  $C_{\|c_0\|_{L^{\infty}}} $, we get from \eqref{lem3-9} and the fact of
$
\|(n^\epsilon+1) \ln (n^\epsilon+1)\|_{L^1}\asymp\|n^\epsilon\|_{L^1 \cap L \textrm{log} L}=\|n^\epsilon\|_{L^1 }+\|n^\epsilon\|_{ L \textrm{log} L}
$
that
\begin{equation*} \begin{split}
&\mathcal{F}(n^{\epsilon},c^{\epsilon},u^{\epsilon})(t)+\int_0^t\mathcal{G}(n^{\epsilon},c^{\epsilon},u^{\epsilon})(r)\mathrm{d}r\\
&\lesssim_{c_0,\mathbf{N}}\mathcal{F}(n^{\epsilon},c^{\epsilon},u^{\epsilon})(0)+\int_0^t(1+\|\nabla\phi\|_{L^{\infty}}^2\|n^{\epsilon}(r)\|_{L^2}^2)\mathrm{d}r+\int_0^t\mathcal{F}(n^{\epsilon},c^{\epsilon},u^{\epsilon})(r)\mathrm{d}r\\
&\quad+\sum_{j\geq 1}\int_0^t(G_j(u^\epsilon(r)),u^\epsilon(r))_{L^2}\mathrm{d} W_j(r).
\end{split} \end{equation*}
Noting that, by \eqref{ad2}, the second integral on the R.H.S. of the last inequality is uniformly bounded with respect to $\epsilon$. Therefore, by raising the $p$-th power on both sides of the above inequality  and applying the BDG inequality to the stochastic integral, we infer that
\begin{equation} \begin{split}\label{asdf}
&\mathbb{E} \sup_{t\in [0,T]} \left(\mathcal{F}(n^{\epsilon},c^{\epsilon},u^{\epsilon})(t)\right)^p
+ \mathbb{E}\left(\int_0^{T}\mathcal{G}(n^{\epsilon},c^{\epsilon},u^{\epsilon})(t)\mathrm{d}t\right)^p\\
&\leq C_{\mathbf{N},a,\phi,p,T,n_0,c_0,u_0}+C_{c_0,\mathbf{N},p,T}\mathbb{E}\int_0^T\left(\mathcal{F}(n^{\epsilon},c^{\epsilon},u^{\epsilon})(t)\right)^p\mathrm{d}t\\
&\quad+C_{c_0,\mathbf{N},p}\mathbb{E}\left(\int_0^T\|G(u^\epsilon(t))\|_{\mathcal{L}_2(\mathbf{U};L^2)}^2\|u^\epsilon(t)\|_{L^2}^2\mathrm{d}t\right)^{\frac{p}{2}}\\
&\leq \frac{1}{2}\mathbb{E} \sup_{t\in [0,T]} \left(\mathcal{F}(n^{\epsilon},c^{\epsilon},u^{\epsilon})(t)\right)^p+C_{a,\phi,p,T,n_0,c_0,u_0}
+C_{c_0,\mathbf{N},p,T}\mathbb{E}\int_0^T\left(\mathcal{F}(n^{\epsilon},c^{\epsilon},u^{\epsilon})(t)\right)^p\mathrm{d}t.
\end{split} \end{equation}
The desired estimate \eqref{lem3-1-1} then follows by applying the Gronwall lemma to \eqref{asdf}. The proof is thus completed.
\end{proof}

\subsection{Further energy estimates}\label{sec3-2}
Relying on the uniform bounds established in Lemma \ref{lem3}, we can further develop energy estimates for the approximate solutions $\textbf{y}^{\epsilon}$ to obtain sufficient space-time regularity, which ensures the existence and uniqueness of global solutions for system \eqref{CNS}.
\begin{lemma} \label{lem4}
Let $(n^\epsilon,c^\epsilon,u^\epsilon)$ be the global solution of \eqref{Mod-1} constructed in Lemma \ref{lem2.5} with $\epsilon \in (0,1)$. Then for all $p\geq1$, we have
\begin{align}
&\mathbb{E} \sup_{t\in [0,T]}\|u^{\epsilon}(t)\|_{H^1}^{2p}
+ \mathbb{E}\left(\int_0^{T}\|u^{\epsilon}(t)\|_{H^2}^2\mathrm{d}t\right)^p\lesssim _{\phi,a,\mathbf{N},p,T,n_0,c_0,u_0}  1,\label{lem4-1}\\
&\mathbb{E} \sup_{t\in [0,T]}\|c^{\epsilon}(t)\|_{H^1}^{2p}
+ \mathbb{E}\left(\int_0^{T}\|c^{\epsilon}(t)\|_{H^2}^2\mathrm{d}t\right)^p\lesssim _{\phi,a,\mathbf{N},p,T,n_0,c_0,u_0}  1.\label{lem4-2}
\end{align}
Moreover, there exist some positive constants $\tilde{C_1},\cdots,\tilde{C_9}$ independent of $\epsilon$ such that, for all $t\in[0,T]$ and $\mathbb{P}$-a.s.,
\begin{align}
&\|c^{\epsilon}(t)\|_{H^2}^2+\int_0^t\|c^{\epsilon}(r)\|_{H^3}^2\mathrm{d}r\cr
&\leq \tilde{C}_{1}\exp\left(\tilde{C}_2\exp\left(\tilde{C}_{3}\exp\left(\tilde{C}_4(\sup_{r\in[0,t]}\|u^{\epsilon}(r)\|_{H^1}^2+\int_0^t\|u^{\epsilon}(r)\|_{H^{2}}^2\mathrm{d}r)\right)\right)\right),\label{lem4-4}\\
&\|n^{\epsilon}(t)\|_{H^1}^2+\int_0^t\|n^{\epsilon}(r)\|_{H^2}^2\mathrm{d}r\cr
&\leq \tilde{C}_{5}\exp\left(\tilde{C}_6\exp\left(\tilde{C}_7\exp\left(\tilde{C}_{8}\exp\left(\tilde{C}_9(\sup_{r\in[0,t]}\|u^{\epsilon}(r)\|_{H^1}^2+\int_0^t\|u^{\epsilon}(r)\|_{H^{2}}^2\mathrm{d}r)\right)\right)\right)\right).\label{lem4-5}
\end{align}
\end{lemma}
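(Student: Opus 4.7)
The plan is to climb the regularity ladder starting from Lemma~\ref{lem3}, via a combination of It\^{o}'s formula, deterministic energy identities, and Gronwall's lemma. Three structural ingredients recur throughout: (i) the tamed-term identity
\begin{equation*}
(\mathbf{g}(|u^\epsilon|^2)u^\epsilon,\Delta u^\epsilon)_{L^2}\leq -\||u^\epsilon||\nabla u^\epsilon|\|_{L^2}^2+C_{\mathbf{N}}\|\nabla u^\epsilon\|_{L^2}^2,
\end{equation*}
already established during the proof of Lemma~\ref{lem2.5}, whose negative term absorbs the convective contribution $((u^\epsilon\cdot\nabla)u^\epsilon,\Delta u^\epsilon)_{L^2}$; (ii) the maximum principle $\|c^\epsilon(t)\|_{L^\infty}\leq\|c_0\|_{L^\infty}$ inherited from the sign-preserving $c^\epsilon$-equation; and (iii) the interpolation $\|n^\epsilon\|_{L^2}^2\leq\|n^\epsilon\|_{L^1}^{1/2}\|n^\epsilon\|_{L^3}^{3/2}$, which together with Lemma~\ref{lem3} ensures that $\int_0^T\|n^\epsilon(t)\|_{L^2}^2\,dt$ is uniformly bounded in every $L^p(\Omega)$.

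For \eqref{lem4-1} I would apply It\^{o}'s formula to $u\mapsto\|\nabla u\|_{L^2}^{2p}$ along $u^\epsilon$, the rigorous justification being analogous to that around \eqref{fff1}--\eqref{ffff1}. The parabolic dissipation contributes $-2p\|\nabla u^\epsilon\|_{L^2}^{2p-2}\|\Delta u^\epsilon\|_{L^2}^2$; ingredient (i) handles the tamed term; the convective term is absorbed as indicated; the Lorentz force contributes $\|\nabla\phi\|_{L^\infty}^2\|n^\epsilon\|_{L^2}^2$; and the stochastic integral is handled via BDG together with Assumption~\ref{as3}. Applying Gronwall's lemma and invoking ingredient (iii) yields \eqref{lem4-1}. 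For \eqref{lem4-2}, since the $c^\epsilon$-equation carries no stochastic forcing, I would compute $\frac{d}{dt}\|\nabla c^\epsilon\|_{L^2}^{2p}$ pathwise after testing with $-\Delta c^\epsilon$. The transport contribution is estimated by $H^1\hookrightarrow L^6$ and the Gagliardo--Nirenberg inequality $\|\nabla c^\epsilon\|_{L^3}\lesssim\|\nabla c^\epsilon\|_{L^2}^{1/2}\|\Delta c^\epsilon\|_{L^2}^{1/2}$; the reaction $c^\epsilon(n^\epsilon*\rho^\epsilon)$ is controlled using ingredient (ii) and the $L^2$-contractivity of mollification. Taking expectations and invoking \eqref{lem4-1} closes the Gronwall argument.

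The pathwise bounds \eqref{lem4-4}--\eqref{lem4-5} are obtained by a deterministic bootstrap along sample paths, writing $U(t):=\sup_{r\leq t}\|u^\epsilon(r)\|_{H^1}^2+\int_0^t\|u^\epsilon(r)\|_{H^2}^2\,dr$. Each energy identity is closed by Gronwall's lemma, producing one exponential layer per iteration. Concretely, I would successively bound (a) $\|n^\epsilon\|_{L^2}^2+\int\|\nabla n^\epsilon\|_{L^2}^2$, using the rewriting $\int n^\epsilon\nabla(c^\epsilon*\rho^\epsilon)\cdot\nabla n^\epsilon=-\tfrac12\int(n^\epsilon)^2\Delta(c^\epsilon*\rho^\epsilon)$ and absorbing the $(n^\epsilon)^2$-remainders by the logistic dissipation $\|n^\epsilon\|_{L^4}^4$; (b) $\|\nabla c^\epsilon\|_{L^2}^2+\int\|\Delta c^\epsilon\|_{L^2}^2$ from the $c^\epsilon$-equation tested against $-\Delta c^\epsilon$, with (a) entering through the reaction term; (c) $\|\Delta c^\epsilon\|_{L^2}^2+\int\|\nabla\Delta c^\epsilon\|_{L^2}^2$ from the $c^\epsilon$-equation tested against $\Delta^2 c^\epsilon$, with Moser-type estimates bringing in (a)--(b) through the convective and reaction terms to yield \eqref{lem4-4}; and (d) $\|\nabla n^\epsilon\|_{L^2}^2+\int\|\Delta n^\epsilon\|_{L^2}^2$, where the chemotaxis term $\nabla\cdot(n^\epsilon\nabla(c^\epsilon*\rho^\epsilon))$ is controlled through the $\|c^\epsilon\|_{H^2}$ bound from (c), producing \eqref{lem4-5}.

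The main obstacle is the careful scheduling of the iteration: the $n^\epsilon$ and $c^\epsilon$ equations couple through both the chemotaxis term and the reaction term, so at each step one must verify that every nonlinear factor appearing in the Gronwall coefficient involves only a norm already bounded at a previous step. A second delicate point is keeping all constants $\tilde{C}_1,\ldots,\tilde{C}_9$ independent of $\epsilon$; this forces every estimate involving the mollifier $\rho^\epsilon$ to be performed using the $L^p\to L^p$ contractivity $\|f*\rho^\epsilon\|_{L^p}\leq\|f\|_{L^p}$ rather than placing derivatives onto $\rho^\epsilon$, which would introduce the $\epsilon^{-1}$ factor seen in the proof of Lemma~\ref{lem2.5}.
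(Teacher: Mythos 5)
Your treatment of \eqref{lem4-1} matches the paper's (It\^{o} on the $H^1$-energy of $u^\epsilon$, the tamed term absorbing the convection, a \emph{deterministic} Gronwall coefficient $C_{\mathbf N}$, BDG for the noise), and your bootstrap for the pathwise bounds \eqref{lem4-4}--\eqref{lem4-5} is structurally sound, modulo one scheduling fix: your step (a) needs $\int_0^t\|\Delta c^\epsilon\|_{L^2}^2\mathrm{d}r$, which is produced in your step (b), so (b) must come first (it can, since the reaction term in (b) only needs $\int\|n^\epsilon\|_{L^2}^2\mathrm{d}r$, which is bounded pathwise by a deterministic constant via \eqref{ad2}). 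Your handling of the chemotaxis term in (a) by writing $\int n^\epsilon\nabla(c^\epsilon*\rho^\epsilon)\cdot\nabla n^\epsilon=-\tfrac12\int (n^\epsilon)^2\Delta(c^\epsilon*\rho^\epsilon)$ and absorbing into $\|n^\epsilon\|_{L^4}^4$ is a genuine simplification over the paper, which instead runs an intermediate $L^3$-estimate for $\nabla c^\epsilon$ (their \eqref{00-2}--\eqref{00-5}) to control $\|n^\epsilon\|_{L^3}\|\nabla c^\epsilon\|_{L^6}\|\nabla n^\epsilon\|_{L^2}$; your route produces fewer exponential layers, which still implies the stated bounds.

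The genuine gap is in \eqref{lem4-2}. You propose to test the $c^\epsilon$-equation with $-\Delta c^\epsilon$, estimate the transport term, run a pathwise Gronwall, and then ``take expectations.'' But every admissible estimate of $(u^\epsilon\cdot\nabla c^\epsilon,\Delta c^\epsilon)_{L^2}$ after absorbing $\tfrac12\|\Delta c^\epsilon\|_{L^2}^2$ leaves a Gronwall coefficient that is a power of a norm of $u^\epsilon$ (e.g.\ $\|u^\epsilon\|_{H^2}^2$ or $\|u^\epsilon\|_{H^1}^4$), so the pathwise Gronwall output carries a factor $\exp\bigl(C\int_0^T\|u^\epsilon\|_{H^2}^2\mathrm{d}t\bigr)$ (or worse). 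Estimate \eqref{lem4-1} only provides polynomial moments of $\int_0^T\|u^\epsilon\|_{H^2}^2\mathrm{d}t$, not exponential moments, so this quantity is not integrable and the expectation cannot be closed; taking expectations before Gronwall fails too, since the random coefficient multiplies the unknown. This is exactly why \eqref{lem4-4}--\eqref{lem4-5} are stated as pathwise bounds with exponentials of $u^\epsilon$-norms rather than as moment bounds. The paper obtains \eqref{lem4-2} by an entirely different route that avoids Gronwall altogether: from $\nabla c^\epsilon=2\sqrt{c^\epsilon}\,\nabla\sqrt{c^\epsilon}$ and the maximum principle one gets $\|\nabla c^\epsilon\|_{L^2}^2\lesssim\|c_0\|_{L^\infty}\|\nabla\sqrt{c^\epsilon}\|_{L^2}^2$, and from $\Delta c^\epsilon=2|\nabla\sqrt{c^\epsilon}|^2+2\sqrt{c^\epsilon}\Delta\sqrt{c^\epsilon}$ together with \eqref{ww7} one gets $\|\Delta c^\epsilon\|_{L^2}^2\lesssim\|c_0\|_{L^\infty}\|\sqrt{c^\epsilon}|D^2\ln c^\epsilon|\|_{L^2}^2$; both right-hand sides are controlled in every $L^p(\Omega)$ by the entropy-energy functionals $\mathcal F$ and $\mathcal G$ of Lemma \ref{lem3}. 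You need to replace your argument for \eqref{lem4-2} by this (or an equivalent Gronwall-free) deduction from Lemma \ref{lem3}.
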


\begin{proof}[\emph{\textbf{Proof}}] Applying It\^{o}'s formula to $\|\nabla u^{\epsilon}\|_{L^2}^2$ and using  from \eqref{0-6}, we infer that
\begin{equation*}
\begin{split}
&\|\nabla u^{\epsilon}(t)\|_{L^2}^2+2\int_0^t\|\Delta u^{\epsilon}(r)\|_{L^2}^2\mathrm{d}r\\
&=\|\nabla u^{\epsilon}(0)\|_{L^2}^2+2\int_0^t[(\mathbf{g}(|u^{\epsilon}(r)|^2)u^{\epsilon}(r),\Delta u^{\epsilon}(r))_{L^2}+((u^{\epsilon}(r)\cdot \nabla) u^{\epsilon}(r),\Delta u^{\epsilon}(r))_{L^2}\\
&\quad-((n^{\epsilon}(r)\nabla \phi)*\rho^\epsilon,\Delta u^{\epsilon}(r))_{L^2}]\mathrm{d}r+\int_0^t\|\nabla\textbf{P}  G(u^{\epsilon}(r))\|_{\mathcal{L}_2(\mathbf{U};L^2)}^2\mathrm{d}r\\
&\quad+2\sum_{j\geq 1}\int_0^t(\nabla G_j(u^\epsilon(r)),\nabla u^\epsilon(r))_{L^2}\mathrm{d} W_j(r)\\
&\leq\|\nabla u^{\epsilon}(0)\|_{L^2}^2+\int_0^t\|\Delta u^{\epsilon}(r)\|_{L^2}^2\mathrm{d}r+C_{\mathbf{N}}\int_0^t(1+\|\nabla\phi\|_{L^{\infty}}^2\|n^{\epsilon}(r)\|_{L^2}^2+\|u^{\epsilon}(r)\|_{L^2}^2+\|\nabla u^{\epsilon}(r)\|_{L^2}^2)\mathrm{d}r\\
&\quad+2\sum_{j\geq 1}\int_0^t(\nabla G_j(u^\epsilon(r)),\nabla u^\epsilon(r))_{L^2}\mathrm{d} W_j(r).
\end{split}
\end{equation*}
Raising the $p$-th power on both sides of the above inequality, and using BDG's inequality, we see from \eqref{lem3-1-1} and \eqref{ad2} that
\begin{equation}\label{00-1}
\begin{split}
&\mathbb{E} \sup_{t\in [0,T]}\|\nabla u^{\epsilon}(t)\|_{L^2}^{2p}
+ \mathbb{E}\left(\int_0^{T}\|\Delta u^{\epsilon}(t)\|_{L^2}^2\mathrm{d}t\right)^p\\
&\leq\mathbb{E}\|\nabla u^{\epsilon}(0)\|_{L^2}^{2p}+C_{\mathbf{N},p}\mathbb{E}\left(\int_0^{T}(1+\|\nabla\phi\|_{L^{\infty}}^2\|n^{\epsilon}(t)\|_{L^2}^2+\|u^{\epsilon}(t)\|_{L^2}^2)\mathrm{d}t\right)^p\\
&\quad+C_{\mathbf{N},p}\mathbb{E}\left(\int_0^{T}\|\nabla u^{\epsilon}(t)\|_{L^2}^2\mathrm{d}t\right)^p+C\sum_{j\geq 1}\mathbb{E} \sup_{t\in [0,T]}\left(\left|\int_0^t(\nabla G_j(u^\epsilon(r)),\nabla u^\epsilon(r))_{L^2}\mathrm{d} W_j(r)\right|\right)^p\\
&\leq C_{\phi,a,\mathbf{N},p,T,n_0,c_0,u_0}+C_{\mathbf{N},p}\mathbb{E}\left(\int_0^{T}\|\nabla u^{\epsilon}(t)\|_{L^2}^{2p}\mathrm{d}t\right)\\
&\quad+C_p\mathbb{E}\left(\int_0^T\|\nabla G(u^\epsilon(t))\|_{\mathcal{L}_2(\mathbf{U};L^2)}^2\|\nabla u^\epsilon(t)\|_{L^2}^2\mathrm{d}t\right)^{\frac{p}{2}}\\
&\leq C_{\phi,a,\mathbf{N},p,T,n_0,c_0,u_0}+\frac{1}{2}\mathbb{E} \sup_{t\in [0,T]}\|\nabla u^{\epsilon}(t)\|_{L^2}^{2p}+C_{\mathbf{N},p}\int_0^{T}\mathbb{E}\sup_{r\in[0,t]}\|\nabla u^{\epsilon}(r)\|_{L^2}^{2p}\mathrm{d}t.
\end{split}
\end{equation}
Thus, by applying Gronwall's lemma to \eqref{00-1} and using the estimate \eqref{lem3-1-1}, we obtain \eqref{lem4-1}.

By using inequality \eqref{ad1} and \eqref{ww7}, it follows that
\begin{equation*} \begin{split}
&\sup_{r\in[0,t]}\|\nabla c^{\epsilon}(r)\|_{L^2}^{2}\lesssim \sup_{r\in[0,t]}\|\sqrt{c^{\epsilon}(r)}\|_{L^{\infty}} ^{2} \sup_{r\in[0,t]}\|\nabla\sqrt{c^{\epsilon}(r)}\|_{L^2}^{2}\lesssim\|c_0\|_{L^{\infty}}\sup_{r\in[0,t]}\|\nabla\sqrt{c^{\epsilon}(r)}\|_{L^2}^{2},
\end{split} \end{equation*}
and
\begin{equation*} \begin{split}
\int_0^{t}\|\Delta c^{\epsilon}(r)\|_{L^2}^2\mathrm{d}r&\lesssim\|c_0\|_{L^{\infty}}\int_0^{t}(\|(\sqrt{c^{\epsilon}(r)})^{-1}|\nabla\sqrt{c^{\epsilon}(r)}|^2\|_{{L^2}}^2+\|\Delta\sqrt{c^{\epsilon}(r)}\|_{L^2}^2)\mathrm{d}r\\
&\lesssim\|c_0\|_{L^{\infty}}\int_0^{t}\|\sqrt{c^{\epsilon}(r)}|D^2 \ln c^{\epsilon}(r)|\|_{L^2}^2\mathrm{d}r.
\end{split} \end{equation*}
By   \eqref{lem3-1-1} and \eqref{ad1}, we get from the above two estimates that
\begin{equation*}
\begin{split}
&\mathbb{E} \sup_{t\in [0,T]}\|c^{\epsilon}(t)\|_{H^1}^{2p}+ \mathbb{E}\left(\int_0^{T}\|c^{\epsilon}(t)\|_{H^2}^2\mathrm{d}t\right)^p\\
&\lesssim_{c_0,p}1+\mathbb{E} \sup_{t\in [0,T]}\|\nabla\sqrt{c^{\epsilon}(t)}\|_{L^2}^{2p}+\mathbb{E}\left(\int_0^{T}\|\sqrt{c^{\epsilon}(t)}|D^2 \ln c^{\epsilon}(t)|\|_{L^2}^2\mathrm{d}t\right)^p\\
&\lesssim _{\phi,a,\mathbf{N},p,T,n_0,c_0,u_0}  1,
\end{split}
\end{equation*}
which implies \eqref{lem4-2}.

Next, we shall deal with the estimate for $\|\nabla c^{\epsilon}\|_{L^3}$. By means of the $c^\epsilon$-equation, we obtain that
\begin{equation*} \begin{split}
&\frac{1}{3}\frac{\mathrm{d}}{\mathrm{d}t}\|\partial_ic^{\epsilon}\|_{L^3}^3+2\int_{\mathbb{R}^3}|\partial_ic^{\epsilon}(x)||\nabla\partial_ic^{\epsilon}(x)|^2\mathrm{d}x\\
&=2\int_{\mathbb{R}^3}(n^{\epsilon}*\rho^{\epsilon})c^{\epsilon}(x)|\partial_ic^{\epsilon}(x)|\partial_{ii}c^{\epsilon}(x)\mathrm{d}x+2\int_{\mathbb{R}^3}(u^{\epsilon}(x)\cdot\nabla c^{\epsilon}(x))|\partial_i c^{\epsilon}(x)|\partial_{ii}c^{\epsilon}(x)\mathrm{d}x\\
&\leq\|\sqrt{|\partial_i c^{\epsilon}|}|\nabla\partial_{i}c^{\epsilon}|\|_{L^2}^2+C\|c^{\epsilon}\|_{L^{\infty}}^2\|(n^{\epsilon}*\rho^{\epsilon})\sqrt{|\partial_i c^{\epsilon}|}\|_{L^2}^2+C\||u^{\epsilon}||\nabla c|\sqrt{|\partial_i c^{\epsilon}|}\|_{L^2}^2\\
&\leq\|\sqrt{|\partial_i c^{\epsilon}|}|\nabla\partial_{i}c^{\epsilon}|\|_{L^2}^2+C_{c_0}\|n^{\epsilon}\|_{L^3}^2\|\partial_i c^{\epsilon}\|_{L^3}+C\|u^{\epsilon}\|_{L^6}^2\|\nabla c^{\epsilon}\|_{L^6}^2\|\partial_i c^{\epsilon}\|_{L^3},
\end{split} \end{equation*}
which together with the embedding $H^1(\mathbb{R}^3)\hookrightarrow L^6(\mathbb{R}^3)$, the Young inequality and the Sobolev inequality implies that
\begin{equation} \begin{split}\label{00-2}
&\frac{\mathrm{d}}{\mathrm{d}t}\|\nabla c^{\epsilon}\|_{L^3}^3+\|\nabla|\nabla c^{\epsilon}|^{\frac{3}{2}}\|_{L^2}^2\\
&\leq C_{c_0}\|n^{\epsilon}\|_{L^3}^2+C\|u^{\epsilon}\|_{H^1}^2\|\Delta c^{\epsilon}\|_{L^2}^2+(C_{c_0}\|n^{\epsilon}\|_{L^3}^2+C\|u^{\epsilon}\|_{H^1}^2\|\Delta c^{\epsilon}\|_{L^2}^2)\|\nabla c^{\epsilon}\|_{L^3}^3\\
&\lesssim_{c_0}1+\|n^{\epsilon}\|_{L^3}^3+\|u^{\epsilon}\|_{H^1}^2\|\Delta c^{\epsilon}\|_{L^2}^2+(1+\|n^{\epsilon}\|_{L^3}^3+\|u^{\epsilon}\|_{H^1}^2\|\Delta c^{\epsilon}\|_{L^2}^2)\|\nabla c^{\epsilon}\|_{L^3}^3.
 \end{split} \end{equation}
Applying the Gronwall lemma to \eqref{00-2} and using the basic inequality $x\leq e^{x}$ for any $x\geq0$, we get
\begin{equation} \begin{split}\label{00-3}
&\|\nabla c^{\epsilon}(t)\|_{L^3}^3+\int_0^t\|\nabla|\nabla c^{\epsilon}(r)|^{\frac{3}{2}}\|_{L^2}^2\mathrm{d}r\\
&\leq C_{c_0}\left(\|\nabla c^{\epsilon}(0)\|_{L^3}^3+\int_0^t(1+\|n^{\epsilon}(r)\|_{L^3}^3+\|u^{\epsilon}(r)\|_{H^1}^2\|\Delta c^{\epsilon}(r)\|_{L^2}^2)\mathrm{d}r\right)\\
&\quad\times\exp\left(C_{c_0}\int_0^t(1+\|n^{\epsilon}(r)\|_{L^3}^3+\|u^{\epsilon}(r)\|_{H^1}^2\|\Delta c^{\epsilon}(r)\|_{L^2}^2)\mathrm{d}r\right)\\
&\leq\exp(C_{c_0}\|\nabla c^{\epsilon}(0)\|_{L^3}^3)\exp\left(C_{c_0}\int_0^t(1+\|n^{\epsilon}(r)\|_{L^3}^3+\|u^{\epsilon}(r)\|_{H^1}^2\|\Delta c^{\epsilon}(r)\|_{L^2}^2)\mathrm{d}r\right)\\
&\leq C_{c_0}\exp\left(C_{c_0}\int_0^T(1+\|n^{\epsilon}(r)\|_{L^3}^3)\mathrm{d}r\right)\exp\left(C_{c_0}\int_0^t\|u^{\epsilon}(r)\|_{H^1}^2\|\Delta c^{\epsilon}(r)\|_{L^2}^2\mathrm{d}r\right)\\
&\leq C_{a,T,n_0,c_0}\exp\left(C_{c_0}\int_0^t\|u^{\epsilon}(r)\|_{H^1}^2\|\Delta c^{\epsilon}(r)\|_{L^2}^2\mathrm{d}r\right),
\end{split} \end{equation}
where the last inequality used the fact of \eqref{ad2}. Similarly, by using the $c^\epsilon$-equation again, we have
\begin{equation*} \begin{split}
\frac{\mathrm{d}}{\mathrm{d}t}\|\nabla c^{\epsilon}\|_{L^2}^2+\|\Delta c^{\epsilon}\|_{L^2}^2&\leq\|u^{\epsilon}\cdot\nabla c^{\epsilon}\|_{L^2}^2+\|c^\epsilon(n^\epsilon*\rho^\epsilon)\|_{L^2}^2\\
&\leq\|u^{\epsilon}\|_{L^{\infty}}^2\|\nabla c^{\epsilon}\|_{L^2}^2+\|c^{\epsilon}\|_{L^{\infty}}^2\|n^{\epsilon}\|_{L^2}^2\leq C\|c_0\|_{L^{\infty}}^2\|n^{\epsilon}\|_{L^2}^2+C\|u^{\epsilon}\|_{H^{2}}^2\|\nabla c^{\epsilon}\|_{L^2}^2,
\end{split} \end{equation*}
which together with Gronwall's lemma means that
\begin{equation} \begin{split}\label{00-4}
&\|\nabla c^{\epsilon}(t)\|_{L^2}^2+\int_0^t\|\Delta c^{\epsilon}(r)\|_{L^2}^2\mathrm{d}r\leq C_{a,T,n_0,c_0}\exp\left(C\int_0^t\|u^{\epsilon}(r)\|_{H^{2}}^2\mathrm{d}r\right).
\end{split} \end{equation}
Thus, we get from \eqref{00-3} and \eqref{00-4} that
\begin{equation} \begin{split}\label{00-5}
&\|\nabla c^{\epsilon}(t)\|_{L^3}^3+\int_0^t\|\nabla|\nabla c^{\epsilon}(r)|^{\frac{3}{2}}\|_{L^2}^2\mathrm{d}r\\
&\leq C_{a,T,n_0,c_0}\exp\left(C_{c_0}\sup_{r\in[0,t]}\|u^{\epsilon}(r)\|_{H^1}^2\int_0^t\|\Delta c^{\epsilon}(r)\|_{L^2}^2\mathrm{d}r\right)\\
&\leq C_{a,T,n_0,c_0}\exp\left(C_{c_0}\sup_{r\in[0,t]}\|u^{\epsilon}(r)\|_{H^1}^2C_{a,T,n_0,c_0}\exp\left(C\int_0^t\|u^{\epsilon}(r)\|_{H^{2}}^2\mathrm{d}r\right)\right)\\
&\leq C_{a,T,n_0,c_0}\exp\left(C_{a,T,n_0,c_0}\exp\left(C(\sup_{r\in[0,t]}\|u^{\epsilon}(r)\|_{H^1}^2+\int_0^t\|u^{\epsilon}(r)\|_{H^{2}}^2\mathrm{d}r)\right)\right).
\end{split} \end{equation}

Next, we shall show the estimate for $\|n^{\epsilon}\|_{L^2}$. Taking the $L^2$-inner product of the first equation of \eqref{Mod-1} with $n^{\epsilon}$ and using the interpolation inequality as well as the Sobolev inequality, we have
\begin{equation*} \begin{split}
&\frac{1}{2}\frac{\mathrm{d}}{\mathrm{d}t}\|n^{\epsilon}\|_{L^2}^2+\|\nabla n^{\epsilon}\|_{L^2}^2+a\|n^{\epsilon}\|_{L^2}^2+\|n^{\epsilon}\|_{L^4}^4\\
&=(n^{\epsilon}\nabla (c^{\epsilon}*\rho^{\epsilon}),\nabla n^{\epsilon})_{L^2}+(a+1)\|n^{\epsilon}\|_{L^3}^3\\
&\leq\|n^{\epsilon}\|_{L^3}\|\nabla c^{\epsilon}\|_{L^6}\|\nabla n^{\epsilon}\|_{L^2}+(a+1)\|n^{\epsilon}\|_{L^3}^3\\
&\leq C\|n^{\epsilon}\|_{L^2}^{\frac{1}{2}}\||\nabla c^{\epsilon}|^{\frac{3}{2}}\|_{L^2}^{\frac{1}{6}}\||\nabla c^{\epsilon}|^{\frac{3}{2}}\|_{L^6}^{\frac{1}{2}}\|\nabla n^{\epsilon}\|_{L^2}^{\frac{3}{2}}+(a+1)\|n^{\epsilon}\|_{L^3}^3\\
&\leq\frac{1}{2}\|\nabla n^{\epsilon}\|_{L^2}^2+C\|\nabla c^{\epsilon}\|_{L^3}\|\nabla|\nabla c^{\epsilon}|^{\frac{3}{2}}\|_{L^2}^2\|n^{\epsilon}\|_{L^2}^2+(a+1)\|n^{\epsilon}\|_{L^3}^3.
\end{split} \end{equation*}
By using \eqref{ad2}, \eqref{00-5} and Gronwall's lemma, it follows that
\begin{equation} \begin{split}\label{00-6}
&\|n^{\epsilon}(t)\|_{L^2}^2+\int_0^t(\|n^{\epsilon}(r)\|_{H^1}^2+\|n^{\epsilon}(r)\|_{L^4}^4)\mathrm{d}r\\
&\leq C_{a,T,n_0}\exp\left(C\int_0^t\|\nabla c^{\epsilon}(r)\|_{L^3}\|\nabla|\nabla c^{\epsilon}(r)|^{\frac{3}{2}}\|_{L^2}^2\mathrm{d}r\right)\\
&\leq C_{a,T,n_0}\exp\left(C(1+\sup_{r\in[0,t]}\|\nabla c^{\epsilon}(r)\|_{L^3}^3)\int_0^t\|\nabla|\nabla c^{\epsilon}(r)|^{\frac{3}{2}}\|_{L^2}^2\mathrm{d}r\right)\\
&\leq C_{a,T,n_0,c_0}\exp\left(C\exp\left(C_{a,T,n_0,c_0}\exp\left(C(\sup_{r\in[0,t]}\|u^{\epsilon}(r)\|_{H^1}^2+\int_0^t\|u^{\epsilon}(r)\|_{H^{2}}^2\mathrm{d}r)\right)\right)\right).
\end{split}\end{equation}

Next, let us deal with the estimate for $\|\Delta c^{\epsilon}\|_{L^2}$. Multiplying the second equation of \eqref{Mod-1} by $-\Delta^2 c^{\epsilon}$ and integrating in space, we have
\begin{equation*} \begin{split}
&\frac{1}{2}\frac{\mathrm{d}}{\mathrm{d}t}\|\Delta c^{\epsilon}\|_{L^2}^2+\|\nabla\Delta c^{\epsilon}\|_{L^2}^2=(\nabla(u^{\epsilon}\cdot\nabla c^{\epsilon}),\nabla\Delta c^{\epsilon})_{L^2}+(\nabla(c^{\epsilon}n^{\epsilon}*\rho^{\epsilon}),\nabla\Delta c^{\epsilon})_{L^2}\\
&\leq\frac{1}{2}\|\nabla\Delta c^{\epsilon}\|_{L^2}^2+\|\nabla u^{\epsilon}\|_{L^3}^2\|\nabla c^{\epsilon}\|_{L^6}^2+\|u^{\epsilon}\|_{L^{\infty}}^2\|D^2 c^{\epsilon}\|_{L^2}^2+\|\nabla c^{\epsilon}\|_{L^6}^2\|n^{\epsilon}\|_{L^3}^2+\|c^{\epsilon}\|_{L^{\infty}}^2\|\nabla n^{\epsilon}\|_{L^2}^2\\
&\leq\frac{1}{2}\|\nabla\Delta c^{\epsilon}\|_{L^2}^2+C_{c_0}\|\nabla n^{\epsilon}\|_{L^2}^2+C(1+\|u^{\epsilon}\|_{H^2}^2+\|n^{\epsilon}\|_{L^3}^3)\|\Delta c^{\epsilon}\|_{L^2}^2.
\end{split} \end{equation*}
In terms of \eqref{ad2} and \eqref{00-6}, by using the Gronwall lemma and the basic inequality $1+e^{f+g}\lesssim e^f+e^{f+g}\lesssim e^{f+g}$ for any $f,~g\geq0$, we further have
\begin{equation*} \begin{split}
&\|\Delta c^{\epsilon}(t)\|_{L^2}^2+\int_0^t\|\nabla\Delta c^{\epsilon}(r)\|_{H^1}^2\mathrm{d}r\\
&\leq C_{c_0}\left(1+\int_0^t\|\nabla n^{\epsilon}(r)\|_{L^2}^2\mathrm{d}r\right)\exp\left(C\int_0^t(1+\|u^{\epsilon}(r)\|_{H^2}^2+\|n^{\epsilon}(r)\|_{L^3}^3)\mathrm{d}r\right)\\
&\leq C_{a,T,n_0,c_0}\exp\left(C\int_0^t\|u^{\epsilon}(r)\|_{H^2}^2\mathrm{d}r\right)\\
&\quad\times\exp\left(C\exp\left(C_{a,T,n_0,c_0}\exp\left(C(\sup_{r\in[0,t]}\|u^{\epsilon}(r)\|_{H^1}^2+\int_0^t\|u^{\epsilon}(r)\|_{H^{2}}^2\mathrm{d}r)\right)\right)\right)\\
&\leq C_{a,T,n_0,c_0}\exp\left(C\exp\left(C_{a,T,n_0,c_0}\exp\left(C(\sup_{r\in[0,t]}\|u^{\epsilon}(r)\|_{H^1}^2+\int_0^t\|u^{\epsilon}(r)\|_{H^{2}}^2\mathrm{d}r)\right)\right)\right),
\end{split} \end{equation*}
which together with \eqref{ad1} and \eqref{00-4} implies \eqref{lem4-4}.

Finally, we shall deal with the estimate for $\|\nabla n^{\epsilon}\|_{L^2}$. Multiplying the first equation of \eqref{Mod-1} by $-\Delta n^{\epsilon}$ and integrating in space, we infer from the Sobolev inequality that
\begin{equation*} \begin{split}
&\frac{1}{2}\frac{\mathrm{d}}{\mathrm{d}t}\|\nabla n^{\epsilon}\|_{L^2}^2+\|\Delta n^{\epsilon}\|_{L^2}^2+a\|\nabla n^{\epsilon}\|_{L^2}^2+3\|n^{\epsilon}|\nabla n^{\epsilon}|\|_{L^2}^2\\
&=(u^{\epsilon}\cdot\nabla n^{\epsilon},\Delta n^{\epsilon})_{L^2}+(\nabla\cdot(n^{\epsilon}\nabla (c^{\epsilon}*\rho^{\epsilon})),\Delta n^{\epsilon})_{L^2}+2(a+1)(n^{\epsilon}\nabla n^{\epsilon},\nabla n^{\epsilon})_{L^2}\\
&\leq\frac{1}{2}\|\Delta n^{\epsilon}\|_{L^2}^2+\|n^{\epsilon}|\nabla n^{\epsilon}|\|_{L^2}^2+C\|u^{\epsilon}\cdot\nabla n^{\epsilon}\|_{L^2}^2+C\|\nabla\cdot(n^{\epsilon}\nabla (c^{\epsilon}*\rho^{\epsilon}))\|_{L^2}^2+C_a\|\nabla n^{\epsilon}\|_{L^2}^2\\
&\leq\frac{1}{2}\|\Delta n^{\epsilon}\|_{L^2}^2+\|n^{\epsilon}|\nabla n^{\epsilon}|\|_{L^2}^2+C_a(1+\|u^{\epsilon}\|_{L^{\infty}}^2+\|\nabla c^{\epsilon}\|_{L^{\infty}}^2)\|\nabla n^{\epsilon}\|_{L^2}^2+C\|n^{\epsilon}\|_{L^6}^2\|\Delta c^{\epsilon}\|_{L^3}^2\\
&\leq\frac{1}{2}\|\Delta n^{\epsilon}\|_{L^2}^2+\|n^{\epsilon}|\nabla n^{\epsilon}|\|_{L^2}^2+C_a(1+\|u^{\epsilon}\|_{H^{2}}^2+\|c^{\epsilon}\|_{H^{3}}^2)\|\nabla n^{\epsilon}\|_{L^2}^2.
\end{split} \end{equation*}
By applying Gronwall's lemma to the above inequality and using the estimate \eqref{lem4-4}, we get
\begin{equation*} \begin{split}
&\|\nabla n^{\epsilon}(t)\|_{L^2}^2+\int_0^t\|\Delta n^{\epsilon}(r)\|_{L^2}^2\mathrm{d}r\\
&\leq C_{n_0}\exp\left(C_a\int_0^t(1+\|u^{\epsilon}(r)\|_{H^2}^2+\|c^{\epsilon}(r)\|_{H^3}^2)\mathrm{d}r\right)\\
&\leq C_{a,T,n_0}\exp\left(C_a\int_0^t\|u^{\epsilon}(r)\|_{H^2}^2\mathrm{d}r\right)\exp\left(C_a\int_0^t\|c^{\epsilon}(r)\|_{H^3}^2\mathrm{d}r\right)\\
&\leq C_{a,T,n_0,c_0}\exp\left(C_a\exp\left(C\exp\left(C_{a,T,n_0,c_0}\exp\left(C(\sup_{r\in[0,t]}\|u^{\epsilon}(r)\|_{H^1}^2+\int_0^t\|u^{\epsilon}(r)\|_{H^{2}}^2\mathrm{d}r)\right)\right)\right)\right),
\end{split} \end{equation*}
which combined with \eqref{00-6} implies \eqref{lem4-5}. The proof is thus completed.
\end{proof}

\begin{corollary} \label{1cor}
There exist positive constants $\tilde{C}_{10},\cdots,\tilde{C}_{19}$ independent of $\epsilon$ such that $\mathbb{P}$-a.s.
\begin{align}
&\|n^{\epsilon}\|_{W^{1,2}(0,T;L^2(\mathbb{R}^3))}^2\cr
&\leq\tilde{C}_{10}\exp\left(\tilde{C}_{11}\exp\left(\tilde{C}_{12}\exp\left(\tilde{C}_{13}\exp\left(\tilde{C}_{14}(\sup_{t\in[0,T]}\|u^{\epsilon}(t)\|_{H^1}^2+\int_0^T\|u^{\epsilon}(t)\|_{H^{2}}^2\mathrm{d}t)\right)\right)\right)\right),\label{cor.1}\\
&\|c^{\epsilon}\|_{W^{1,2}(0,T;H^1(\mathbb{R}^3))}^2\cr
&\leq \tilde{C}_{15}\exp\left(\tilde{C}_{16}\exp\left(\tilde{C}_{17}\exp\left(\tilde{C}_{18}\exp\left(\tilde{C}_{19}(\sup_{t\in[0,T]}\|u^{\epsilon}(t)\|_{H^1}^2+\int_0^T\|u^{\epsilon}(t)\|_{H^{2}}^2\mathrm{d}t)\right)\right)\right)\right).\label{cor.2}
\end{align}
\end{corollary}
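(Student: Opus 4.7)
The strategy is direct: read $\partial_t n^\epsilon$ and $\partial_t c^\epsilon$ off the first two equations of \eqref{Mod-1}, bound the right-hand sides pointwise in time by the higher-regularity norms already controlled in Lemma \ref{lem4}, and then integrate in $t$. The $L^2(0,T;L^2)$- and $L^2(0,T;H^1)$-parts of the target norms are immediate from \eqref{lem4-5} and \eqref{lem4-4}, so only the time derivatives require work.

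For the density equation, I would write
\begin{equation*}
\partial_t n^{\epsilon}= \Delta n^{\epsilon}-u^{\epsilon}\cdot\nabla n^{\epsilon}-\nabla\cdot(n^{\epsilon}\nabla(c^{\epsilon}\ast\rho^{\epsilon}))+L(n^{\epsilon}),
\end{equation*}
and estimate each term in $L^2(\mathbb{R}^3)$ using H\"older's inequality together with the Sobolev embeddings $H^{2}\hookrightarrow L^{\infty}$ and $H^{1}\hookrightarrow L^{6}$, the convolution bound $\|\cdot\ast\rho^{\epsilon}\|_{H^{k+1}}\lesssim\|\cdot\|_{H^{k}}$, and the polynomial structure of $L(n^{\epsilon})$. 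This yields
\begin{equation*}
\|\partial_t n^{\epsilon}\|_{L^2}^2\lesssim \|n^{\epsilon}\|_{H^2}^2+\|u^{\epsilon}\|_{H^2}^2\|n^{\epsilon}\|_{H^1}^2+\|n^{\epsilon}\|_{H^2}^2\|c^{\epsilon}\|_{H^2}^2+\bigl(1+\|n^{\epsilon}\|_{H^1}^4\bigr)\|n^{\epsilon}\|_{H^1}^2.
\end{equation*}
Integrating in $t$ and inserting the pathwise bounds \eqref{lem4-4} and \eqref{lem4-5} (which already have the nested-exponential form) converts the right-hand side into an expression of the required type; taking the supremum of these pointwise pathwise bounds on $[0,T]$ gives \eqref{cor.1}, with four nested exponentials inherited from \eqref{lem4-5}.

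For the chemical concentration, I apply $\Lambda$ to the second equation of \eqref{Mod-1} and estimate in $L^2$:
\begin{equation*}
\|\partial_t c^{\epsilon}\|_{H^1}^2\lesssim \|c^{\epsilon}\|_{H^3}^2+\|u^{\epsilon}\cdot\nabla c^{\epsilon}\|_{H^1}^2+\|c^{\epsilon}(n^{\epsilon}\ast\rho^{\epsilon})\|_{H^1}^2,
\end{equation*}
and handle the two nonlinear terms via the Moser-type product estimate
$\|fg\|_{H^1}\lesssim \|f\|_{H^2}\|g\|_{H^1}$, which gives
$\|u^{\epsilon}\cdot\nabla c^{\epsilon}\|_{H^1}^2\lesssim \|u^{\epsilon}\|_{H^2}^2\|c^{\epsilon}\|_{H^2}^2$ and $\|c^{\epsilon}(n^{\epsilon}\ast\rho^{\epsilon})\|_{H^1}^2\lesssim \|c^{\epsilon}\|_{H^2}^2\|n^{\epsilon}\|_{H^1}^2$. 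Combining with $\|c^{\epsilon}\|_{H^1}^2\leq \|c^{\epsilon}\|_{H^2}^2$ and integrating in time, the right-hand side is controlled by $\int_0^T\|u^{\epsilon}\|_{H^2}^2\,\mathrm{d}t\cdot\sup_{[0,T]}\|c^{\epsilon}\|_{H^2}^2+\int_0^T\|c^{\epsilon}\|_{H^3}^2\,\mathrm{d}t+\sup_{[0,T]}\|n^{\epsilon}\|_{H^1}^2\int_0^T\|c^{\epsilon}\|_{H^2}^2\,\mathrm{d}t$, and each factor is bounded by \eqref{lem4-4}--\eqref{lem4-5}, producing \eqref{cor.2} with the advertised triple/quadruple exponentials.

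No genuine obstacle is expected here: the work is essentially a product-rule bookkeeping exercise, since all of the difficult analysis (the entropy-energy estimate, the iterative control of $(n^{\epsilon},c^{\epsilon})$ by $u^{\epsilon}$) was done in Lemmas \ref{lem3} and \ref{lem4}. The only point requiring care is that the repeated products of exponentials must be absorbed using the elementary inequality $e^{a}e^{b}\leq e^{2(a+b)}$, together with the observation already used in the proof of Lemma \ref{lem4} that $1+e^{f+g}\lesssim e^{f+g}$ when $f,g\geq 0$, so that the final bound can be written with a single exponential tower of the same height as in \eqref{lem4-5}. Tracking the constants carefully then yields the explicit $\tilde{C}_{10},\dots,\tilde{C}_{19}$ appearing in the statement.
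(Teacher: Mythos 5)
Your proposal is correct and follows essentially the same route as the paper: read $\partial_t n^\epsilon$ and $\partial_t c^\epsilon$ off the first two equations of \eqref{Mod-1}, bound the right-hand sides in $L^2$ (resp.\ $H^1$) via H\"older, Sobolev embeddings and the convolution bound, integrate in time, and insert the pathwise nested-exponential bounds \eqref{lem4-4}--\eqref{lem4-5}, absorbing products of exponential towers into a single tower of the same height. The only cosmetic difference is in how the individual product terms are grouped (e.g.\ $\|u^\epsilon\|_{H^2}^2\|n^\epsilon\|_{H^1}^2$ versus the paper's $\|u^\epsilon\|_{L^\infty}^2\|\nabla n^\epsilon\|_{L^2}^2$), which is immaterial after the same Sobolev embeddings.
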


\begin{proof}[\emph{\textbf{Proof}}] By direct calculation, we deduce from the estimates \eqref{lem4-4} and \eqref{lem4-5} that
\begin{equation*}
\begin{split}
&\int_0^T\|n^{\epsilon}(r)\|_{L^2}^2+\left\|\partial_tn^{\epsilon}(r)\right\|_{L^{2}}^2\mathrm{d}r\\
&\leq C_a\int_0^T(\|u^\epsilon(t)\|_{L^{\infty}}^2\|\nabla n^{\epsilon}(t)\|_{L^2}^2+\|\Delta n ^\epsilon(t)\|_{L^2}^2+\|\nabla c^\epsilon(t)\|_{L^{\infty}}^2\|\nabla n^{\epsilon}(t)\|_{L^2}^2\\
&\quad+\|n^\epsilon(t)\|_{L^{\infty}}^2\|\Delta c^{\epsilon}(t)\|_{L^2}^2+\|n^{\epsilon}(t)\|_{L^2}^2+\|n^{\epsilon}(t)\|_{L^4}^4+\|n^{\epsilon}(t)\|_{L^6}^6)\mathrm{d}t\\
&\leq C_{a,T}\Bigl(\sup_{t\in[0,T]}\|n^{\epsilon}(t)\|_{H^1}^2\int_0^T\|u^\epsilon(t)\|_{H^{2}}^2\mathrm{d}t+\int_0^T\|n ^\epsilon(t)\|_{H^2}^2\mathrm{d}t+\sup_{t\in[0,T]}\|n^{\epsilon}(t)\|_{H^1}^2\int_0^T\|c^\epsilon(t)\|_{H^{3}}^2\mathrm{d}t\\
&\quad+\sup_{t\in[0,T]}\|c^{\epsilon}(t)\|_{H^2}^2\int_0^T\|n^\epsilon(t)\|_{H^{2}}^2\mathrm{d}t+\sup_{t\in[0,T]}\|n^{\epsilon}(t)\|_{H^1}^6T\Bigl)\\
&\leq \tilde{C}_{10}\exp\left(\tilde{C}_{11}\exp\left(\tilde{C}_{12}\exp\left(\tilde{C}_{13}\exp\left(\tilde{C}_{14}(\sup_{t\in[0,T]}\|u^{\epsilon}(t)\|_{H^1}^2+\int_0^T\|u^{\epsilon}(t)\|_{H^{2}}^2\mathrm{d}t)\right)\right)\right)\right),
\end{split}
\end{equation*}
which implies   \eqref{cor.1}. Similarly, according to the second equation of \eqref{Mod-1}, we derive from \eqref{lem4-4} and \eqref{lem4-5} that
\begin{equation*}
\begin{split}
&\int_0^T\|c^{\epsilon}(r)\|_{H^1}^2+\left\|\partial_t c^{\epsilon} (r)\right\|_{L^{2}}^2+\left\|\nabla\partial_t c^{\epsilon}(r)\right\|_{L^{2}}^2\mathrm{d}r\\
&\leq C\int_0^T(\|u^\epsilon(t)\|_{H^2}^2\|c^{\epsilon}(t)\|_{H^1}^2+\|c^\epsilon(t)\|_{H^2}^2+\|c^\epsilon(t)\|_{L^{\infty}}^2\| n^{\epsilon}(t)\|_{L^2}^2)\mathrm{d}t\\
&\quad+C\int_0^T(\|u^\epsilon(t)\|_{H^1}^2\|c^{\epsilon}(t)\|_{H^3}^2+\|u^\epsilon(t)\|_{H^2}^2\|c^{\epsilon}(t)\|_{H^2}^2+\|c^\epsilon(t)\|_{H^3}^2+\|c^\epsilon(t)\|_{H^3}^2\| n^{\epsilon}(t)\|_{L^2}^2\\
&\quad+\|c^{\epsilon}(t)\|_{L^{\infty}}^2\|n^{\epsilon}(t)\|_{H^1}^2)\mathrm{d}t\\
&\leq C\Bigg(\sup_{t\in[0,T]}\|u^{\epsilon}(t)\|_{H^1}^2\int_0^T\|c^\epsilon(t)\|_{H^{3}}^2\mathrm{d}t+\sup_{t\in[0,T]}\|c^{\epsilon}(t)\|_{H^2}^2\int_0^T\|u^\epsilon(t)\|_{H^{2}}^2\mathrm{d}t\\
&\quad+\sup_{t\in[0,T]}\|c^{\epsilon}(t)\|_{H^2}^2\int_0^T\|n^\epsilon(t)\|_{H^{1}}^2\mathrm{d}t+(1+\sup_{t\in[0,T]}\|n^{\epsilon}(t)\|_{L^2}^2)\int_0^T\|c^\epsilon(t)\|_{H^{3}}^2\mathrm{d}t\Bigg)\\
&\leq\tilde{C}_{15}\exp\left(\tilde{C}_{16}\exp\left(\tilde{C}_{17}\exp\left(\tilde{C}_{18}\exp\left(\tilde{C}_{19}(\sup_{t\in[0,T]}\|u^{\epsilon}(t)\|_{H^1}^2+\int_0^T\|u^{\epsilon}(t)\|_{H^{2}}^2\mathrm{d}t)\right)\right)\right)\right),
\end{split}
\end{equation*}
which implies the estimate \eqref{cor.2}. The proof is thus completed.
\end{proof}

\subsection{Tightness of approximation solutions}\label{sec3.3}
Unlike the deterministic system, the uniform a priori entropy-energy inequality is insufficient for us to take the limit $\epsilon\rightarrow0$ to obtain the global solution directly, due to the loss of the topology structure of the space $\Omega$. To overcome this difficulty, we shall prove the tightness of the probability measures induced by the approximation solutions. Then with the help of the Prohorov theorem \cite{da2014stochastic} and Skorokhod representation theorem, one can find convergent subsequence defined on a new probability space. To begin with, let us introduce the following  phase spaces:
\begin{equation}\label{work}
\begin{split}
\mathcal{Z}_n&:=\mathcal{C}([0,T];U')\cap L^2_w(0,T;H^2(\mathbb{R}^3))\cap L^2(0,T;H^1_{loc}(\mathbb{R}^3))\cap \mathcal{C}([0,T];H^1_w(\mathbb{R}^3)),\\
\mathcal{Z}_c&:=\mathcal{C}([0,T];U')\cap L^2_w(0,T;H^3(\mathbb{R}^3))\cap L^2(0,T;H^2_{loc}(\mathbb{R}^3))\cap \mathcal{C}([0,T];H^2_w(\mathbb{R}^3)),\\
\mathcal{Z}_u&:=\mathcal{C}([0,T];U'_1)\cap L^2_w(0,T;\mathbb{H}^2)\cap L^2(0,T;\mathbb{H}^1_{loc})\cap \mathcal{C}([0,T];\mathbb{H}^1_w),
\end{split}
\end{equation}
where the Hilbert spaces $U$ and $U_1$ satisfy that, for any fixed $s>0$, the embeddings $U\subset H^s(\mathbb{R}^3)$ and $U_1\subset \mathbb{H}^s$ are compact (cf. \cite{11brzezniak2013existence}), and $U'$ and $U_1'$ are the corresponding dual spaces. Let $\mathcal{T}_n$, $\mathcal{T}_c$ and $\mathcal{T}_u$ be the supremum of the corresponding topologies with respect to the phase spaces.

Similar to the proof of \cite[Lemma 5.3]{12brzezniak2017note} and \cite[Lemma 2.7]{mikulevicius2005global}, it is not difficult to obtain the following compactness criteria.
\begin{lemma}\label{3lem} Let $(\mathcal{Z}_n,\mathcal{Z}_c,\mathcal{Z}_u)$ be as defined in \eqref{work}.
\begin{itemize}
\item [(\textbf{\textsf{A$_1$}})] A set $\mathcal{K}\subset\mathcal{Z}_n$ is $\mathcal{T}_n$-relatively compact if

\begin{itemize}
\item [(1)] $\sup_{f\in\mathcal{K}}\sup_{t\in[0,T]}\|f(t)\|_{H^1}^2+\sup_{f\in\mathcal{K}}\int_0^T\|f(t)\|_{H^2}^2\textrm{d}t<\infty$,

\item [(2)] $\exists\alpha>0$ such that $\sup_{f\in\mathcal{K}}\|f\|_{\mathcal{C}^{\alpha}([0,T];L^2(\mathbb{R}^3))}<\infty$.
\end{itemize}

\item [{(\textbf{\textsf{A$_2$}})}] A set $\mathcal{K}\subset\mathcal{Z}_c$ is $\mathcal{T}_c$-relatively compact if

\begin{itemize}
\item [(1)] $\sup_{f\in\mathcal{K}}\sup_{t\in[0,T]}\|f(t)\|_{H^2}^2+\sup_{f\in\mathcal{K}}\int_0^T\|f(t)\|_{H^3}^2\textrm{d}t<\infty$,

\item [(2)] $\exists\beta>0$ such that $\sup_{f\in\mathcal{K}}\|f\|_{\mathcal{C}^{\beta}([0,T];H^{1}(\mathbb{R}^3))}<\infty$.
\end{itemize}

\item [{(\textbf{\textsf{A$_3$}})}] A set $\mathcal{K}\subset\mathcal{Z}_u$ is $\mathcal{T}_u$-relatively compact if

\begin{itemize}
\item [(1)] $\sup_{f\in\mathcal{K}}\sup_{t\in[0,T]}\|f(t)\|_{\mathbb{H}^1}^2+\sup_{f\in\mathcal{K}}\int_0^T\|f(t)\|_{\mathbb{H}^2}^2\textrm{d}t<\infty$,

\item [(2)] $\lim\limits_{\delta\rightarrow0}\sup\limits_{f\in\mathcal{K}}\sup\limits_{s,t\in[0,T],|t-s|\leq\delta}\|f(t)-f(s)\|_{\mathbb{H}^0}=0$.
 \end{itemize}
\end{itemize}
\end{lemma}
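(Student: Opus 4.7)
The plan is to establish relative compactness in each of the four constituent topologies separately (for each of the three phase spaces) and then extract a single diagonal subsequence converging in all of them simultaneously, which yields compactness in the supremum topology $\mathcal{T}_n$, $\mathcal{T}_c$, $\mathcal{T}_u$. The argument follows the same broad scheme as in Brzeźniak--Motyl \cite{11brzezniak2013existence,12brzezniak2017note} and Mikulevicius--Rozovskii \cite{mikulevicius2005global}, but has to be adapted to (i) the \emph{local} Sobolev topology on the unbounded domain $\mathbb{R}^3$, and (ii) in case (\textsf{A$_3$}), to the weaker hypothesis that only uniform equicontinuity of $u$ in $\mathbb{H}^0$ is available (rather than a Hölder bound).

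I begin with $\mathcal{Z}_n$. Hypothesis (1) supplies a uniform bound in $L^2(0,T;H^2(\mathbb{R}^3))$, so Banach--Alaoglu on the reflexive Hilbert space $L^2(0,T;H^2)$ directly gives sequential relative compactness in $L^2_w(0,T;H^2(\mathbb{R}^3))$. For $\mathcal{C}([0,T];U')$, I invoke Ascoli--Arzelà: hypothesis (2) plus the continuous embedding $L^2(\mathbb{R}^3)\hookrightarrow U'$ produces equicontinuity in $U'$, while the uniform $H^1$-bound from (1), together with the compactness of $H^1(\mathbb{R}^3)\hookrightarrow U'$, yields pointwise relative compactness. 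For $\mathcal{C}([0,T];H^1_w(\mathbb{R}^3))$, I employ the Strauss-type criterion: a family that is bounded in $L^\infty(0,T;H^1(\mathbb{R}^3))$ and for which $t\mapsto (f(t),\varphi)_{L^2}$ is equicontinuous for every $\varphi$ in a countable dense subset of $H^1$ is relatively compact in $\mathcal{C}([0,T];H^1_w)$; the equicontinuity in the scalar product is immediate from the Hölder bound in $L^2$ furnished by hypothesis (2).

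The most delicate component is $L^2(0,T;H^1_{loc}(\mathbb{R}^3))$. On each bounded subdomain $\mathcal{O}_d$ of the exhausting sequence, the restrictions $\{f|_{\mathcal{O}_d}\}_{f\in\mathcal{K}}$ are bounded in $L^2(0,T;H^2(\mathcal{O}_d))$ by (1) and uniformly Hölder in $L^2(\mathcal{O}_d)$ by (2); since the embedding $H^2(\mathcal{O}_d) \hookrightarrow\hookrightarrow H^1(\mathcal{O}_d) \hookrightarrow L^2(\mathcal{O}_d)$ furnishes the compact step required by the Aubin--Lions--Dubinskii lemma, I deduce relative compactness in $L^2(0,T;H^1(\mathcal{O}_d))$. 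A standard diagonal extraction across the sequence $\{\mathcal{O}_d\}$ produces a subsequence convergent in $L^2(0,T;H^1_{loc}(\mathbb{R}^3))$. Extracting a further subsequence that works for all four topologies simultaneously finishes case (\textsf{A$_1$}).

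Cases (\textsf{A$_2$}) and (\textsf{A$_3$}) proceed along the identical lines, with the regularity indices shifted upward by one: for $c$, the Hölder hypothesis in $H^1$ plays the role of the Hölder hypothesis in $L^2$, and for $u$ only a modulus-of-continuity condition in $\mathbb{H}^0$ is assumed. The main obstacle is verifying that this weaker uniform equicontinuity in $\mathbb{H}^0$ is still enough to drive both the Ascoli--Arzelà argument in $\mathcal{C}([0,T];U_1')$ (which uses $\mathbb{H}^0 \hookrightarrow U_1'$ continuously and $\mathbb{H}^1 \hookrightarrow\hookrightarrow U_1'$ compactly), the Strauss criterion for $\mathcal{C}([0,T];\mathbb{H}^1_w)$ (where I test against a dense countable subset of $\mathbb{H}^1$ and use density/continuity of the embedding $\mathbb{H}^1\hookrightarrow\mathbb{H}^0$ to transfer the equicontinuity), and the localized Aubin--Lions step in $L^2(0,T;\mathbb{H}^1(\mathcal{O}_d))$ (which only requires uniform equicontinuity of translations in a weaker space, not a time-derivative bound). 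Once these three ingredients are in place, the diagonal extraction concludes the proof.
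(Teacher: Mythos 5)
Your proposal is correct and follows essentially the same route the paper intends: the paper gives no proof of Lemma \ref{3lem} but refers to Lemma 5.3 of \cite{12brzezniak2017note} and Lemma 2.7 of \cite{mikulevicius2005global}, whose arguments are exactly your scheme of treating the four constituent topologies separately (Banach--Alaoglu for $L^2_w$, Ascoli--Arzel\`a for $\mathcal{C}([0,T];U')$, the Strauss-type criterion for $\mathcal{C}([0,T];Q_w)$, and a localized Aubin--Lions--Simon argument on the exhausting domains $\mathcal{O}_d$ followed by a diagonal extraction). Your observation that the weaker uniform-equicontinuity hypothesis in (\textbf{\textsf{A$_3$}}) still suffices for all three time-regularity ingredients is also the correct adaptation.
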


The tightness of the laws of the approximation solutions is guaranteed by the following result.

\begin{proposition}\label{3pro} Let $(n^\epsilon,c^\epsilon,u^\epsilon)\in$ be the global solution of \eqref{Mod-1} constructed in Lemma \ref{lem2.5} with $\epsilon \in (0,1)$. Then the set of probability measures $\{(\mathscr{L}(n^{\epsilon}),\mathscr{L}(c^{\epsilon}),\mathscr{L}(u^{\epsilon}))\}_{\epsilon\in(0,1)}$ is tight on $(\mathcal{Z}_n\times\mathcal{Z}_c\times\mathcal{Z}_u,\mathcal{T}_n\times\mathcal{T}_c\times\mathcal{T}_u)$.
\end{proposition}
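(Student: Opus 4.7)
The plan is to verify, for each marginal, the compactness criteria of Lemma \ref{3lem} on sets of probability at least $1-\delta$ uniformly in $\epsilon$, and then combine them via a product compact-set construction. Tightness of the joint law will follow from tightness of the three marginals by a standard argument.

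First I would handle the velocity marginal $\{\mathscr{L}(u^{\epsilon})\}$. The uniform bound \eqref{lem4-1} together with Chebyshev's inequality immediately yields compact sets in $\mathcal{Z}_u$ satisfying (\textbf{\textsf{A$_3$}})(1). For the uniform equicontinuity condition (\textbf{\textsf{A$_3$}})(2), I would exploit the $u^{\epsilon}$-equation to establish a fractional Sobolev estimate
\begin{equation*}
\mathbb{E}\|u^{\epsilon}\|_{W^{\alpha,p}(0,T;\mathbb{H}^{-s})}^{p}\lesssim 1,\qquad \alpha\in(0,\tfrac{1}{2}),\ p>2,\ s>0\text{ large enough,}
\end{equation*}
by estimating each drift term through Lemma \ref{lem4}, and treating the stochastic convolution via the factorization method and the BDG inequality combined with Assumption \ref{as3}. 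Interpolating this Hölder-in-$\mathbb{H}^{-s}$ control with the uniform $L^{\infty}(0,T;\mathbb{H}^{1})$ bound from \eqref{lem4-1} then gives equicontinuity in $\mathbb{H}^{0}$ with high probability.

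For the marginals $\{\mathscr{L}(n^{\epsilon})\}$ and $\{\mathscr{L}(c^{\epsilon})\}$, the essential difficulty is the (iterated) exponential dependence on $u^{\epsilon}$ in \eqref{lem4-4}--\eqref{lem4-5} and \eqref{cor.1}--\eqref{cor.2}. I would bypass this by conditioning on the velocity. For each $M>0$ set
\begin{equation*}
\Omega^{\epsilon}_{M}:=\Bigl\{\sup_{t\in[0,T]}\|u^{\epsilon}(t)\|_{H^{1}}^{2}+\int_{0}^{T}\|u^{\epsilon}(t)\|_{H^{2}}^{2}\mathrm{d}t\leq M\Bigr\}.
\end{equation*}
Chebyshev's inequality and Lemma \ref{lem4} give $\sup_{\epsilon}\mathbb{P}((\Omega^{\epsilon}_{M})^{c})\leq C/M$. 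On $\Omega^{\epsilon}_{M}$ the right-hand sides of \eqref{lem4-4}--\eqref{lem4-5} and \eqref{cor.1}--\eqref{cor.2} are bounded by an explicit constant $F(M)$, so conditions (\textbf{\textsf{A$_1$}})(1) and (\textbf{\textsf{A$_2$}})(1) hold on $\Omega^{\epsilon}_{M}$. The Hölder requirement (\textbf{\textsf{A$_1$}})(2) with $\alpha\in(0,\tfrac{1}{2})$ follows from the embedding $W^{1,2}(0,T;L^{2})\hookrightarrow \mathcal{C}^{\alpha}([0,T];L^{2})$ and \eqref{cor.1}, and similarly (\textbf{\textsf{A$_2$}})(2) from \eqref{cor.2}.

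Given $\delta>0$, choosing $M$ sufficiently large makes each of the three exceptional events have probability below $\delta/3$ uniformly in $\epsilon$. On the intersection of their complements, the triple $(n^{\epsilon},c^{\epsilon},u^{\epsilon})$ lies in a product of compact sets in $(\mathcal{Z}_{n},\mathcal{Z}_{c},\mathcal{Z}_{u})$, which proves joint tightness in $\mathcal{T}_{n}\times\mathcal{T}_{c}\times\mathcal{T}_{u}$.

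The main obstacle I anticipate is the verification of uniform equicontinuity of $u^{\epsilon}$ in $\mathbb{H}^{0}$. Unlike the deterministic components $n^{\epsilon},c^{\epsilon}$, the velocity contains a stochastic convolution and so is not of bounded variation in any space; one must work in a sufficiently negative Sobolev space $\mathbb{H}^{-s}$ to absorb the nonlinearities $\mathbf{g}(|u^{\epsilon}|^{2})u^{\epsilon}$ and $(u^{\epsilon}\cdot\nabla)u^{\epsilon}$, establish a fractional Hölder bound there, and recover the desired equicontinuity by interpolating against the highest-regularity bound from Lemma \ref{lem4}. The secondary subtlety is keeping all estimates uniform in $\epsilon$ despite the presence of the mollifier $\rho^{\epsilon}$, which is transparent here because every relevant bound on $\rho^{\epsilon}\ast(\cdot)$ reduces to the corresponding bound on $(\cdot)$.
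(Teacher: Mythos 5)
Your proposal is correct and, for the cell-density and chemical-concentration marginals, coincides in substance with the paper's argument: the paper inverts the iterated exponentials in \eqref{lem4-4}--\eqref{lem4-5} and \eqref{cor.1}--\eqref{cor.2} by choosing the Chebyshev thresholds $C_1,\dots,C_6$ themselves as iterated exponentials of $C/h$, which is exactly your conditioning on the event $\Omega^{\epsilon}_{M}$ viewed from the other end; both reduce everything to the single first-moment bound \eqref{lem4-1} on $\sup_{t}\|u^{\epsilon}\|_{H^1}^2+\int_0^T\|u^{\epsilon}\|_{H^2}^2\,\mathrm{d}t$, and both get the temporal H\"older control from $W^{1,2}(0,T;X)\hookrightarrow \mathcal{C}^{1/2}([0,T];X)$ together with Corollary \ref{1cor}. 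Where you genuinely diverge is the velocity marginal. The paper does not prove the deterministic equicontinuity in $(\textbf{\textsf{A$_3$}})$(2) directly: it invokes the standard reduction of tightness to the Aldous condition in $\mathbb{H}^0$, and then verifies that condition by splitting $u^{\epsilon}(\tau^{\epsilon}+\theta)-u^{\epsilon}(\tau^{\epsilon})$ into four drift increments of order $\theta^{1/2}$ in $L^1(\Omega;L^2)$ and a stochastic increment of order $\theta$ in $L^2(\Omega;L^2)$, followed by Chebyshev. Your route -- a uniform $W^{\alpha,p}(0,T;\mathbb{H}^{-s})$ bound via BDG/factorization, then interpolation with the $L^{\infty}(0,T;\mathbb{H}^1)$ bound of \eqref{lem4-1} to get a pathwise H\"older modulus in $\mathbb{H}^0$ of exponent $(\alpha-1/p)/(1+s)$ on a high-probability set -- is the classical Flandoli--G\k{a}tarek alternative. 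It is heavier (it needs negative-order spaces and the fractional Sobolev embedding) but it verifies $(\textbf{\textsf{A$_3$}})$(2) literally as stated in Lemma \ref{3lem}, whereas the paper's Aldous-condition shortcut only requires first/second moments of increments from stopping times and leans on the Brze\'{z}niak--Motyl tightness machinery cited alongside Lemma \ref{3lem}. Both are sound; just make sure, if you follow your route, that the negative index $s$ is taken large enough that the tamed term $\mathbf{g}(|u^{\epsilon}|^2)u^{\epsilon}$ and the convection term land in $L^2(0,T;\mathbb{H}^{-s})$ with bounds controlled by Lemma \ref{lem4} (in fact both already land in $L^2(0,T;L^2)$ by the $H^1\hookrightarrow L^6$ embedding, so $s=0$ suffices for the drift and the only reason to descend is notational convenience).
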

\begin{proof}[\emph{\textbf{Proof}}]
\underline{\textsf{Tightness of $\mathscr{L}(n^{\epsilon})$}}.   According to $(\textbf{\textsf{A$_1$}})$ in Lemma \ref{3lem}, to prove the tightness of $\mathscr{L}(n^{\epsilon})$, it is sufficient to prove that, for any $h>0$, there exist constants $C_i>0$, $i=1,~2,~3$, such that
\begin{align}
&\sup_{\epsilon\in(0,1)}\mathbb{P}\left\{\|n^{\epsilon}\|_{L^{\infty}(0,T;H^1(\mathbb{R}^3))}^2>C_1\right\}\leq h,\quad\sup_{\epsilon\in(0,1)}\mathbb{P}\left\{\|n^{\epsilon}\|_{L^{2}(0,T;H^2(\mathbb{R}^3))}^2>C_2\right\}\leq h,\label{0.1}\\
&\sup_{\epsilon\in(0,1)}\mathbb{P}\left\{\|n^{\epsilon}\|_{\mathcal{C}^{\frac{1}{2}}([0,T];L^{2}(\mathbb{R}^3))}^2>C_3\right\}\leq h.\label{0.2}
\end{align}

To prove the first one in \eqref{0.1}, let us choose the constant $C_1:=\tilde{C}_5e^{\tilde{C}_6e^{\tilde{C}_7e^{\tilde{C}_8e^{\frac{\tilde{C}_9C_{\phi,a,\mathbf{N},T,n_0,c_0,u_0}}{h}}}}}>0$, where the positive constants $\tilde{C}_5,...,\tilde{C}_9$ are provided in Lemma \ref{3lem}. Clearly, we observe inductively that $\frac{\ln (C_1/\tilde{C}_5)}{\tilde{C}_6}=e^{\tilde{C}_7e^{\tilde{C}_8e^{\frac{\tilde{C}_9C_{\phi,a,\mathbf{N},T,n_0,c_0,u_0}}{h}}}}>1$,...,
$\frac{\ln\Bigl(\frac{\ln\bigl(\frac{\ln(C_1/\tilde{C}_5)}{\tilde{C}_6}\bigl)}
{\tilde{C}_7}\Bigl)}{\tilde{C}_8}=e^{\frac{\tilde{C}_9C_{\phi,a,\mathbf{N},T,n_0,c_0,u_0}}{h}}>1$. By using the estimates \eqref{lem4-1} and \eqref{lem4-5}, we infer from Chebyshev's inequality that
\begin{equation*}
\begin{split}
&\sup_{\epsilon\in(0,1)}\mathbb{P}\left\{\|n^{\epsilon}\|_{L^{\infty}(0,T;H^1(\mathbb{R}^3))}^2>C_1\right\}\\
&\leq\sup_{\epsilon\in(0,1)}\mathbb{P}\left\{\tilde{C}_{5}\exp\left(\tilde{C}_6\exp\left(\tilde{C}_7\exp\left(\tilde{C}_{8}\exp\left(\tilde{C}_9(\sup_{t\in[0,T]}\|u^{\epsilon}(t)\|_{H^1}^2+\int_0^T\|u^{\epsilon}(t)\|_{H^{2}}^2\mathrm{d}t)\right)\right)\right)\right)>C_1\right\}\\
&\leq\sup_{\epsilon\in(0,1)}\mathbb{P}\Biggl\{\sup_{t\in[0,T]}\|u^{\epsilon}(t)\|_{H^1}^2+\int_0^T\|u^{\epsilon}(t)\|_{H^{2}}^2\mathrm{d}t>\frac{\ln\Biggl(\frac{\ln\Bigl(\frac{\ln\bigl(\frac{\ln(C_1/\tilde{C}_5)}{\tilde{C}_6}\bigl)}{\tilde{C}_7}\Bigl)}{\tilde{C}_8}\Biggl)}{\tilde{C}_9}\Biggl\}\\
&\leq\sup_{\epsilon\in(0,1)}\frac{\tilde{C}_9}{\ln\Biggl(\frac{\ln\Bigl(\frac{\ln\bigl(\frac{\ln(C_1/\tilde{C}_5)}{\tilde{C}_6}\bigl)}{\tilde{C}_7}\Bigl)}{\tilde{C}_8}\Biggl)}\left(\mathbb{E} \sup_{t\in [0,T]}\|u^{\epsilon}(t)\|_{H^1}^{2}
+ \mathbb{E} \int_0^{T}\|u^{\epsilon}(t)\|_{H^2}^2\mathrm{d}t \right)\\
&\leq\frac{\tilde{C}_9C_{\phi,a,\mathbf{N},T,n_0,c_0,u_0}}{\ln\Biggl(\frac{\ln\Bigl(\frac{\ln\bigl(\frac{\ln(C_1/\tilde{C}_5)}{\tilde{C}_6}\bigl)}{\tilde{C}_7}\Bigl)}{\tilde{C}_8}\Biggl)}\leq h,
\end{split}
\end{equation*}

The second one in \eqref{0.1} can be treated as the first one by taking $C_2:=C_1$.

To prove \eqref{0.2}, let us choose $C_3:=C_{\textrm{emb}}  \tilde{C}_{10}e^{\tilde{C}_{11}e^{\tilde{C}_{12}e^{\tilde{C}_{13}e^{\frac{\tilde{C}_{14}C_{\phi,a,\mathbf{N},T,n_0,c_0,u_0}}{h}}}}}>0$, where $C_{\textrm{emb}} >0$ denotes the Sobolev embedding constant satisfying $\|\cdot\|_{\mathcal{C}^{\frac{1}{2}}([0,T];X)}^2\leq C_{\textrm{emb}}  \|\cdot\|_{W^{1,2}(0,T;X)}^2$. Then, by using \eqref{lem4-1}, \eqref{cor.1} and Chebyshev's inequality, we get
\begin{equation*}
\begin{split}
\sup_{\epsilon\in(0,1)}\mathbb{P}\left\{\|n^{\epsilon}\|_{\mathcal{C}^{\frac{1}{2}}([0,T];L^{2}(\mathbb{R}^3))}^2>C_3\right\}&\leq\sup_{\epsilon\in(0,1)}\mathbb{P}\left\{\|n^{\epsilon}\|_{W^{1,2}(0,T;L^2(\mathbb{R}^3))}^2>\frac{C_3}{C_{\textrm{emb}} }\right\}\\
&\leq\frac{\tilde{C}_{14}C_{\phi,a,\mathbf{N},T,n_0,c_0,u_0}}{\ln\Biggl(\frac{\ln\Bigl(\frac{\ln\bigl
(\frac{\ln(C_3/C_{\textrm{emb}} \tilde{C}_{10})}{\tilde{C}_{11}}\bigl)}
{\tilde{C}_{12}}\Bigl)}{\tilde{C}_{13}}\Biggl)}\leq h.
\end{split}
\end{equation*}
Note that, similar to the constant $C_1$ as before, the structure of $C_3$ makes the logarithmic term in the last inequality  well-defined.

\underline{\textsf{Tightness of $\mathscr{L}(c^{\epsilon})$}}. By $(\textbf{\textsf{A$_2$}})$ in Lemma \ref{3lem}, to prove the tightness of $\mathscr{L}(c^{\epsilon})$, it is sufficient to show that, for any $h>0$, there exist constants $C_i>0$, $i=4,~5,~6$, such that
\begin{equation}\label{0.4}
\begin{split}
&\sup_{\epsilon\in(0,1)}\mathbb{P}\left\{\|c^{\epsilon}\|_{L^{\infty}(0,T;H^2(\mathbb{R}^3))}^2>C_4\right\}\leq h,\quad\sup_{\epsilon\in(0,1)}\mathbb{P}\left\{\|c^{\epsilon}\|_{L^{2}(0,T;H^3(\mathbb{R}^3))}^2>C_5\right\}\leq h,\\
&\sup_{\epsilon\in(0,1)}\mathbb{P}\left\{\|c^{\epsilon}\|_{\mathcal{C}^{\frac{1}{2}}([0,T];H^{1}(\mathbb{R}^3))}^2>C_6\right\}\leq h.
\end{split}
\end{equation}
Indeed, by setting $C_4:=\tilde{C}_{1}e^{\tilde{C}_{2}e^{\tilde{C}_{3}e^{\frac{\tilde{C}_{4}C_{\phi,a,\mathbf{N},T,n_0,c_0,u_0}}{h}}}}>0$, and using the inequality \eqref{lem4-1}, we derive from Chebyshev's inequality that
\begin{equation*}
\begin{split}
&\sup_{\epsilon\in(0,1)}\mathbb{P}\left\{\|c^{\epsilon}\|_{L^{\infty}(0,T;H^2(\mathbb{R}^3))}^2>C_4\right\}\\
&\leq\sup_{\epsilon\in(0,1)}\mathbb{P}\left\{\tilde{C}_{1}\exp\left(\tilde{C}_2\exp\left(\tilde{C}_{3}\exp\left(\tilde{C}_4(\sup_{t\in[0,T]}\|u^{\epsilon}(t)\|_{H^1}^2+\int_0^T\|u^{\epsilon}(t)\|_{H^{2}}^2\mathrm{d}t)\right)\right)\right)>C_4\right\}\\
&\leq\sup_{\epsilon\in(0,1)}\frac{\tilde{C}_4}{\ln\Bigl(\frac{\ln\bigl(\frac{\ln(\frac{C_4}{\tilde{C}_1})}{\tilde{C}_2}\bigl)}{\tilde{C}_3}\Bigl)}\left(\mathbb{E} \sup_{t\in [0,T]}\|u^{\epsilon}(t)\|_{H^1}^{2}
+ \mathbb{E}\left(\int_0^{T}\|u^{\epsilon}(t)\|_{H^2}^2\mathrm{d}t\right)\right)\\
&\leq\frac{\tilde{C}_4C_{\phi,a,\mathbf{N},T,n_0,c_0,u_0}}{\ln\Bigl(\frac{\ln\bigl(\frac{\ln(\frac{C_4}{\tilde{C}_1})}{\tilde{C}_2}\bigl)}{\tilde{C}_3}\Bigl)}\leq h.
\end{split}
\end{equation*}
In a similar manner, by \eqref{lem4-1}, one can prove that
$
\sup_{\epsilon\in(0,1)}\mathbb{P}\{\|c^{\epsilon}\|_{L^{2}(0,T;H^3(\mathbb{R}^3))}^2>C_4\}\leq h.
$

For the last one in \eqref{0.4}, by considering $C_6:=C_{\textrm{emb}}  \tilde{C}_{15}e^{\tilde{C}_{16}e^{\tilde{C}_{17}e^{\tilde{C}_{18}e^{\frac{\tilde{C}_{19}C_{\phi,a,\mathbf{N},T,n_0,c_0,u_0}}{h}}}}}>0$, we deduce from \eqref{cor.2} that
\begin{equation*}
\begin{split}
\sup_{\epsilon\in(0,1)}\mathbb{P}\left\{\|c^{\epsilon}\|_{\mathcal{C}^{\frac{1}{2}}([0,T];H^{1}(\mathbb{R}^3))}^2>C_6\right\}&\leq\sup_{\epsilon\in(0,1)}\mathbb{P}\left\{\|c^{\epsilon}\|_{W^{1,2}(0,T;H^1(\mathbb{R}^3))}^2>\frac{C_6}{C_{\textrm{emb}} }\right\}\\
&\leq\frac{\tilde{C}_{19}C_{\phi,a,\mathbf{N},T,n_0,c_0,u_0}}{\ln\Biggl(\frac{\ln\Bigl(\frac{\ln\bigl(\frac{\ln(\frac{C_6}{C_{\textrm{emb}} \tilde{C}_{15}})}{\tilde{C}_{16}}\bigl)}{\tilde{C}_{17}}\Bigl)}{\tilde{C}_{18}}\Biggl)}\leq h.
\end{split}
\end{equation*}

\underline{\textsf{Tightness of $\mathscr{L}(u^{\epsilon})$}}. According to $(\textbf{\textsf{A$_3$}})$ in Lemma \ref{3lem} and the uniform bound \eqref{lem4-1}, proving the tightness of $\mathscr{L}(u^{\epsilon})$ can be reduced to show that $\{u^{\epsilon}\}_{\epsilon\in(0,1)}$  satisfies the Aldous condition (cf. \cite{aldous1978stopping,aldous1989stopping}) in $\mathbb{H}^0$. To this end, let $\{\tau^{\epsilon}\}_{\epsilon\in(0,1)}$ be a sequence of stopping times such that $0\leq\tau^{\epsilon}\leq T$. By virtue of the third equation of \eqref{Mod-1}, we have for any $\theta>0$
\begin{equation*}
\begin{split}
&u^{\epsilon}(\tau^{\epsilon}+\theta)-u^{\epsilon}(\tau^{\epsilon})=\int_{\tau^{\epsilon}}^{\tau^{\epsilon}+\theta}-A u^\epsilon(r)\mathrm{d}r+\int_{\tau^{\epsilon}}^{\tau^{\epsilon}+\theta}-\textbf{P}[\mathbf{g}(|u^{\epsilon}(r)|^2)u^{\epsilon}(r)]\mathrm{d}r\\
&\quad+\int_{\tau^{\epsilon}}^{\tau^{\epsilon}+\theta} \textbf{P}[ (n^\epsilon(r)\nabla \phi)*\rho^\epsilon]\mathrm{d}r+\int_{\tau^{\epsilon}}^{\tau^{\epsilon}+\theta}-\textbf{P} (u^\epsilon(r)\cdot \nabla) u^\epsilon(r)\mathrm{d}r+\int_{\tau^{\epsilon}}^{\tau^{\epsilon}+\theta}\textbf{P}G(u^\epsilon(r)) \mathrm{d} W(r)\\
&:=J^{\epsilon}_1+\cdots+J^{\epsilon}_5.
\end{split}
\end{equation*}
By using H\"{o}lder's inequality, it follows from \eqref{lem4-1} that
\begin{equation*}
\begin{split}
\mathbb{E}\|J^{\epsilon}_1\|_{L^2}&\lesssim \mathbb{E}\int_{\tau^{\epsilon}}^{\tau^{\epsilon}+\theta}\|u^\epsilon(r)\|_{H^2}\mathrm{d}r\lesssim\theta^{\frac{1}{2}}\mathbb{E}\left(\int_{0}^{T}\|u^\epsilon(t)\|_{H^2}^2\mathrm{d}t\right)^{\frac{1}{2}}\\
&\lesssim_{\phi,a,\mathbf{N},T,n_0,c_0,u_0}\theta^{\frac{1}{2}}.
\end{split}
\end{equation*}
By using the Sobolev embedding $H^1(\mathbb{R}^3)\hookrightarrow L^6(\mathbb{R}^3)$, we have
\begin{equation*}
\begin{split}
\mathbb{E}\|J^{\epsilon}_2\|_{L^2}&\lesssim \mathbb{E}\int_{\tau^{\epsilon}}^{\tau^{\epsilon}+\theta}\|u^\epsilon(r)\|_{L^6}^3\mathrm{d}r\lesssim_{T}\theta^{\frac{1}{2}}\mathbb{E}\sup_{t\in[0,T]}\|u^\epsilon(t)\|_{H^1}^3\mathrm{d}t\\
&\lesssim_{\phi,a,\mathbf{N},T,n_0,c_0,u_0}\theta^{\frac{1}{2}}.
\end{split}
\end{equation*}
Similarly, we derive from \eqref{ad2} that
\begin{equation*}
\begin{split}
\mathbb{E}\|J^{\epsilon}_3\|_{L^2}+\mathbb{E}\|J^{\epsilon}_4\|_{L^2}&\lesssim \|\nabla\phi\|_{L^{\infty}}\mathbb{E}\int_{\tau^{\epsilon}}^{\tau^{\epsilon}+\theta}\|n^\epsilon(r)\|_{L^2}\mathrm{d}r+\mathbb{E}\int_{\tau^{\epsilon}}^{\tau^{\epsilon}+\theta}\|u^\epsilon(r)\|_{H^2}\|u^\epsilon(r)\|_{H^1}\mathrm{d}r\\
&\lesssim_{\phi,a,\mathbf{N},T,n_0,c_0,u_0}\theta^{\frac{1}{2}}.
\end{split}
\end{equation*}
Moreover, we infer from Assumption \ref{as3} that
\begin{equation*}
\begin{split}
\mathbb{E}\|J^{\epsilon}_5\|_{L^2}^2&=\mathbb{E}\int_{\tau^{\epsilon}}^{\tau^{\epsilon}+\theta}\|\textbf{P}G(u^\epsilon(r))\|_{\mathcal{L}_2(\mathbf{U};L^2)}^2\mathrm{d}r\lesssim\theta(1+\mathbb{E}\sup_{t\in[0,T]}\|u^\epsilon(t)\|_{L^2}^2)\\
&\lesssim_{\phi,a,\mathbf{N},T,n_0,c_0,u_0}\theta.
\end{split}
\end{equation*}
Thus, by using Chebyshev's inequality, we infer from the above estimates that for any $h,~\eta>0$ there exists $\delta:=\min\left\{\frac{h^2\eta^2}{(40C_{\phi,a,\mathbf{N},T,n_0,c_0,u_0})^2},\frac{h\eta^2}{50C_{\phi,a,\mathbf{N},T,n_0,c_0,u_0}}\right\}$ such that
\begin{equation*} \begin{split}
&\sup_{\epsilon\in(0,1)}\sup_{\theta\in{[0,\delta]}}\mathbb{P}\left\{\|u^{\epsilon}(\tau^{\epsilon}+\theta)-u^{\epsilon}(\tau^{\epsilon})\|_{L^2}\geq\eta\right\}\\
&\leq\sup_{\epsilon\in(0,1)}\sup_{\theta\in{[0,\delta]}}\sum_{i=1}^5\mathbb{P}\left\{\|J^{\epsilon}_i\|_{L^2}\geq\frac{\eta}{5}\right\}\leq\sup_{\epsilon\in(0,1)}\sup_{\theta\in{[0,\delta]}}\sum_{i=1}^4\frac{5}{\eta}\mathbb{E}\|J^{\epsilon}_i\|_{L^2}+\frac{25}{\eta^2}\mathbb{E}\|J^{\epsilon}_5\|_{L^2}^2\\
&\leq \frac{20C_{\phi,a,\mathbf{N},T,n_0,c_0,u_0}\theta^{\frac{1}{2}}}{\eta}+\frac{25C_{\phi,a,\mathbf{N},T,n_0,c_0,u_0}\theta}{\eta^2}\leq \frac{h}{2}+\frac{h}{2}\leq h,
\end{split} \end{equation*}
which implies that the sequence $\{u^{\epsilon}\}_{\epsilon\in(0,1)}$ satisfies the Aldous condition in $\mathbb{H}^0$. The proof of Proposition \ref{3pro} is thus completed.
\end{proof}

\subsection{Global strong solutions for STCNS system}\label{sec3.4}
Theorem \ref{th1} can now be proved based on the Proposition \ref{3pro}.

\vspace{3mm}\noindent
\textbf{Existence of a global martingale solution.}  The proof  is divided into two steps.

\textsf{Step 1 (Convergence in new space).} According to Proposition \ref{3pro}, the family of induced measures
$\{ (\mathscr{L}(n^{\epsilon}), \mathscr{L}(c^{\epsilon}), \mathscr{L}(u^{\epsilon})) \}_{\epsilon \in (0,1)}$
is tight on $\mathcal{Z}_n \times \mathcal{Z}_c \times \mathcal{Z}_u$. Moreover, the sequence of probability measures defined by
$\mathscr{L}(W^\epsilon)(\cdot) = \mathscr{L}(W)(\cdot) := \mathbb{P}[W^\epsilon \in \cdot]$
is tight on $\mathcal{C}([0,T]; \mathbf{U}_0)$. Therefore, by applying the Jakubowski-Skorokhod theorem (cf.~\cite{jakubowski1998almost}),
there exists a subsequence $(\epsilon_k)_{k \in \mathbb{N}}$ and
$\mathcal{Z}_n \times \mathcal{Z}_c \times \mathcal{Z}_u \times \mathcal{C}([0,T]; \mathbf{U}_0)$-valued random variables
$(n_*, c_*, u_*, W_*)$, $(\bar{n}^k, \bar{c}^k, \bar{u}^k, \bar{W}^k)_{k \in \mathbb{N}}$
defined on a new probability space $(\bar{\Omega}, \bar{\mathcal{F}}, \bar{\mathbb{P}})$, such that:
\begin{itemize}
  \item[(1)] $\mathscr{L}(\bar{n}^k, \bar{c}^k, \bar{u}^k, \bar{W}^k) = \mathscr{L}(n^{\epsilon_k}, c^{\epsilon_k}, u^{\epsilon_k}, W^{\epsilon_k})$ for all $k \in \mathbb{N}$;
  \item[(2)] $(\bar{n}^k, \bar{c}^k, \bar{u}^k, \bar{W}^k) \to (n_*, c_*, u_*, W_*)$ in
  $\mathcal{Z}_n \times \mathcal{Z}_c \times \mathcal{Z}_u \times \mathcal{C}([0,T]; \mathbf{U}_0)$
  on $(\bar{\Omega}, \bar{\mathcal{F}}, \bar{\mathbb{P}})$ as $k \to \infty$, $\bar{\mathbb{P}}$-a.s.
\end{itemize}
More precisely, the conclusion of (2) indicates the following  properties:
\begin{equation}\label{5-1}
\begin{split}
&\bar{n}^k\rightarrow n_*~\textrm{in}~\mathcal{C}([0,T];U')\cap L^2_w(0,T;H^2(\mathbb{R}^3))\cap L^2(0,T;H^1_{loc}(\mathbb{R}^3))\cap \mathcal{C}([0,T];H^1_w(\mathbb{R}^3)),\quad \bar{\mathbb{P}}\textrm{-a.s.,}\\
&\bar{c}^k\rightarrow c_*~\textrm{in}~\mathcal{C}([0,T];U')\cap L^2_w(0,T;H^3(\mathbb{R}^3))\cap L^2(0,T;H^2_{loc}(\mathbb{R}^3))\cap \mathcal{C}([0,T];H^2_w(\mathbb{R}^3)),\quad \bar{\mathbb{P}}\textrm{-a.s.,}\\
&\bar{u}^k\rightarrow u_*~\textrm{in}~\mathcal{C}([0,T];U'_1)\cap L^2_w(0,T;\mathbb{H}^2)\cap L^2(0,T;\mathbb{H}^1_{loc})\cap \mathcal{C}([0,T];\mathbb{H}^1_w),\quad \bar{\mathbb{P}}\textrm{-a.s.}
\end{split}
\end{equation}
Since the laws of $(n^{\epsilon_k},c^{\epsilon_k},u^{\epsilon_k})$ and $(\bar{n}^k,\bar{c}^k,\bar{u}^k)$ are equal in $\mathcal{Z}_n\times\mathcal{Z}_c\times\mathcal{Z}_u$, we infer from \eqref{ad2} and Lemma \ref{lem4} that for all $p\geq1$, there exists a constant $C>0$ independent of $k$ such that
\begin{equation}\label{5-2}
\begin{split}
&\bar{\mathbb{E}} \left( \int_0^{T}\|\bar{n}^k(t)\|_{L^2}^2\textrm{d}t \right) ^p+
\bar{\mathbb{E}}\sup_{t\in[0,T]}\|\bar{c}^k(t)\|_{H^1}^{2p}+\bar{\mathbb{E}} \left( \int_0^{T}\|\bar{c}^k(t)\|_{H^2}^2\textrm{d}t \right) ^p\\
&\quad+
\bar{\mathbb{E}}\sup_{t\in[0,T]}\|\bar{u}^k(t)\|_{H^1}^{2p}+\bar{\mathbb{E}} \left( \int_0^{T}\|\bar{u}^k(t)\|_{H^2}^2\textrm{d}t \right) ^p\leq C.
\end{split}
\end{equation}
By the Banach-Alaoglu theorem and the uniqueness of weak-star limit, we see that $(n_*,c_*,u_*)\in L^{2p}(\bar{\Omega};L^2(0,T;L^2(\mathbb{R}^3)))\times L^{2p}(\bar{\Omega};L^{\infty}(0,T;H^1(\mathbb{R}^3))\cap L^2(0,T;H^2(\mathbb{R}^3)))\times L^{2p}(\bar{\Omega};L^{\infty}(0,T;H^1(\mathbb{R}^3))\cap L^2(0,T;H^2(\mathbb{R}^3)))$. Moreover, invoking the Fatou lemma, we get from \eqref{5-2} that
\begin{equation}\label{5-22}
\begin{split}
&\bar{\mathbb{E}} \left( \int_0^{T}\|n_*(t)\|_{L^2}^2\textrm{d}t \right) ^p+\bar{\mathbb{E}}\sup_{t\in[0,T]}\|c_*(t)\|_{H^1}^{2p}+
\bar{\mathbb{E}} \left( \int_0^{T}\|c_*(t)\|_{H^2}^2\textrm{d}t \right) ^p\\
&\quad+\bar{\mathbb{E}}\sup_{t\in[0,T]}\|u_*(t)\|_{H^1}^{2p}+
\bar{\mathbb{E}} \left( \int_0^{T}\|u_*(t)\|_{H^2}^2\textrm{d}t \right) ^p\leq C.
\end{split}
\end{equation}

\textsf{Step 2 (Construction of martingale solution).} For any $(\varphi,\psi)\in L^2(\mathbb{R}^3)\times \mathbb{H}^0$, let us define
\begin{equation*}
\begin{split}
\mathbf{A}^k(t)&:=(\bar{n}_0*\rho^{\frac{1}{k}},\varphi)_{L^2}+\int_0^t(\Delta \bar{n}^k(r)-\bar{u}^k(r)\cdot\nabla \bar{n}^k(r)-\nabla\cdot (\bar{n}^k(r)\nabla (\bar{c}^k(r)*\rho^{\frac{1}{k}}))\\
&\quad+L(\bar{n}^k(r)),\varphi)_{L^2}\mathrm{d}r,\\
\mathbf{B}^k(t)&:=(\bar{c}_0*\rho^{\frac{1}{k}},\varphi)_{L^2}+\int_0^t(\Delta \bar{c}^k(r)-\bar{u}^k(r)\cdot\nabla \bar{c}^k(r)-\bar{c}^k(r)(\bar{n}^k(r)*\rho^{\frac{1}{k}}),\varphi)_{L^2}\mathrm{d}r,\\
\mathbf{C}^k(t)&:=(\bar{u}_0*\rho^{\frac{1}{k}},\psi)_{L^2}+\int_0^t(\Delta \bar{u}^k(r)-(\bar{u}^k(r)\cdot\nabla)\bar{u}^k(r)-\mathbf{g}(|\bar{u}^k(r)|^2)\bar{u}^k(r)\\
&\quad+(\bar{n}^k(r) \nabla\phi)*\rho^{\frac{1}{k}},\psi)_{L^2}\mathrm{d}r+\int_0^t (G(\bar{u}^k(r))\mathrm{d}\bar{W}^k(r),\psi)_{L^2},
 \end{split}
\end{equation*}
for every integer $k\geq 1$, and
\begin{equation*}
\begin{split}
\mathbf{A}(t)&:=(n_*(0),\varphi)_{L^2}+\int_0^t(\Delta n_*(r)-u_*(r)\cdot\nabla n_*(r)-\nabla\cdot (n_*(r)\nabla c_*(r)))+L(n_*(r)),\varphi)_{L^2}\mathrm{d}r,\\
\mathbf{B}(t)&:=(c_*(0),\varphi)_{L^2}+\int_0^t(\Delta c_*(r)-u_*(r)\cdot\nabla c_*(r)-c_*(r)n_*(r),\varphi)_{L^2}\mathrm{d}r,\\
\mathbf{C}(t)&:=(u_*(0),\psi)_{L^2}+\int_0^t(\Delta u_*(r)-(u_*(r)\cdot\nabla)u_*(r)-\mathbf{g}(|u_*(r)|^2)u_*(r)+n_* (r) \nabla\phi,\psi)_{L^2}\mathrm{d}r\\
&\quad+\int_0^t (G(u_*(r))\mathrm{d}W_*(r),\psi)_{L^2}.
 \end{split}
\end{equation*}
To finish the proof of existence part, it remains to prove that the terms involved in the terms $\mathbf{A}^k(t),\mathbf{B}^k(t)$ and $\mathbf{C}^k(t)$ converge to the terms involved in $\mathbf{A}(t),\mathbf{B}(t)$ and $\mathbf{C}(t)$, respectively.

\underline{\textsf{Convergence for $n$, $c-$equations}}.  Based on \eqref{5-1}, for each term in $\mathbf{A}^k(t)$ and $\mathbf{B}^k(t)$, it is not difficult to obtain the corresponding convergence result. As an example, we shall only provide a verification for the cross-diffusion term in the $n$-equation, i.e.,
\begin{equation}\label{ddd}
\begin{split}
&\lim_{k\rightarrow\infty}\int_0^t(\nabla\cdot (\bar{n}^k(r)\nabla (\bar{c}^k(r)*\rho^{\frac{1}{k}})),\varphi)_{L^2}\mathrm{d}r\\
&=\int_0^t(\nabla\cdot (n_*(r)\nabla c_*(r)),\varphi)_{L^2}\mathrm{d}r,\quad\forall \varphi\in L^2(\mathbb{R}^3),\quad\bar{\mathbb{P}}\textrm{-a.s.}
 \end{split}
\end{equation}
Indeed, for every $\varphi\in L^2(\mathbb{R}^3)$ and $h>0$, there exists a function $\varphi_{h}\in \mathcal{C}_0^{\infty}(\mathbb{R}^3)$ such that $\|\varphi-\varphi_h\|_{L^2}\leq h$.
Moreover, for such a  $\varphi_h\in \mathcal{C}_0^{\infty}(\mathbb{R}^3)$, one can find a $d>0$ large enough such that supp$~\varphi_h \subset \mathcal{O}_d$, where $\{\mathcal{O}_d\}_{d\in\mathbb{N}}$ is a sequence of bounded open subsets of $\mathbb{R}^3$ defined in Subsection \ref{nnnn}.  Thus by the triangle inequality, we have
\begin{equation}\label{d.1}
\begin{split}
&\left|(\nabla\cdot (\bar{n}^k\nabla (\bar{c}^k*\rho^{\frac{1}{k}}))-\nabla\cdot (n_*\nabla c_*),\varphi)_{L^2}\right|\\
&\leq\left|(\nabla\cdot (\bar{n}^k\nabla (\bar{c}^k*\rho^{\frac{1}{k}}))-\nabla\cdot (n_*\nabla c_*),\varphi-\varphi_h)_{L^2}\right|\\
&\quad+\left|(\nabla\cdot (\bar{n}^k\nabla (\bar{c}^k*\rho^{\frac{1}{k}}))-\nabla\cdot (n_*\nabla c_*),\varphi_h)_{L^2}\right|\\
&\leq \left(\|\nabla\cdot (\bar{n}^k\nabla (\bar{c}^k*\rho^{\frac{1}{k}}))\|_{L^2}+\|\nabla\cdot (n_*\nabla c_*)\|_{L^2}\right)\|\varphi-\varphi_h\|_{L^2}\\
&\quad+\left|(\nabla\cdot ((\bar{n}^k-n_*)\nabla (\bar{c}^k*\rho^{\frac{1}{k}})),\varphi_h)_{L^2}\right|+\left|(\nabla\cdot (n_*\nabla (\bar{c}^k*\rho^{\frac{1}{k}}-c_*)),\varphi_h)_{L^2}\right|.
\end{split}
\end{equation}
By using the Sobolev embedding $H^2(\mathbb{R}^3)\hookrightarrow L^{\infty}(\mathbb{R}^3)$ and $H^1(\mathbb{R}^3)\hookrightarrow L^{p}(\mathbb{R}^3)$ for $p\in[2,6]$, the first term on the R.H.S. of \eqref{d.1} can be estimated as
\begin{equation*}
\begin{split}
&\left(\|\nabla\cdot (\bar{n}^k\nabla (\bar{c}^k*\rho^{\frac{1}{k}}))\|_{L^2}+\|\nabla\cdot (n_*\nabla c_*)\|_{L^2}\right)\|\varphi-\varphi_h\|_{L^2}\\
&\lesssim \left(\|\bar{n}^k\|_{H^1}\|\nabla\bar{c}^k\|_{L^{\infty}}+\|\bar{n}^k\|_{L^6}\|\Delta\bar{c}^k\|_{L^{3}}+\|n_*\|_{H^1}\|\nabla c_*\|_{L^{\infty}}+\|n_*\|_{L^6}\|\Delta c_*\|_{L^{3}}\right)h\\
&\lesssim \left(\|\bar{n}^k\|_{H^1}\|\bar{c}^k\|_{H^3}+\|n_*\|_{H^1}\|c_*\|_{H^3}\right)h.
\end{split}
\end{equation*}
Similarly, the second and third terms can be treated by
\begin{equation*}
\begin{split}
&\left|(\nabla\cdot ((\bar{n}^k-n_*)\nabla (\bar{c}^k*\rho^{\frac{1}{k}})),\varphi_h)_{L^2}\right|\lesssim\|\bar{n}^k-n_*\|_{H^1(\mathcal{O}_d)}\|\bar{c}^k\|_{H^3(\mathcal{O}_d)}\|\varphi_h\|_{L^2(\mathcal{O}_d)},
\end{split}
\end{equation*}
and
\begin{equation*}
\begin{split}
&\left|(\nabla\cdot (n_*\nabla (\bar{c}^k*\rho^{\frac{1}{k}}-c_*)),\varphi_h)_{L^2}\right|\\
&\lesssim\left(\|\nabla n_*\|_{L^3(\mathcal{O}_d)}\|\nabla(\bar{c}^k*\rho^{\frac{1}{k}}-c_*)\|_{L^6(\mathcal{O}_d)}+\|n_*\|_{L^{\infty}(\mathcal{O}_d)}\|\Delta(\bar{c}^k*\rho^{\frac{1}{k}}-c_*)\|_{L^2(\mathcal{O}_d)}\right)\|\varphi_h\|_{L^2(\mathcal{O}_d)}\\
&\lesssim\|n_*\|_{H^{2}(\mathcal{O}_d)}\|\bar{c}^k*\rho^{\frac{1}{k}}-c_*\|_{H^2(\mathcal{O}_d)}\|\varphi_h\|_{L^2(\mathcal{O}_d)}
.\end{split}
\end{equation*}
Plugging the last three estimates into \eqref{d.1}, we get for all $t\in[0,T]$ and $\bar{\mathbb{P}}$-a.s.
\begin{equation}\label{d.2}
\begin{split}
&\left|\int_0^t(\nabla\cdot (\bar{n}^k(r)\nabla (\bar{c}^k(r)*\rho^{\frac{1}{k}}))-\nabla\cdot (n_*(r)\nabla c_*(r)),\varphi)_{L^2}\mathrm{d}r\right|\\
&\lesssim h\int_0^T(\|\bar{n}^k(t)\|_{H^1}^2+\|\bar{c}^k(t)\|_{H^3}^2+\|n_*(t)\|_{H^1}^2+\|c_*(t)\|_{H^3}^2)\mathrm{d}t\\
&\quad+\|\varphi_h\|_{L^2(\mathcal{O}_d)}\|\bar{c}^k\|_{L^2(0,T;H^3(\mathcal{O}_d))}\|\bar{n}^k-n_*\|_{L^2(0,T;H^1(\mathcal{O}_d))}\\
&\quad+\|\varphi_h\|_{L^2(\mathcal{O}_d)}\|n_*\|_{L^2(0,T;H^2(\mathcal{O}_d))}\|\bar{c}^k*\rho^{\frac{1}{k}}-c_*\|_{L^2(0,T;H^2(\mathcal{O}_d))}.
\end{split}
\end{equation}
Taking the upper limit and using the result \eqref{5-1}, we infer from \eqref{d.2} that
\begin{equation*}
\begin{split}
&\limsup_{k\rightarrow\infty}\left|\int_0^t(\nabla\cdot (\bar{n}^k(r)\nabla (\bar{c}^k(r)*\rho^{\frac{1}{k}}))-\nabla\cdot (n_*(r)\nabla c_*(r)),\varphi)_{L^2}\mathrm{d}r\right|\leq Ch, \quad \bar{\mathbb{P}}\textrm{-a.s.,}
\end{split}
\end{equation*}
which combined with the arbitrariness of $h$ implies that \eqref{ddd} holds. As a result, we have for all $t\in[0,T]$
\begin{equation}\label{d.3}
\begin{split}
&\lim_{k\rightarrow\infty}\mathbf{A}^k(t)=\mathbf{A}(t),\quad\lim_{k\rightarrow\infty}\mathbf{B}^k(t)=\mathbf{B}(t), \quad \bar{\mathbb{P}}\textrm{-a.s.}
\end{split}
\end{equation}

\underline{\textsf{Convergence for $u-$equations}}. We shall prove that for all $t \in [0,T]$
\begin{equation}\label{d.889}
\begin{split}
&\lim_{k\rightarrow\infty}\mathbf{C}^k(t)=\mathbf{C}(t), \quad \bar{\mathbb{P}}\textrm{-a.s.}
\end{split}
\end{equation}
Note that due to the almost surely convergence result in \eqref{5-1}, the convergence of the linear part is obvious, and for the nonlinear part we also have
\begin{equation*}
\begin{split}
\lim_{k\rightarrow\infty}\left|\int_0^t(\mathbf{g}(|\bar{u}^k(r)|^2)
\bar{u}^k(r)-\mathbf{g}(|u_*(r)|^2)u_*(r),\psi)_{L^2}\mathrm{d}r\right|=0, \quad \bar{\mathbb{P}}\textrm{-a.s.,}
\end{split}
\end{equation*}
and
\begin{equation*}
\begin{split}
\lim_{k\rightarrow\infty}\left|\int_0^t((\bar{u}^k(r)\cdot\nabla)\bar{u}^k(r)
-(u_*(r)\cdot\nabla)u_*(r),\psi)_{L^2}\mathrm{d}r\right|=0, \quad \bar{\mathbb{P}}\textrm{-a.s.}
\end{split}
\end{equation*}
We would like to omit the details of the verification for the above two identities that follows from simple calculations, and mainly focus on the treatment of stochastic integral. Precisely, we shall demonstrate that for all $\psi \in \mathbb{H}^0$ and $t \in [0,T]$,
\begin{equation}\label{d.8}
\lim_{k \to \infty} \int_0^t (G(\bar{u}^k(r))\mathrm{d} \bar{W}^k(r), \psi)_{L^2} = \int_0^t (G(u_*(r))\mathrm{d} W_*(r), \psi)_{L^2}, \quad \bar{\mathbb{P}}\text{-a.s.}
\end{equation}
Let $h > 0$ be arbitrary. Since $\mathcal{V}$ is dense in $\mathbb{H}^0$, there exists $\psi_h \in \mathcal{V}$ such that $\|\psi - \psi_h\|_{L^2} \leq h$. Then we decompose:
\begin{equation*}
\begin{split}
\int_0^t (G(\bar{u}^k(r))\mathrm{d}\bar{W}^k(r) - G(u_*(r))\mathrm{d} W_*(r), \psi)_{L^2} &=\int_0^t (G(\bar{u}^k(r))\mathrm{d}\bar{W}^k(r) - G(u_*(r))\mathrm{d} W_*(r), \psi_h)_{L^2}\\
&\quad+\int_0^t (G(\bar{u}^k(r))\mathrm{d} \bar{W}^k(r)-G(u_*(r))\mathrm{d} W_*(r), \psi - \psi_h)_{L^2}.
\end{split}
\end{equation*}
Applying the BDG inequality, Assumption \ref{as3} and the estimates \eqref{5-2}-\eqref{5-22}, we have for all $t\in[0,T]$
\begin{equation}\label{abc}
\begin{split}
&\bar{\mathbb{E}}\left|\int_0^t (G(\bar{u}^k(r))\mathrm{d}\bar{W}^k(r) - G(u_*(r))\mathrm{d} W_*(r), \psi)_{L^2}\right|^2\\
&\lesssim\bar{\mathbb{E}}\left|\int_0^t (G(\bar{u}^k(r))\mathrm{d}\bar{W}^k(r) - G(u_*(r))\mathrm{d} W_*(r), \psi_h)_{L^2}\right|^2\\
&\quad+\bar{\mathbb{E}}\sup_{t\in[0,T]}\left|\int_0^t (G(\bar{u}^k(r))\mathrm{d} \bar{W}^k(r), \psi - \psi_h)_{L^2}\right|^2\\
&\quad+\bar{\mathbb{E}}\sup_{t\in[0,T]}\left|\int_0^t (G(u_*(r))\mathrm{d} W_*(r), \psi - \psi_h)_{L^2}\right|^2\\
&\lesssim\bar{\mathbb{E}}\left|\int_0^t (G(\bar{u}^k(r))\mathrm{d}\bar{W}^k(r) - G(u_*(r))\mathrm{d} W_*(r), \psi_h)_{L^2}\right|^2\\
&\quad+\|\psi - \psi_h\|_{L^2}^2\left(\bar{\mathbb{E}} \int_0^T \|G(\bar{u}^k(t))\|_{\mathcal{L}_2(\mathbf{U}; L^2)}^2 \mathrm{d}t+\bar{\mathbb{E}} \int_0^T \|G(u_*(t))\|_{\mathcal{L}_2(\mathbf{U}; L^2)}^2 \mathrm{d}t\right)\\
&\leq Ch^2 +C\bar{\mathbb{E}}\left|\int_0^t (G(\bar{u}^k(r))\mathrm{d}\bar{W}^k(r) - G(u_*(r))\mathrm{d} W_*(r), \psi_h)_{L^2}\right|^2.
\end{split}
\end{equation}
To finish the proof of \eqref{d.8}, it is sufficient to prove that the second term on the R.H.S. of \eqref{abc} converges to zero along some subsequence of $\{k\}$. Indeed, since $\bar{u}^k \to u_*$ in $L^2(0,T; \mathbb{H}^1_{\mathrm{loc}})$ $\bar{\mathbb{P}}$-a.s., we deduce from the Lipschitz condition in Assumption \ref{as3} that $G(\bar{u}^k) \to G(u_*)$ in $L^2(0,T;\mathcal{L}_2(\mathbf{U};\mathbb{H}^1_{\mathrm{loc}}))$ $\bar{\mathbb{P}}$-a.s. Moreover, as $\bar{W}^k \to W_*$ in $\mathcal{C}([0,T]; \mathbf{U}_0)$ $\bar{\mathbb{P}}$-a.s., it follows from Lemma 2.4.35 in \cite{mensah2019stochastic} (see also \cite[Lemma 2.6.6]{Breit}) that there exists a subsequence (still denoted by itself) such that for all $t\in[0,T]$, $\int_0^t(G(\bar{u}^k(r))\mathrm{d} \bar{W}^k(r), \psi_h)_{L^2}\rightarrow\int_0^t(G(u_*(r))\mathrm{d}\bar{W}_*(r), \psi_h)_{L^2}$ in probability. By the Borel-Cantelli lemma, one can choose a subsequence (still denoted by itself) such that for all $t\in[0,T]$,
\begin{equation}\label{abc1}
\begin{split}
\lim_{k\rightarrow\infty}\int_0^t (G(\bar{u}^k(r))\mathrm{d} \bar{W}^k(r), \psi_h)_{L^2} = \int_0^t (G(u_*(r))\mathrm{d} W_*(r), \psi_h)_{L^2}, \quad \bar{\mathbb{P}}\text{-a.s.}
\end{split}
\end{equation}
Moreover, we get by \eqref{5-2} and the BDG inequality that
\begin{equation}\label{abc2}
\begin{split}
\bar{\mathbb{E}}\sup_{t\in[0,T]}\left|\int_0^t (G(\bar{u}^k(r))\mathrm{d}\bar{W}^k(r), \psi_h)_{L^2}\right|^{2p}&\lesssim_{T,p}\|\psi_h\|_{L^2}^{2p}\left(1+\bar{\mathbb{E}}\sup_{t\in[0,T]}\|\bar{u}^k(t)\|_{L^2}^{2p}\right)\\
&\lesssim_{T,p}\|\psi_h\|_{L^2}^{2p}<\infty,\quad\forall p\geq1.
\end{split}
\end{equation}
Thanks to the Vitali convergence theorem, we infer from \eqref{abc1}-\eqref{abc2} that for all $t\in[0,T]$
\begin{equation}\label{bbbb}
\begin{split}
\lim_{k\rightarrow\infty}\bar{\mathbb{E}}\left|\int_0^t (G(\bar{u}^k(r))\mathrm{d}\bar{W}^k(r) - G(u_*(r))\mathrm{d} W_*(r), \psi_h)_{L^2}\right|^2=0.
\end{split}
\end{equation}
Since $h > 0$ is arbitrary, it follows from \eqref{abc} and \eqref{bbbb} that
\begin{equation*}
\begin{split}
\lim_{k \to \infty} \bar{\mathbb{E}}\left| \int_0^t (G(\bar{u}^k(r))\mathrm{d} \bar{W}^k(r) - G(u_*(r))\mathrm{d} W_*(r), \psi)_{L^2}\right|^2=0,
\end{split}
\end{equation*}
for all $t\in[0,T]$ and $\psi \in \mathbb{H}^0$, which implies \eqref{d.8} and hence \eqref{d.889}.

Since $(n^k,c^k,u^k)$ is a solution of system \eqref{Mod-1} and $\mathscr{L}(n^k,c^k,u^k,W^k)=\mathscr{L}(\bar{n}^{k},\bar{c}^{k},\bar{u}^{k},\bar{W}^{k})$, we deduce that for all $\varphi\in L^2(\mathbb{R}^3)$, $\psi\in\mathbb{H}^0$, $t\in[0,T]$ and $\bar{\mathbb{P}}$-a.s., the following holds:
\begin{equation}\label{e.1}
\begin{split}
(\bar{n}^k(t),\varphi)_{L^2}=\mathbf{A}^k(t),\quad(\bar{c}^k(t),\varphi)_{L^2}=\mathbf{B}^k(t),\quad(\bar{u}^k(t),\psi)_{L^2}=\mathbf{C}^k(t).
\end{split}
\end{equation}
Let $U$ be a dense subset of $L^2(\mathbb{R}^3)$ and $U_1$ be a dense subset of $\mathbb{H}^0$. Since $\bar{n}^k\rightarrow n_*$, $\bar{c}^k\rightarrow c_*$ in $\mathcal{C}([0,T];U')$ and $\bar{u}^k\rightarrow u_*$ in $\mathcal{C}([0,T];U'_1)$ (cf. \eqref{5-1}), it follows that for all $t \in [0,T]$ and $\bar{\mathbb{P}}$-a.s., we have:
\begin{equation}\label{e.3}
\begin{split}
&\lim_{k\rightarrow\infty}(\bar{n}^k(t),\varphi)_{L^2}=(n_*(t),\varphi)_{L^2},\quad\lim_{k\rightarrow\infty}(\bar{c}^k(t),\varphi)_{L^2}=(c_*(t),\varphi)_{L^2},\\
&\lim_{k\rightarrow\infty}(\bar{u}^k(t),\psi)_{L^2}=(u_*(t),\psi)_{L^2}.
\end{split}
\end{equation}
By combining \eqref{d.3}, \eqref{d.889}, \eqref{e.1} and \eqref{e.3}, we conclude that for all $\varphi\in L^2(\mathbb{R}^3)$, $\psi\in\mathbb{H}^0$ and $t\in[0,T]$
\begin{equation*}
\begin{split}
(n_*(t),\varphi)_{L^2}&=(n_*(0),\varphi)_{L^2}+\int_0^t(\Delta n_*(r)-u_*(r)\cdot\nabla n_*(r)-\nabla\cdot (n_*(r)\nabla c_*(r))\\
&\quad+L(n_*(r)),\varphi)_{L^2}\mathrm{d}r,\\
(c_*(t),\varphi)_{L^2}&=(c_*(0),\varphi)_{L^2}+\int_0^t(\Delta c_*(r)-u_*(r)\cdot\nabla c_*(r)-n_*(r)c_*(r),\varphi)_{L^2}\mathrm{d}r\\
(u_*(t),\psi)_{L^2}&=(u_*(0),\psi)_{L^2}+\int_0^t(\Delta u_*(r)-(u_*(r)\cdot\nabla)u_*(r)-\mathbf{g}(|u_*(r)|^2)u_*(r)\\
&\quad+n_*(r) \nabla\phi,\psi)_{L^2}\mathrm{d}r+\int_0^t (G(u_*(r))\mathrm{d}W_*(r),\psi)_{L^2},\quad \bar{\mathbb{P}}\textrm{-a.s.}
\end{split}
\end{equation*}
Furthermore, by \eqref{5-1} and applying the Lions-Magenes lemma \cite{ad-2lions1963problemes} as well as Lemma 4.1 in \cite{pardoux1975equations}, one can easily verify that the triple  $(n_*,c_*,u_*)$ satisfies the space-time regularity stated in Theorem \ref{th1}. As a result, we conclude that the tuple
$
((\bar{\Omega},\bar{\mathcal {F}},\{\bar{\mathcal {F}}_t\}_{t>0},\bar{\mathbb{P}}),n_*,c_*,u_*,W_*)
$
constitutes a global martingale solution to system \eqref{CNS}. This completes the proof of the existence part. \qed

\vspace{2mm}\noindent
\textbf{Uniqueness of the global strong solution.} 
Let $(n_1,c_1,u_1)$ and $(n_2,c_2,u_2)$ be two global strong solutions to the system \eqref{CNS} with the same initial data $(n_0,c_0,u_0)$. For simplicity, we set
\begin{equation*}
\begin{split}
n^*=n_1-n_2,~c^*=c_1-c_2,~u^*=u_1-u_2, ~ P^*=P_1-P_2.
\end{split}
\end{equation*}
Then for $t\in(0,T]$, the triple $(n^*,c^*,u^*)$ satisfies the following system:
\begin{equation}\label{fin2}
\left\{
\begin{aligned}
&\mathrm{d}n^*+(u_1\cdot\nabla n_1-u_2\cdot\nabla n_2)\mathrm{d}t=[\Delta n^*-\nabla\cdot(n_1\nabla c_1)+\nabla\cdot(n_2\nabla c_2)+L(n_1)-L(n_2)]\mathrm{d}t,\\
&\mathrm{d}c^*+(u_1\cdot\nabla c_1-u_2\cdot\nabla c_2)\mathrm{d}t=(\Delta c^*-n_1c_1+n_2c_2)\mathrm{d}t,\\
&\mathrm{d} u^*+\textbf{P}[(u_1\cdot \nabla) u_1-(u_2\cdot \nabla) u_2]\mathrm{d}t =\mathbf{P}[\Delta u^*-\mathbf{g}(|u_1|^2)u_1+\mathbf{g}(|u_2|^2)u_2+  n^*\nabla\phi]\mathrm{d}t\\
&+\mathbf{P}[G(u_1)-G(u_2)]\mathrm{d}W(t),\\
&\nabla\cdot u^*=0,\\
&(n_0^*,c_0^*,u_0^*)=0.
\end{aligned}
\right.
\end{equation}
Since $n^*\in L^2(0,T;H^1(\mathbb{R}^3))$ and $\partial_tn^*\in L^2(0,T;L^2(\mathbb{R}^3))$, $\bar{\mathbb{P}}$-a.s., it follows that $\frac{1}{2}\frac{\mathrm{d}}{\mathrm{d}t}\|n^*\|_{L^2}^2=(\partial_tn^*,n^*)_{L^2}$. From the first equation of \eqref{fin2}, we obtain that $\bar{\mathbb{P}}$-a.s.
\begin{equation} \begin{split}\label{un-1}
&\frac{1}{2}\frac{\mathrm{d}}{\mathrm{d}t}\|n^*\|_{L^2}^2+\|\nabla n^*\|_{L^2}^2\\
&=(u^*n_1,\nabla n^*)_{L^2}+(n^*\nabla c_1,\nabla n^*)_{L^2}+(n_2\nabla c^*,\nabla n^*)_{L^2}+(L(n_1)-L(n_2),n^*)_{L^2}\\
&:=I_1+I_2+I_3+I_4.
 \end{split} \end{equation}
By using the H\"{o}lder inequality and the Sobolev embedding, we have
\begin{equation*} \begin{split}
|I_1|\leq\|u^*\|_{L^6}\|n_1\|_{L^3}\|\nabla n^*\|_{L^2}\leq\epsilon\|\nabla n^*\|_{L^2}^2+C_{\epsilon}\|n_1\|_{L^3}^2\|\nabla u^*\|_{L^2}^2,
 \end{split} \end{equation*}
and
\begin{equation*} \begin{split}
|I_2|\leq\epsilon\|\nabla n^*\|_{L^2}^2+C_{\epsilon}\|c_1\|_{H^3}^{2}\|n^*\|_{L^2}^2.
 \end{split} \end{equation*}
Thanks to the Sobolev embedding $H^1(\mathbb{R}^3)\hookrightarrow L^p(\mathbb{R}^3)$ for $p\in[2,6]$, we have
\begin{equation*} \begin{split}
|I_3|&\leq\epsilon\|\nabla n^*\|_{L^2}^2+C_{\epsilon}\|n_2\nabla c^*\|_{L^2}^2\leq\epsilon\|\nabla n^*\|_{L^2}^2+C_{\epsilon}\|n_2\|_{L^6}^2\|\nabla c^*\|_{L^3}^2\\
&\leq\epsilon\|\nabla n^*\|_{L^2}^2+C_{\epsilon}\|n_2\|_{H^1}^2\|c^*\|_{H^2}^2.
 \end{split} \end{equation*}
Moreover, there holds
\begin{equation*} \begin{split}
I_4&=-a\|n^*\|_{L^2}^2-\int_{\mathbb{R}^3}(n_1^2(x)+n_1(x)n_2(x)+n_2^2(x))(n^*(x))^2\mathrm{d}x\\
&\quad+(a+1)\int_{\mathbb{R}^3}(n_1(x)+n_2(x))(n^*(x))^2\mathrm{d}x\\
&\leq(a+1)\int_{\mathbb{R}^3}(n_1(x)+n_2(x))(n^*(x))^2\mathrm{d}x\leq C_a(\|n_1\|_{L^3}+\|n_2\|_{L^3})\|n^*\|_{L^6}\|n^*\|_{L^2}\\
&\leq\epsilon\|\nabla n^*\|_{L^2}^2+C_{\epsilon,a}(\|n_1\|_{L^3}^2+\|n_2\|_{L^3}^2)\|n^*\|_{L^2}^2.
 \end{split} \end{equation*}
Substituting the above estimates for $I_1$-$I_4$ into \eqref{un-1} and choosing $\epsilon>0$ small enough, we get
\begin{equation} \begin{split}\label{un-2}
&\frac{\mathrm{d}}{\mathrm{d}t}\|n^*\|_{L^2}^2+\|\nabla n^*\|_{L^2}^2\\
&\leq C_a(\|c_1\|_{H^3}^{2}+\|n_1\|_{L^3}^2+\|n_2\|_{L^3}^2)\|n^*\|_{L^2}^2+C\|n_1\|_{L^3}^2\|\nabla u^*\|_{L^2}^2+C\|n_2\|_{H^1}^2\|c^*\|_{H^2}^2.
\end{split} \end{equation}
In a similar manner, since $\frac{1}{2}\frac{\mathrm{d}}{\mathrm{d}t}\|\nabla n^*\|_{L^2}^2=\langle\nabla\partial_tn^*,\nabla n^*\rangle_{H^{-1},H^1}$, we deduce that $\bar{\mathbb{P}}$-a.s.
\begin{equation} \begin{split}\label{un-3}
&\frac{1}{2}\frac{\mathrm{d}}{\mathrm{d}t}\|\nabla n^*\|_{L^2}^2+\|\Delta n^*\|_{L^2}^2\\
&\leq\frac{1}{2}\|\Delta n^*\|_{L^2}^2+C\|\nabla n_1\|_{L^{3}}^2\|u^*\|_{L^6}^2+C\|u_2\|_{L^{\infty}}^2\|\nabla n^*\|_{L^2}^2+C\|\nabla c_1\|_{L^{\infty}}^2\|\nabla n^*\|_{L^2}^2\\
&\quad+C\|\Delta c_1\|_{L^{3}}^2\|n^*\|_{L^6}^2+C\|\nabla n_2\|_{L^{3}}^2\|\nabla c^*\|_{L^6}^2+C\|n_2\|_{L^{\infty}}^2\|\Delta c^*\|_{L^2}^2\\
&\quad+C_a(1+\|n_1\|_{L^{\infty}}^2+\|n_2\|_{L^{\infty}}^2)\|n^*\|_{L^2}^2+C(\|n_1\|_{L^{6}}^4+\|n_2\|_{L^{6}}^4)\|n^*\|_{L^6}^2\\
&\leq\frac{1}{2}\|\Delta n^*\|_{L^2}^2+C\|\nabla n_1\|_{L^{3}}^2\|\nabla u^*\|_{L^2}^2+C(\|u_2\|_{H^{2}}^2+\|c_1\|_{H^{3}}^2)\|\nabla n^*\|_{L^2}^2+C\|n_2\|_{H^2}^2\|\Delta c^*\|_{L^2}^2\\
&\quad+C_a(1+\|n_1\|_{H^{2}}^2+\|n_2\|_{H^{2}}^2)\|n^*\|_{L^2}^2+C(\|n_1\|_{H^{1}}^4+\|n_2\|_{H^{1}}^4)\|\nabla n^*\|_{L^2}^2.
 \end{split} \end{equation}

For the second equation of \eqref{fin2}, noting that $c^*\in L^2(0,T;H^3(\mathbb{R}^3))$ and $\partial_tc^*\in L^2(0,T;H^1(\mathbb{R}^3))$, $\bar{\mathbb{P}}$-a.s., it follows that $\frac{1}{2}\frac{\mathrm{d}}{\mathrm{d}t}\|c^*\|_{L^2}^2=(\partial_tc^*,c^*)_{L^2}$ and $\frac{1}{2}\frac{\mathrm{d}}{\mathrm{d}t}\|\Delta c^*\|_{L^2}^2=\langle\Delta\partial_tc^*,\Delta c^*\rangle_{H^{-1},H^1}$. By using the Sobolev inequality, one can deduce that $\bar{\mathbb{P}}$-a.s.
\begin{equation} \begin{split}\label{un-4}
&\frac{1}{2}\frac{\mathrm{d}}{\mathrm{d}t}\|c^*\|_{L^2}^2+\|\nabla c^*\|_{L^2}^2=(u^*c_1,\nabla c^*)_{L^2}-(n^*c_1,c^*)_{L^2}-(n_2c^*,c^*)_{L^2}\\
&\leq\|c_1\|_{L^{\infty}}\|u^*\|_{L^2}\|\nabla c^*\|_{L^2}+\|c_1\|_{L^{\infty}}\|n^*\|_{L^2}\|c^*\|_{L^2}+\|n_2\|_{L^3}\|c^*\|_{L^2}\|c^*\|_{L^6}\\
&\leq\frac{1}{2}\|\nabla c^*\|_{L^2}^2+C\|c_1\|_{L^{\infty}}^2\|u^*\|_{L^2}^2+C\|n^*\|_{L^2}^2+C(\|c_1\|_{L^{\infty}}^2+\|n_2\|_{L^3}^2)\|c^*\|_{L^2}^2,
\end{split} \end{equation}
and
\begin{equation} \begin{split}\label{un-5}
&\frac{1}{2}\frac{\mathrm{d}}{\mathrm{d}t}\|\Delta c^*\|_{L^2}^2+\|\nabla\Delta c^*\|_{L^2}^2\\
&\leq\frac{1}{2}\|\nabla\Delta c^*\|_{L^2}^2+C\|\nabla c_1\|_{L^{\infty}}^2\|\nabla u^*\|_{L^2}^2+C\|u^*\|_{L^6}^2\|D^2 c_1\|_{L^3}^2+C\|\nabla u_2\|_{L^{3}}^2\|\nabla c^*\|_{L^6}^2\\
&\quad+C\|u_2\|_{L^{\infty}}^2\|D^2 c^*\|_{L^2}^2+C\|c_1\|_{L^{\infty}}^2\|\nabla n^*\|_{L^2}^2+C\|\nabla c_1\|_{L^{\infty}}^2\|n^*\|_{L^2}^2+C\|\nabla n_2\|_{L^{2}}^2\|c^*\|_{L^{\infty}}^2\\
&\quad+C\|n_2\|_{L^{3}}^2\|\nabla c^*\|_{L^6}^2\\
&\leq\frac{1}{2}\|\nabla\Delta c^*\|_{L^2}^2+C\|c_1\|_{L^{\infty}}^2\|\nabla n^*\|_{L^2}^2+C\|c_1\|_{H^3}^2\|\nabla u^*\|_{L^2}^2+C\|c_1\|_{H^{3}}^2\|n^*\|_{L^2}^2\\
&\quad+C(\|u_2\|_{H^2}^2+\|\nabla n_2\|_{L^{2}}^2+\|n_2\|_{L^{3}}^2)\|c^*\|_{H^2}^2.
 \end{split} \end{equation}

For the fluid equation, by use a version of the It\^{o} formula proved in \cite[Lemma 1.4]{pardouxt1980stochastic}, we get
\begin{equation} \begin{split}\label{un-6}
&\frac{1}{2}\mathrm{d}\|u^*(t)\|_{L^2}^2+\|\nabla u^*(t)\|_{L^2}^2\mathrm{d}t=\Bigl[(u^*\otimes u_1,\nabla u^*)_{L^2}-(\mathbf{g}(|u_1|^2)u_1-\mathbf{g}(|u_2|^2)u_2,u^*)_{L^2}\\
&\quad+(n^*\nabla\phi,u^*)_{L^2}+\frac{1}{2}\|\mathbf{P}[G(u_1)-G(u_2)]\|_{\mathcal{L}_2(\mathbf{U};L^2)}^2\Bigl]\mathrm{d}t+([G(u_1)-G(u_2)]\mathrm{d}W(t),u^*)_{L^2}\\
&\leq\Bigl[\frac{1}{2}\|\nabla u^*\|_{L^2}^2+C(1+\|u_1\|_{L^{\infty}}^2)\|u^*\|_{L^2}^2+C\|\nabla\phi\|_{L^{\infty}}^2\|n^*\|_{L^2}^2\\
&\quad+|(\mathbf{g}(|u_1|^2)u_1-\mathbf{g}(|u_1|^2)u_2,u^*)_{L^2}|+|(\mathbf{g}(|u_1|^2)u_2-\mathbf{g}(|u_2|^2)u_2,u^*)_{L^2}|\Bigl]\mathrm{d}t\\
&\quad+\frac{1}{2}\|\mathbf{P}[G(u_1)-G(u_2)]\|_{\mathcal{L}_2(\mathbf{U};L^2)}^2\mathrm{d}t+([G(u_1)-G(u_2)]\mathrm{d}W(t),u^*)_{L^2}\\
&\leq\Bigl[\frac{1}{2}\|\nabla u^*\|_{L^2}^2+C(1+\|u_1\|_{L^{\infty}}^2+\|u_2\|_{L^{\infty}}^2)\|u^*\|_{L^2}^2+C\|\nabla\phi\|_{L^{\infty}}^2\|n^*\|_{L^2}^2\Bigl]\mathrm{d}t\\
&\quad+\frac{1}{2}\|\mathbf{P}[G(u_1)-G(u_2)]\|_{\mathcal{L}_2(\mathbf{U};L^2)}^2\mathrm{d}t+([G(u_1)-G(u_2)]\mathrm{d}W(t),u^*)_{L^2},\quad \bar{\mathbb{P}}\textrm{-a.s.}
\end{split} \end{equation}
Moreover, an application of the It\^{o} formula to $\|\nabla u^*(t)\|_{L^2}^2$ leads to
\begin{equation*}
\begin{split}
&\|\nabla u(t)\|_{L^2}^2=\|\nabla u_0\|_{L^2}^2\\
&\quad+2\int_0^t\left\langle\nabla(\Delta u(r)-(u(r)\cdot\nabla)u(r)-\mathbf{g}(|u(r)|^2)u(r)+n(r)\nabla\phi),\nabla u(r)\right\rangle_{H^{-1},H^1}\mathrm{d}r\\
&\quad+\int_0^t\|\mathbf{P}\nabla G(u(r))\|_{\mathcal{L}_2(\mathbf{U};L^2)}^2\mathrm{d}r+2\int_0^t (\nabla G(u(r))\mathrm{d}W(r),\nabla u(r))_{L^2},\quad \bar{\mathbb{P}}\textrm{-a.s.}
\end{split}
\end{equation*}
Thanks to the Sobolev inequality, we get from the last identity that
\begin{equation} \begin{split}\label{un-7}
&\frac{1}{2}\mathrm{d}\|\nabla u^*(t)\|_{L^2}^2+\|\Delta u^*(t)\|_{L^2}^2\mathrm{d}t\\
&\leq\Bigl[\frac{1}{2}\|\Delta u^*\|_{L^2}^2+C\|\nabla u_1\|_{L^3}^2\|u^*\|_{L^6}^2+C\|\nabla\phi\|_{L^{\infty}}^2\|n^*\|_{L^2}^2+C\|\mathbf{g}(|u_1|^2)u_1-\mathbf{g}(|u_2|^2)u_2\|_{L^2}^2\Bigl]\mathrm{d}t\\
&\quad+\frac{1}{2}\|\mathbf{P}\nabla[G(u_1)-G(u_2)]\|_{\mathcal{L}_2(\mathbf{U};L^2)}^2\mathrm{d}t+(\nabla[G(u_1)-G(u_2)]\mathrm{d}W(t),\nabla u^*)_{L^2}\\
&\leq\Bigl[\frac{1}{2}\|\Delta u^*\|_{L^2}^2+C\|u_1\|_{H^2}^2\|\nabla u^*\|_{L^2}^2+C\|\nabla\phi\|_{L^{\infty}}^2\|n^*\|_{L^2}^2+C(\|u_1\|_{L^6}^4+\|u_2\|_{L^6}^4)\|u^*\|_{L^6}^2\Bigl]\mathrm{d}t\\
&\quad+\frac{1}{2}\|\mathbf{P}\nabla[G(u_1)-G(u_2)]\|_{\mathcal{L}_2(\mathbf{U};L^2)}^2\mathrm{d}t+(\nabla[G(u_1)-G(u_2)]\mathrm{d}W(t),\nabla u^*)_{L^2}\\
&\leq\Bigl[\frac{1}{2}\|\Delta u^*\|_{L^2}^2+C\|u_1\|_{H^2}^2\|\nabla u^*\|_{L^2}^2+C\|\nabla\phi\|_{L^{\infty}}^2\|n^*\|_{L^2}^2+C(\|u_1\|_{H^1}^4+\|u_2\|_{H^1}^4)\|\nabla u^*\|_{L^2}^2\Bigl]\mathrm{d}t\\
&\quad+\frac{1}{2}\|\mathbf{P}\nabla[G(u_1)-G(u_2)]\|_{\mathcal{L}_2(\mathbf{U};L^2)}^2\mathrm{d}t+(\nabla[G(u_1)-G(u_2)]\mathrm{d}W(t),\nabla u^*)_{L^2},\quad \bar{\mathbb{P}}\textrm{-a.s.}
\end{split} \end{equation}
Putting the estimates \eqref{un-2}-\eqref{un-7} together, we arrive at
\begin{equation} \begin{split}\label{un-8}
&\mathrm{d}\left(\|n^*(t)\|_{H^1}^2+\|c^*(t)\|_{H^2}^2+\|u^*(t)\|_{H^1}^2\right)\\
&\leq C_a\Bigl[\mathbf{N}(t)(\|n^*(t)\|_{H^1}^2+\|c^*(t)\|_{H^2}^2+\|u^*(t)\|_{H^1}^2)\Bigl]\mathrm{d}t+C\|G(u_1)-G(u_2)\|_{\mathcal{L}_2(\mathbf{U};H^1)}^2\mathrm{d}t\\
&\quad+C(G(u_1)-G(u_2)\mathrm{d}W(t),u^*)_{H^1},\quad \bar{\mathbb{P}}\textrm{-a.s.}
\end{split} \end{equation}
where
\begin{equation*} \begin{split}
\mathbf{N}(t):=1+\|\nabla\phi\|_{L^{\infty}}^2+\sum_{i=1}^{2}(\|n_i(t)\|_{H^1}^4+\|n_i(t)\|_{H^2}^2+\|c_i(t)\|_{H^3}^{2}+\|u_i(t)\|_{H^{1}}^4+\|u_i(t)\|_{H^{2}}^2).
\end{split} \end{equation*}
Define $\textbf{t}^R =\textbf{t}^R_1\wedge\textbf{t}^R_2\wedge T$ with
\begin{equation*}
\begin{split}
\textbf{t}^R_i:=&\inf \left\{t>0;~\sup _{r\in[0, t]} \|n_i(r) \|_{H^1}^2 \vee \int_0^t \|n_i(r) \|_{H^2}^2 \mathrm{d}r \vee \sup _{r\in[0, t]} \|c_i(r) \|_{H^2}^2\right.\\
 &\left.\vee \int_0^t\|c_i(r) \|_{H^3}^2\mathrm{d}r \vee \sup _{r\in[0, t]} \|u_i(r) \|_{H^1}^2 \vee \int_0^t \|u_i(r) \|_{H^2}^2\mathrm{d}r \geq R\right\},\quad i=1,~2.
\end{split}
\end{equation*}
Since $ (n_i, c_i, u_i )$, $i=1,~2$ are global solutions of \eqref{CNS}, the continuity-in-time of the solutions implies that $\textbf{t}^R \nearrow T$  as $R\rightarrow\infty$, $\bar{\mathbb{P}}$-a.s. By using Assumption \ref{as3}, we infer from \eqref{un-8} that
\begin{equation} \begin{split}\label{un-9}
&\|n^*(t\wedge\textbf{t}^R)\|_{H^1}^2+\|c^*(t\wedge\textbf{t}^R)\|_{H^2}^2+\|u^*(t\wedge\textbf{t}^R)\|_{H^1}^2\\
&\leq C_{a,R}\int_0^{t\wedge\textbf{t}^R}\mathbf{N}(r)(\|n^*(r)\|_{H^1}^2+\|c^*(r)\|_{H^2}^2+\|u^*(r)\|_{H^1}^2)\mathrm{d}r\\
&\quad+C\left|\int_0^{t\wedge\textbf{t}^R}(G(u_1(r))-G(u_2(r))\mathrm{d}W(r),u^*(r))_{H^1}\right|.
\end{split} \end{equation}
Applying the Gronwall lemma to \eqref{un-9} and noting the fact that $\int_0^{t\wedge\textbf{t}^R}\mathbf{N}(r)\mathrm{d}r\leq C_{R,\phi,T}$, we obtain
\begin{equation} \begin{split}\label{un-10}
&\|n^*(t\wedge\textbf{t}^R)\|_{H^1}^2+\|c^*(t\wedge\textbf{t}^R)\|_{H^2}^2+\|u^*(t\wedge\textbf{t}^R)\|_{H^1}^2\\
&\leq C_{a,R,\phi,T}\left|\int_0^{t\wedge\textbf{t}^R}(G(u_1(r))-G(u_2(r))\mathrm{d}W(r),u^*(r))_{H^1}\right|.
\end{split} \end{equation}
By using the BDG inequality, it follows from \eqref{un-10} that
\begin{equation*} \begin{split}
&\bar{\mathbb{E}}\sup_{r\in[0,t\wedge\textbf{t}^R]}(\|n^*(r)\|_{H^1}^2+\|c^*(r)\|_{H^2}^2+\|u^*(r)\|_{H^1}^2)\\
&\leq C_{a,R,\phi,T}\bar{\mathbb{E}}\left(\int_0^{t\wedge\textbf{t}^R}\|G(u_1(r))-G(u_2(r))\|_{\mathcal{L}_2(\mathbf{U};H^1)}^2\|u^*(r)\|_{H^1}^2\right)^{\frac{1}{2}}\\
&\leq \frac{1}{2}\mathbb{E}\sup_{r\in[0,t\wedge\textbf{t}^R]}\|u^*(r)\|_{H^1}^2+C_{a,R,\phi,T}
\int_0^{t}\bar{\mathbb{E}}\sup_{s\in[0,r\wedge\textbf{t}^R]}\|u^*(s)\|_{H^1}^2\mathrm{d}r,
\end{split} \end{equation*}
which implies $\bar{\mathbb{E}}\sup_{r\in[0,t\wedge\textbf{t}^R]}(\|n^*(r)\|_{H^1}^2+\|c^*(r)\|_{H^2}^2+\|u^*(r)\|_{H^1}^2)=0$. After taking the limit $R\rightarrow\infty$, we conclude that
$
\|n^*(t)\|_{H^1}^2+\|c^*(t)\|_{H^2}^2+\|u^*(t)\|_{H^1}^2=0$, for all $t\in[0,T]$ and $\bar{\mathbb{P}}$-a.s. This completes the proof of the uniqueness part.

\vspace{2mm}\noindent
\textbf{Proof of Theorem \ref{th1}.}  According to the classical Yamada-Watanabe theorem \cite{da2014stochastic,watanabe1971uniqueness,yamada1971uniqueness} and the Gy\"{o}ngy-Krylov characterization of the convergence in probability \cite{gyongy1980stochastic}, one can prove the existence and uniqueness of pathwise solutions, provided the existence of martingale solutions and the pathwise uniqueness as we established before. The proof of Theorem \ref{th1} is thus completed.\qed

\section*{Data availability}

No data was used for the research described in the article.

\section*{Conflict of interest statement}

The authors declared that they have no conflicts of interest to this work.

\section*{Acknowledgements}

This work was partially supported by the National Key Research and Development Program of China (Grant No.  2023YFC2206100), and the National Natural Science Foundation of China (Grant No. 12231008).

\bibliographystyle{amsrefs}
\bibliography{SCNS-3D}

\end{document}